\long\def\symbolfootnote[#1]#2{\begingroup
\def\thefootnote{\fnsymbol{footnote}}\footnote[#1]{#2}\endgroup}
\newtheorem{theorem}{Theorem}[section]
\newtheorem{lemma}[theorem]{Lemma}
\newtheorem{lem}[theorem]{Lemma}
\newtheorem{thm}[theorem]{Theorem}
\newtheorem{prop}[theorem]{Proposition}
\newtheorem{cor}[theorem]{Corollary}
\newtheorem{metathm}{Metatheorem}
\theoremstyle{definition}
\newtheorem{rem}[theorem]{Remark}
\newtheorem*{claim}{Claim}
\newtheorem{defin}[theorem]{Definition}
\newtheorem{conj}[theorem]{Conjecture}
\newcommand{\N}{\mathbf{N}}
\newcommand{\R}{\mathbf{R}}
\newcommand{\Z}{\mathbf{Z}}
\renewcommand{\H}{\mathbf{H}}
\newcommand{\C}{\mathbf{C}}
\newcommand {\mr}{\mathrm}
\begin{document}

\title{Realisation and dismantlability}

\author[S.~Hensel]{Sebastian Hensel}
           \address{Mathematisches Institut der Universit\"at Bonn\\
             Endenicher Allee 60, D-53115 Bonn, Germany}
           \email{loplop@math.uni-bonn.de}

\author[D.~Osajda]{Damian Osajda$^\dag$}
           \address{Universit\"at Wien, Fakult\"at f\"ur Mathematik\\
Nordbergstra\ss e 15, 1090 Wien, Austria\\
and}
\address{Instytut Matematyczny, Uniwersytet Wroc{\l}awski\\
pl. Grunwaldzki 2/4, 50-384 Wroc{\l}aw, Poland}
           \email{dosaj@math.uni.wroc.pl}
\thanks{$\dag$ Partially supported by MNiSW grant N N201 541738.}

\author[P.~Przytycki]{Piotr Przytycki$^{\dag\ddag}$}
\address{Inst. of Math., Polish Academy of Sciences\\
 \'Sniadeckich 8, 00-956 Warsaw, Poland}
\email{pprzytyc@mimuw.edu.pl}
\thanks{$\ddag$ Partially supported by the Foundation for Polish Science.}

\maketitle

\begin{abstract}
\noindent
We study dismantling properties of the arc, disc and sphere graphs. We
prove that any finite subgroup $H$ of the mapping class group of a
surface with punctures, the handlebody group, or $\mathrm{Out}(F_n)$
fixes a filling (resp.\ simple) clique in the appropriate graph. We
deduce realisation theorems, in particular the Nielsen Realisation
Problem in the case of a nonempty set of punctures. We also prove that
infinite $H$ have either empty or contractible fixed point sets in the
corresponding complexes. Furthermore, we show that their spines are
classifying spaces for proper actions for mapping class groups and
$\mathrm{Out}(F_n)$.
\end{abstract}

\section{Introduction}
\label{sec:intro}

One of the most outstanding long open problems in mapping class groups is
the \emph{Nielsen Realisation Problem}, which asks if a finite
group of isotopy classes of homeomorphisms of a surface can be
realised as an actual group of homeomorphisms.
This question was answered affirmatively in 1980 by
Steven Kerckhoff, by in fact realising the isotopy classes as
isometries of a suitable hyperbolic metric.

\begin{thm}[\cite{Ker, Ker2}]
\label{thm:Nielsen}
Let $X$ be a connected oriented surface of finite type and negative Euler characteristic.
Let $H$ be a finite subgroup of the mapping class group $\mathrm{Map}(X)$. Then $X$ admits a complete hyperbolic metric such that $H$ acts on $X$ as a group of isometries.
\end{thm}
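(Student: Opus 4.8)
The plan is to translate the statement into a fixed-point problem on Teichm\"uller space and then to locate a fixed point as the unique minimiser of a carefully chosen $H$-invariant function. Write $\mathcal{T}(X)$ for the Teichm\"uller space of $X$, i.e.\ the space of isotopy classes of complete finite-area hyperbolic metrics on $X$; the mapping class group $\mathrm{Map}(X)$ acts on it, and a point fixed by $H$ is exactly a hyperbolic metric for which each element of $H$ is represented by a (necessarily unique) isometry, these isometries assembling into a genuine isometric $H$-action since on a surface of negative Euler characteristic an isometry isotopic to the identity is the identity. So it suffices to produce an $H$-fixed point in $\mathcal{T}(X)$. The subtlety is that $\mathcal{T}(X)$ is homeomorphic to a Euclidean space but the $H$-action is not visibly by isometries of an obvious complete metric, so no soft fixed-point theorem applies directly and one needs extra geometric structure.

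The tool I would use is the family of geodesic length functions $\ell_\gamma \colon \mathcal{T}(X) \to \R_{>0}$, one for each essential simple closed curve $\gamma$. First I would fix a finite collection $\Gamma$ of curves that \emph{fills} $X$ (the complement is a union of disks and once-punctured disks) and is $H$-invariant as a set; this is arranged by taking any finite filling system and replacing it by the union of its finitely many images under $H$ --- here is where finiteness of $H$ enters. Then $F_\Gamma := \sum_{\gamma \in \Gamma} \ell_\gamma$ is continuous, $H$-invariant, and \emph{proper}: bounding $F_\Gamma$ bounds every $\ell_\gamma$ for $\gamma \in \Gamma$, and by the collar lemma a degenerating hyperbolic metric forces some curve of a filling system to become long, so $F_\Gamma \to \infty$ at infinity. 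Hence $F_\Gamma$ attains a minimum on $\mathcal{T}(X)$.

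The heart of the argument is that this minimiser is unique. Here I would invoke the convexity theory of length functions: along every \emph{earthquake path} in $\mathcal{T}(X)$ (shearing along a measured geodesic lamination) each $\ell_\gamma$, hence $F_\Gamma$, is strictly convex, and by Thurston's earthquake theorem any two points of $\mathcal{T}(X)$ are joined by an earthquake path. A proper function that is strictly convex along a path through each pair of its points has a unique minimiser $p_0$. Because $F_\Gamma$ is $H$-invariant, $h\cdot p_0$ is a minimiser for every $h \in H$, so $h\cdot p_0 = p_0$; thus $p_0$ is the desired $H$-fixed hyperbolic metric. (Alternatively one can run the identical argument with Weil--Petersson geodesics in place of earthquake paths, using Wolpert's strict convexity of length functions along WP geodesics together with geodesic convexity of the WP metric; the incompleteness of the WP metric is harmless here since we only use existence of connecting geodesics and properness of $F_\Gamma$.)

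The main obstacle is the strict convexity input. The \emph{first} variation of $\ell_\gamma$ along an earthquake has a clean ``cosine formula'' over the intersection points of $\gamma$ with the shearing lamination, from which convexity is readable, but \emph{strict} convexity requires excluding a vanishing second variation, and that is the genuinely delicate step (in Kerckhoff's treatment it rests on comparing two earthquake deformations and analysing the interaction of their supports; in Wolpert's, on the second-variation formula for the WP metric). Establishing the earthquake theorem --- an earthquake path between any prescribed pair of points --- is a second nontrivial ingredient. Everything else (properness of $F_\Gamma$, the soft uniqueness-of-minimiser argument, and the passage back to an isometric action) is routine.
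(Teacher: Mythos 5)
Your argument is correct in outline, but it is not the paper's argument: it is essentially Kerckhoff's original proof (with Wolpert's Weil--Petersson variant as the alternative you mention), and in fact the paper does not reprove Theorem~\ref{thm:Nielsen} in full generality --- it cites Kerckhoff and only gives a new proof under the additional hypotheses (A) (nonempty puncture set) or (B) (an $H$-invariant system of disjoint curves). The paper's route is combinatorial rather than analytic: dismantlability of the arc graph yields an $H$-invariant filling clique of arcs (Theorems~\ref{thm:fixed_arc} and~\ref{thm:fixed_arc_system}), which is realised by disjoint arcs, and the invariant complete hyperbolic metric is then built directly by gluing regular ideal hyperbolic polygons along the complementary components; case (B) is reduced to case (A) by cutting along the invariant curves and choosing the twist parameters equivariantly. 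What your approach buys is the closed case with no invariant curve system, which the combinatorial method does not reach; what it costs is reliance on the heavy inputs you flag (convexity of length functions along earthquakes or WP geodesics, Thurston's earthquake theorem), which is precisely the machinery the paper is designed to avoid where possible. One correction to your write-up: an individual $\ell_\gamma$ is \emph{not} strictly convex along every earthquake path --- it is constant along the twist about $\gamma$ itself and only convex in general, with strictness holding when $\gamma$ crosses the support of the shearing lamination; the statement you actually need is that the sum $F_\Gamma$ over your filling $H$-invariant family is strictly convex along every earthquake path, because every measured lamination intersects some curve of a filling system. With that adjustment the properness plus uniqueness-of-minimiser argument goes through as you describe, and the passage back from an $H$-fixed point of Teichm\"uller space to an isometric action is correct as stated.
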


Kerckhoff's proof is analytic, based on proving convexity of length functions along ``earthquake paths''.
Alternate but also involved proofs were later given by Wolpert \cite{Wol} and Gabai \cite{Gab}.

Similar realisation theorems are true in other contexts as well. As one
example, we can consider a handlebody $U$ instead of a surface. Its mapping class group $\mathrm{Map}(U)$ is called the \emph{handlebody group}.

\begin{thm}[\cite{Zim2}]
\label{thm:schottky-nielsen}
  Let $U$ be a connected handlebody of genus $\geq 2$. Let $H$ be a finite subgroup
  of the handlebody group $\mathrm{Map}(U)$. Then there is a hyperbolic $3$--manifold $M$
  which is homeomorphic to the interior of $U$ such that $H$ acts on
  $M$ as a group of isometries.
\end{thm}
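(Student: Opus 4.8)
The plan is to reduce Theorem~\ref{thm:schottky-nielsen} to Theorem~\ref{thm:Nielsen} on the boundary surface and then to promote the resulting isometric action on $\partial U$ to a Schottky-type uniformisation of $\mathrm{int}(U)$ carrying a compatible $H$--action. First recall the classical fact that the restriction homomorphism $\mathrm{Map}(U)\to\mathrm{Map}(\partial U)$ is injective: a homeomorphism of a handlebody that is the identity on the boundary is isotopic rel boundary to the identity. So $H$ becomes a finite subgroup of $\mathrm{Map}(\partial U)$, and since $\partial U$ is a closed orientable surface of genus $g\ge 2$, hence of negative Euler characteristic, Theorem~\ref{thm:Nielsen} gives a complete hyperbolic metric $\partial U=\mathbb{H}^2/\Gamma$ with $\Gamma\cong\pi_1(\partial U)$ on which $H$ acts isometrically; equivalently a Fuchsian group $\Lambda\subset\mathrm{PSL}_2(\mathbb{R})$ containing $\Gamma$ as a normal subgroup of index $|H|$ with $\Lambda/\Gamma\cong H$ inducing the prescribed outer action on $\pi_1(\partial U)$.

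The handlebody structure is recorded by $N=\ker\!\big(\pi_1(\partial U)\to\pi_1(U)\big)$, the normal closure of the meridians. Because $H\subset\mathrm{Map}(U)$ it preserves $N$, so $N$ is in fact normal in $\Lambda$ and $\Lambda/N$ fits in an extension $1\to\pi_1(U)\to\Lambda/N\to H\to 1$ with $\pi_1(U)\cong F_g$. The goal is to realise $\Lambda/N$ as a geometrically finite Kleinian group $\widehat{G}\subset\mathrm{PSL}_2(\mathbb{C})$ whose torsion-free subgroup $G\cong\pi_1(U)$ of index $|H|$ is free and satisfies $\mathbb{H}^3/G\cong\mathrm{int}(U)$; then $M=\mathbb{H}^3/G$ is the required $3$--manifold and $\widehat{G}/G\cong H$ acts on it by isometries. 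That this action is conjugate to the given finite subgroup of $\mathrm{Map}(U)$ is then forced by the rigidity of Haken $3$--manifolds (Waldhausen), once one observes that the induced action on $\pi_1(U)$ and on $\partial U$ is the prescribed one by construction.

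To build $\widehat{G}$ one may run an $H$--equivariant retrosection (Maskit) deformation: starting from the Fuchsian pair $(\Gamma,\Lambda)$, pinch an $H$--invariant complete meridian system of $U$ --- a collection of disjoint essential discs permuted by $H$ that cut $U$ into a ball --- to reach a cusped Kleinian group, then open the cusps to obtain the Schottky-type group $G$ together with $\widehat{G}$. Equivalently, one forms the quotient $3$--orbifold $\mathcal{U}=U/H$, a handlebody orbifold with hyperbolic orbifold boundary $\partial U/H=\mathbb{H}^2/\Lambda$, hyperbolises $\mathrm{int}(\mathcal{U})$ by Thurston's hyperbolisation of Haken $3$--orbifolds, and passes to the cover corresponding to $\pi_1(U)\le\pi_1^{\mathrm{orb}}(\mathcal{U})$. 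Both routes presuppose that the isometric $H$--action on $\partial U$ extends to a finite smooth action on $U$ (topological Nielsen realisation for handlebodies); this is obtained from the equivariant Dehn lemma and loop theorem of Meeks--Yau, used to find the $H$--invariant meridian system, after which the action is standard on the resulting ball.

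The main obstacle is exactly this equivariant uniformisation. Producing the $H$--invariant complete meridian system needs an argument: $H$ preserves the disc set of $U$ only as a set of isotopy classes, and a single finite cut system is typically not invariant, so one must invoke equivariant minimal surface theory (or an equivariant form of Haken finiteness for disc systems) to find an invariant one. Granting this, what remains is to verify the hypotheses of the orbifold theorem --- that $\mathcal{U}$ is a very good, irreducible orbifold with no essential Euclidean suborbifolds beyond the evident compressing discs --- and to check that the resulting structure is geometrically finite so that $M$ is genuinely homeomorphic to $\mathrm{int}(U)$; for a handlebody orbifold with negatively curved boundary these are expected but not automatic.
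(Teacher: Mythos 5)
Your outline reduces the theorem to (a) Nielsen realisation on the closed boundary surface, (b) the existence of an $H$--invariant complete meridian system so that the action extends to $U$ (equivalently, so that the quotient orbifold $U/H$ exists), and (c) a uniformisation of that orbifold via Maskit deformations or Thurston's orbifold theorem. The genuine gap is step (b), and it is exactly the point the paper is about: the existence of an $H$--invariant simple disc system (a fixed \emph{simple clique} in the disc graph) is the content of Theorem~\ref{thm:fixed_disc_system}, which the paper proves combinatorially via dismantlability of the disc graph (Theorem~\ref{thm:fixed_disc}) together with a length-minimisation argument. Your proposed substitute --- the Meeks--Yau equivariant Dehn lemma/loop theorem --- is circular as stated: those equivariant theorems take as input an honest smooth finite group action on the $3$--manifold (or at least on a neighbourhood of the relevant surface inside it), whereas here $H$ is only given as a subgroup of $\mathrm{Map}(U)$, and the isometric realisation of $H$ on $\partial U$ is not known a priori to extend over $U$; producing that extension is precisely what the invariant meridian system is needed for. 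Relatedly, your appeal to Theorem~\ref{thm:Nielsen} for the closed surface $\partial U$ invokes the full Kerckhoff theorem, which this paper does not prove: within the paper only hypotheses (A) and (B) are handled, and hypothesis (B) becomes available for $H\subset\mathrm{Map}(U)$ only \emph{after} one has the fixed clique in the disc graph --- again the combinatorial input you are missing. Finally, the closing steps of your plan (verifying the hypotheses of the orbifold hyperbolisation theorem, geometric finiteness, that $M\cong\mathrm{int}(U)$, and that the resulting isometric action realises the original isotopy classes) are explicitly left as ``expected but not automatic'', i.e.\ unproved, and they carry real content.

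For comparison, the paper's proof avoids all of this machinery. By Theorem~\ref{thm:fixed_disc_system} there is an $H$--invariant simple clique $\Delta$; realising it by disjoint curves $d_1,\ldots,d_n$ cuts $\partial U$ into bordered $2$--spheres. Replacing boundary circles by punctures and applying the paper's own Nielsen realisation under hypothesis (A) (Theorem~\ref{thm:fixed_arc_system} plus ideal polygon gluing) gives an $H$--invariant complete hyperbolic structure on the resulting punctured spheres; uniformisation turns each into $\hat{\C}_i$ minus $k_i\geq 3$ points with $H$ acting by M\"obius transformations. One then removes an $H$--invariant family of round discs about the punctures, takes in each $\H^3_i$ the complement of the corresponding half-spaces, and glues these pieces along their totally geodesic boundary planes according to the pairing of disc sides, choosing the twist parameters equivariantly by the same finite-dimensional fixed point argument as in the proof of Theorem~\ref{thm:Nielsen} under hypothesis (B). This yields directly a hyperbolic $M$ homeomorphic to $\mathrm{int}(U)$ with $H$ acting by isometries --- no equivariant minimal surfaces, no orbifold theorem, no Marden or Maskit theory. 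If you want to salvage your route, you would at minimum have to supply an independent proof of the invariant meridian system, which is the theorem's real difficulty.
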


Theorem~\ref{thm:schottky-nielsen} may be deduced from Theorem~\ref{thm:Nielsen} using Marden isomorphism theorem (see \cite{Zim2}).

As a last example of realisation theorems we consider graphs. The group $\mathrm{Out}(F_n)$ acts by homotopy equivalences on a graph with fundamental group $F_n$.

\begin{thm}[\cite{Zim,Cul,Kh}]
\label{thm:graph}
Let $H$ be a finite subgroup of $\mathrm{Out}(F_n)$. Then there is a graph with fundamental group $F_n$ on which $H$ acts as a group of isometries.
\end{thm}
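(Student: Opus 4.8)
The plan is to deduce Theorem~\ref{thm:graph} from the dismantlability of the sphere graph, in the same way that Theorems~\ref{thm:Nielsen} and~\ref{thm:schottky-nielsen} are to be recovered from the dismantlability of the arc and disc graphs. Write $M_n=\#_n(S^1\times S^2)$, so $\pi_1 M_n\cong F_n$, and recall Laudenbach's theorem: the natural map $\mathrm{Map}(M_n)\to\mathrm{Out}(F_n)$ is surjective with finite kernel, generated by twists along embedded $2$--spheres. Consequently the preimage $\widetilde H$ of $H$ in $\mathrm{Map}(M_n)$ is a finite group, and it suffices to build a finite graph $\Gamma$ with $\pi_1\Gamma\cong F_n$ together with an action of $\widetilde H$ on $\Gamma$ inducing, under some identification $\pi_1\Gamma\cong\pi_1 M_n$, exactly the given subgroup $H\le\mathrm{Out}(F_n)$; declaring every edge of such a $\Gamma$ to have length $1$ then turns each graph automorphism into an isometry.

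First I would invoke our main result, applied to the action of $\widetilde H$ on the sphere graph $\mathcal{S}(M_n)$: it yields a $\widetilde H$--invariant \emph{simple} clique, i.e.\ the isotopy class of a system $\Sigma=S_1\cup\cdots\cup S_k$ of disjoint essential $2$--spheres in $M_n$ whose complementary pieces are all simply connected. By uniqueness of minimal position for sphere systems, each element of $\widetilde H$ permutes the isotopy classes of the $S_i$ and of the components of $M_n\setminus\Sigma$ compatibly with incidence, so we obtain an action of $\widetilde H$ on the dual graph $\Gamma=\Gamma_\Sigma$ whose vertices are the complementary pieces and whose edges are the spheres $S_i$. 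Since those pieces and the $S_i$ are simply connected, $M_n$ is the total space of a graph of spaces over $\Gamma$ with trivial vertex and edge groups, so the associated graph-of-groups decomposition identifies $\pi_1\Gamma$ canonically with $\pi_1 M_n\cong F_n$.

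Next I would check compatibility of the actions. Isotoping a given mapping class in $\widetilde H$ (using ambient isotopy extension) so that it genuinely preserves $\Sigma$, and using naturality of the isomorphism $\pi_1\Gamma\cong\pi_1 M_n$, one sees that the action of $\widetilde H$ on $\Gamma$ induces on $\pi_1\Gamma\cong F_n$ precisely the image of $\widetilde H$ in $\mathrm{Out}(F_n)$, i.e.\ $H$. Finally, to descend to an honest $H$--action, note that the kernel of $\widetilde H\to\mathrm{Aut}(\Gamma)$ acts trivially on $\pi_1\Gamma\cong F_n$, hence lies in the kernel of $\widetilde H\to\mathrm{Out}(F_n)$; and conversely the sphere twists generating the latter kernel act trivially on $\Gamma$, since a twist along a sphere, put in minimal position with respect to $\Sigma$, fixes each $S_i$ and each complementary piece up to isotopy. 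The two kernels thus coincide, so the $\widetilde H$--action descends to a faithful action of $H$ on $\Gamma$ realising $H\le\mathrm{Out}(F_n)$, and metrising $\Gamma$ with unit edges completes the argument.

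I expect the main obstacle to be not the construction of $\Gamma$ but the bookkeeping relating the three actions in play --- of $\widetilde H$ on $\mathcal{S}(M_n)$, on $\Gamma$, and on $F_n=\pi_1 M_n$ --- and in particular the passage through Laudenbach's theorem that is needed to guarantee one ends with an action of $H$ itself rather than of some finite group merely surjecting onto $H$. The remaining ingredients --- uniqueness of sphere systems in minimal position, the graph-of-spaces computation of $\pi_1 M_n$, the ambient isotopy extension, and the triviality of sphere twists on the dual graph --- are standard $3$--manifold topology.
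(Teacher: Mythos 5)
Your argument is correct and is essentially the paper's own proof: Theorem~\ref{thm:fixed-filling-sphere-clique} supplies an invariant simple clique, and the group then acts by isometries on the dual graph of a representing simple sphere system, whose fundamental group is $F_n$. The only (immaterial) difference is bookkeeping: the paper lets $\mathrm{Out}(F_n)$ act directly on $\mathcal{S}(W_n)$, since Laudenbach's finite kernel acts trivially on isotopy classes of spheres, whereas you lift $H$ to a finite subgroup of $\mathrm{Map}(W_n)$ and descend to $H$ at the end.
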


The proofs require the description of virtually free groups as graphs of finite groups.

Theorem~\ref{thm:graph} is known to imply Theorem~\ref{thm:Nielsen} for punctured surfaces. Namely, a finite subgroup $H$ of $\mathrm{Map}(X)$ with $\pi_1(X)=F_n$ embeds in $\mathrm{Out}(F_n)$. By
Theorem~\ref{thm:graph} the group $H$ acts by isometries
on a graph $\Gamma$ with $\pi_1(\Gamma)=F_n$. One can then show that $\Gamma$ has an $H$--invariant ribbon
graph structure determining the action of $H$ on $X$. However, to better illustrate our approach,
we will treat Theorem~\ref{thm:Nielsen} separately.   

\medskip
The purpose of this article is to develop a unified and elementary combinatorial
approach to such realisation problems. We give proofs of
Theorem~\ref{thm:schottky-nielsen} and Theorem~\ref{thm:graph} in full
generality, and
we also prove Theorem~\ref{thm:Nielsen} under any of the following hypotheses:
\begin{enumerate}[(A)]
\item
The set of punctures of $X$ is nonempty.
\item
There is a nonempty set of disjoint essential simple closed curves on $X$, which is $H$--invariant up to homotopy.
\end{enumerate}

Our method is elementary and consists in finding the fixed point of the action of $H$ on the arc graph (resp. the disc graph, the sphere graph).

\begin{metathm}
\label{met:filling}
Let $\Gamma$ be the arc graph (resp.\ the disc graph, the sphere graph). Then any finite subgroup of the mapping class group (resp.\ the handlebody group, $\mathrm{Out}(F_n)$) fixes
\begin{enumerate}[(1)]
\item
a clique in $\Gamma$,
\item
a filling (resp.\ simple) clique in $\Gamma$.
\end{enumerate}
\end{metathm}

Assertion (2) of Metatheorem~\ref{met:filling} is stronger than (1), but in order to prove (2), we first prove (1) and then apply induction.
To prove (1), the first step is to use an easy surgery procedure to show that the
graph $\Gamma$ can be exhausted by finite subgraphs enjoying a combinatorial nonpositive curvature
feature called \emph{dismantlability}. We then show that, even though $\Gamma$ is locally infinite, a finite
subgroup of the ambient group preserves a finite subgraph that is dismantlable. By a theorem of Polat \cite{Pol} a finite dismantlable graph contains a clique fixed by all of its automorphisms.

\medskip
Dismantlability is a feature shared by many graphs
associated to mapping class groups and related groups.
In this article, we show it for the arc graph of a punctured surface,
the disc graph of a handlebody and the sphere graph of a doubled handlebody.
We note that the proofs of dismantlability follow easily from the
known proofs of contractibility for the arc and disc graph, but not
for the sphere graph. We also prove dismantlability for the Rips graph of a hyperbolic group.

Additionally, dismantlability is known for Kakimizu graphs of
link complement groups and $1$--skeleta of weakly systolic
complexes. In \cite{PS} we used the Kakimizu graph to prove a fixed
point theorem for finite subgroups of link complements. We proved a
weakly systolic fixed point theorem in \cite{CO}.

\medskip

In the second part of the article, we generalise Polat's theorem to the following.

\begin{thm}
\label{thm:contrfix}
  Let $\Gamma$ be a finite dismantlable graph and let $Y=\Gamma^\blacktriangle$, the flag simplicial complex spanned on $\Gamma$.
  Let $H$ be a finite subgroup of $\mr{Aut}(Y)$. Then the set $Y^H$ of points of $Y$ fixed by $H$ is contractible.
\end{thm}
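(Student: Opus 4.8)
The plan is to translate the statement into a question about posets and then induct on the number of vertices of $\Gamma$.

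Passing to the barycentric subdivision $Y'$, whose vertices are the simplices of $Y$ --- that is, the nonempty cliques of $\Gamma$, ordered by inclusion --- the $H$--action becomes order preserving, so a simplex of $Y'$ fixed setwise by $H$ is a chain of cliques permuted by $H$, hence fixed pointwise. Therefore $Y^H=(Y')^H$ is the order complex of the poset $\mathcal{Q}=\mathcal{Q}(\Gamma,H)$ of $H$--invariant nonempty cliques of $\Gamma$, and it suffices to show that $\mathcal{Q}(\Gamma,H)$ has contractible order complex for every finite dismantlable $\Gamma$ and every finite $H\le\mathrm{Aut}(\Gamma)$. I prove this by induction on $|V(\Gamma)|$, the case $|V(\Gamma)|=1$ being trivial. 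Throughout I use that an order preserving self-map of a poset lying everywhere above or everywhere below the identity induces a homotopy equivalence of order complexes.

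For the inductive step (so $|V(\Gamma)|\ge 2$) I distinguish three cases. (i) If $\Gamma$ has a universal vertex, the set $U$ of universal vertices is an $H$--invariant clique and $\sigma\mapsto\sigma\cup U$ is an endomorphism of $\mathcal{Q}(\Gamma,H)$ above the identity whose image has least element $U$; hence the order complex of $\mathcal{Q}(\Gamma,H)$ is a cone, so contractible. (ii) If $\Gamma$ has no universal vertex but two vertices with equal closed neighbourhood, collapse $\Gamma$ along the ($H$--invariant) relation ``$N[x]=N[y]$'' to obtain an $H$--graph $\Gamma^{*}$; it has fewer vertices and, being obtained by successively deleting closed-twin (hence dominated) vertices, is dismantlable. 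The induced poset map $\mathcal{Q}(\Gamma,H)\to\mathcal{Q}(\Gamma^{*},H)$ has, for each clique $\bar\sigma$ of $\Gamma^{*}$, preimage of $\{\bar\tau\le\bar\sigma\}$ equal to the poset of nonempty $H$--invariant cliques of the complete graph on the union of the twin-classes in $\bar\sigma$; such a poset has a greatest element, so by Quillen's fibre lemma the map is a homotopy equivalence and the induction hypothesis applies to $\Gamma^{*}$. (iii) If $\Gamma$ is twin-free it still has a dominated vertex $v$; twin-freeness forces every dominator of $v$, hence (by equivariance) every dominator of every vertex of the orbit $O=Hv$, to lie outside $O$, so $\Gamma\setminus O$ is dismantlable, and it is nonempty (otherwise every vertex of $\Gamma$ would be dominated, impossible for a finite twin-free graph with no universal vertex). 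If $O$ is not a clique, then no $H$--invariant clique meets $O$, so $\mathcal{Q}(\Gamma,H)=\mathcal{Q}(\Gamma\setminus O,H)$ and we are done. If $O$ is a clique, set $B=\bigcap_{u\in O}N[u]\supseteq O$ and $E=\{z:B\subseteq N[z]\}$, an $H$--invariant clique with $O\subsetneq E$; composing the retraction $\sigma\mapsto\sigma\cup E$ of $\mathcal{Q}(\Gamma,H)$ onto cliques containing $E$ with the retraction $\sigma\mapsto\sigma\setminus O$ of the latter onto $\mathcal{Q}(\Gamma\setminus O,H)$ shows $Y^H\simeq(\Gamma\setminus O)^{\blacktriangle,H}$, and again the induction hypothesis finishes the case.

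The main obstacle is getting case (iii) off the ground and tracking dismantlability through the reductions: the naive move of deleting the orbit of a dominated vertex fails exactly when that vertex is dominated only by its closed twins inside its orbit, which is why all closed twins must be eliminated first (case (ii)); and since collapsing twin-classes is not injective on cliques it can change homotopy type, so the Quillen-fibre computation in (ii) --- identifying the fibres with clique posets of complete graphs, hence as contractible --- is the delicate point. The remaining verifications (that the poset maps are well defined and monotone, and that the three cases are exhaustive) are routine.
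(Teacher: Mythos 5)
Your reduction of $Y^H$ to the order complex of the poset $\mathcal{Q}(\Gamma,H)$ of nonempty $H$--invariant cliques is correct, and cases (i), (ii) (including the Quillen-fibre computation for the twin-collapse) and the sub-case of (iii) where the orbit $O$ is not a clique all go through; this is a genuinely different route from the paper, which instead constructs an explicit $H$--equivariant affine deformation retraction of $Y$ pushing each dominated vertex towards the barycentre of the simplex spanned by its dominators and then restricts it to $Y^H$. However, the final sub-case of (iii) has a genuine gap: the map $\sigma\mapsto\sigma\cup E$ is not defined on all of $\mathcal{Q}(\Gamma,H)$, because $\sigma\cup E$ need not be a clique. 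Take $\Gamma$ to be the path on the five vertices $a,b,c,d,e$ and $H$ trivial (so every clique is $H$--invariant): $\Gamma$ is dismantlable, twin-free and has no universal vertex, $v=a$ is dominated, $O=\{a\}$ is a clique, and (with the paper's convention that $N(\cdot)$ contains the vertex) $B=N(a)=\{a,b\}$, $E=\{a,b\}$; for the invariant clique $\sigma=\{d,e\}$ the set $\sigma\cup E$ is not a clique. A further warning sign: if $\sigma\mapsto\sigma\cup E$ really were an order-preserving self-map above the identity with image the cliques containing $E$, that image has least element $E$, so $Y^H$ would be contractible outright and your second map and the induction hypothesis would be superfluous.

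The sub-case can be repaired along your lines, but the closure may only be applied to cliques meeting $O$. An $H$--invariant clique $\sigma$ meeting $O$ contains the whole orbit $O$, hence every $x\in\sigma$ lies in $B$, and since each $z\in E$ satisfies $B\subset N(z)$, the union $\sigma\cup E$ is then a clique. So set $f(\sigma)=\sigma\cup E$ when $\sigma\cap O\neq\emptyset$ and $f(\sigma)=\sigma$ otherwise; $f$ is order-preserving, idempotent and everywhere $\geq\mathrm{id}$, with image the invariant cliques that are either disjoint from $O$ or contain $E$. On that image, $g(\sigma)=\sigma\setminus O$ is order-preserving, idempotent and everywhere $\leq\mathrm{id}$ (its values are nonempty because a dominator $w$ of $v$ lies in $E\setminus O$), with image exactly $\mathcal{Q}(\Gamma\setminus O,H)$. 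These two closure operators give $|\mathcal{Q}(\Gamma,H)|\simeq|\mathcal{Q}(\Gamma\setminus O,H)|$, and your induction applies, using that $\Gamma\setminus O$ is dismantlable because all dominators of vertices of $O$ lie outside $O$ --- note that this last point needs a word more than ``twin-freeness forces'': a dominator inside the orbit has the same degree, so domination would force equal closed neighbourhoods, i.e.\ closed twins. With this correction your argument is complete and independent of the paper's proof.
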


Theorem~\ref{thm:contrfix} leads to the following metatheorems.

\begin{metathm}
\label{met:H_infinite}
Let $H$ be any subgroup of the mapping class group (resp.\ the handlebody group, $\mathrm{Out}(F_n)$). The fixed point set of the action of $H$ on the arc complex (resp.\ the disc complex, the sphere complex) is empty or contractible.
\end{metathm}

In the case where $H$ is the trivial group Metatheorem~\ref{met:H_infinite} implies that the arc, disc and
sphere complexes are contractible. These facts were known before, see
\cite{Har1,Ha0,McC,Ha}.
For general $H$ the results in Metatheorem~\ref{met:H_infinite} are
new. In particular for $H$ finite, in view of
Metatheorem~\ref{met:filling}(1), the fixed point set of $H$ is
contractible.
\medskip

The following builds on and generalises Metatheorem~\ref{met:filling}(2).

\begin{metathm}
\label{met:H_filling}
Let $H$ be a finite subgroup of the mapping class group (resp.\ the handlebody group, $\mathrm{Out}(F_n)$). The fixed point set of the action of $H$ on the filling arc system complex (resp.\ the simple disc system complex, the simple sphere system complex) is contractible.
\end{metathm}

Consequently the filling arc system complex, as well as the simple
sphere system complex, is a finite model for $\underline{E}G$ --- the
classifying space for proper actions for the ambient group $G$. This
was known for $\mathrm{Out}(F_n)$ (\cite{Whi}, \cite{KV}) for which the
corresponding complex is the spine of the Outer space \cite{Ha}. For
the mapping class group and the handlebody group
Metatheorem~\ref{met:H_filling} is new. When $G$ is the mapping class
group other finite models for $\underline{E}G$ were constructed before
in \cite{JW,Mis}.
But it was not known if the filling arc system complex, which is a
natural candidate, is in fact a model for $\underline{E}G$. In the
case of a surface with one
puncture the filling arc system complex coincides with Harer's spine
of the Teichm\"uller space \cite{Har2}. Our method also answers
affirmatively the question if Harer's spine is in general a model for
$\underline{E}G$ for $G$ the pure mapping class group.

Theorem \ref{thm:contrfix} can be applied in many other situations. In particular it can be used to show contractibility of sets of fixed points for weakly systolic complexes (cf.\ \cite{CO}) or for Kakimizu complexes (cf.\ \cite{PS}). In our article we show how to deduce from Theorem~\ref{thm:contrfix} a new proof of the following classical result.

\begin{thm}[{\cite[Thm~1]{MS}}]
\label{thm:EGhyper}
Let $G$ be a $\delta$--hyperbolic group and let $D\geq 8\delta +1$. Then the Rips complex $P_D(G)$ is a finite model for $\underline EG$.
\end{thm}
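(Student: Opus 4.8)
The plan is to deduce Theorem~\ref{thm:EGhyper} from Theorem~\ref{thm:contrfix} along the same lines by which Metatheorems~\ref{met:H_infinite} and~\ref{met:H_filling} follow from it, with the surgery procedure for arcs and discs replaced by an elementary argument in $\delta$--hyperbolic geometry. First observe that $P_D(G)$ is a flag complex: a finite subset $S\subseteq G$ has diameter $\leq D$ if and only if every two of its elements lie at distance $\leq D$, so $P_D(G)=\mathcal R^\blacktriangle$, where $\mathcal R=\mathcal R_D(G)$ is the \emph{Rips graph} --- vertex set $G$, with an edge joining $g,g'$ whenever $d(g,g')\leq D$ in the word metric. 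As $G$ is finitely generated, $\mathcal R$ is locally finite; $G$ acts on it simplicially, freely on vertices (hence with finite cell stabilisers, so properly), and with finitely many orbits of simplices, so $G\backslash P_D(G)$ is a finite complex. By the standard characterisation of $\underline EG$ among proper $G$--CW complexes, and since properness forces the fixed point set of any infinite subgroup to be empty, it therefore suffices to prove that $P_D(G)^H$ is contractible for every finite subgroup $H\leq G$; the case $H=\{1\}$ will in particular reprove contractibility of $P_D(G)$ itself.

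The geometric heart of the argument, and the only place where the hypothesis $D\geq 8\delta+1$ enters, is the following dismantlability lemma: \emph{given $D\geq 8\delta+1$ and a finite subgroup $H\leq G$, there is a point $c=c_H\in G$ that is coarsely $H$--fixed} --- say a centre of the smallest enclosing ball of an $H$--orbit, so that $Hc$ has uniformly bounded diameter --- \emph{such that for every $\rho$ the full subgraph $\Gamma_\rho$ of $\mathcal R$ spanned by the metric neighbourhood $N_\rho(Hc)$ is finite, $H$--invariant, and dismantlable}. To prove dismantlability one removes the vertices of $\Gamma_\rho$ in order of decreasing distance to $Hc$ (breaking ties suitably): if $v$ is currently furthest from $Hc$ and $v'$ is chosen on a geodesic from $v$ to a nearest point of $Hc$ at controlled distance of order $4\delta$ from $v$, then applying thin triangles --- using that every remaining $u\in N[v]$ satisfies $d(u,Hc)\leq d(v,Hc)$ and that $\mathrm{diam}(Hc)$ is small --- puts $u$ into the closed neighbourhood $N[v']$, so $v'$ dominates $v$; the vertex $v'$ survives because it is strictly closer to $Hc$ than $v$, and $D\geq 8\delta+1$ is exactly what the domination estimate needs. (For $H=\{1\}$ this is, in essence, Rips' proof that $P_D(G)$ is contractible.) The dismantling order itself need not be $H$--equivariant: only $H$--invariance of the subgraph $\Gamma_\rho$ matters, since Theorem~\ref{thm:contrfix} already accounts for the automorphisms.

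To finish, fix a finite $H\leq G$ and its coarse centre $c=c_H$. Since $\bigcup_\rho N_\rho(Hc)=G$, the subgraphs $\Gamma_1\subseteq\Gamma_2\subseteq\cdots$ exhaust $\mathcal R$ by finite $H$--invariant dismantlable subgraphs; putting $Y_\rho=\Gamma_\rho^\blacktriangle$ we get $P_D(G)=\bigcup_\rho Y_\rho$, an increasing union of $H$--invariant subcomplexes, and, after passing to the barycentric subdivision, $P_D(G)^H=\bigcup_\rho Y_\rho^H$. By Theorem~\ref{thm:contrfix} each $Y_\rho^H$ is contractible, in particular nonempty, so every inclusion $Y_\rho^H\hookrightarrow Y_{\rho+1}^H$, being a map between contractible CW complexes, is a homotopy equivalence; hence the increasing union $P_D(G)^H$ is contractible, being homotopy equivalent --- via the mapping telescope --- to $Y_1^H$. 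I expect the main obstacle to be the dismantlability lemma: producing the coarsely $H$--fixed centre and pushing the domination estimate through with the sharp constant $8\delta+1$ while keeping the enveloping subgraph $H$--invariant. This step is, in hyperbolic geometry, the analogue of the surgery arguments used for the arc, disc and sphere graphs.
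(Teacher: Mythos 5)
Your proposal follows essentially the same route as the paper: its Lemma~\ref{lem:dismhyp} exhausts the Rips graph by finite $H$--invariant dismantlable subgraphs, namely balls around the quasi-centre $C$ of an orbit $HS$ (of diameter at most $4\delta+1$ by \cite[Lem~III.$\Gamma$.3.3]{BH}), dismantled by removing farthest vertices first, each dominated by the point $u$ on a geodesic towards $C$ at distance exactly $2\delta$; Theorem~\ref{thm:contrfix} and the exhaustion then give contractibility of $P_D(G)^H$ exactly as you describe. The one detail to fix is your choice of the dominating vertex ``at controlled distance of order $4\delta$'': it must be taken at distance $2\delta$ from $v$, since the estimate that closes at the sharp constant is $d(t,u)\leq d(u,v)+\mathrm{diam}(C)+2\delta\leq 2\delta+(4\delta+1)+2\delta=8\delta+1\leq D$.
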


We believe that many simplicial complexes appearing elsewhere in the literature can be studied using dismantlability and we intend to continue developing this approach.

\subsection*{Organisation}
The article is divided into two parts. In Sections~\ref{sec:dism}--\ref{sec:sph-red} we prove Metatheorem~\ref{met:filling} and deduce realisation results. In Sections~\ref{sec:contr}--\ref{sec:Rips} we prove Metatheorems~\ref{met:H_infinite} and~\ref{met:H_filling}. Each part is preluded by a graph-theoretic Section \ref{sec:dism} or~\ref{sec:contr}.

In Section~\ref{sec:dism} we discuss the notion of dismantlability. We include the proof of Polat's theorem (Theorem~\ref{thm:fixedpoint} in the text) and deduce a fixed clique criterion which will be used for Metatheorem~\ref{met:filling}(1).
In Section~\ref{sec:fixarc} we prove Metatheorem~\ref{met:filling}(1) for the mapping class group (Theorem~\ref{thm:fixed_arc}). In Section~\ref{sec:Nielsen} we deduce from this Metatheorem~\ref{met:filling}(2) for the mapping class group (Theorem~\ref{thm:fixed_arc_system}) and Theorem~\ref{thm:Nielsen} under hypothesis (A) or (B).

In Section~\ref{sec:fixdisc} we prove Metatheorem~\ref{met:filling}(1) for the handlebody group (Theorem~\ref{thm:fixed_disc}). In
Section~\ref{sec:Nielsen_for_handle} we derive Metatheorem~\ref{met:filling}(2) for the handlebody group (Theorem~\ref{thm:fixed_disc_system}) and Theorem~\ref{thm:schottky-nielsen}. Similarly, we prove Metatheorem~\ref{met:filling}(1) for $\mathrm{Out}(F_n)$ (Theorem~\ref{thm:fixedsphere}) in Section~\ref{sec:spheres} and promote it to Metatheorem~\ref{met:filling}(2) for $\mathrm{Out}(F_n)$ (Theorem~\ref{thm:fixed-filling-sphere-clique}) and to Theorem~\ref{thm:graph} in Section~\ref{sec:sph-red}.

The second part of the article begins with Section~\ref{sec:contr}, where we prove Theorem~\ref{thm:contrfix}. This section can be read alternatively directly as a continuation of Section~\ref{sec:dism}. In Section~\ref{sec:contr_arc} we deduce from Theorem~\ref{thm:contrfix} the results of Metatheorems~\ref{met:H_infinite} and~\ref{met:H_filling} for the mapping class group (Theorems~\ref{thm:contrarc} and~\ref{thm:arc_classify}). We do the same for the handlebody group and $\mathrm{Out}(F_n)$ in Section~\ref{sec:contr_disc_sphere} (Theorems~\ref{thm:contrdisc}, \ref{thm:handle_spine}, \ref{thm:contrsphere}, and~\ref{thm:sphere_spine}). Consequently, we obtain finite models for classifying spaces for proper actions in Corollaries~\ref{cor:spine_teich}, \ref{cor:spine_teich_Harer}, and~\ref{cor:spine_sphere}. We close the article with the proof of Theorem~\ref{thm:EGhyper} in Section~\ref{sec:Rips}.

\subsection*{Acknowledgements}
We would like to thank Victor Chepoi, Saul Schleimer and Karen Vogtmann for discussions. We
thank the Mittag--Leffler Institute, where a large part of this work
was carried out, for its hospitality and a great working atmosphere.

\section{Dismantlability}
\label{sec:dism}

Our goal in this section is to give a criterion for a group of
automorphisms of a graph to have an invariant clique
(Proposition~\ref{prop:fixedpoint}). The criterion requires the
existence of ``projections'' and finite sets which are ``convex''
under sufficiently many of these projections. This criterion will be used to obtain Metatheorem~\ref{met:filling}(1), where the role of projections will be played by surgeries.

Our proof of Proposition~\ref{prop:fixedpoint} relies on Polat's fixed point theorem for dismantlable graphs \cite{Pol}. We include the proof of a special case of Polat's theorem for the article to be self-contained. The notion of dismantlability was brought into geometric group theory by Chepoi and the second named author, and it was used in particular to prove a fixed point theorem for weakly systolic complexes \cite{CO}. Later Schultens and the third named author used it to obtain contractibility and a fixed point theorem for the Kakimizu complex \cite{PS}.

The results of this section have a natural continuation and generalisation in Section~\ref{sec:contr}.

Unless stated otherwise, all graphs in this article are \emph{simple} in the sense that they do not have double edges or loops, i.e.\ edges connecting a vertex to itself.
Let $\Gamma$ be a graph with vertex set $V$. A subgraph of $\Gamma$ \emph{induced} on a subset $V'\subset V$ has vertex set $V'$ and all of the edges of $\Gamma$ connecting the vertices in $V'$.

\begin{defin}
For a vertex $\rho$ of a graph its \emph{neighbourhood} $N(\rho)$ is the set of vertices consisting of $\rho$ and all its neighbours. We say that a vertex $\rho$ of a graph is \emph{dominated} by a (\emph{dominating}) vertex $\pi \neq \rho$, if $N(\rho)\subset N(\pi)$. The symbol $\subset$ here allows equality.

A finite graph is \emph{dismantlable} if its vertices can be ordered into a sequence $\sigma_1, \ldots , \sigma_m$ so that for each $i<m$ the vertex $\sigma_i$ is dominated in the subgraph induced on $\{\sigma_i,\ldots,\sigma_m\}$. We call such an order a \emph{dismantling order}.
\end{defin}

\begin{defin}
A \emph{clique} in a graph is a nonempty subgraph all of whose vertices are pairwise connected by edges. The vertex set of a clique is called a \emph{clique} as well.

For a graph $\Gamma$ let $\Gamma^\blacktriangle$ denote the simplicial complex with $1$--skeleton $\Gamma$ and simplices spanned on all cliques in $\Gamma$.
A simplicial complex $Y$ is \emph{flag} if $Y=\Gamma^\blacktriangle$, where $\Gamma$ is the $1$--skeleton of $Y$.
If $S$ is a subset of the vertex set of $\Gamma$, then $S^\blacktriangle$ denotes the subcomplex of  $\Gamma^\blacktriangle$ spanned on $S$.
\end{defin}

\begin{rem}
If a finite graph $\Gamma$ is dismantlable, then $\Gamma^\blacktriangle$ is contractible. This will be generalised in Section~\ref{sec:contr}, where we prove contractibility of fixed point sets of group actions.
\end{rem}

We build on the following special case of a result of Polat (who proves an analogue for some infinite graphs).

 \begin{thm}[{\cite[Thm~A]{Pol}}]
\label{thm:fixedpoint}
A finite dismantlable graph contains a clique invariant under all of its automorphisms.
\end{thm}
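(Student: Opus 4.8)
The plan is to induct on the number of vertices using the dismantling order. Let $\sigma_1,\ldots,\sigma_m$ be a dismantling order for $\Gamma$, so that $\sigma_1$ is dominated in $\Gamma$ by some vertex $\pi$. The key observation is that the subgraph $\Gamma'$ induced on $\{\sigma_2,\ldots,\sigma_m\}$ is again dismantlable (the given order restricts to one), and it has fewer vertices, so by induction it contains a clique $K$ invariant under all automorphisms of $\Gamma'$. The difficulty is that automorphisms of $\Gamma$ need not restrict to automorphisms of $\Gamma'$, since an automorphism may move $\sigma_1$ to some other vertex. To get around this I would identify a canonically defined dismantlable subgraph that is preserved by \emph{all} of $\mathrm{Aut}(\Gamma)$.

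The standard device is to pass to the subgraph $\Gamma_0$ induced on the set of vertices that are \emph{not} dominated in $\Gamma$ (the ``core''). First I would check that $\Gamma_0$ is nonempty: in a finite graph one cannot have every vertex dominated, e.g.\ a vertex of maximum degree that is dominated would force its dominator to have strictly larger neighbourhood, and iterating is impossible — more carefully, domination is related to a preorder whose maximal elements are undominated. Second, being undominated is preserved by graph automorphisms, since $\phi$ an automorphism and $N(\rho)\subset N(\pi)$ implies $N(\phi\rho)\subset N(\phi\pi)$; hence $\mathrm{Aut}(\Gamma)$ acts on $\Gamma_0$. Third, and this is the main technical point, I would show $\Gamma_0$ is itself dismantlable: given the dismantling order of $\Gamma$, I would argue that removing a single dominated vertex $\sigma_1$ does not create new domination relations among the surviving vertices that weren't ``already there'' in the appropriate sense — or more robustly, that one can always dismantle $\Gamma$ down to its core, and the resulting induced subgraph on the core admits a dismantling order in which no vertex of the core is ever removed as long as a non-core vertex remains. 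Concretely: run the dismantling order; whenever the vertex $\sigma_i$ about to be removed lies in $\Gamma_0$ but some later vertex outside $\Gamma_0$ is still present, I claim one can reorder so as to remove that non-core vertex first. Establishing this reordering lemma — that the dismantling process can be made to exhaust all non-core vertices before touching any core vertex — is where I expect to spend the most effort.

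Granting that $\Gamma_0$ is a dismantlable graph strictly smaller than $\Gamma$ whenever $\Gamma$ has a dominated vertex, the induction closes: if $\Gamma=\Gamma_0$ then $\Gamma$ has no dominated vertex, so the dismantling order has length one, i.e.\ $\Gamma$ is a single vertex, which is a clique fixed by every automorphism. Otherwise $\Gamma_0\subsetneq\Gamma$, the group $\mathrm{Aut}(\Gamma)$ acts on $\Gamma_0$ (restriction gives a homomorphism $\mathrm{Aut}(\Gamma)\to\mathrm{Aut}(\Gamma_0)$), and by the inductive hypothesis $\Gamma_0$ contains a clique $K$ invariant under $\mathrm{Aut}(\Gamma_0)$, hence under the image of $\mathrm{Aut}(\Gamma)$; so $K$ is a clique in $\Gamma$ invariant under all automorphisms of $\Gamma$, as required.

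I should remark that an alternative, perhaps cleaner route avoids the reordering lemma entirely: one shows directly that the relation ``$\rho$ is dominated'' allows a greedy removal of an entire $\mathrm{Aut}(\Gamma)$-orbit. Fix a dominated vertex $\sigma$; its orbit $\mathrm{Aut}(\Gamma)\cdot\sigma$ consists of dominated vertices. If one can show the induced subgraph on $V\setminus(\mathrm{Aut}(\Gamma)\cdot\sigma)$ is still dismantlable, then since $\mathrm{Aut}(\Gamma)$ visibly acts on it, induction on $|V|$ finishes immediately with no need to introduce the core. The crux in this variant is the same in spirit: controlling how deleting several dominated vertices at once interacts with the dismantling order, and I expect the verification that ``dismantlability survives deletion of an invariant set of dominated vertices'' to be the principal obstacle either way.
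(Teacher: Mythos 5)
There is a genuine gap: your claim that the core $\Gamma_0$ of undominated vertices is nonempty is false, because domination is defined via closed neighbourhoods and the inclusion $N(\rho)\subset N(\pi)$ is allowed to be an equality. Two distinct vertices with $N(\rho)=N(\pi)$ (``twins'', necessarily adjacent) dominate each other, so a maximal element of the domination preorder can itself be dominated; your heuristic about a dominated vertex of maximum degree forcing a strictly larger neighbourhood breaks down exactly there. Concretely, the complete graph $K_n$ ($n\ge 2$) is dismantlable, every one of its vertices is dominated, and $\Gamma_0$ is empty; the same happens whenever some maximal twin class has more than one element (e.g.\ double every vertex of any dismantlable graph). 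In all such cases your induction cannot get started, and the theorem is not vacuous there (for $K_n$ the invariant clique is the whole vertex set). Your alternative route, deleting the $\mathrm{Aut}(\Gamma)$--orbit of a dominated vertex, fails for the same reason: in $K_n$ that orbit is the entire vertex set, and in general dominated vertices may be dominated only by other vertices of the deleted orbit.

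In the twin-free situation your plan is essentially the paper's Case~1, and the technical work you defer to a ``reordering lemma'' is done there by Lemma~\ref{lem:pushing}: removing any single dominated vertex of a dismantlable graph leaves a dismantlable graph. No reordering of the original dismantling order is needed on top of that, because when no two vertices have equal neighbourhoods the domination relation is antisymmetric, so by finiteness every dominated vertex is dominated by an \emph{undominated} one, and that dominator survives in every intermediate induced subgraph; hence one may delete the dominated vertices one at a time and apply Lemma~\ref{lem:pushing} repeatedly. What your proposal is missing is the second case, which the paper handles by quotienting $\Gamma$ by the relation $N(\sigma)=N(\rho)$: the equivalence classes are cliques, the quotient embeds as an induced subgraph of $\Gamma$ obtained by repeatedly removing dominated vertices (hence is dismantlable and strictly smaller when twins exist), induction gives an invariant clique there, and its preimage in $\Gamma$ is an $\mathrm{Aut}(\Gamma)$--invariant clique. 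Without an argument of this kind your induction does not cover graphs with twin vertices, which is precisely the degenerate case of the domination order your argument overlooks.
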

Note that there are finite group actions on finite contractible or
even collapsible simplicial complexes with no fixed
points. Dismantlability seems to be the correct strengthening of those notions.

For completeness we include a concise proof of
Theorem~\ref{thm:fixedpoint}. Roughly speaking, the invariant clique
is obtained by successively removing simultaneously all dominated
vertices.

\begin{lem}
\label{lem:pushing}
If a finite graph with vertex set $V$ is dismantlable and $\sigma\in V$ is dominated, then the subgraph induced on $V\setminus \{\sigma\}$ is dismantlable as well.
\end{lem}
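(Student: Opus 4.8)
The plan is to argue by induction on the number $|V|$ of vertices. When $|V|\le 2$ the claim is immediate: no vertex is dominated if $|V|=1$, and the induced subgraph on $V\setminus\{\sigma\}$ is a single vertex if $|V|=2$. So assume $|V|\ge 3$ and that the lemma holds for all graphs with fewer vertices. Fix a dismantling order $\sigma_1,\dots,\sigma_m$ of the given graph $\Gamma$, choose a vertex $\tau$ dominating $\sigma_1$ in $\Gamma$, and let $\pi$ be a vertex dominating $\sigma$ in $\Gamma$. If $\sigma=\sigma_1$, then $\sigma_2,\dots,\sigma_m$ is a dismantling order of the induced subgraph on $V\setminus\{\sigma\}$ and we are done; so from now on assume $\sigma\neq\sigma_1$.

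The argument then splits into two cases. \textbf{Twin case: $N(\sigma)=N(\sigma_1)$.} Here the transposition that exchanges $\sigma$ and $\sigma_1$ and fixes every other vertex is an automorphism $\phi$ of $\Gamma$, because adjacency of any vertex $w\notin\{\sigma,\sigma_1\}$ to $\sigma$ (resp.\ to $\sigma_1$) is detected by membership in the common set $N(\sigma)=N(\sigma_1)$. Since automorphisms carry dismantling orders to dismantling orders, $\sigma=\phi(\sigma_1),\phi(\sigma_2),\dots,\phi(\sigma_m)$ is a dismantling order of $\Gamma$ beginning with $\sigma$, so deleting its first entry gives a dismantling order of the induced subgraph on $V\setminus\{\sigma\}$. \textbf{Generic case: $N(\sigma)\neq N(\sigma_1)$.} The plan is to delete $\sigma_1$ first. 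I would establish two claims: (i) $\sigma$ is still dominated in the induced subgraph $\Gamma_1$ on $V\setminus\{\sigma_1\}$, by a vertex different from $\sigma_1$; and (ii) $\sigma_1$ is still dominated in the induced subgraph on $V\setminus\{\sigma\}$, by a vertex different from $\sigma$. Granting these, $\Gamma_1$ is dismantlable (via $\sigma_2,\dots,\sigma_m$) and has fewer vertices, so by (i) and the inductive hypothesis the induced subgraph on $V\setminus\{\sigma,\sigma_1\}$ is dismantlable; prepending $\sigma_1$ to one of its dismantling orders and invoking (ii) yields a dismantling order of the induced subgraph on $V\setminus\{\sigma\}$, completing the induction.

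Both claims (i) and (ii) reduce to chasing inclusions of closed neighbourhoods, using $N(\sigma_1)\subset N(\tau)$, $N(\sigma)\subset N(\pi)$, together with the key observation that $N(\sigma)\neq N(\sigma_1)$ forbids having $\pi=\sigma_1$ and $\tau=\sigma$ simultaneously: for (i), if $\pi\neq\sigma_1$ then $\pi$ is the required witness, and otherwise $\tau$ is (one checks $\tau\neq\sigma$ and $N(\sigma)\subset N(\sigma_1)\subset N(\tau)$); for (ii) the roles of $\tau$ and $\pi$ are symmetric. Since removing a vertex only shrinks both sides of any inclusion $N(a)\subset N(b)$, domination by a \emph{surviving} vertex is preserved. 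I expect this bookkeeping --- making sure the witnessing dominator is not itself the vertex being deleted --- to be the only real obstacle, and it is precisely the reason the twin case has to be isolated and dispatched by the automorphism argument before the induction can run.
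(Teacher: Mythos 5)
Your proof is correct, and it takes a genuinely different route from the paper's. You induct on the number of vertices, isolating the twin case $N(\sigma)=N(\sigma_1)$ (dispatched by the transposition automorphism, which is indeed an automorphism since equal closed neighbourhoods force $\sigma$ and $\sigma_1$ to be adjacent) from the generic case, where you delete $\sigma_1$ first, apply the inductive hypothesis, and re-prepend $\sigma_1$, with the chases $N(\sigma)\subset N(\sigma_1)\subset N(\tau)$ and $N(\sigma_1)\subset N(\sigma)\subset N(\pi)$ supplying surviving dominators; these chases are sound --- the condition that genuinely needs checking in your claims (i) and (ii) is that the witness differs from the vertex it dominates (distinctness from the deleted vertex is automatic), and your key observation that $\pi=\sigma_1$ and $\tau=\sigma$ cannot hold simultaneously handles exactly that. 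The paper instead operates directly on the given dismantling order: writing $\sigma=\sigma_j$ and letting $f$ record the dominators, it reroutes witnesses along a chain $k_1=k$, $k_i=f(k_{i-1})$ whenever the current witness is about to be removed, proving $N(\sigma_{k_i})\cap V^i\supset N(\sigma_j)\cap V^i$; if the chain never reaches $j$, the old order with rerouted witnesses already dismantles $V\setminus\{\sigma\}$, and if it reaches $j$ through some $k_l=l$, then $N(\sigma_l)\cap V^l=N(\sigma_j)\cap V^l$ and $\sigma_l$ is transplanted into $\sigma$'s position --- a relative-twin situation exactly parallel to your twin case, resolved there by order surgery rather than by an automorphism. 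What each approach buys: yours is lighter on index bookkeeping and makes the two underlying facts completely explicit (domination survives deletion of any vertex other than the witness, and a deleted witness can be replaced by its own dominator); the paper's argument is non-inductive and produces the new dismantling order as an explicit, essentially positional, modification of the old one.
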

\begin{proof}
By dismantlability we can order the vertices of $V$ into a sequence $\sigma_1, \ldots, \sigma_m$ so that
for each $i<m$ the vertex $\sigma_i$ is dominated in the subgraph induced on $V^i=\{\sigma_i,\ldots, \sigma_m\}$ by some $\sigma_{f(i)}$ with $f(i)>i$. Assume that $\sigma=\sigma_j$ is dominated in $V$ by $\sigma_k$. Note that we might have $k<j$.

Let $k_1=k$. For $i=2,\ldots, j$ define $k_i=k_{i-1}$ if $k_{i-1}\neq i-1$ and $k_i=f(k_{i-1})$ otherwise. Observe that $k_i\geq i$ and $N(\sigma_{k_i})\cap V^i\supset N(\sigma_j)\cap V^i$.

First assume that there is no $i$ with $k_i=j$. In that case we induce an order on $V\setminus \{\sigma\}$ from $V$ and put $f'(i)=f(i)$ if $f(i)\neq j$ and $f'(i)=k_i$ otherwise. Let $i<m$ be distinct from $j$. Then $\sigma_i$ is dominated in the subgraph induced on $V^i\setminus \{\sigma\}$ by $\sigma_{f'(i)}$. Hence this is a dismantling order.

If $l$ is minimal with $k_{l+1}=j$, then $k_l=l$ and we have $N(\sigma_{l})\cap V^l= N(\sigma_j)\cap V^l$. Hence the subgraph induced on $V^{l}\setminus \{\sigma\}$ is isomorphic to the subgraph induced on $V^{l+1}$ which was dismantlable. Then in the order on $V\setminus \{\sigma\}$ induced from $V$ we carry $\sigma_l$ to the position $j$ which was occupied in $V$ by $\sigma$. For every $i<l$ the vertex $\sigma_i$ is dominated in the subgraph induced on $V^i\setminus \{\sigma\}$ by $\sigma_{f'(i)}$ as before.
\end{proof}

\begin{proof}[Proof of Theorem~\ref{thm:fixedpoint}.]
The proof is by induction on the number of vertices. If the graph consists of one vertex, the theorem is trivial. Now assume we have a dismantlable graph $\Gamma$ with vertex set $V$ and the theorem is true for all graphs with smaller number of vertices. We treat separately two cases.

\medskip
\noindent \emph{Case 1.} There are no two vertices $\sigma, \rho\in V$ with $N(\sigma)=N(\rho)$. In this case, let $V'\subset V$ be the set of all dominated vertices. Note that each vertex of $V'$ is dominated by a vertex in $V\setminus V'$. Let $\Gamma'$ be the subgraph of $\Gamma$ induced on $V\setminus V'$.
Note that every automorphism of $\Gamma$ restricts to an automorphism
of  $\Gamma'$. Since $\Gamma'$ is obtained from $\Gamma$ by repeatedly
removing dominated vertices, by Lemma~\ref{lem:pushing} the graph
$\Gamma'$ is dismantlable. By induction hypothesis, there is a clique
$\Delta'$ in $\Gamma'$ invariant under every automorphism of
$\Gamma'$. The clique $\Delta'$ is then also invariant under every
automorphism of $\Gamma$.

\medskip
\noindent \emph{Case 2.} There are vertices $\sigma, \rho\in V$ with $N(\sigma)=N(\rho)$. In this case, let $\Gamma'$ be the graph obtained from $\Gamma$ by identifying all vertices $\sigma$ with common $N(\sigma)$ (and identifying the double edges and removing loops). More precisely, we consider the equivalence relation $\sim$ on $V$ for which $\sigma\sim \rho$ if and only if $N(\sigma)=N(\rho)$. Equivalence classes of $\sim$ are cliques. The vertex set of $\Gamma'$ is the set of equivalence classes of $\sim$. Two vertices of $\Gamma'$ are connected by an edge if some (hence any) of its representatives in $\Gamma$ are connected by an edge.

Note that every automorphism of $\Gamma$ projects to an automorphism
of $\Gamma'$. Moreover,  $\Gamma'$ can be embedded as an induced
subgraph in $\Gamma$ and obtained from $\Gamma$ by repeatedly removing
dominated vertices. Again, by Lemma~\ref{lem:pushing} the graph
$\Gamma'$ is dismantlable and by induction hypothesis there is a
clique $\Delta'$ in $\Gamma'$ invariant under every automorphism of
$\Gamma'$. Hence its preimage $\Delta$ under the projection from
$\Gamma$ is a clique invariant under every automorphism of $\Gamma$.
\end{proof}

Dismantlability can be verified if we have particular ``projection'' maps.

\begin{defin}
\label{def:proj}
Consider a graph with vertex set $V$. Let $\sigma\in V$ and let $\Pi$ assign to each vertex $\rho\in V\setminus \{\sigma\}$ a nonempty finite set $\Pi(\rho)$ consisting of pairs of elements of $V$.
We allow the two elements of a pair to coincide. Let $\Pi^*(\rho)\subset V$ denote the set of all vertices appearing in pairs from $\Pi(\rho)$.

We call $\Pi$ a \emph{$\sigma$--projection} (or a \emph{dismantling projection} if $\sigma$ is not specified) if the following axioms are satisfied:
\begin{enumerate}[(i)]
\item
For each finite set of vertices $R\subset V$ with nonempty $R\setminus\{\sigma\}$ there is an \emph{exposed} vertex $\rho\in R\setminus \{\sigma\}$, that is a vertex with $N(\rho)\cap R\subset N(\pi)$ for both $\pi$ from some pair of $\Pi(\rho)$.
\item
There is no cycle of vertices $\rho_0,\ldots, \rho_{m-1}\in V$ with $\rho_{i+1}\in \Pi^*(\rho_i)$, where $i$ is considered modulo $m$.
\end{enumerate}
\end{defin}

Note that axiom (ii) implies in particular that $\rho\notin \Pi^*(\rho)$.

\begin{lem}
\label{lem:proj->dism}
A finite graph with a dismantling projection is dismantlable.
\end{lem}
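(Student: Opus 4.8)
The plan is to produce a dismantling order by repeatedly extracting exposed vertices, using axiom (i) to guarantee at each stage that an exposed vertex exists and axiom (ii) to guarantee that this process terminates after listing all vertices (rather than getting stuck at $\sigma$). Concretely, I would argue by induction on the number of vertices of the finite graph $\Gamma$, with vertex set $V$. If $|V|=1$ there is nothing to prove. For the inductive step, the first thing to do is isolate a vertex that can be placed first in the dismantling order: I want to find an exposed vertex $\rho\neq\sigma$ that is \emph{dominated} in $\Gamma$ itself (not merely dominated by something in one of its projection pairs, since the pair elements might fail to be neighbours of $\rho$). Applying axiom (i) with $R=V$ gives an exposed $\rho_0\in V\setminus\{\sigma\}$ together with some pair $(\pi,\pi')\in\Pi(\rho_0)$ satisfying $N(\rho_0)\subset N(\pi)$ and $N(\rho_0)\subset N(\pi')$ (here $N(\rho_0)\cap V=N(\rho_0)$). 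If $\pi\neq\rho_0$ then $\rho_0$ is dominated by $\pi$ and we are done finding our first vertex.

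The subtlety — and I expect this to be the main obstacle — is the possibility $\pi=\rho_0$ (recall the definition explicitly allows a projection pair to consist of a single repeated vertex, and even allows $\rho_0$ itself to appear, as long as it does not violate axiom (ii)). However, axiom (ii) forbids $\rho_0\in\Pi^*(\rho_0)$, so in fact $\pi,\pi'\neq\rho_0$ always, and both are genuine dominating vertices. So this apparent obstacle dissolves, and from $R=V$ we directly obtain a vertex $\rho_0\in V\setminus\{\sigma\}$ dominated in $\Gamma$. Now set $V'=V\setminus\{\rho_0\}$ and let $\Gamma'$ be the induced subgraph on $V'$. To invoke the induction hypothesis I must equip $\Gamma'$ with a dismantling projection. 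The natural candidate is the restriction: keep $\sigma$ as the distinguished vertex (note $\sigma\in V'$ since $\rho_0\neq\sigma$), and for each $\rho\in V'\setminus\{\sigma\}$ define $\Pi'(\rho)$ by taking each pair of $\Pi(\rho)$ and, in any pair that mentions $\rho_0$, replacing the occurrence of $\rho_0$ by a dominating vertex of $\rho_0$ — or more simply, just discarding pairs that mention $\rho_0$, provided at least one pair remains.

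The delicate point in that last move is ensuring $\Pi'(\rho)$ stays nonempty and that axioms (i), (ii) survive. For axiom (i) in $\Gamma'$: given finite $R\subset V'$ with $R\setminus\{\sigma\}\neq\emptyset$, apply axiom (i) for $\Pi$ in $\Gamma$ to this same $R$; it produces an exposed $\rho\in R\setminus\{\sigma\}$ with a witnessing pair $(\pi,\pi')\in\Pi(\rho)$ such that $N(\rho)\cap R\subset N(\pi)$ and $\subset N(\pi')$. Since $\rho_0\notin R$, the vertices $\pi,\pi'$ need not lie in $R$, but the inclusion $N(\rho)\cap R\subset N(\pi)$ is a statement computed inside $\Gamma$; intersecting with $V'$ and using $\rho_0\notin R$ shows the same pair witnesses exposedness of $\rho$ in $\Gamma'$ — the only thing to check is that the pair belongs to $\Pi'(\rho)$, i.e.\ that it does not mention $\rho_0$, and if it does one substitutes a $\Gamma$-dominating vertex of $\rho_0$ (which has an even larger neighbourhood, so the inclusions persist). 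For axiom (ii): any cycle $\rho_0',\ldots,\rho_{m-1}'$ with $\rho_{i+1}'\in\Pi'^*(\rho_i')$ would, after undoing the substitutions, yield a closed walk in the relation "$\in\Pi^*(\cdot)$" on $V$; a short argument using acyclicity of that relation on $V$ (axiom (ii) for $\Pi$) shows no such cycle exists in $V'$ either, since replacing $\rho_0$ by one of its dominators cannot create a directed cycle avoiding $\rho_0$ in a relation that had no cycles through $\rho_0$ to begin with. Granting these verifications, the induction hypothesis gives a dismantling order $\sigma_1,\ldots,\sigma_{m-1}$ of $\Gamma'$; prepending $\rho_0$ yields an order of $\Gamma$ in which $\rho_0$ is dominated in all of $\Gamma$ (a fortiori in $\{\rho_0,\sigma_1,\ldots,\sigma_{m-1}\}$) and each $\sigma_i$ is dominated in $\{\sigma_i,\ldots,\sigma_{m-1}\}$, so this is a dismantling order of $\Gamma$, completing the induction.
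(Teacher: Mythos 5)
Your overall route is genuinely different from the paper's: you induct on the number of vertices, peel off one exposed (hence, by acyclicity, dominated) vertex $\rho_0$, and then equip the smaller graph with a new dismantling projection obtained by substituting for $\rho_0$ in all pairs that mention it. The paper instead keeps the original $\sigma$--projection throughout, extracts exposed vertices $\sigma_1,\sigma_2,\dots$ from the shrinking sets $V^i$ directly, and repairs dominators that have already been removed by an explicit relabelling scheme $\pi^i_j$ whose well-foundedness rests on axiom (ii); no modified projection is ever constructed. Your first step is fine (axiom (ii) rules out $\rho_0\in\Pi^*(\rho_0)$, so the exposedness witnesses really dominate), your verification of axiom (i) for the restricted projection is fine, and prepending $\rho_0$ to a dismantling order of the subgraph is fine.

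The one genuine gap is the verification of axiom (ii) for the modified projection $\Pi'$. Two problems: first, ``undoing the substitutions'' does not produce a closed walk in the old relation, because after you replace a substituted vertex $\pi_0$ back by $\rho_0$, the next step of the putative cycle leaves $\pi_0$, not $\rho_0$, so the walk no longer concatenates. Second, the principle you invoke --- that replacing $\rho_0$ by \emph{any} dominating vertex cannot create a cycle avoiding $\rho_0$ --- is false: if $\pi_0\notin\Pi^*(\rho_0)$ and the old relation contains a path from $\pi_0$ to some $\rho$ with $\rho_0\in\Pi^*(\rho)$, then the new edge $\rho\to\pi_0$ closes a directed cycle even though the old relation was acyclic. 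The repair is to specify the substitute: take $\pi_0$ to be one of the vertices of the exposedness witness pair in $\Pi(\rho_0)$, so that $\pi_0\in\Pi^*(\rho_0)$ (and $\pi_0$ still dominates $\rho_0$, so your axiom (i) argument is unaffected). Then every substituted edge $\rho\to\pi_0$ of $\Pi'$ factors in the old relation as $\rho\to\rho_0\to\pi_0$, so a cycle for $\Pi'$ lifts, by interpolating $\rho_0$ before each substituted occurrence of $\pi_0$, to a closed walk for $\Pi$, contradicting axiom (ii). With that choice and that interpolation argument your induction goes through.
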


Before we provide the proof of Lemma~\ref{lem:proj->dism} we deduce a corollary.

\begin{defin}
\label{def:convex}
Consider a graph with vertex set $V$ and a $\sigma$--projection $\Pi$. We say that a subset $R\subset V$ is \emph{$\Pi$--convex} if for every vertex $\rho\in R\setminus \{\sigma\}$ each pair in $\Pi(\rho)$ intersects $R$.
\end{defin}

We will see in a moment that if $R$ is $\Pi$--convex for a $\sigma$--projection $\Pi$, then $\sigma \in R$.

\begin{cor}
\label{cor:proj->dism}
Let $\Gamma$ be a graph with a dismantling projection $\Pi$ and a finite $\Pi$--convex subset $R$ of its vertices. Then the subgraph of $\Gamma$ induced on $R$ is dismantlable.
\end{cor}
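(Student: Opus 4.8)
The plan is to deduce the statement from Lemma~\ref{lem:proj->dism} by restricting the dismantling projection $\Pi$ to the subgraph $\Gamma[R]$ of $\Gamma$ induced on $R$. Before that, I would establish the remark announced just above the statement, that $\sigma\in R$ (we of course assume $R$ nonempty). Suppose $\sigma\notin R$ and pick any $\rho_0\in R$. Since $\sigma\notin R$, every vertex we produce lies in $R\setminus\{\sigma\}$, so $\Pi$--convexity applies at each stage: given $\rho_i\in R\setminus\{\sigma\}$, some pair of $\Pi(\rho_i)$ meets $R$, yielding $\rho_{i+1}\in\Pi^*(\rho_i)\cap R$. As $R$ is finite the sequence $\rho_0,\rho_1,\ldots$ must repeat, producing a cycle forbidden by axiom~(ii). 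Hence $\sigma\in R$, and $\sigma$ is a vertex of $\Gamma[R]$.

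Next I would define a $\sigma$--projection $\Pi_R$ on $\Gamma[R]$. Fix $\rho\in R\setminus\{\sigma\}$. For each pair $(\pi_1,\pi_2)\in\Pi(\rho)$, $\Pi$--convexity guarantees $\{\pi_1,\pi_2\}\cap R\ne\emptyset$; I ``push'' the pair into $R$, replacing it by $(\pi_1,\pi_2)$ itself if both entries lie in $R$, and by $(\pi_i,\pi_i)$ if only $\pi_i\in R$ (using that a pair is allowed to have coinciding entries). Let $\Pi_R(\rho)$ consist of all the pushed pairs; it is a nonempty finite set of pairs of vertices of $\Gamma[R]$, and $\Pi_R^*(\rho)\subset\Pi^*(\rho)\cap R$.

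Then I would check the two axioms. Axiom~(ii) is immediate: a cycle $\rho_0,\ldots,\rho_{m-1}$ in $\Gamma[R]$ with $\rho_{i+1}\in\Pi_R^*(\rho_i)$ would be a cycle with $\rho_{i+1}\in\Pi^*(\rho_i)$, contradicting axiom~(ii) for $\Pi$. For axiom~(i), let $R'\subset R$ be finite with $R'\setminus\{\sigma\}\ne\emptyset$; applying axiom~(i) for $\Pi$ to $R'\subset V$ gives an exposed $\rho\in R'\setminus\{\sigma\}$ and a pair $(\pi_1,\pi_2)\in\Pi(\rho)$ with $N(\rho)\cap R'\subset N(\pi_1)$ and $N(\rho)\cap R'\subset N(\pi_2)$. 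Each entry $\pi'$ of the pushed pair of $(\pi_1,\pi_2)$ equals $\pi_1$ or $\pi_2$, so $N(\rho)\cap R'\subset N(\pi')$; since $\pi'\in R$, the neighbourhood of $\pi'$ in $\Gamma[R]$ is $N(\pi')\cap R$, and as $N(\rho)\cap R'\subset R'\subset R$ the $\Gamma[R]$--neighbourhood of $\rho$ intersected with $R'$, namely $N(\rho)\cap R'$, is contained in $N(\pi')\cap R$, for both entries of the pushed pair. Thus $\rho$ is exposed in $\Gamma[R]$, $\Pi_R$ is a dismantling projection, and Lemma~\ref{lem:proj->dism} concludes.

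The only place requiring care is this last verification of axiom~(i): one has to keep track of how pushing a pair into $R$ interacts with the domination inclusions, and to remember that the neighbourhood in $\Gamma[R]$ of a vertex of $R$ is exactly its $\Gamma$--neighbourhood intersected with $R$. The role of $\Pi$--convexity is precisely to make the push well defined --- every pair of $\Pi(\rho)$ keeps at least one entry inside $R$ --- so that $\Pi_R(\rho)$ really consists of pairs of vertices of $\Gamma[R]$; the rest is bookkeeping.
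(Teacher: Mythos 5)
Your proposal is correct and follows essentially the same route as the paper: the paper's proof also restricts $\Pi$ to $R$ by replacing each pair $P\in\Pi(\rho)$ with $P\cap R$ (your ``pushed pair''), notes that the two axioms carry over, and deduces $\sigma\in R$ from axiom~(ii) together with finiteness of $R$, exactly as in your maximal-sequence/cycle argument. Your more detailed verification of axiom~(i), keeping track of neighbourhoods in the induced subgraph, is just an expansion of the step the paper declares immediate.
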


\begin{proof}
Suppose $\Pi$ is a $\sigma$--projection. For any pair $P$ intersecting $R$ consider $P\cap R$ as a pair (of possibly two coinciding elements).
Let $\Pi^R$ assign to any $\rho\in R\setminus \{\sigma\}$ the set of pairs $P\cap R$ over $P\in \Pi(\rho)$.
The assignment $\Pi^R$ satisfies axioms (i) and (ii) of a $\sigma$--projection for the induced subgraph. The only non-immediate part is to verify $\sigma\in R$, which can be deduced from axiom (ii): we consider the longest sequence $\rho_0,\ldots, \rho_{m}\in R$ with $\rho_{i+1}\in \Pi^*(\rho_i)$. Since the only vertex on which $\Pi^R$ might not be defined is $\sigma$, we have $\rho_m=\sigma$.
\end{proof}

\begin{proof}[Proof of Lemma~\ref{lem:proj->dism}]
Let $m$ denote the cardinality of the vertex set $V$ of the graph. Suppose the dismantling projection $\Pi$ is a $\sigma$--projection. Define inductively $\sigma_i$ for $i=1,\ldots, m-1$ so that $\sigma_i$ is exposed (see axiom (i) of a $\sigma$--projection) in the set $V^i=V\setminus \{\sigma_1,\ldots, \sigma_{i-1}\}$ and put $\sigma_m=\sigma$. We will verify that this is a dismantling order. Since $\sigma_i$ are exposed, there are vertices $\pi^1_1,\ldots, \pi^1_{m-1}$ in $\Pi^*(\sigma_1),\ldots,\Pi^*(\sigma_{m-1})$ satisfying $N(\sigma_i)\cap V^i\subset N(\pi^1_i)$.

By construction the vertex $\sigma_1$ is dominated in $V^1=V$ by
$\pi_1^1$. However, the vertices $\sigma_i$ for $i>1$ might not be dominated in $V^i$ by $\pi_i^1$ if the latter lie outside $V^i$. This might happen if $\sigma_j$ for some $j<i$ is chosen to be $\pi_i^1$. However, in that case we can replace $\pi_i^1$ with $\pi^1_j$, if the latter is in $V^i$, or else with yet another vertex. Here is the systematic way to determine this choice.

By axiom (ii) of a $\sigma$--projection, the directed graph with vertices $V=V^1$ and edges $\{(\sigma_j, \pi^1_j)\}_{j=1}^{m-1}$ has no directed cycles. For $i=2$ to $m-1$ inductively define $\pi^i_j$, where $j=i,i+1,\ldots,m-1$ so that $\pi^i_j=\pi^{i-1}_j$ if $\pi^{i-1}_j\neq\sigma_{i-1}$, and $\pi^i_j=\pi^{i-1}_{i-1}$ otherwise. We inductively see that in both cases $\pi^i_j\in V^i$ and the directed graph with vertices $V^i$ and edges $\{(\sigma_j, \pi^i_j)\}_{j=i}^{m-1}$ has no directed cycles. We also claim the following, which we will prove by induction on $i=1,\ldots, m-1$:

\begin{enumerate}[(a)]
\item
$N(\pi_i^1)\cap V^i\subset N(\pi^i_i)$,
\item
$N(\sigma_i)\cap V^i\subset N(\pi^i_i)$,
\item
$N(\pi_j^i)\cap V^{i+1}\subset N(\pi_j^{i+1})$  for $j=i+1,\ldots, m-1$.
\end{enumerate}

Note that part (b) means that $\sigma_i$ is dominated in $V^i$, which implies dismantlability.

First observe that part (a) implies part (b) by the choice of $\pi_i^1$. Also note that part (b) implies directly part (c) since in the case where $\pi_j^{i+1}=\pi_j^i$ there is nothing to prove and otherwise $\pi_j^i=\sigma_i$ and $\pi_j^{i+1}=\pi^i_i$. Finally, part (a) for $i=k$ follows from applying $k-1$ times part (c) with $j=k$ and $i=1,\ldots, k-1$.
\end{proof}

We are now in position to prove our main criterion for the existence of an invariant clique.

\begin{defin}
Let $H$ be a finite group of automorphisms of a graph with vertex set $V$. A family of $\sigma$--projections $\{\Pi_\sigma\}_{\sigma\in S}$ with $S\subset V$ is \emph{$H$--equivariant} if $S$ is $H$--invariant and
$$h\Pi_{\sigma}(\rho)=\Pi_{h\sigma}(h\rho)$$
for all $\sigma\in S, \ \rho\in V$ and $h\in H$.
\end{defin}

\begin{prop}
\label{prop:fixedpoint}
Let $H$ be a group of automorphisms of a graph $\Gamma$ with vertex set $V$. Suppose that there is
a $\sigma$--projection $\Pi_{\sigma}$
and a finite $\Pi_{\sigma}$--convex subset $R\subset V$. Furthermore suppose that

\begin{enumerate}[(i)]
\item
$R$ is $H$--invariant, or
\item
the set $HR$ is finite and for all $\sigma'\in S=H\sigma\subset V$ there are $\sigma'$--projections $\Pi_{\sigma'}$ for which $R$ is $\Pi_{\sigma'}$--convex. Moreover, the family $\{\Pi_{\sigma'}\}_{\sigma'\in S}$ is $H$--equivariant.
\end{enumerate}
Then the subgraph induced on $HR\subset V$ is dismantlable and $\Gamma$ contains an $H$--invariant clique.
\end{prop}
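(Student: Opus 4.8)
The plan is to reduce everything to two tools already established: Corollary~\ref{cor:proj->dism}, which converts finiteness together with $\Pi$--convexity into dismantlability of an induced subgraph, and Theorem~\ref{thm:fixedpoint} (Polat), which produces an automorphism-invariant clique in a finite dismantlable graph. The subgraph we will feed into these is the one induced on $HR$, so the whole proof hinges on showing that $HR$ is a \emph{finite} and \emph{$\Pi_\sigma$--convex} subset of $V$ for the single given $\sigma$--projection $\Pi_\sigma$ on $\Gamma$. Once that is done, $H$ restricts to a group of automorphisms of the subgraph of $\Gamma$ induced on $HR$ (because $HR$ is $H$--invariant), so Theorem~\ref{thm:fixedpoint} yields a clique invariant under all automorphisms of this subgraph, hence $H$--invariant; and a clique of an induced subgraph of $\Gamma$ is a clique of $\Gamma$.

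Under hypothesis (i) there is nothing to prove beyond this: $HR=R$ by $H$--invariance, and $R$ is assumed finite and $\Pi_\sigma$--convex. Under hypothesis (ii), finiteness of $HR$ is assumed, so the work is to establish $\Pi_\sigma$--convexity of $HR$. First I would unwind the definitions of $H$--equivariance and of $\Pi$--convexity to verify the translation: for $g\in H$ and $\sigma'=g\sigma\in S$, the set $R$ is $\Pi_{\sigma'}$--convex if and only if $g^{-1}R$ is $\Pi_\sigma$--convex. This is a one-line check: for $\rho'\in R$ put $\rho=g^{-1}\rho'$; $H$--equivariance gives $\Pi_{g\sigma}(\rho')=g\,\Pi_\sigma(\rho)$, so a pair of $\Pi_{g\sigma}(\rho')$ meets $R$ exactly when the corresponding pair of $\Pi_\sigma(\rho)$ meets $g^{-1}R$, while $\rho'\neq g\sigma\Leftrightarrow\rho\neq\sigma$ and $\rho'\in R\Leftrightarrow\rho\in g^{-1}R$. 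Thus hypothesis (ii) says precisely that $gR$ is $\Pi_\sigma$--convex for every $g\in H$.

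It then remains to note that a union of $\Pi_\sigma$--convex sets is $\Pi_\sigma$--convex: given $\rho\in HR\setminus\{\sigma\}$, choose $g$ with $\rho\in gR$; since $\rho\neq\sigma$ we have $\rho\in gR\setminus\{\sigma\}$, so every pair of $\Pi_\sigma(\rho)$ meets $gR\subseteq HR$. Hence $HR=\bigcup_{g\in H}gR$ is a finite $\Pi_\sigma$--convex subset of $V$, Corollary~\ref{cor:proj->dism} gives that the subgraph induced on $HR$ is dismantlable, and the argument of the first paragraph then finishes both cases. The only place demanding real care is the equivariance--convexity translation in case (ii); everything else is bookkeeping plus a direct appeal to Corollary~\ref{cor:proj->dism} and Theorem~\ref{thm:fixedpoint}. (One may also record that $\sigma\in R$, as remarked after Definition~\ref{def:convex}, so $\sigma\in HR$ and $S=H\sigma\subseteq HR$ is finite, keeping the setup consistent.)
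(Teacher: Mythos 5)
Your proposal is correct and follows essentially the same route as the paper: the paper's proof of part (ii) likewise verifies that $T=HR$ is a finite, $H$--invariant, $\Pi_\sigma$--convex set (by pulling a pair $P\in\Pi_\sigma(h\rho)$ back via $h^{-1}$ to $\Pi_{h^{-1}\sigma}(\rho)$ using equivariance and the $\Pi_{h^{-1}\sigma}$--convexity of $R$) and then invokes Corollary~\ref{cor:proj->dism} and Theorem~\ref{thm:fixedpoint}. Your packaging of this computation as ``$R$ is $\Pi_{g\sigma}$--convex iff $g^{-1}R$ is $\Pi_\sigma$--convex'' plus the observation that unions of $\Pi_\sigma$--convex sets are $\Pi_\sigma$--convex is just a mild reorganisation of the same argument.
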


\begin{proof}
Part (i) follows directly from Corollary~\ref{cor:proj->dism} and Theorem~\ref{thm:fixedpoint}.

In part (ii), let $T=HR\subset V$. Then the subgraph induced on the finite vertex set $T$ is $H$--invariant and we want to reduce to part (i) with $T$ in place of $R$. Let $\tau\in T\setminus\{\sigma\}$. Then $\tau=h\rho$ for some $\rho\in R, \ h\in H$. Let $P$ be a pair in $\Pi_{\sigma}(\tau)=\Pi_{hh^{-1}\sigma}(h\rho)$. Then $h^{-1}(P)$ is a pair in $\Pi_{h^{-1}\sigma}(\rho)$. Since $h^{-1}\sigma\in S$ the set $R$ is $\Pi_{h^{-1}\sigma}$--convex. Then $h^{-1}(P)\cap R$ is nonempty and hence $h^{-1}(P)\cap T$ is nonempty. Since $T=h^{-1}T$ we have $P\cap T\neq \emptyset$, as desired.
\end{proof}

We conclude this section by stating a conjecture, whose validity
would significantly simplify the proofs of our realisation results and
of similar results in future. Define an infinite graph $\Gamma$ to be \emph{dismantlable} if the following holds: every finite set of vertices of $\Gamma$ is contained in another finite set of vertices $S$ such that the finite subgraph induced on $S$ is dismantlable.

\begin{conj}
Let $H$ be a finite group of automorphisms of a dismantlable graph $\Gamma$. Then $\Gamma$ contains an $H$--invariant clique.
\end{conj}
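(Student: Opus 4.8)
\emph{Proof proposal.} The natural strategy is to reduce to Polat's theorem (Theorem~\ref{thm:fixedpoint}). It suffices to produce a \emph{finite} $H$--invariant subset $S\subset V$ whose induced subgraph is dismantlable: then Theorem~\ref{thm:fixedpoint} gives a clique in that subgraph invariant under all of its automorphisms, hence under $H$ (which acts on it since $S$ is $H$--invariant), and such a clique is an $H$--invariant clique of $\Gamma$. So the entire problem is to find such an $S$, and one would like to do so by running the exhaustion of $\Gamma$ by finite dismantlable subgraphs \emph{$H$--equivariantly}.

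There are two natural ways to try this. The first is to iterate an averaging step: start from a finite $H$--invariant set (for instance an orbit $Hv$), use dismantlability to enlarge it to a finite set $S_0$ inducing a dismantlable subgraph, pass to the finite $H$--invariant set $HS_0$, and repeat. This yields an increasing chain (transfinite if $\Gamma$ is uncountable) of finite $H$--invariant subsets covering $V$, each of which is the union of finitely many induced dismantlable subgraphs --- the $H$--translates of the intermediate sets; if one of these unions were itself dismantlable, Theorem~\ref{thm:fixedpoint} would finish the proof. The second is to imitate the proof of Theorem~\ref{thm:fixedpoint} directly on $\Gamma$: Case~2 there (identifying vertices with the same neighbourhood) is automatically $H$--equivariant, and the set $V'$ of all dominated vertices removed in Case~1 is $H$--invariant, so one could try to run that reduction transfinitely and hope the process stabilises at an $H$--invariant clique.

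The hard part --- and surely the reason the statement is only a conjecture --- is that dismantlability of an induced subgraph is \emph{not} preserved under unions, so there is no evident way either to ``symmetrise'' a finite dismantlable subgraph into an $H$--invariant one or to conclude that $HS_0$ is dismantlable from the fact that each translate $hS_0$ is; likewise the transfinite version of the proof of Theorem~\ref{thm:fixedpoint} has no induction on the number of vertices to fall back on and no reason to terminate at a clique (Polat's own treatment of infinite graphs uses a stronger notion of infinite dismantlability than the exhaustion property assumed here). In all the concrete situations of this paper the obstruction is sidestepped by Proposition~\ref{prop:fixedpoint}: one exhibits an explicit $H$--equivariant family of dismantling projections (Definition~\ref{def:proj}) together with a finite convex subset, whereupon Corollary~\ref{cor:proj->dism} directly produces a finite $H$--invariant dismantlable subgraph. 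A proof of the conjecture in full generality would thus require either a way to manufacture such projection data from an abstract exhaustion, or a genuinely new idea --- quite possibly accompanied by a strengthening of the hypothesis on $\Gamma$.
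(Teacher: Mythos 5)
You have correctly recognised that this statement is not proved in the paper at all: it is stated there precisely as an open conjecture, and the reason is exactly the obstruction you name, namely that dismantlability of induced subgraphs is not preserved under taking unions, so an exhaustion of $\Gamma$ by finite dismantlable subgraphs cannot obviously be symmetrised into a finite $H$--invariant dismantlable subgraph, nor does a transfinite version of the proof of Theorem~\ref{thm:fixedpoint} have any reason to terminate. Your description of how the paper sidesteps the issue in all its concrete applications is also accurate: the authors never use the conjecture, but instead exhibit an explicit $H$--equivariant family of dismantling projections (Definition~\ref{def:proj}) together with a finite $\Pi$--convex set, so that Corollary~\ref{cor:proj->dism} produces the finite $H$--invariant dismantlable subgraph and Proposition~\ref{prop:fixedpoint} together with Theorem~\ref{thm:fixedpoint} yields the invariant clique. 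So your proposal contains no error, but it also proves nothing beyond what the paper already establishes; as you say, settling the conjecture would require either manufacturing projection data from an abstract exhaustion or a genuinely new idea (possibly with a strengthened notion of dismantlability for infinite graphs, as in Polat's own work).
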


\section{Fixed clique in the arc graph}
\label{sec:fixarc}
In this Section we prove Metatheorem A(1) in the case of the arc graph.
Let $X$ be a (possibly disconnected) closed oriented surface with
a nonempty finite set $\mu$ of \emph{marked points} on $X$.
We require that homeomorphisms and homotopies of $X$ fix $\mu$.
The \emph{Euler characteristic} of $X$ is defined to be the Euler
characteristic of $X - \mu$.

A \emph{simple arc} on $X$ is an embedding of an open interval in $X-\mu$ which can be extended to map the endpoints into $\mu$.
We require that homotopies of arcs fix the endpoints and do not
pass over marked points.
An arc is \emph{essential}, if it is not homotopic into $\mu$.
Unless stated otherwise, all arcs are assumed to be simple and essential.

The \emph{arc graph} $\mathcal{A}(X)$ is the graph
whose vertex set $\mathcal{A}^0(X)$ is the set of homotopy classes of
arcs. Two vertices are connected by an edge in $\mathcal{A}(X)$ if the
corresponding arcs can be realised disjointly.

By $\mathrm{Map}(X)$ we denote the \emph{mapping class group} of $X$,
that is the
group of orientation preserving homeomorphisms of $X$ up to isotopy. The action of
$\mathrm{Map}(X)$ on homotopy classes of arcs induces an action of
$\mathrm{Map}(X)$ on $\mathcal{A}(X)$ as a group of automorphisms.
We can now state the main theorem of this section.
\begin{thm}
\label{thm:fixed_arc}
Let $X$ be a closed oriented surface with nonempty
set of marked points, each component of which
has negative Euler characteristic.
Let $H$ be a finite subgroup of $\mathrm{Map}(X)$.
Then $H$ fixes a clique in the arc graph $\mathcal{A}(X)$.
\end{thm}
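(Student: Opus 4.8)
The plan is to apply Proposition~\ref{prop:fixedpoint}(i) to the action of $H$ on the graph $\Gamma=\mathcal{A}(X)$. For this I need two ingredients: a $\sigma$--projection $\Pi_\sigma$ on $\mathcal{A}(X)$, and a finite, $H$--invariant, $\Pi_\sigma$--convex subset $R\subset\mathcal{A}^0(X)$. Granting these, Proposition~\ref{prop:fixedpoint}(i) --- through Corollary~\ref{cor:proj->dism} and Theorem~\ref{thm:fixedpoint} --- immediately produces an $H$--invariant clique. I may assume $X$ is connected; for disconnected $X$ one runs the argument on each $H$--orbit of components and takes the union of the resulting cliques.

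\emph{The $\sigma$--projection via surgery.} Fix an essential arc $\sigma$ and work in minimal position. If $\rho$ is disjoint from $\sigma$, set $\Pi_\sigma(\rho)=\{(\sigma,\sigma)\}$. If $\rho$ crosses $\sigma$, then for every point $p\in\rho\cap\sigma$ and every endpoint $v$ of $\sigma$ let $\sigma_0\subset\sigma$ be the subarc from $v$ to $p$ and let $\rho^a,\rho^b$ be the two subarcs of $\rho$ cut off by $p$; surgery at $p$ yields the arcs $\alpha=\rho^a\cup\sigma_0$ and $\beta=\rho^b\cup\sigma_0$, pushed off $\rho\cup\sigma$. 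Let $\Pi_\sigma(\rho)$ be the collection of all these pairs, with inessential constituents and constituents equal to $\rho$ deleted; a short check (using that $\rho\simeq\alpha\ast\bar\beta$ and minimal position) shows $\alpha,\beta$ are never both inessential and that $\Pi_\sigma(\rho)$ is nonempty. Axiom (ii) holds because every constituent $\alpha$ of a pair of $\Pi_\sigma(\rho)$ satisfies $i(\alpha,\sigma)<i(\rho,\sigma)$, while $\Pi^*_\sigma(\rho)=\{\sigma\}$ for $\rho$ disjoint from $\sigma$, so there is no $\Pi^*_\sigma$--cycle. For axiom (i), let $R$ be a finite set of arcs with $R\setminus\{\sigma\}\neq\emptyset$. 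If every arc of $R\setminus\{\sigma\}$ is disjoint from $\sigma$, then $R\subset N(\sigma)$ and any $\rho\in R\setminus\{\sigma\}$ is exposed via $(\sigma,\sigma)$. Otherwise realise $\sigma$ together with all arcs of $R$, each in minimal position with $\sigma$, and let $p$ be the point of $\sigma\cap\bigcup R$ closest along $\sigma$ to some endpoint $v$, with $p\in\rho\cap\sigma$. Then the subarc $\sigma_0=\sigma_{[v,p]}$ has no interior intersection with any arc of $R$, so both $\alpha$ and $\beta$ coming from this surgery are disjoint from every $\tau\in R$ that is disjoint from $\rho$, and from $\rho$ itself; hence $\rho$ is exposed. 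The choice of $p$ nearest an endpoint of $\sigma$ is the crucial point, and is the combinatorial shadow of Hatcher's surgery flow.

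\emph{The finite convex set.} Let $A$ be a triangulation of $X$ by essential arcs with $\sigma\in A$, and let $c=HA$, a finite, $H$--invariant, filling collection of arcs. Put $R=\{\rho\in\mathcal{A}^0(X):i(\rho,c)\le K\}$ for a constant $K$ depending only on $c$; $R$ is finite because $c$ fills, and $H$--invariant because $i(\cdot,c)$ is. For convexity, $\sigma\in R$ once $K\ge i(\sigma,c)$, which disposes of all pairs $(\sigma,\sigma)$. For a pair obtained by surgering $\rho\in R$ at $p$, the two subarcs $\rho^a,\rho^b$ together meet $c$ at most $i(\rho,c)$ times, so one of them, say $\rho^a$, meets $c$ at most $\tfrac12 i(\rho,c)$ times, and then $i(\alpha,c)\le i(\rho^a,c)+i(\sigma_0,c)+O(1)\le\tfrac12 K+i(\sigma,c)+O(1)$, which is $\le K$ once $K\ge 2i(\sigma,c)+O(1)$; thus $\alpha\in R$ and the pair meets $R$. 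If instead $\rho^a$ yields an inessential arc, then $\beta$ is homotopic to $\rho$ and that pair was deleted. Hence $R$ is $\Pi_\sigma$--convex, Proposition~\ref{prop:fixedpoint}(i) applies, and the theorem follows.

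\emph{Expected obstacle.} The technical heart is the surgery projection and Axiom (i): identifying the right intersection point to surger at, showing the surgered arcs dominate $\rho$ inside an arbitrary finite $R$, and handling the essential/trivial degenerate cases via minimal position. Once the projection is in place, producing the finite $H$--invariant $\Pi_\sigma$--convex set from a triangulation and a coarse intersection estimate is routine.
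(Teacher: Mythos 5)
Your overall strategy is sound and, in one important respect, genuinely different from the paper's. The paper uses the same outermost-surgery projection, but its finite hull is the set of arcs of hyperbolic length at most $2L$ on $\check{X}$; since that hull depends on the metric and is \emph{not} $H$--invariant, the paper must invoke Proposition~\ref{prop:fixedpoint}(ii), together with the equivariance of the whole family $\{\Pi_{\alpha_i}\}$ over the orbit (Lemma~\ref{lem:arcsurgery-equivariance}). You instead build an $H$--invariant hull --- the ``ball'' $R=\{\rho:\ i(\rho,c)\le K\}$ around an $H$--invariant filling system $c=HA$ --- and the halving estimate $i(\alpha,c)\le \tfrac12 i(\rho,c)+i(\sigma,c)+O(1)$ does verify $\Pi_\sigma$--convexity for the outermost surgery pairs, so Proposition~\ref{prop:fixedpoint}(i) applies and no equivariance of a family of projections is needed. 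This is close in spirit to the alternative the paper only sketches in the remark after the proof (the one-corner broken arcs), and your invariant set is arguably simpler to handle; what the paper's length hull buys is that the convexity check (Lemma~\ref{lem:arc-length-reduction}) is a one-line length comparison, at the price of routing through part (ii).

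Two points need repair, though neither requires a new idea. First, your projection is over-general and, as written, not well defined on $\mathcal{A}^0(X)$: if $p$ is not the intersection point of $\rho\cap\sigma$ closest to the endpoint $v$, then $\sigma_0=\sigma_{[v,p]}$ crosses $\rho$ in its interior, and $\rho^a\cup\sigma_0$ has transverse self-intersections that survive any push-off, so it need not be a simple arc and hence is not a vertex of the arc graph (and the essential/trivial bookkeeping for such pairs is also unclear). You should restrict $\Pi_\sigma(\rho)$ to the (at most two) \emph{outermost} surgery pairs --- exactly the paper's definition --- which is all your verification of axioms (i), (ii) and of convexity ever uses: your chosen $p$ in axiom (i) is automatically the outermost point of $\rho\cap\sigma$, and the halving argument is insensitive to which outermost subarc is used. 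Second, in the axiom (i) check it is not enough to realise each arc of $R$ ``in minimal position with $\sigma$'': you need the whole finite collection $R\cup\{\sigma\}$ (and, for the convexity estimates, also $c$) realised pairwise in minimal position, so that $\tau\in R$ with $i(\tau,\rho)=0$ is actually realised disjointly from $\rho$; otherwise the claim that the surgered arcs are disjoint from every neighbour of $\rho$ in $R$ does not follow from your picture. This is precisely why the paper fixes a hyperbolic metric on $\check{X}$ and works with geodesic representatives; inserting that device (as in Lemma~\ref{lem:arcprojection}) closes the gap.
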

We will obtain Theorem~\ref{thm:fixed_arc} using
Proposition~\ref{prop:fixedpoint}(ii). For that we need two elements: on the one hand, we will use
a surgery procedure for arcs to define dismantling projections $\Pi_\sigma$ (see Subsection~\ref{subs:arc_surgery}).
On the other hand, we will define a finite set in the arc graph which is
$\Pi_\sigma$--convex for all $\sigma$ in some $H$--orbit (see Subsection~\ref{subs:hull_arcs}).

\subsection{Arc surgery}
\label{subs:arc_surgery}
We begin by describing the surgery procedure that will be used to
define the dismantling projections. This surgery procedure was used by
Hatcher \cite{Ha0} to show contractibility of the arc complex.

Let $a$ and $a'$ be two arcs on $X$. We say that $a$ and
$a'$ are in \emph{minimal position} if
the number of intersections between $a$ and $a'$ is minimal in the
homotopy classes of $a$ and $a'$.

Now assume that $a$ and $a'$ are in minimal position and suppose that
$a$ and $a'$ intersect.
We say that a subarc $b \subset a$ is \emph{outermost in $a$ for
 $a'$}, if $b$ is a component of $a-a'$ sharing an endpoint with $a$. There are two outermost subarcs in $a$ for $a'$.

Consider an outermost subarc $b$ in $a$ for $a'$. Let $p$ be the endpoint of $b$ in the interior of $a$. Then $p\in a'$.
Let $b'_+, b'_-$ be the two components of $a'-p$.
We say that the arcs $a'_+=b\cup b'_+$ and $a'_-=b\cup b'_-$ are
obtained by \emph{outermost surgery of $a'$ in
direction of $a$ determined by $b$} (see Figure~\ref{fig:arcsurgery}).
Both $a'_+$ and $a'_-$ are essential, since $a$
and $a'$ are in minimal position. Furthermore both $a'_+$ and $a'_-$
are disjoint from $a'$ up to homotopy.

Note that the homotopy classes $\alpha'_+$ of $a'_+$ and $\alpha'_-$
of $a'_-$ depend only on the homotopy classes $\alpha$ of $a$
and $\alpha'$ of $a'$.
We call the pair $\{\alpha'_+,\alpha'_-\}$ an
\emph{outermost surgery pair of $\alpha'$ in direction of $\alpha$}.

\begin{figure}[h!]
\centering
\includegraphics[width=0.35\textwidth]{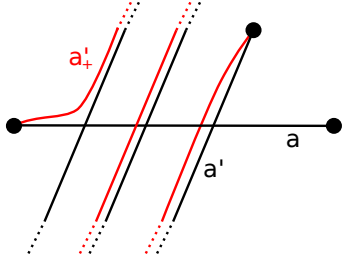}
\caption{Surgery for arcs}
\label{fig:arcsurgery}
\end{figure}

If $\alpha'$ is disjoint from $\alpha$ (but distinct), then we say that the only \emph{outermost surgery pair of $\alpha'$ in direction of $\alpha$} is
$\{\alpha,\alpha\}$, interpreted as a pair both of whose elements are equal to $\alpha$.
Let $\Pi_\alpha(\alpha')$ be the set of all (two or one) outermost surgery pairs of $\alpha'$ in direction of $\alpha$.
If $\alpha' = \alpha$, then $\Pi_\alpha(\alpha')$ is undefined.

\medskip
The following lemma states that the assignment $\Pi_\sigma$ satisfies the
axioms of a $\sigma$--projection for each $\sigma \in \mathcal{A}^0(X)$.
This lemma was essentially proved as Claim~3.18 of \cite{Sch}.
\begin{lem}\label{lem:arcprojection}
  Let $\sigma\in\mathcal{A}^0(X)$ be an arc on $X$.
  \begin{enumerate}[(i)]
  \item Let $R \subset \mathcal{A}^0(X)$ be a finite set of arcs with $R
    \setminus\{\sigma\}\neq\emptyset$. Then
    there is an arc $\rho \in R\setminus\{\sigma\}$ with the
    following property: there is an outermost surgery pair
    $\{\rho_+,\rho_-\}$ of $\rho$
    in direction of $\sigma$, such that each arc $\rho'\in R$ which is
    disjoint from $\rho$ is also disjoint from $\rho_+$ and $\rho_-$.
  \item Every sequence $(\rho_i)$ of arcs in $\mathcal{A}^0(X)$ such that $\rho_{i+1}$ is
    contained in an outermost surgery pair of $\rho_i$ in direction of $\sigma$
    terminates after finitely many steps with the arc $\sigma$.
  \end{enumerate}
\end{lem}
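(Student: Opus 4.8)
The plan is to prove the two axioms of a dismantling projection for the surgery assignment $\Pi_\sigma$, working entirely with arcs in minimal position with $\sigma$.

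\medskip
\noindent\textbf{Setting up and proving (i).} First I would fix, for each homotopy class $\rho \in R \setminus \{\sigma\}$, a representative arc in minimal position with a fixed representative of $\sigma$; classes disjoint from $\sigma$ contribute nothing new since their only surgery pair is $\{\sigma,\sigma\}$, so I may assume every $\rho \in R \setminus\{\sigma\}$ actually meets $\sigma$. Among all arcs in $R\setminus\{\sigma\}$ pick one, call it $\rho$, whose number of intersections with $\sigma$ is minimal. Look at the two outermost subarcs of $\sigma$ for $\rho$ — wait, the direction matters: I want the outermost surgery of $\rho$ in the direction of $\sigma$, so I should look at an outermost subarc $b$ of $\sigma$ for $\rho$, with interior endpoint $p \in \rho$, and form $\rho_\pm = b \cup b'_\pm$ where $b'_\pm$ are the two halves of $\rho - p$. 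The key point is that $\rho_+$ and $\rho_-$ each have fewer intersections with $\sigma$ than $\rho$ does (the subarc $b$ of $\sigma$ is disjoint from them, and we've cut $\rho$), hence by minimality of the intersection number of $\rho$ among $R\setminus\{\sigma\}$, neither $\rho_+$ nor $\rho_-$ can lie in $R\setminus\{\sigma\}$ unless it equals $\sigma$. Now take any $\rho' \in R$ disjoint from $\rho$. Since $\rho'$ is disjoint from $\rho$, it can be homotoped off $\rho$; I must show it can simultaneously be made disjoint from $\rho_+$ and $\rho_-$. This is where the ``outermost'' choice is essential: $\rho_\pm$ is built from a piece of $\sigma$ (namely $b$, which lies near the boundary of a regular neighbourhood configuration) together with a piece of $\rho$. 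Since $\rho'$ is disjoint from $\rho$, it is disjoint from the $b'_\pm$ parts, and since $b$ is an outermost arc, $\rho'$ can be pushed off $b$ — this is the content of Claim~3.18 in \cite{Sch}, and I would cite or reprove that geometric fact. The honest statement is: if $\rho'$ is realised disjointly from $\rho$, then up to homotopy $\rho'$ misses both arcs of the outermost surgery pair.

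\medskip
\noindent\textbf{Proving (ii).} Here I would use a complexity function. For an arc $\rho$ in minimal position with $\sigma$, let $i(\rho,\sigma)$ be its geometric intersection number with $\sigma$. By the discussion above, each arc $\rho_{i+1}$ in an outermost surgery pair of $\rho_i$ satisfies $i(\rho_{i+1},\sigma) < i(\rho_i,\sigma)$, unless $\rho_i$ is already disjoint from $\sigma$, in which case the only surgery pair is $\{\sigma,\sigma\}$ and the sequence becomes constant equal to $\sigma$. Therefore the strictly decreasing sequence of nonnegative integers $i(\rho_i,\sigma)$ must stabilise, which forces some $\rho_i$ to be disjoint from $\sigma$; the next term, and all subsequent terms, equal $\sigma$. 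This also shows there is no cycle $\rho_0,\dots,\rho_{m-1}$ with $\rho_{i+1}$ in an outermost surgery pair of $\rho_i$ (indices mod $m$), since intersection number would have to strictly decrease around the loop unless all the $\rho_i$ are already $\sigma$, contradicting $\rho \notin \Pi^*(\rho)$ being built into the definition — actually more simply: a cycle would give a non-terminating sequence, contradicting what we just proved. So axiom (ii) of Definition~\ref{def:proj} holds.

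\medskip
\noindent\textbf{Main obstacle.} The routine part is axiom (ii): it is a clean strict-monotonicity argument in the intersection number. The genuine content, and the step I expect to be hardest to make fully rigorous, is the disjointness assertion in (i) — that an arc $\rho'$ disjoint from $\rho$ is automatically disjoint (up to homotopy) from \emph{both} members $\rho_+,\rho_-$ of the chosen outermost surgery pair. One has to argue carefully about minimal position and the fact that the surgered arcs, being glued from an outermost piece of $\sigma$ and a sub-piece of $\rho$, introduce no new essential intersections with anything that already avoided $\rho$; the cleanest route is to invoke the cited surgery lemma of Hatcher/Schleimer (\cite{Ha0}, \cite[Claim~3.18]{Sch}) rather than to reprove it from scratch. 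One should also double-check the degenerate possibility that an outermost surgery produces an inessential arc — but this is excluded exactly because $\rho$ and $\sigma$ are in minimal position, as already noted in the surgery discussion preceding the lemma.
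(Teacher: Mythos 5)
Part (ii) of your argument is correct and is essentially the paper's: each outermost surgery removes at least the intersection point $p$, so the geometric intersection number with $\sigma$ strictly decreases until some $\rho_i$ is disjoint from $\sigma$, at which point the only surgery pair is $\{\sigma,\sigma\}$ and the sequence terminates at $\sigma$.

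The gap is in (i), exactly at the step you flag as hardest. Your selection rule (minimise $i(\rho,\sigma)$ over $R\setminus\{\sigma\}$) does not make the key claim true: it is false in general that an arc $\rho'$ disjoint from $\rho$ can be pushed off an outermost subarc $b$ of $\sigma$ for $\rho$. The subarc $b$ may cross $\rho'$ essentially --- for instance, let $\rho$ meet $\sigma$ once, far from the endpoints of $\sigma$, and let $\rho'$ be disjoint from $\rho$ but cross $\sigma$ near the endpoints of $\sigma$; then $\rho$ is the intersection-minimising choice, yet both arcs $b\cup b'_\pm$ of either outermost surgery pair of $\rho$ intersect $\rho'$ essentially. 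The quantifier structure of the lemma demands that $\rho$ and the subarc be chosen relative to the \emph{whole} finite set $R$, not just relative to $\sigma$. The paper's proof does precisely this: realise $\sigma$ and all elements of $R$ by shortest geodesics $s,r_1,\ldots,r_k$, let $b$ be a component of $s-(r_1\cup\dots\cup r_k)$ sharing an endpoint with $s$ --- so $b$ is disjoint from \emph{every} arc of $R$ --- and take $\rho$ to be the class of the arc $r$ containing the interior endpoint of $b$. Then $b$ is outermost in $s$ for $r$, and any $r_i$ disjoint from $r$ is automatically disjoint from $r_*\subset r\cup b$, with no pushing-off argument needed. Deferring to \cite[Claim~3.18]{Sch} is legitimate in spirit (the paper says the lemma is essentially proved there), but the statement you attribute to it --- that disjointness from $\rho$ alone implies disjointness from an arbitrary outermost surgery pair of $\rho$ --- is not a correct statement, so the citation cannot close the gap as your proof is organised. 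Finally, your observation that $\rho_\pm$ cannot lie in $R\setminus\{\sigma\}$ by minimality is a non sequitur: it is neither needed nor relevant to the disjointness assertion to be proved.
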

The proof requires us to consistently choose preferred representatives
for each homotopy class of arcs. This is made possible
through the use of hyperbolic geometry. To this end, it is convenient to adopt a
slightly different description of the arc graph of $X$. Namely, let
$\check{X}$ be
the compact surface obtained from $X$ by replacing each marked point
with a boundary component.
Let $\mathcal{A}(\check{X})$ be the graph whose vertex set is the set of
homotopy classes of (essential simple) arcs on $\check{X}$ that are embedded \emph{properly}, i.e.\ they intersect the boundary exactly at the endpoints.
Homotopies of arcs on $\check{X}$ are required to be homotopies of
properly embedded arcs.
Two vertices are connected by an edge if the corresponding arcs may be realised
disjointly.
Since homotopy classes of (properly embedded) arcs on $\check{X}$ are in
one-to-one
correspondence with homotopy classes of arcs on $X$ respecting disjointness, the graph
$\mathcal{A}(\check{X})$ is isomorphic to $\mathcal{A}(X)$. We will
implicitly use this identification, and simply speak about
representatives of arcs $\rho\in\mathcal{A}^0(X)$ on $\check{X}$.

We now fix a hyperbolic metric on $\check{X}$ which makes the boundary
geodesic. Then each homotopy class of an
arc contains a unique shortest geodesic representative, and any two
such representatives are in minimal position (see e.g. the
discussion in \cite[Sec~1.2.7]{FM11}).
\begin{proof}[Proof of Lemma~\ref{lem:arcprojection}]

  \begin{enumerate}[(i)]
  \item
   We can assume that not all the arcs in $R$ are disjoint from $\sigma$, since otherwise the assertion
    follows trivially.
  Denote the shortest geodesic representatives on $\check{X}$ of the elements of $R$ by
  $r_1,\ldots,r_k$ and the shortest geodesic representative of $\sigma$ by $s$.
  Let $b\subset s$ be a component of $s-(r_1\cup\dots\cup r_k)$ sharing an endpoint with $s$. Denote by $p$ the endpoint of $b$ in the interior of $s$. Let $r$ be an arc in $R$
    containing $p$. Suppose $r_*$ is obtained by
    outermost surgery of $r$ in direction of $s$ determined by $b$.
  Let now $r_i$ be disjoint from $r$. Since $r_i$ is disjoint from
    $b$, it is also disjoint from $r_*\subset r\cup b$, as desired.
  \item By construction, each outermost surgery strictly decreases the
    geometric intersection number with $\sigma$. Hence every sequence
    of surgeries eventually yields an arc which is disjoint from
    $\sigma$.
  \end{enumerate}
\end{proof}

For future reference, we also note the following easy fact.
\begin{lem}\label{lem:arcsurgery-equivariance}
  For every $h\in\mathrm{Map}(X)$ we have
  $$h\Pi_\sigma(\rho) = \Pi_{h\sigma}\left(h\rho\right)$$
  for all arcs $\sigma\neq\rho \in \mathcal{A}^0(X)$.
\end{lem}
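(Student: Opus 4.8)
The plan is to observe that the entire outermost surgery construction is defined purely in terms of data — a pair of representatives in minimal position, an outermost subarc, its interior endpoint, and the two pieces of the surgered arc — all of which behave naturally under homeomorphisms of $X$. So I would begin by fixing a homeomorphism $\phi$ of $X$ preserving $\mu$ which represents $h$, noting that the asserted identity does not depend on this choice because, by construction, $\Pi_\sigma(\rho)$ depends only on the homotopy classes $\sigma$ and $\rho$.

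Next I would simply chase the definition. Let $a$ be a representative of $\sigma$ and $a'$ a representative of $\rho$ in minimal position, and suppose they intersect. Since $\phi$ is a homeomorphism, the arcs $\phi(a)$ and $\phi(a')$ represent $h\sigma$ and $h\rho$, and they are again in minimal position: $\phi$ induces a bijection on homotopy classes of arcs and satisfies $\#(\phi(a)\cap\phi(a'))=\#(a\cap a')$, so $\phi(a),\phi(a')$ realise the minimal number of intersections in their homotopy classes. If $b\subset a$ is an outermost subarc in $a$ for $a'$, i.e.\ a component of $a-a'$ sharing an endpoint with $a$, then $\phi(b)$ is a component of $\phi(a)-\phi(a')$ sharing an endpoint with $\phi(a)$, hence outermost in $\phi(a)$ for $\phi(a')$; its interior endpoint is $\phi(p)\in\phi(a')$, the two components of $\phi(a')-\phi(p)$ are $\phi(b'_+)$ and $\phi(b'_-)$, and the arcs obtained by outermost surgery of $\phi(a')$ in direction of $\phi(a)$ determined by $\phi(b)$ are $\phi(a'_\pm)=\phi(b)\cup\phi(b'_\pm)$. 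Passing to homotopy classes, $\{h\alpha'_+,h\alpha'_-\}$ is exactly the outermost surgery pair of $h\rho$ in direction of $h\sigma$ determined by $\phi(b)$. Since $b\mapsto\phi(b)$ is a bijection between the outermost subarcs in $a$ for $a'$ and those in $\phi(a)$ for $\phi(a')$, every pair of $\Pi_{h\sigma}(h\rho)$ arises this way, so $h\Pi_\sigma(\rho)=\Pi_{h\sigma}(h\rho)$ whenever $\sigma$ and $\rho$ are not disjoint.

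Finally I would dispose of the degenerate case: if $\rho\neq\sigma$ is disjoint from $\sigma$, then $h\rho\neq h\sigma$ is disjoint from $h\sigma$, so by definition $\Pi_\sigma(\rho)=\{\{\sigma,\sigma\}\}$ and $\Pi_{h\sigma}(h\rho)=\{\{h\sigma,h\sigma\}\}$, and these correspond under $h$. I do not expect a genuine obstacle: the statement is just the functoriality of the homotopy-class-level surgery operation under the action of $\mathrm{Map}(X)$. The one point deserving a word of care is that $\Pi_\sigma$ is metric-free — the hyperbolic metric on $\check X$ and its geodesic representatives entered only into the proof of Lemma~\ref{lem:arcprojection}, not into the definition of the surgery pairs — so there is no need for $\phi$ to be compatible with any auxiliary structure.
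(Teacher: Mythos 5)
Your proposal is correct and follows essentially the same route as the paper: choose a homeomorphism representing $h$, note that it preserves minimal position and carries outermost subarcs to outermost subarcs, and conclude that the surgery pairs correspond (the paper leaves the disjoint case and the minimal-position verification implicit, which you spell out). No gaps.
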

\begin{proof}
  Let $s, r$ be representatives of $\sigma$ and $\rho$ which are in minimal position. Let $\hbar:X\to X$ be a representative of the mapping class $h$.
  Then $\hbar(s)$ and $\hbar(r)$ are in minimal position, and $\hbar$ maps an
  outermost subarc in $s$ for $r$ to an outermost
  subarc in $\hbar(s)$ for $\hbar(r)$.
\end{proof}

\subsection{Finite hull for arcs}
\label{subs:hull_arcs}
We now describe how a finite vertex set
$\alpha_1,\ldots,\alpha_k\in\mathcal{A}^0(X)$ can be
extended to a finite set which is $\Pi_\sigma$--convex for all $\sigma
= \alpha_i$.

\medskip
Let $\alpha_1,\ldots,\alpha_k\in\mathcal{A}^0(X)$ be an arbitrary finite set
of arcs, and let $a_1,\ldots,a_k$ be their shortest geodesic
representatives on $\check{X}$. Let $L$ be the maximum of the lengths of the $a_i$.

We define $\mathcal{H}=\mathcal{H}(\alpha_1,\ldots,\alpha_k)$ to be the set of
homotopy classes of arcs which have a representative of length at most
$2L$ on $\check{X}$.
The set $\mathcal{H}$ is finite, and contains
$\alpha_1,\ldots,\alpha_k$ by construction.
However, note that the set $\mathcal{H}$ depends on the choice of the
hyperbolic metric.

\begin{lemma}\label{lem:arc-length-reduction}
  Let $\alpha_1,\ldots,\alpha_k\in\mathcal{A}^0(X)$ be a finite set of
  arcs. Put $\alpha = \alpha_i$ for some $i=1,\ldots,k$, and let
  $\alpha' \in \mathcal{H} = \mathcal{H}(\alpha_1,\ldots,\alpha_k)$ be
  arbitrary.
  Then each outermost surgery pair of $\alpha'$ in
  direction of $\alpha$ contains at least one element of $\mathcal{H}$.
\end{lemma}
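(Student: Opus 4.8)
The plan is to work with shortest geodesic representatives on $\check X$ and argue that outermost surgery cannot increase length too much, so that at least one of the two surgered arcs stays within the length bound $2L$ defining $\mathcal H$. Let $a$ be the shortest geodesic representative of $\alpha=\alpha_i$, so $\mr{length}(a)\le L$, and let $a'$ be the shortest geodesic representative of $\alpha'\in\mathcal H$, so $\mr{length}(a')\le 2L$. Put $a$ and $a'$ in minimal position (they already are, being distinct shortest geodesics) and pick the outermost subarc $b\subset a$ for $a'$ realising the given outermost surgery pair, with $p$ the interior endpoint of $b$ on $a'$. The two surgered arcs are $a'_+=b\cup b'_+$ and $a'_-=b\cup b'_-$, where $b'_\pm$ are the two components of $a'-p$. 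Since $b'_+$ and $b'_-$ together make up $a'$, we have $\mr{length}(b'_+)+\mr{length}(b'_-)=\mr{length}(a')\le 2L$, hence $\min\{\mr{length}(b'_+),\mr{length}(b'_-)\}\le L$.

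Next I would bound $\mr{length}(b)$. The subarc $b$ is a sub\emph{arc} of the geodesic $a$, but it is not a full arc of $\check X$: one endpoint is on $\partial\check X$ (shared with $a$) and the other is the interior point $p\in a'$. Nonetheless $b$ is a geodesic segment, and $\mr{length}(b)\le\mr{length}(a)\le L$. Therefore the shorter of the two surgered arcs, say $a'_+$ without loss of generality, satisfies
$$\mr{length}(a'_+)=\mr{length}(b)+\mr{length}(b'_+)\le L+L=2L.$$
This is a piecewise-geodesic representative of $\alpha'_+$ of length at most $2L$; since $\mathcal H$ is defined via homotopy classes admitting \emph{some} representative of length $\le 2L$ (not necessarily geodesic), it follows that $\alpha'_+\in\mathcal H$. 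In the degenerate case where $\alpha'$ is already disjoint from $\alpha$, the surgery pair is $\{\alpha,\alpha\}$ and $\alpha\in\mathcal H$ trivially, so we may assume $a$ and $a'$ genuinely intersect.

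I expect the only subtlety to be the bookkeeping of which concatenation realises $\alpha'_+$ versus $\alpha'_-$ and the verification that $b$ really is a geodesic subsegment of $a$ (immediate, since $b$ is a connected subarc of the geodesic $a$), together with the remark that $\mathcal H$ is closed under taking homotopy classes of short \emph{non-geodesic} representatives — which is exactly how $\mathcal H$ was defined, so no extra work is needed there. One should also note $a'_+$ is essential and simple: essentiality holds because $a$ and $a'$ are in minimal position (as already recorded in the surgery discussion), and simplicity because $b$ and $b'_+$ meet only at $p$ and each is embedded. Thus at least one element of each outermost surgery pair of $\alpha'$ in direction of $\alpha$ lies in $\mathcal H$, which is the assertion. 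The main obstacle, such as it is, is purely expository: making sure the length estimate is attributed to the right one of the two surgered arcs, since a priori either $a'_+$ or $a'_-$ could be the short one, so the statement is correctly phrased as ``contains at least one element of $\mathcal H$'' rather than ``both elements lie in $\mathcal H$''.
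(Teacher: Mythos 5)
Your proposal is correct and follows essentially the same argument as the paper: the outermost subarc $b\subset a$ has length at most $L$, one of the two complementary subarcs $b'_\pm$ of $a'$ has length at most $L$ since $a'$ has length at most $2L$, and hence the corresponding surgered arc has a representative of length at most $2L$ and lies in $\mathcal{H}$. The extra remarks (handling the disjoint case, simplicity and essentiality of the surgered arc, and that $\mathcal{H}$ only requires some, not geodesic, representative of length at most $2L$) are accurate and consistent with the paper's treatment.
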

\begin{proof}
  Let $a'$ be the shortest geodesic representative of $\alpha'$
  and let $a$ be the shortest geodesic representative of $\alpha=\alpha_i$.
  We can assume that $a$ and $a'$ intersect.

  Then an outermost subarc $b$ in $a$ for $a'$ has length at most $L$ by the
  definition of $L$. Let $b'_+,b'_-$ be the two subarcs of $a'$
  bounded by the endpoint of $b$. Then at least one of $b'_+,b'_-$ has length at most
  $L$, say $b'_+$. As a consequence, the arc $a'_+=b\cup b'_+$ has length
  at most $2L$, and thus its homotopy class is contained in $\mathcal{H}$.
\end{proof}
We are now in position to prove Theorem~\ref{thm:fixed_arc}.
\begin{proof}[Proof of Theorem~\ref{thm:fixed_arc}]
  By Lemma~\ref{lem:arcprojection} the assignment of outermost
  surgery pairs satisfies the axioms of a
  dismantling projection.
  Let $\alpha_1$ be an arbitrary arc in $\mathcal{A}^0(X)$, and let
  $\alpha_1,\ldots,\alpha_k$ be the orbit of $\alpha_1$ under $H$.
  Lemma~\ref{lem:arc-length-reduction} shows that
  $\mathcal{H}(\alpha_1,\ldots,\alpha_k)$
  is $\Pi_{\alpha_i}$--convex for each $i=1,\ldots,k$. Finally,
  by Lemma~\ref{lem:arcsurgery-equivariance} the family $\Pi_{\alpha_i}$ is $H$--equivariant. Thus $H$ fixes a clique by
  Proposition~\ref{prop:fixedpoint}(ii).
\end{proof}
\begin{rem}
  We could derive Theorem~\ref{thm:fixed_arc} also directly from
  Proposition~\ref{prop:fixedpoint}(i) by considering a
  different finite invariant set.

  Namely, define a \emph{one-corner broken arc obtained from geodesic arcs
  $a_1,\ldots,a_k$} to be an embedded arc $a$ on $\check{X}$
  of the following form: there are subarcs $a^+,a^-$ of $a_i,
  a_j$ for some $i,j$ such that $a = a^+ \cup a^-$.

  We then define $\widetilde{\mathcal{H}}$ to be the finite set of all
  one-corner broken
  arcs obtained from $a_1,\ldots,a_k$. If the set of homotopy classes of the
  arcs $a_1,\ldots,a_k$ is $H$--invariant, then the set $\widetilde{\mathcal{H}}$ is $H$--invariant as well.
  Furthermore, one can show that $\widetilde{\mathcal{H}}$ is
  $\Pi_{\alpha_i}$--convex for the homotopy classes $\alpha_i$ of $a_i$.
  However, the proof is a bit
  more involved than the simple length argument used for
  Lemma~\ref{lem:arc-length-reduction}, and we decided to omit it.
\end{rem}

\section{Nielsen realisation for surfaces}
\label{sec:Nielsen}

In this section we prove Metatheorem~\ref{met:filling}(2) for mapping class groups (Theorem~\ref{thm:fixed_arc_system}) and Theorem~\ref{thm:Nielsen} under any of the hypotheses (A) or (B).

\subsection{Fixed filling arc set}
A set $A$ of disjoint (simple essential) arcs on a surface $X$ with marked points $\mu$ is
\emph{filling} if the components of $X-(\mu\cup A)$ are discs.
A clique $\Delta\subset \mathcal{A}^0(X)$ is \emph{filling} if some (hence any)
set of disjoint representatives of elements in $\Delta$ is filling.

\begin{thm}
\label{thm:fixed_arc_system}
Let $X$ be a closed connected oriented surface with nonempty
set of marked points and negative Euler characteristic.
Let $H$ be a finite subgroup of $\mathrm{Map}(X)$. Then $H$ fixes a filling clique in $\mathcal{A}^0(X)$.
\end{thm}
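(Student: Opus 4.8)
The plan is to reduce Theorem~\ref{thm:fixed_arc_system} to Theorem~\ref{thm:fixed_arc} by an induction on the \emph{deficiency} of a fixed clique, i.e.\ on the number of components of $X-(\mu\cup A)$ that are not discs, where $A$ is a set of disjoint representatives. By Theorem~\ref{thm:fixed_arc}, $H$ fixes some clique $\Delta\subset\mathcal{A}^0(X)$; realise it by a set $A$ of disjoint geodesic arcs with respect to an $H$--invariant hyperbolic metric on $\check X$, so that $HA=A$. If $\Delta$ is already filling we are done, so suppose some component $S$ of $\check X$ cut along $A$ has negative Euler characteristic (equivalently is not a disc, once one checks no component is an annulus or worse — essentiality rules those out). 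The key point will be to enlarge $\Delta$ in an $H$--equivariant way to a strictly larger fixed clique whose complementary pieces are simpler.

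The central step is a relative version of the fixed-arc theorem applied to the (possibly disconnected) surface $X'$ obtained from the non-disc components of $\check X\setminus A$ by collapsing each boundary arc coming from $A$ and introducing one marked point on each boundary circle, so that arcs on $X'$ correspond to arcs on $X$ disjoint from $A$ and not homotopic into $A$. The finite group $H$ need not act on a single component of $X'$, but it acts on $X'$ as a whole (permuting components); Theorem~\ref{thm:fixed_arc} — or rather its proof, which makes no connectedness assumption and is stated for possibly disconnected $X$ in Section~\ref{sec:fixarc} — gives a clique $\Delta'$ on $X'$ fixed by $H$. Pulling $\Delta'$ back to $X$ and adjoining it to $\Delta$ yields a clique $\Delta\cup\Delta'$ in $\mathcal{A}^0(X)$ that is still $H$--invariant (since both pieces are), and whose complementary regions have strictly larger total Euler characteristic toward $0$: each non-disc component of $\check X\setminus A$ gets genuinely subdivided by at least one essential arc. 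Iterating, the deficiency drops and the process terminates at a filling fixed clique.

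Two points need care. First, one must ensure the enlarged set $\Delta\cup\Delta'$ is genuinely a clique, i.e.\ the arcs of $\Delta'$ can be realised disjointly from those of $\Delta$: this holds because arcs on $X'$ live, by construction, in the complement of $A$. Second, one must verify that adjoining $\Delta'$ strictly decreases the deficiency and does not create new bad components — here the standard fact that cutting a surface of negative Euler characteristic along an essential arc yields pieces of Euler characteristic strictly larger (closer to zero) and with no component that is a disc bounded only by parts of one arc, does the job; an Euler-characteristic count shows the induction is well-founded.

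The main obstacle I expect is the equivariance bookkeeping across the (possibly disconnected, possibly with $H$ permuting components) cut surface $X'$: one has to check that the marked points introduced on the boundary circles, the identification of arcs on $X'$ with arcs on $X$ disjoint from $A$, and the resulting $H$--action are all compatible, so that an $H$--fixed clique on $X'$ really pulls back to an $H$--fixed clique on $X$. Once the correspondence between $\mathcal{A}^0(X')$ and the arcs on $X$ disjoint from $A$ is set up $H$--equivariantly, the rest is the routine Euler-characteristic induction described above. (Alternatively, one could run the induction inside a single application of Proposition~\ref{prop:fixedpoint} by enlarging the projections to act on arc \emph{systems} rather than arcs, but the inductive reduction to Theorem~\ref{thm:fixed_arc} seems cleaner and is the approach I would take.)
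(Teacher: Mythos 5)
Your outline follows the paper's strategy (cut along a fixed clique, apply Theorem~\ref{thm:fixed_arc} to the collapsed surface $X'$, adjoin the resulting clique), but the step you defer as ``equivariance bookkeeping'' is precisely where the real content lies, and as stated your proof has a gap there. Arcs on $X'$ do \emph{not} correspond bijectively to arcs on $X$ disjoint from $A$: collapsing each boundary circle of $\overline{X}_0$ to a marked point forgets both where on that circle the endpoints sit (they must end up at marked points of $X$, which are specific points on $\partial\overline{X}_0$) and how many times the arc twists around the boundary circle. So an $H$--fixed clique $\Delta'$ on $X'$ has infinitely many lifts to $X$, and one must choose lifts that are simultaneously simple, pairwise disjoint, disjoint from $A$, with endpoints in $\mu$, and permuted correctly by $H$. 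This cannot be arranged canonically; the paper resolves it by parametrising the lifts by a point $z\in\R^{2k}$ of boundary ``sliding'' coordinates, showing the locus $Z$ of embedded disjoint realisations is convex, taking the centre of mass of an $H$--orbit to get an $H$--fixed parameter, and then shifting by a minimal vector $\varepsilon$ so the endpoints land on marked points. Without some argument of this kind (a fixed point in the space of lifts), adjoining $\Delta'$ to $\Delta$ need not produce an $H$--invariant clique in $\mathcal{A}^0(X)$.

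Two further problems. First, your parenthetical claim that essentiality rules out annulus complementary components is false: cutting a once-punctured torus along a single essential arc yields an annulus, so components of $X'$ that are spheres with two marked points do occur; the lifting problem there is even more degenerate (the lift is determined by a ``slope'' modulo twisting), and the paper treats it by a separate quotient argument on the parameter space. Second, realising $A$ as geodesics for an $H$--invariant hyperbolic metric on $\check X$ is circular in this paper: the existence of such a metric is the Nielsen realisation theorem, which is deduced \emph{from} Theorem~\ref{thm:fixed_arc_system}. The paper instead invokes Buser's Theorem~A.5 to choose homeomorphism representatives of $H$ preserving $A$; you should do the same. Your induction-on-deficiency framing (versus the paper's maximal-fixed-clique-plus-contradiction) is a harmless repackaging, but the proposal is incomplete until the lifting step and the annulus case are actually carried out.
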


\begin{proof}
By Theorem~\ref{thm:fixed_arc}, there is a clique $\Delta\subset \mathcal{A}^0(X)$ fixed by $H$.
Assume that $\Delta$ is a maximal clique fixed by $H$. We will prove that $\Delta$ is filling. We can realise $\Delta$ as a subset $A\subset S$ of disjoint arcs. Equip $X$ with a path-metric in which each arc of $A$ has length $1$. By \cite[Thm~A.5]{Bus} we can also choose representatives of the elements of $H$ fixing $A$ and restricting to a genuine isometric action on $A$.
Note that these representatives satisfy the composition rule up to isotopy fixing $A$.
We will prove that $A$ is filling.

Otherwise, let $X_0$ be the nonempty union of open components of $X-A$ which are not discs, i.e.\ have nonpositive Euler characteristic. Let $\overline{X}_0$ be the path-metric completion of $X_0$. Let $X'$ be the surface obtained from $\overline{X}_0$ by collapsing each component of $\partial \overline{X}_0$ to a marked point. The group $H$ maps into $\textrm{Map}(X')$. Every component of $X'$ with zero Euler characteristic is a sphere with two marked points and contains a unique (homotopy class of) an arc. In the case where $X'$ has such components, we consider the $H$--invariant clique $\Delta'\subset \mathcal{A}^0(X')$ of unique arcs in all of these components. Otherwise an $H$--invariant clique $\Delta'\subset \mathcal{A}^0(X')$ exists by Theorem~\ref{thm:fixed_arc}. Denote $\Delta'=\{\alpha'_1,\ldots,\alpha_k'\}$.

Let $\mathcal{A}^*(X,A)$ denote the set of homotopy classes of (possibly non-simple) essential arcs on $\overline{X}_0$ with endpoints in $\partial \overline{X}_0$. We require homotopies to fix the endpoints. Let $\mathcal{P}\subset \partial \overline{X}_0$ be a family of points, one in each of the components of $\partial\overline{X}_0$.
For $i=1,\ldots,k$ choose arcs $\alpha^*_i(0)\in\mathcal{A}^*(X,A)$ with endpoints $p_i,q_i\in \mathcal{P}$ that project to $\alpha'_i$ on $X'$ and have representatives that are simple and disjoint outside $\mathcal{P}$. For a point $p\in \partial \overline{X}_0,\ t\in \R$ let $\gamma^*(p,t)$ be the homotopy class of the (non-essential) arc starting at $p$, moving clockwise along the boundary component $c\subset \partial \overline{X}_0$ containing $p$ and terminating after distance $t|c|$, where $|c|$ denotes the length of $c$. In particular for $t\in \Z$ the endpoint of $\gamma^*(p,t)$ is $p$. To each point $z=(x_1,y_1,\ldots,x_k, y_k) \in\R^{2k}$ we assign the following sequence $\Delta^*(z)=(\alpha^*_1,\ldots,\alpha^*_k)$ of arcs in $\mathcal{A}^*(X,A)$. Each $\alpha^*_i$ is the homotopy class of the concatenation $\gamma^*(p_i,x_i)^{-1}\alpha^*_i(0)\gamma^*(q_i,y_i)$.

Consider first the case where all of the components of $X'$ have negative Euler characteristic. In that case $\Delta^*$ is injective, since Dehn twists along a pair of components of $\partial \overline{X}_0$ are independent. Note that the set $Z\subset \R^{2k}$ of points $z$ for which $\Delta^*(z)$ can be represented as a collection of disjoint simple arcs is nonempty, since it contains $0$. We claim that $Z$ is convex.

The arc $\alpha^*_i$ might self-intersect only when its endpoints lie on the same component of $\partial \overline{X}_0$, hence only when $p_i=q_i$. Self-intersection of $\alpha^*_i$ arises when starting from $\alpha^*_i(0)$ the two endpoints pass through one another. This is governed, up to interchanging $x_i$ with $y_i$, by the equation
$x_i\leq y_i\leq x_i+1$, which is a linear constraint. The description is the same for intersections of distinct $\alpha^*_i, \alpha^*_j$, where we will get a constraint for each pair of coinciding endpoints of $\alpha^*_i(0), \alpha^*_j(0)$. This justifies the claim.

Under identification of $Z$ with $\Delta^*(Z)$ the finite group $H$ acts on $Z$ by isometries (each of which is a composition of a translation and an interchange of the coordinate axes). Hence $H$ fixes a point $z\in Z$ (for example the centre of mass of some orbit). Let $\varepsilon\in \R^{2k}$ be the vector with smallest nonnegative entries such that $\Delta^*(z+\varepsilon)$ has endpoints in marked points of $X$ under the map $\overline{X}_0\rightarrow X$. Note that $z+\varepsilon$ is also $H$--invariant. The arc set $\Delta^*(z+\varepsilon)$ can be interpreted as a subset of $\mathcal{A}^0(X)$. Then $\Delta\cup \Delta^*(z+\varepsilon)$ forms an $H$--invariant clique larger than $\Delta$ and we have reached a contradiction.

In the case where $\alpha'_i$ lie in the components of $X'$ of zero Euler characteristic we have $\Delta^*(z+e_{2i-1}-e_{2i})=\Delta^*(z)$, where $e_i$ denotes the $i$--th basis vector of $\R^{2k}$. Arcs $\alpha^*_i$ for each of the lines $z+te_{2i-1}-te_{2i}$ where $t\in \R$ form families of parallel arcs. Each such family can be thought of as having the same ``slope'' in the annulus component of $\overline{X}_0$. After quotienting $Z\subset\R^{2k}$ by all $k$ such $\R$--actions we obtain an action of $H$ on a convex subset of $\R^k=\R^{2k}\slash \R^k$ of ``slopes''. In the conclusion of the fixed-point theorem we obtain $H$--invariant ``slope sets'': families of parallel arcs in the components of zero Euler characteristic. We can improve them to families of parallel arcs with endpoints in marked points as before.
\end{proof}

\subsection{Fixed hyperbolic metric}
\begin{proof}[Proof of Theorem~\ref{thm:Nielsen} under hypothesis (A)]
By Theorem~\ref{thm:fixed_arc_system}, the group $H$ fixes a filling clique
$\Delta\subset \mathcal{A}^0(X)$. We can realise $\Delta$ as a set $A$ of disjoint arcs on $X$. For each component $Y$ of $X-A$ with $k$ arcs in its boundary we consider a regular ideal hyperbolic $k$--gon $\widehat{Y}$. For any arc $a\in A$ adjacent to components $Y_1,Y_2$ of $X-A$ we glue

$\widehat{Y}_1$ with $\widehat{Y}_2$ along the sides $\widehat{A}_1\subset \widehat{Y}_1,\widehat{A}_2\subset \widehat{Y}_2$ corresponding to $a$ in such a way that the projections of the centres of  $\widehat{Y}_1$ and $\widehat{Y}_2$ onto $\widehat{A}_1$ and $\widehat{A}_2$ coincide under that gluing. The hyperbolic metric on $X-\mu$ that we obtain as a result is complete and $H$--invariant.
\end{proof}

\begin{proof}[Proof of Theorem~\ref{thm:Nielsen} under hypothesis (B)]
We can assume that the set of marked points on $X$ is empty. Realise the curves of the fixed set as a set $A$ of disjoint simple curves on $X$.
We can choose representatives of elements of $H$ fixing $A$ (see \cite[Thm~A.3]{Bus}).
Let $X'$ be obtained from $\overline{X-A}$ by collapsing its boundary components to marked points. Denote by $\mu'$ the set of marked points on $X'$. By Theorem~\ref{thm:Nielsen} under hypothesis (A) there is a complete hyperbolic metric on $X'-\mu'$ invariant under the image of $H$ in $\mathrm{Map}(X')$. Let $l$ be small enough so that the horocycles of length $l$ around the punctures embed on $X'$. We remove the puncture neighbourhoods bounded by these horocycles. This results in an $H$--invariant complete hyperbolic metric $d_\mathbf{H}$ on $\overline{X-A}$ whose boundary components are horocycles of length $l$. If there are $k$ curves in $A$, the ways of identifying the boundary components of $(\overline{X-A},d_\mathbf{H})$ in order to obtain a (marked) conformal structure on $X$ are parametrised by points in an affine space $\R^k$: changing the $k$--th coordinate corresponds to a twist along the $k$--th boundary component. The group $H$ admits an action on this parameter space $\R^k$ by isometries (which are again compositions of translations and axes interchanges). The fixed point of this action gives an $H$--invariant conformal structure on $X$. By uniformisation this gives an $H$--invariant complete hyperbolic metric on $X$.
\end{proof}

\section{Fixed clique in the disc graph}
\label{sec:fixdisc}
In this section we prove Metatheorem A(1) in the case of the disc graph.
We denote by $U_g$ a handlebody of genus $g$.
A \emph{(disconnected)
  handlebody} $U$ is a disjoint union of various $U_g$.
The boundary $\partial U$ is a closed surface which may be disconnected.
An \emph{essential disc} $D$ in $U$ is a disc properly embedded in
$U$, such that $\partial D$ is an essential simple closed curve on
$\partial U$. We say that an essential simple closed curve $d$ on
$\partial U$ is \emph{discbounding} if there is an essential disc $D$
with $\partial D = d$. Two essential discs $D, D'$ are properly
homotopic if and only if their boundary curves $\partial D$ and
$\partial D'$ are homotopic.

By $\mathcal{D}(U)$ we denote the \emph{disc graph} of
$U$. The vertex set $\mathcal{D}^0(U)$ is the set of homotopy
classes of discbounding curves on $\partial U$. Two such vertices are connected
by an edge if the corresponding curves are disjoint (up to homotopy).

The \emph{handlebody group} $\mathrm{Map}(U)$ is the mapping class
group of $U$, i.e. the group of orientation preserving homeomorphisms
of $U$ up to isotopy. It is well-known that $\mathrm{Map}(U)$ can be
identified with a subgroup of $\mathrm{Map}(\partial U)$ by
restricting homeomorphisms of $U$ to the boundary.

The main theorem of this section is the following.
\begin{thm}
\label{thm:fixed_disc}
Let $U$ be a (possibly disconnected) handlebody
each component of which has genus $\geq 2$.
Let $H$ be a finite subgroup of $\mathrm{Map}(U)$.
Then $H$ fixes a clique in the disc graph $\mathcal{D}(U)$.
\end{thm}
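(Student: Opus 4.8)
The plan is to mirror the proof of Theorem~\ref{thm:fixed_arc} by verifying the hypotheses of Proposition~\ref{prop:fixedpoint}(ii) for the disc graph, with outermost surgery of discs (equivalently, of the boundary curves) playing the role of arc surgery. First I would recall the classical disc surgery procedure, going back to Hatcher's proof of contractibility of the disc complex: given two discbounding curves $d,d'$ on $\partial U$ in minimal position that intersect, and an outermost subarc $b$ of $d'$ cut off by $d$, one surgers the disc $D$ along $b$ to obtain two new discs, hence two new discbounding curves $d'_+,d'_-$, both disjoint from $d$ and essential by minimal position. This gives, for each $\sigma\in\mathcal{D}^0(U)$, an assignment $\Pi_\sigma(\rho)$ of outermost surgery pairs exactly as in Subsection~\ref{subs:arc_surgery}, with the convention $\Pi_\sigma(\rho)=\{\sigma,\sigma\}$ when $\rho$ is already disjoint from $\sigma$.

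Next I would prove the analogue of Lemma~\ref{lem:arcprojection}: that $\Pi_\sigma$ is a $\sigma$--projection. Axiom~(i), the existence of an exposed vertex in any finite set $R$, follows by choosing a shortest-geodesic representative $s$ of $\sigma$ and representatives $r_i$ of the curves in $R$ with respect to a fixed hyperbolic metric on $\partial U$, looking at an outermost subarc $b$ of some $r_i$ cut off by $s$, and observing that any $r_j$ disjoint from this $r_i$ is disjoint from $b$ and hence from the surgered curve, which lies in $r_i\cup b$ up to homotopy --- this is the word-for-word transcription of the argument from \cite{Sch}. Axiom~(ii), no infinite surgery sequences, follows because each outermost surgery strictly decreases the geometric intersection number with $\sigma$; when the intersection number reaches zero the sequence has reached $\sigma$. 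Equivariance, $h\Pi_\sigma(\rho)=\Pi_{h\sigma}(h\rho)$ for $h\in\mathrm{Map}(U)$, is immediate since a homeomorphism of $U$ restricts to a homeomorphism of $\partial U$ carrying minimal position to minimal position and outermost subarcs to outermost subarcs; this is the analogue of Lemma~\ref{lem:arcsurgery-equivariance}.

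The third ingredient is the finite $\Pi_\sigma$--convex set. Fix a hyperbolic metric on $\partial U$ with geodesic representatives, let $\alpha_1,\ldots,\alpha_k\in\mathcal{D}^0(U)$ be the $H$--orbit of an arbitrary discbounding curve, let $L$ be the maximal length of their geodesic representatives, and let $\mathcal{H}$ be the set of homotopy classes of discbounding curves having a representative of length at most $2L$. This $\mathcal{H}$ is finite, $H$--invariant (the metric can be chosen $H$--invariant, or one simply notes the orbit is $H$--invariant and lengths are permuted), contains all $\alpha_i$, and --- by the same length bookkeeping as in Lemma~\ref{lem:arc-length-reduction} --- is $\Pi_{\alpha_i}$--convex: an outermost subarc $b$ cut off from $\alpha_i$ has length at most $L$, one of the two pieces of the surgered curve has length at most $L$, so the corresponding new curve has length at most $2L$ and lies in $\mathcal{H}$. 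Then Proposition~\ref{prop:fixedpoint}(ii) applies and $H$ fixes a clique in $\mathcal{D}(U)$.

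The one genuinely delicate point, flagged already in the introduction (``the proofs of dismantlability follow easily from the known proofs of contractibility for the arc and disc graph''), is the surgery step itself: after surgering a disc $D$ along an outermost arc of $D'$ one must check that the resulting curves are \emph{essential discbounding} curves on $\partial U$ --- this is where genus $\geq 2$ enters, since in a solid torus a surgered disc could fail to be essential --- and that they are honestly disjoint from $d'$, not merely from $d$; but this is exactly the content of Hatcher's surgery lemma for the disc complex and requires no new idea here. So the main obstacle is really just transcribing that classical disc-surgery input carefully; everything downstream is a formal repetition of the arc case, and I would phrase the proof so as to make that parallelism explicit.
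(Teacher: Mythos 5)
Your overall architecture is exactly the paper's: outermost surgery pairs as dismantling projections, a length-based finite hull $\mathcal{H}$ for an $H$--orbit, and Proposition~\ref{prop:fixedpoint}(ii). The hull argument (Lemma~\ref{lem:disc-hulls}), the termination axiom via intersection numbers, and equivariance are all as in the paper and unproblematic.

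However, there is a genuine gap at the one step you dismiss as ``word-for-word transcription of the argument from \cite{Sch}'', namely axiom (i) of the $\sigma$--projection (the paper's Lemma~\ref{lem:discsurgery}(i)). For arcs, the exposed vertex is found by taking an outermost piece $b$ of the geodesic representative $s$ of $\sigma$ in the complement of \emph{all} the curves of $R$ simultaneously; since $b$ avoids every $r_j$, any $r_j$ disjoint from the surgered curve's partner is automatically disjoint from $r\cup b$. For discs this argument breaks down: an admissible outermost subarc $a\subset\partial D$ is not an arbitrary piece of $s$ cut out by the curve pattern on $\partial U$ --- it must bound a subdisc of $D$ together with a component of the \emph{disc} intersection $D\cap D^1$, and this three-dimensional intersection pattern is not determined by geodesic representatives of the boundary curves (the paper explicitly warns that minimal position of discs is not unique). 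Consequently you cannot choose $a$ to be disjoint from all the other curves of $R$, and a curve $r_j$ disjoint from $r^1$ may well cross the interior of $a$, so your one-step disjointness claim fails. The paper repairs this with an iterative argument: pick some $r^1$ meeting $s$, choose discs $D(r')$ for the curves $r'$ disjoint from $r^1$ so that they avoid $D^1$ and are in minimal position with $D$; then their outermost subarcs in $D$ are disjoint from $a$, contain $a$, or are nested strictly inside $a$, and in the last case one replaces $r^1$ by the offending $r^2$ and repeats, terminating because the representatives intersect in finitely many points. This nesting/induction step is the genuinely new content in the disc case and your proposal does not supply it (flagging instead only the essentialness of the surgered curves, which is indeed standard --- though the relevant contractibility proof is McCullough's \cite{McC}, not Hatcher's). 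With Lemma~\ref{lem:discsurgery}(i) proved in this way, the rest of your argument goes through as written.
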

As an immediate consequence we obtain the following.
\begin{cor}
\label{cor:reducib}
  Every finite order element of the handlebody group is reducible.
\end{cor}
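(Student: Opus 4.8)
The plan is to deduce this directly from Theorem~\ref{thm:fixed_disc} by applying it to the cyclic group generated by a finite order element. First I would let $h\in\mathrm{Map}(U)$ be an element of finite order $n$, where $U$ is a connected handlebody of genus $g\geq 2$, and set $H=\langle h\rangle$, a finite cyclic subgroup of $\mathrm{Map}(U)$. By Theorem~\ref{thm:fixed_disc}, the group $H$ fixes a clique $\Delta\subset\mathcal{D}^0(U)$ in the disc graph. Fixing a clique means that $h$ permutes the (finitely many, pairwise disjoint) homotopy classes of discbounding curves in $\Delta$; it does not immediately mean $h$ fixes an individual curve. To recover an invariant multicurve one realises $\Delta$ by a collection $D$ of pairwise disjoint essential discs in $U$: the curves $\partial D$ form a nonempty $H$--invariant system of disjoint essential simple closed curves on $\partial U$. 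Since $g\geq 2$ such a curve is essential and noncontractible, so this is a genuine reducing system.

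The next step is to argue that the existence of an $H$--invariant, hence $h$--invariant, nonempty collection of disjoint essential simple closed curves on $\partial U$ is exactly the definition of $h$ being \emph{reducible} as an element of the handlebody group (equivalently, as a mapping class of $U$). I would note that the isotopy class of the multicurve $\partial D$ on $\partial U$ is preserved by $h$, and since $h$ has finite order one may in fact realise $h$ by a homeomorphism that preserves this multicurve setwise (this uses that a finite cyclic group of mapping classes fixing a clique can be realised fixing a representative system of disjoint curves, cf.\ the use of \cite[Thm~A.3]{Bus} in the proof of Theorem~\ref{thm:Nielsen} under hypothesis (B), or simply the fact that the isotopy class being preserved suffices for the standard definition of reducibility). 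Either way, $h$ is reducible.

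Finally, for the disconnected case, if $U$ is a disjoint union of handlebodies each of genus $\geq 2$, a finite order element $h$ of $\mathrm{Map}(U)$ permutes the components; some power $h^k$ preserves each component $U_i$ and restricts there to a finite order mapping class, which is reducible by the connected case, and a curve permuted among components already gives a reducing system in the general sense. Thus every finite order element of the handlebody group is reducible. The only mild subtlety --- the step I would flag --- is passing from ``$H$ fixes a clique'' (a permutation of curves) to ``$h$ is reducible'' (preservation of a multicurve isotopy class): but this is immediate once one realises the clique by disjoint discs, since the isotopy class of the resulting multicurve is what $h$ preserves, and that is precisely what reducibility demands.
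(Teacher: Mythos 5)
Your proposal is correct and follows the paper's own route: the paper deduces the corollary as an immediate consequence of Theorem~\ref{thm:fixed_disc} applied to the cyclic group generated by the element, exactly as you do, since a fixed clique realised by disjoint discs yields an invariant isotopy class of a nonempty essential multicurve on $\partial U$, which is the definition of reducibility. Your extra remarks about realising $h$ by a homeomorphism preserving the multicurve are not needed (preserving the isotopy class suffices), but they do no harm; the paper also notes an alternative argument via the Lefschetz fixed point theorem and contractibility of the disc complex, which you did not use.
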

Corollary~\ref{cor:reducib} also follows from the Lefschetz fixed point
theorem, since the disc complex is contractible \cite[Thm~5.3]{McC}.

\subsection{Disc surgery}
In this section we describe the surgery procedure which will be used to
define dismantling projections on the disc graph. This surgery procedure was used in \cite{McC} to prove contractibility of the disc complex.

Let $D$ and $D'$ be two transverse essential discs in $U$. We say that $D$ and
$D'$ are in \emph{minimal position}, if the boundary curves $d = \partial D$ and $d' = \partial D'$ are in minimal position, and
$D$ and $D'$ intersect only along arcs.

For any pair of discbounding curves $d$ and $d'$ in minimal position, it is always possible to find discs $D$ and $D'$ in minimal
position with $\partial D = d$, $\partial D' = d'$.
However, at this point we want to warn the reader that the minimal
position of discs is not unique: in particular, the homotopy
classes of $d$ and $d'$ do not determine which
intersection points of $d$ and $d'$ are connected by
an intersection arc of $D\cap D'$.

\medskip
Let now $D$ and $D'$ be two essential discs in minimal position, and
let $d$ and $d'$ denote their boundary curves. Assume that $d$ and
$d'$ intersect. We then say that a subarc $a$ of $d$ is
\emph{outermost in $D$ for $D'$}, if $a$ bounds a disc in
$D$ together with a component
of $D \cap D'$, and the interior of $a$ is disjoint from $D'$.
The endpoints of $a$ decompose $d'$ into two subarcs $a'_+$ and
$a'_-$. The unions $d'_+=a \cup a'_+$ and $d'_-=a \cup a'_-$ are simple closed
curves on $\partial U$, both of which are discbounding by
construction. See e.g. \cite[Sec~5]{HH11} or \cite{Ma86} for a detailed description of
this surgery procedure. Since $d$ and $d'$ are in minimal position, both $d'_+$ and $d'_-$ are essential. We say that the
discbounding curves $d'_+$ and $d'_-$ are obtained by
\emph{outermost surgery of $d'$ in direction of $d$ determined by $a$.}

Note that the homotopy classes $\delta'_+$ of $d'_+$ and $\delta'_-$ of $d'_-$ depend only on the homotopy classes $\delta$ of $d$ and $\delta'$ of $d'$, and the choice of an outermost subarc.
We call the pair $\{\delta'_+, \delta'_-\}$ an \emph{outermost surgery pair
of $\delta'$ in direction of $\delta$}.  Note that there are only finitely many
outermost surgery pairs for any choice of discbounding homotopy
classes $\delta$ and $\delta'$.

If $\delta\neq \delta'$ have disjoint representatives, then we say that the only \emph{outermost surgery pair
of $\delta'$ in direction of $\delta$} is
$\{\delta,\delta\}$, interpreted as a pair
both of whose elements are equal to $\delta$.
For homotopy classes $\delta,\delta'\in\mathcal{D}^0(U)$ of
discbounding curves we define $\Pi_\delta(\delta')$ to be the set of
all outermost surgery pairs of $\delta'$ in direction of $\delta$.

\medskip
The next lemma shows that $\Pi_\sigma$ satisfies the axioms of a
$\sigma$--projection for each $\sigma\in\mathcal{D}^0(U)$.
This lemma essentially follows from the proof of \cite[Thm~5.3]{McC}.
\begin{lem}\label{lem:discsurgery}
  Let $\sigma\in\mathcal{D}^0(U)$ be a discbounding curve on $\partial U$.
  \begin{enumerate}[(i)]
  \item Let $R \subset \mathcal{D}^0(U)$ be a finite set of
    discbounding curves with $R \setminus\{\sigma\}\neq\emptyset$. Then
    there is a discbounding curve $\rho \in R\setminus\{\sigma\}$ with the
    following property: there is an outermost surgery pair
    $\{\rho_+,\rho_-\}$ of $\rho$ in direction of $\sigma$ such that
    each discbounding curve $\rho'\in R$ which is disjoint from $\rho$
    is also disjoint from $\rho_+$ and $\rho_-$.
  \item Every sequence $(\rho_i)$ of discbounding curves in
    $\mathcal{D}^0(U)$ such that $\rho_{i+1}$ is contained in an
    outermost surgery pair of $\rho_i$ in direction of $\sigma$
    terminates after finitely steps with the discbounding curve $\sigma$.
  \end{enumerate}
\end{lem}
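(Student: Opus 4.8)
The plan is to mimic the proof of Lemma~\ref{lem:arcprojection}, but with the complication that discs meeting in minimal position need not do so in a *unique* pattern of intersection arcs, so a little extra care is required in choosing representatives.

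First I would set up preferred representatives. Fix an auxiliary hyperbolic metric on $\partial U$ so that every homotopy class of essential simple closed curve has a unique shortest geodesic representative, and any two such are in minimal position on the surface. Given the finite set $R = \{\rho_1,\ldots,\rho_k\}$ and $\sigma$, realise $\partial\rho_i$ and $\partial\sigma$ by these geodesics $d_1,\ldots,d_k$ and $s$. The subtlety, as the text warns, is that this does not pin down the disc intersections, so I would pick the discs all at once: choose essential discs $D_1,\ldots,D_k$ and $S$ with the prescribed geodesic boundaries and with $D_i \cap D_j$, $D_i\cap S$ consisting only of arcs, all in general position (this is the standard fact recalled in the paper). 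One should also arrange (again standard, using minimal position on the surface and innermost/outermost arc exchanges) that no $D_i$ is "trivial" in the sense of having an intersection arc cutting off a disc disjoint from the others — but in fact for part (i) we do not need global niceness, only an outermost-arc argument.

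For part (i), look at $S \cap (D_1 \cup \cdots \cup D_k)$ inside $S$: this is a collection of disjoint arcs in the disc $S$. Take an arc $c$ of this collection that is \emph{outermost in $S$}, i.e.\ cuts off a subdisc $S_0 \subset S$ whose interior is disjoint from all the $D_i$. Then $c \subset D_j$ for some $j$, and $c$ is an outermost arc in $D_j$ for that $D_j$ (its interior misses every other disc, in particular $D_j$ along any other intersection arc — here one checks that outermostness in $S$ forces the cutting off disc to be a genuine outermost disc for $D_j$). Surgering $\partial\rho_j = d_j$ along $c$ in the direction of $s = \partial\sigma$ produces curves $\rho_{j,\pm}$ whose representatives lie in $d_j \cup (\text{a subarc of } s)$, more precisely in $d_j \cup \partial S_0$. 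Now if $\rho_i \in R$ is disjoint from $\rho_j$, realise it by $\partial D_i$; since $\partial D_i$ is disjoint from $d_j$ and from $\mathrm{int}(S_0)\subset \mathrm{int}(S)$ (the latter because $S_0$'s interior meets no $D_i$), it is disjoint from both $\rho_{j,+}$ and $\rho_{j,-}$. Essentiality of $\rho_{j,\pm}$ is guaranteed because $d_j$ and $s$ are in minimal position on $\partial U$. This $\rho = \rho_j$ is the required exposed vertex. Part (ii) is immediate: each outermost disc surgery strictly decreases the number of intersection arcs $|D \cap S'|$, equivalently it strictly decreases the geometric intersection number $i(\partial\sigma,\partial\rho_i)$, which is a nonnegative integer; hence any sequence of surgeries terminates, and it can only terminate at a curve disjoint from $\sigma$, which by the convention is $\sigma$ itself.

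The main obstacle I anticipate is the disc-level bookkeeping: verifying that an arc of $S\cap \bigcup D_i$ that is outermost \emph{in $S$} is genuinely an outermost arc \emph{in the disc $D_j$ that contains it} — that is, that the cutting off disc in $D_j$ has interior disjoint from $D_j$'s other intersection arcs. This is exactly the point where one must have chosen the disc family in general position and invoke that minimal position of the boundary curves together with an innermost-disc argument removes spurious intersections; it is the step that makes this lemma "essentially follow from the proof of \cite[Thm~5.3]{McC}" rather than being literally identical to the arc case.
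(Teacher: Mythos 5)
Your part (ii) is fine and matches the paper's argument, but part (i) has a genuine gap at its very first step: the set $S\cap(D_1\cup\dots\cup D_k)$ is \emph{not} a collection of disjoint arcs in the disc $S$, and an arc of it that is ``outermost in $S$'' in your sense need not exist. The components of $S\cap D_i$ are disjoint chords for each fixed $i$, but chords coming from different discs $D_i, D_j$ can cross inside $S$ (at points of $D_i\cap D_j\cap S$), and in general this cannot be avoided by any choice of the discs: if the endpoints of a chord of $S\cap D_i$ and a chord of $S\cap D_j$ alternate around $\partial S$ --- which happens whenever the corresponding curves of $R$ intersect each other in a suitable pattern --- then the two chords must cross no matter which discs with the given geodesic boundaries you pick. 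In such a configuration there is simply no chord cutting off a subdisc $S_0$ whose interior misses all the $D_i$, so your choice of $c$ (and hence of $\rho_j$ and of the arc $a=\partial S_0\cap\partial S$) is not available. This is precisely the failure of a canonical simultaneous minimal position for discs that the paper warns about; the pairwise statement you invoke (two discs with boundaries in minimal position can be put in minimal position) does not give a single general-position family in which the traces on $S$ form a disjoint chord system. Note also that what the lemma needs is weaker --- only the curves of $R$ \emph{disjoint from} the chosen $\rho_j$ must miss the surgered curves --- but exploiting this requires choosing the discs of those curves disjoint from $D_j$ (a condition relative to $j$, not achievable once and for all), which your ``choose all discs at once'' setup does not provide.

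The paper's proof is structured exactly to circumvent this: it is iterative rather than simultaneous. One picks a single curve $r^1\in R$ meeting $s$, a disc $D^1$ for it in minimal position with a disc $D$ bounded by $s$, and an outermost subarc $a\subset\partial D$ for $D^1$ alone; then, for the curves of $R$ disjoint from $r^1$, one chooses discs disjoint from $D^1$ (this pairwise disjointness is achievable) and in minimal position with respect to $D$, so their outermost subarcs are nested with or disjoint from $a$. If none is strictly contained in $a$, the interior of $a$ misses all those discs and $r^1$ works; otherwise one replaces $r^1$ by the offending curve $r^2$ and repeats with a strictly smaller arc, and the process terminates because the geodesic representatives of the elements of $R$ intersect $s$ in finitely many points. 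Your proposal skips this induction by assuming a globally outermost arc exists, which is the missing (and false, in general) step; repairing it leads essentially to the argument of the paper (following \cite{McC}).
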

Again, we will use hyperbolic geometry to choose preferred
representatives in the homotopy class of discbounding curves. To this
end we fix a hyperbolic metric on $\partial U$.
\begin{proof}
  \begin{enumerate}[(i)]
  \item Note that if each element of $R$ can be realised
    disjointly from $\sigma$ then the assertion follows trivially.

    Otherwise, let $s$ be the geodesic representative of $\sigma$ and let
    $r^1$ be the geodesic representative of an arbitrary
    element in $R$ intersecting $s$.
    Choose an
    embedded disc $D^1$ in $U$ bounded by
    $r^1$ and an embedded disc $D$ bounded by $s$ which are in minimal
    position.
    Let $a\subset \partial D$ be an outermost subarc in $D$ for
    $D^1$. Let $N$ be the set of geodesic representatives of all the elements
    in $R$ which are disjoint from $r^1$.
    For each $r' \in N$ we may choose a disc $D(r')$ bounded by $r'$
    which is disjoint
    from $D^1$ and in minimal position with respect to $D$.
    Note that this means that outermost subarcs in $D$ for
    the discs $D(r')$ are either disjoint from $a$, contain $a$, or are
    contained in $a$. We distinguish two cases.

    If all outermost subarcs in $D$ for the discs $D(r'), r' \in N$ are
    disjoint from $a$ or contain
    $a$, then both discbounding curves obtained by the outermost surgery pair determined by $a$ are
    disjoint from the curves in $N$, and therefore the homotopy class
    $\rho^1$ of $r^1$ satisfies the condition in assertion (i).

    On the other hand, assume that there is a disc $D^2=D(r^2)$ bounded by $r^2 \in
    N$ which has an outermost subarc that is contained in $a$. In this
    case, we replace $r^1$ by $r^2$, and inductively
    apply the same argument again.
    Since the geodesic representatives of all elements in $R$
    intersect in finitely many points, this
    process stops with the first case after finitely many steps with a $\rho^k$
    satisfying the condition in assertion (i).
  \item The proof of this is identical to the proof of
    Lemma~\ref{lem:arcprojection}(ii). Namely, each outermost
    surgery strictly decreases the geometric intersection number with
    $\sigma$.
  \end{enumerate}
\end{proof}
The fact that homeomorphisms of $U$ map outermost subarcs to outermost
subarcs immediately implies the following.
\begin{lem}\label{lem:discsurgery-equivariance}
  For every $h \in \mathrm{Map}(U)$ we have
  $$h\Pi_\sigma(\rho) = \Pi_{h\sigma}\left(h\rho\right)$$
  for all discbounding curves $\sigma\neq\rho \in \mathcal{D}^0(U)$.
\end{lem}

\subsection{Finite hull for discs}

Let $\delta_1,\ldots,\delta_k\in \mathcal{D}^0(U)$ be a finite set of discbounding
curves. For a homotopy class $\alpha$ of an essential simple closed curve on $\partial U$, we denote by $l_{\partial U}(\alpha)$
the length of the geodesic representative of $\alpha$ on $\partial U$. Put
$$L = \max_i l_{\partial U}(\delta_i).$$
Let $\mathcal{H}=\mathcal{H}(\delta_1,\ldots,\delta_k)$ be the set of discbounding curves
$\delta$ which satisfy $l_{\partial U}(\delta) \leq 2L$.
Note that $\mathcal{H}(\delta_1,\ldots,\delta_k)$ is a finite set of curves
which contains $\delta_1,\ldots,\delta_k$.

The following lemma shows that $\mathcal{H}$ is $\Pi_\sigma$--convex
for each $\sigma=\delta_i, i=1,\ldots,k$.
\begin{lemma}\label{lem:disc-hulls}
  Let $\delta_1,\ldots,\delta_k\in\mathcal{D}^0(U)$ be a finite set of
  discbounding
  curves. Put $\delta = \delta_i$ for some $i=1,\ldots, k$, and let $\delta'
  \in \mathcal{H}=\mathcal{H}(\delta_1,\ldots,\delta_k)$ be arbitrary. Then
  each outermost surgery pair of $\delta'$ in direction of $\delta$
  contains at least one element of $\mathcal{H}$.

\end{lemma}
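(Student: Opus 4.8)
The plan is to run the same elementary length count as in Lemma~\ref{lem:arc-length-reduction}, the only new ingredient being a short argument that lets us work with geodesic boundary curves despite the non-uniqueness of minimal position of discs. As a preliminary step, if $\delta'$ admits a representative disjoint from $\delta$, then the only outermost surgery pair of $\delta'$ in direction of $\delta$ is $\{\delta,\delta\}$, and $\delta=\delta_i\in\mathcal{H}$ by construction of $\mathcal{H}$, so there is nothing to prove; hence I may assume that $\delta$ and $\delta'$ intersect.

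Next I would fix an arbitrary outermost surgery pair $\{\delta'_+,\delta'_-\}$ of $\delta'$ in direction of $\delta$. By definition it is produced from some essential discs $D,D'$ in minimal position, with boundary curves $d=\partial D$ and $d'=\partial D'$, together with an outermost subarc $a\subset d$ in $D$ for $D'$; the endpoints of $a$ split $d'$ into subarcs $a'_+,a'_-$, and $d'_\pm=a\cup a'_\pm$. The key claim is that we may take $d$ and $d'$ to be the geodesic representatives of $\delta$ and $\delta'$ on $\partial U$. Indeed, two transverse essential simple closed curves are in minimal position precisely when they bound no bigon, and any two such configurations realising the homotopy classes $\delta$ and $\delta'$ are related by an ambient isotopy of $\partial U$; this isotopy extends to an isotopy of $U$ through a collar of $\partial U$, and transporting $(D,D',a)$ along it changes neither the homotopy classes $\delta'_\pm$ nor the property of $a$ being an outermost subarc. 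So I may assume $d$ and $d'$ are geodesic.

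With geodesic boundary curves the estimate is the same three lines as in the arc case. Since $a$ lies on the geodesic $d$, its length is at most $l_{\partial U}(\delta)=l_{\partial U}(\delta_i)\le L$. The endpoints of $a$ lie on $d'$, so the lengths of $a'_+$ and $a'_-$ sum to $l_{\partial U}(\delta')$, which is at most $2L$ because $\delta'\in\mathcal{H}$; hence one of them, say $a'_+$, has length at most $L$. Therefore $d'_+=a\cup a'_+$ has length at most $2L$, so $l_{\partial U}(\delta'_+)\le 2L$ and $\delta'_+\in\mathcal{H}$, as required.

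I expect the only genuine obstacle to be the reduction to geodesic boundary curves: one must be sure that, as $D$ and $D'$ range over all discs in minimal position with fixed geodesic boundaries, one still recovers every outermost surgery pair, which is exactly what uniqueness of minimal position up to ambient isotopy provides. Everything after that point is formally identical to the proof of Lemma~\ref{lem:arc-length-reduction}.
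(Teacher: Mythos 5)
Your proof is correct and follows essentially the same route as the paper: realise the boundary curves geodesically and run the $L$/$2L$ length count exactly as in Lemma~\ref{lem:arc-length-reduction}. The only difference is that you spell out the reduction to geodesic representatives (uniqueness of minimal position up to ambient isotopy, extended over a collar of $\partial U$), which the paper uses implicitly by working with geodesic representatives from the start; this is a reasonable point to make explicit given the stated non-uniqueness of minimal position of the discs themselves.
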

\begin{proof}
  Denote by $d$ the geodesic representative of $\delta$, and by $d'$
  the geodesic representative of $\delta'$. We can assume that $d$ and $d'$ intersect.
  Let $a\subset d$ be an outermost subarc.
  By definition of $L$, the subarc $a$ has length at most $L$.

  The endpoints of $a$ decompose $d'$ into two subarcs $a'_+$ and
  $a'_-$. Since $d'$ has length at most $2L$, at least one of these
  subarcs has length less or equal to $L$, say $a'_+$. Then
  $d'_+=a \cup a'_+$ has length at most $2L$. In particular, the homotopy
  class of
  $d'_+$ is contained in $\mathcal{H}$ as required.
\end{proof}

As a consequence of Lemmas~\ref{lem:discsurgery}, \ref{lem:disc-hulls}
and \ref{lem:discsurgery-equivariance} we obtain Theorem~\ref{thm:fixed_disc}
from Proposition~\ref{prop:fixedpoint}(ii) in the same
way that we obtained Theorem~\ref{thm:fixed_arc}.

\section{Nielsen realisation for handlebodies}
\label{sec:Nielsen_for_handle}

\subsection{Fixed simple disc system}
Let $\{d_1,\ldots,d_k\}$ be a finite set of discbounding curves
which are pairwise disjoint. We say that $\{d_1,\ldots,d_k\}$ is
a \emph{simple disc system}, if each complementary component of
$d_1 \cup \dots \cup d_k$ on $\partial U$ is a \emph{bordered}
$2$--sphere, i.e.\ a $2$--sphere minus open discs. Equivalently,
the complementary components of disjoint discs bounded by
$d_1,\ldots,d_k$ in $U$ are simply connected.
A clique $\Delta \subset \mathcal{D}^0(U)$ is called \emph{simple},
if some (hence any) set of disjoint representatives of elements in $\Delta$ is
a simple disc system.

In this section we use the results obtained in
Section~\ref{sec:fixdisc} to show the following.
\begin{thm}\label{thm:simple-fixed}
\label{thm:fixed_disc_system}
Let $U$ be a connected handlebody of genus $g \geq 2$. Let $H$ be a finite subgroup of $\mathrm{Map}(U)$. Then $H$ fixes a
   simple clique in the disc graph $\mathcal{D}^0(U)$.
\end{thm}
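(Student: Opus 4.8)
The plan is to mimic the proof of Theorem~\ref{thm:fixed_arc_system}, replacing arcs with discbounding curves and boundary components of $\overline{X}_0$ with curves of the partial disc system. Starting from Theorem~\ref{thm:fixed_disc}, $H$ fixes some clique $\Delta \subset \mathcal{D}^0(U)$; choose $\Delta$ maximal among $H$--fixed cliques and realise it by a set $D$ of disjoint discs, with boundary curves $d_1,\ldots,d_k$ on $\partial U$. Using \cite[Thm~A.3]{Bus} (as in the proof of Theorem~\ref{thm:Nielsen} under hypothesis (B)) choose representatives of the elements of $H$ fixing $D$ setwise, satisfying the composition rule up to isotopy fixing $D$. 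I would show that $D$ is a simple disc system by contradiction: if not, cutting $U$ along $D$ leaves at least one complementary handlebody piece that is not a ball, i.e.\ has positive genus.

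Next I would set up the ambient space. Let $U'$ be the handlebody obtained from $\overline{U - D}$ by collapsing the boundary spheres arising from the cut discs — more precisely, $\overline{U-D}$ is a (disconnected) handlebody whose boundary surface $\partial \overline{U-D}$ carries the doubled copies of the $d_i$; we keep track of the components that are not balls. As in Section~\ref{sec:Nielsen}, $H$ maps into $\mathrm{Map}(U')$, and by Theorem~\ref{thm:fixed_disc} (applied to the positive-genus components; the genus-$1$ components, handlebodies whose boundary is a torus, contain essentially a unique discbounding curve and need to be handled as a degenerate case just like the zero-Euler-characteristic surfaces in Theorem~\ref{thm:fixed_arc_system}) there is an $H$--invariant clique $\Delta' = \{\delta'_1,\ldots,\delta'_m\}$ in $\mathcal{D}^0(U')$. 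Now I would pull these back: each $\delta'_i$ lifts to a discbounding curve on $\partial \overline{U-D}$ that is uniquely determined up to sliding its position along the boundary curves $d_j$ it runs over — but here is the key difference from the arc case. A curve $\delta'_i$ on $\partial U'$ may cross the collapsed spheres several times, and each intersection with a cut disc can be resolved by a band running along (a subarc of) the corresponding $d_j$. So the lift is parametrised not by two real coordinates per arc-end but by one real ``twisting'' coordinate for each passage through a cut disc, i.e.\ by a point in some $\R^N$; twisting a curve by a full loop around $d_j$ at one passage is a Dehn twist along $d_j$ realised in $U$.

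The heart of the argument, then, is the convexity statement: the set $Z \subset \R^N$ of parameters for which the resolved curves can be realised simultaneously disjointly and simply is a nonempty (it contains the ``zero-twisting'' point) convex set, and $H$ acts on it by isometries that are compositions of translations and coordinate permutations, hence fixes a point (centre of mass of an orbit). Two self/mutual-intersections of resolved curves are governed, exactly as in Theorem~\ref{thm:fixed_arc_system}, by linear inequalities of the form $x \le y \le x+1$ comparing the twisting parameters at coinciding passages, which is what gives convexity; I would need to verify that resolving an intersection along a band of the cut disc introduces no new obstruction beyond these linear constraints, using that distinct cut discs are disjoint so twists along them are independent (for positive-genus pieces; for genus-$1$ pieces one quotients by the $\R$--actions coming from the ``slope'' ambiguity, exactly as in the zero-Euler-characteristic case). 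Having fixed $z \in Z$, push the twisting parameters up by the smallest nonnegative amounts $\varepsilon$ so that the resolved curves become honest discbounding curves on $\partial U$ disjoint from $D$ — this $z+\varepsilon$ remains $H$--invariant — and then $\Delta \cup \Delta^*(z+\varepsilon)$ is an $H$--fixed clique strictly larger than $\Delta$, contradicting maximality. The main obstacle I anticipate is making the band-resolution bookkeeping precise enough to see that convexity genuinely reduces to the same linear constraints as in the surface case, since discs in minimal position intersect in arcs whose endpoints are not canonically paired; I would handle this by working with the \emph{boundary} curves and their resolutions on $\partial U$, where the arc-surgery picture of Section~\ref{sec:fixarc} applies verbatim, rather than with the $3$--dimensional discs.
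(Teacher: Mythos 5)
Your overall strategy (maximal fixed clique $\Delta$, cut along it, get an invariant clique $\Delta'$ in the cut handlebody, then choose an invariant way of pulling $\Delta'$ back to $\partial U$ to enlarge $\Delta$) matches the paper's skeleton, but the step where you pull back is where your argument has a genuine gap. The claim that a curve $\delta'_i$ on $\partial U'$ lifts to $\overline{X}_0=\overline{\partial U - \bigcup d_j}$ ``uniquely up to sliding its position along the boundary curves $d_j$'' is false for closed curves. The set of homotopy classes in $\overline{X}_0$ projecting to a fixed class on $X'$ (the set $\mathcal{F}(\delta'_i)$, in the paper's notation) is an orbit of the kernel of $\mathrm{Map}(\overline{X}_0)\to\mathrm{Map}(X')$, and when one caps boundary circles with \emph{unmarked} discs this kernel is the disc-pushing subgroup (point-pushing plus boundary twists), not just the twists along the $d_j$. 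This is exactly the difference from Theorem~\ref{thm:fixed_arc_system}: there the lifted objects are arcs ending at the collapsed points, so the fiber of lifts really is the orbit of boundary twists and is swept out by sliding endpoints, which is what makes the $\R^{2k}$--parametrisation both exhaustive and $H$--invariant. For closed curves your proposed family ``base lift plus one winding per passage'' does not exhaust the fiber, so an element of $H$ stabilising $\delta'_i$ can carry your base lift to a lift outside the family; equivariance of the parameter space, which is the engine of the centre-of-mass argument, then fails precisely at the stabilisers, i.e.\ at the heart of the problem. Two further symptoms of the same conflation: a Dehn twist along $d_j$ changes the winding at \emph{all} passages of a curve through $d_j$ simultaneously, so ``twisting at one passage'' is not realised by a twist (it is a band slide/disc push); and since the band endpoints are pinned at the fixed intersection points with $d_j$ rather than sliding freely, the parameters are genuinely integral windings, so the linear-constraint convexity of the arc case does not transfer as claimed, and the rounding step $z\mapsto z+\varepsilon$ has no analogue.

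The paper resolves exactly this difficulty by a different device: since $\Delta$ is $H$--invariant, Theorem~\ref{thm:Nielsen} under hypothesis (B) (already proved at that point) supplies a hyperbolic metric on $\partial U$ for which $H$ acts by isometries. One then considers \emph{all} lifts $\mathcal{F}(\Delta')$ of the invariant clique $\Delta'\subset\mathcal{D}^0(U')$, notes that each lift is discbounding in $U$, and takes $\Delta^*$ to be the set of lifts of minimal hyperbolic length; this choice is canonical, hence automatically $H$--invariant, with no parametrisation needed. Pairwise disjointness of the minimal-length lifts is then proved by a cut-and-paste argument: two intersecting geodesic lifts that become disjoint after capping bound, together with boundary components of $\overline{X}_0$, a bordered sphere, and swapping the shorter subarc for the longer produces a broken geodesic in $\mathcal{F}(\Delta')$ of length at most $L$, whence a geodesic of length strictly less than $L$, a contradiction. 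If you want to salvage your approach you would need an $H$--invariant selection from the full disc-pushing orbit $\mathcal{F}(\delta'_i)$, which is essentially what the length-minimisation accomplishes; the per-passage winding coordinates alone cannot do it.
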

Before we give a proof, we note the following corollary.
\begin{cor}
\label{cor:char_conj_into_handlebody}
  Let $U$ be a connected handlebody of genus $g \geq 2$. A finite order
  mapping class $h\in \mathrm{Map}(\partial U)$ is conjugate into the handlebody
  group $\mathrm{Map}(U)$ if and only if $h$ fixes a set of homotopy classes of disjoint essential simple closed curves on $\partial U$ all of whose complementary components
  are bordered $2$--spheres.
\end{cor}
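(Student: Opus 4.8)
The statement to prove is Corollary~\ref{cor:char_conj_into_handlebody}: a finite order mapping class $h\in\mathrm{Map}(\partial U)$ is conjugate into $\mathrm{Map}(U)$ if and only if $h$ fixes a set of homotopy classes of disjoint essential simple closed curves on $\partial U$, each of whose complementary components is a bordered $2$--sphere. The plan is to prove the two implications separately. The \emph{only if} direction is the easy one: if $h$ is conjugate into $\mathrm{Map}(U)$, then after replacing $h$ by a conjugate we may assume $h\in\mathrm{Map}(U)$ itself; since $\langle h\rangle$ is a finite subgroup of $\mathrm{Map}(U)$, Theorem~\ref{thm:fixed_disc_system} provides an $\langle h\rangle$--invariant simple clique $\Delta\subset\mathcal D^0(U)$, and the boundary curves of a set of disjoint representatives of $\Delta$ give exactly the required $h$--invariant family of curves with bordered-sphere complementary components. (One should note that conjugating $h$ merely transports the invariant curve family by a homeomorphism of $\partial U$, which is harmless since the conclusion is stated up to homotopy and up to the action of $\mathrm{Map}(\partial U)$.)

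\textbf{The \emph{if} direction.} Suppose $h$ has finite order and fixes the set of homotopy classes of a family $\{c_1,\dots,c_k\}$ of disjoint essential simple closed curves on $\partial U$ whose complementary components are all bordered $2$--spheres. The goal is to produce a homeomorphism of $\partial U$ conjugating $h$ into $\mathrm{Map}(U)$, i.e.\ to realise $h$ by a homeomorphism that extends over some handlebody with boundary $\partial U$. First I would realise the curves concretely as a disjoint family $C=c_1\cup\dots\cup c_k$ on $\partial U$ and, using \cite[Thm~A.3]{Bus} exactly as in the proof of Theorem~\ref{thm:Nielsen} under hypothesis (B), choose a representative $\hbar$ of $h$ that genuinely preserves $C$ setwise (permuting the $c_i$ and satisfying the group law up to isotopy fixing $C$). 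The key structural observation is that since each complementary component of $C$ is a bordered $2$--sphere, the curves $c_1,\dots,c_k$ bound a handlebody: explicitly, cap off each bordered sphere component with discs (or equivalently glue a $3$--ball to each complementary component along its planar boundary piece, thickened), producing a compact $3$--manifold $U'$ with $\partial U'=\partial U$ in which each $c_i$ bounds an embedded disc $D_i$, and the $D_i$ cut $U'$ into balls — hence $U'$ is a handlebody. Because $\hbar$ preserves $C$ and permutes the complementary bordered spheres, $\hbar$ extends to a homeomorphism of $U'$: on each ball we extend the boundary homeomorphism (any homeomorphism of $S^2$ extends over $B^3$, and the finitely many discs $D_i$ on the boundary of a ball can be matched up combinatorially by $\hbar$ since it sends the $c_i$ to $c_j$'s). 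Identifying $U'$ with the standard genus-$g$ handlebody $U$ by any homeomorphism $\varphi$, the class $\varphi_*h\varphi_*^{-1}$ lies in $\mathrm{Map}(U)$, which is the desired conjugation.

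\textbf{Main obstacle.} The delicate point is the extension step: promoting the boundary homeomorphism $\hbar|_{\partial U'}$ to an actual homeomorphism of $U'$ \emph{of the same finite order as $h$} — or at least one that represents the mapping class — so that it genuinely lies in the handlebody group. The first issue is whether the disc family $\{D_i\}$ can be chosen $\hbar$--invariant; this requires $\hbar$ to permute the $c_i$ compatibly with disjointness, which holds by construction, but one must check that the $D_i$ can be taken disjoint and that $\hbar$ restricted to the union $\bigcup D_i$ can be arranged consistently (one extends over a maximal tree of the ``dual graph'' first, then over the remaining discs, and finally over the complementary balls). The second, more serious issue is the group law: the representatives given by \cite[Thm~A.3]{Bus} only satisfy the composition rule up to isotopy fixing $C$, so the resulting extensions over $U'$ a priori only give a homomorphism $\langle h\rangle\to\mathrm{Map}(U')$ up to isotopy — but that is exactly a subgroup of $\mathrm{Map}(U')$, so it suffices, and no genuine-action strengthening is needed for the conjugation statement. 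I expect the bordered-sphere hypothesis to be used twice and essentially: once to guarantee $U'$ is a handlebody rather than some other $3$--manifold, and once to guarantee that the complementary pieces over which we extend are balls, so that the extension exists with no obstruction.
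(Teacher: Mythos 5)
Your proof is correct and takes essentially the paper's route: the ``only if'' direction is the same appeal to Theorem~\ref{thm:fixed_disc_system} (after transporting the invariant curve system by the conjugating class), and your ``if'' direction is just the explicit version of the paper's one-line argument, which chooses $g\in\mathrm{Map}(\partial U)$ carrying the $\alpha_i$ to a simple disc system and then uses, implicitly, exactly the handle-and-ball extension you spell out to conclude that $ghg^{-1}$ lies in $\mathrm{Map}(U)$. Your remark that only a single representative preserving $C$ is needed (no equivariant strengthening from \cite{Bus} and no finite-order realisation) is right, so the ``main obstacle'' you flag is not an obstacle.
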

\begin{proof}
  If $h$ is contained in a conjugate of the handlebody group, then
  Theorem~\ref{thm:simple-fixed} yields the desired set of curves. Let now
  $h$ be a mapping class fixing a set of curves
  $\{\alpha_1,\ldots,\alpha_k\}$ as required. We may then choose a
  mapping class $g\in \mathrm{Map}(\partial U)$ such that $g(\alpha_i)$ are discbounding in
  $U$. This element $g$ then conjugates $h$ into the handlebody group
  of $U$.
\end{proof}

\begin{proof}[Proof of Theorem~\ref{thm:simple-fixed}]
  By Theorem~\ref{thm:fixed_disc} there is a clique
  $\Delta=\{\delta_1,\ldots,\delta_n\}$ in the disc graph
  $\mathcal{D}(U)$ invariant under $H$. We will argue that if $\Delta$ is
  a maximal $H$--invariant clique, then it is simple.

  As a first step, using Theorem~\ref{thm:Nielsen} with hypothesis
  (B), we find that there is a hyperbolic metric on $X=\partial U$ such
  that $H$ acts as a group of isometries on $X$.
  Denote by $d_i$ the geodesic representative of $\delta_i$. Then $H$
  acts on $X$ preserving the set $d_1\cup\dots\cup d_n$.
  Let $X_0$ be the union of all the components of $X - (d_1 \cup
  \dots \cup d_n)$ which are not bordered $2$--spheres.
  Denote by $\overline{X}_0$ the completion (with respect to the path metric) of
  $X_0$. We will prove that $\overline{X}_0$ is empty, which implies that $\Delta$ is simple.

  Suppose on the contrary that $\overline{X}_0$ is nonempty. In this case, let
  $X'$ be the surface obtained from $\overline{X}_0$ by gluing a disc
  to each boundary component. Since $\overline{X}_0$ embeds naturally into
  $X'$, we can identify $\overline{X}_0$ with a subsurface of
  $X'$.
  Since $H$ acts as a group of isometries
  on $\overline{X}_0$, it maps into the mapping class
  group of $X'$. Furthermore, $X'$ is in a natural way a (possibly
  disconnected) handlebody $U'$ (cut $U$ along discs
  bounded by $d_i$) and so $H$ maps into the handlebody group of $U'$.

  If $U'$ has genus $1$ components, then we consider the $H$--invariant clique $\Delta'\subset \mathcal{D}^0(U')$ of unique discbounding curves in all of these components. Otherwise the group $H$ fixes a clique
  $\Delta'\subset \mathcal{D}^0(U')$ by Theorem~\ref{thm:fixed_disc}.
Denote $\Delta'=\{\delta'_1, \ldots, \delta'_k\}$.

  For a homotopy class of an essential simple closed curve $\alpha'$ on $X'$
  let $\mathcal{F}(\alpha')$ be the set of all homotopy classes of
  simple closed curves $\alpha$ on $\overline{X}_0$ which are homotopic to
  $\alpha'$ as a curve on $X'$. Note that each such
  $\alpha$ is essential. Furthermore, if $\alpha'$ is discbounding in
  $U'$, then each element of $\mathcal{F}(\alpha')$ is discbounding in
  $U$. We put $\mathcal{F}(\Delta') = \mathcal{F}(\delta_1') \cup \dots \cup
  \mathcal{F}(\delta_k')$.

  Set
  $$L = \min\{l_{\overline{X}_0}(\alpha) | \ \alpha \in \mathcal{F}(\Delta') \}$$
  and
  $$\Delta^* = \{ \alpha \in \mathcal{F}(\Delta')  |\
  l_{\overline{X}_0}(\alpha) = L\}.$$
  The group $H$, seen as a subgroup
  of the mapping class group of $\overline{X}_0$, preserves $\Delta^*$.
  We claim that any two elements of $\Delta^*$ correspond to
  disjoint curves.
  This claim implies the theorem, since $\Delta^*$ can be then interpreted as a clique in $\mathcal{D}(U)$. Thus we can extend the initial clique
  $\Delta$ by the clique $\Delta^*$, contradicting maximality.

  To prove the claim, let $\alpha_1,\alpha_2$ be two elements of
  $\Delta^*$, and let $a_1$ and $a_2$ be their geodesic representatives
  on $\overline{X}_0$. Suppose that $a_1$ and $a_2$ intersect. Since $a_1$ and
  $a_2$ are homotopic to disjoint curves after gluing discs to the
  boundary components of $\overline{X}_0$, there are subarcs $c_i \subset a_i$
  such that $c_1 \cup c_2$ bounds a bordered $2$--sphere together with
  boundary components of $\overline{X}_0$ (see Figure~\ref{fig:discreduction}).
  \begin{figure}[h!]
    \centering
    \includegraphics[width=0.35\textwidth]{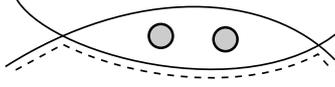}
    \caption{Intersecting preimages are not shortest}
    \label{fig:discreduction}
  \end{figure}
  Suppose that the length of the geodesic arc $c_1$ is smaller (or
  equal to) the length of $c_2$.
  In this case, the curve $\hat{a}$ obtained from $a_2$ by replacing
  $c_2$ with $c_1$ is a broken geodesic which is homotopic to $a_2$ on
  $X'$. Hence its homotopy class lies in $\mathcal{F}(\Delta')$. However,
  the length of the broken geodesic $\hat{a}$ is by construction at most
  $L$, and hence its geodesic representative has length strictly less
  than $L$. This contradicts the choice of $L$, and therefore
  proves the claim.
\end{proof}

\subsection{Fixed Schottky group}
In this section we obtain a Nielsen realisation theorem for the
handlebody group.

\begin{proof}[Proof of Theorem~\ref{thm:schottky-nielsen}]
  Let $X = \partial U$. By Theorem~\ref{thm:simple-fixed}
  there is a simple clique $\Delta=\{\delta_1,\ldots,\delta_n\}$ which
  is fixed by the group $H$. Let $\{d_1,\ldots,d_n\}$ be a set of
  disjoint representatives of the $\delta_i$ and denote by $X_1,\ldots,X_k$ the
  complementary components of the $d_i$ on $X$. For each such
  bordered $2$--sphere $X_i$, denote by $X'_i$ the punctured
  sphere obtained by replacing each boundary component of $X_i$ by a
  puncture. Let $X'$ be the disjoint union of these
  punctured spheres. Note that $H$ acts on $X'$ as a group
  of mapping classes. By Theorem~\ref{thm:Nielsen} with hypothesis (A) there is a
  complete hyperbolic metric on $X'$ such that $H$ acts as a group
  of isometries.

  The hyperbolic metric on $X'_i$ can be interpreted as a conformal structure
  on a punctured sphere. By the uniformisation theorem, it is biholomorphic to the complement of a finite
  number of points $p_i^1, \ldots, p_i^{k_i}$ on a copy
  $\hat{\C}_i$ of the Riemann sphere, with $k_i\geq 3$.
  The group $H$ acts on the union $X'$ of $X_i'\subset\hat{\C}_i$ by M\"obius
  transformations, since every conformal automorphism of $\hat{\C}$ is a M\"obius transformation.

  Consider now a round disc $B_i^j$ centred at one of the points
  $p_i^j$, which is disjoint from all other such points. Suppose that $h
  \in H$ fixes the point $p_i^j$. Since $h$ is a M\"obius
  transformation of finite order which fixes $p_i^j$ and
  permutes $k_i-1\geq 2$ other points $p^{j'}_i$, it is in fact a
  rotation about $p_i^j$ and thus preserves $B_i^j$.
 Therefore we may choose a collection of disjoint discs $B_i^j$, such that
  $B_i^j$ is centred at $p_i^j$, which is invariant under the action
  of $H$.

  We interpret each $\hat{\C}_i$ as the boundary of a copy
  $\H^3_i$ of the hyperbolic $3$--space.
  Let $\mathcal{H}(B_i^j)$ be the halfspace in $\H^3_i$ which
  is the convex hull of $B_i^j$, and let $M_i$ be the complement of the interiors of all
  $\mathcal{H}(B_i^j)$ in $\H^3_i$.
Note that the boundary components of the manifolds $M_i$ correspond to punctures of $X'$. The punctures of $X'$ have
  a natural pairing, since each marked point of $X'$
  corresponds to a side of a disc in the simple disc system $\{d_1,\ldots, d_n\}$.
  We now glue the manifolds $M_i$ along their boundaries according to
  this pairing, such that the closest projections of the points
  $p^j_i$ to the boundary planes $\partial\mathcal{H}(B_i^j)$ agree.
  By arguing as in the proof of
  Theorem~\ref{thm:Nielsen} under hypothesis (B) we can choose the
  twist parameters consistently such that $H$ acts on the resulting hyperbolic manifold $M$ as a
  group of isometries.
\end{proof}

\section{Fixed clique in the sphere graph}
\label{sec:spheres}
Let $U$ be a (possibly disconnected) handlebody and let
$W=W(U)$ be the $3$--manifold obtained by
doubling $U$ along its boundary.
The manifold $W$ is homeomorphic to the
disjoint union of iterated connected sums of $S^1 \times S^2$.
A connected component of $W$ has \emph{rank $n$} if its fundamental group has rank $n$.
By $\mathrm{Map}(W)$ we denote the
\emph{mapping class group of $W$}, i.e.\ the group of orientation preserving
homeomorphisms of $W$ up to isotopy.

A $2$--sphere embedded in $W$ is called \emph{essential}, if it does
not bound a ball.
Unless stated otherwise, we assume that spheres are embedded and
essential $2$--spheres.

We define the \emph{sphere graph} $\mathcal{S}(W)$ of $W$. Its vertex set $\mathcal{S}^0(W)$ is the set of isotopy classes
of spheres in $W$. Two vertices of $\mathcal{S}(W)$ are connected by an edge, if the
corresponding spheres can be realised disjointly.

The fundamental group of a doubled connected handlebody $W_n = W(U_n)$
of rank $n$ is the free group on $n$
generators. Hence, the mapping class group $\mathrm{Map}(W_n)$ of $W_n$
admits a homomorphism to $\mathrm{Out}(F_n)$. By a theorem of
Laudenbach \cite[{Thm~4.3, Rem~1}]{L74}, this map
is surjective and has finite kernel generated by Dehn twists along
spheres. Therefore the elements of the kernel act trivially on isotopy classes
of spheres. As a consequence, $\mathrm{Out}(F_n)$ acts as a group
of automorphisms on $\mathcal{S}(W_n)$ (see \cite{Ha} for a thorough
treatment of this graph).

We can now state the main theorem of this section.
\begin{thm}\label{thm:fixedsphere}
  Let $W$ be a (possibly disconnected) doubled handlebody, each component of which has rank $\geq 2$.
  Let $H$ be a finite subgroup of $\mathrm{Map}(W)$. Then $H$ fixes a
  clique in the sphere graph $\mathcal{S}(W)$.
\end{thm}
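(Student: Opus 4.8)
The plan is to follow exactly the template already used twice in the paper for Theorem~\ref{thm:fixed_arc} and Theorem~\ref{thm:fixed_disc}: produce a dismantling projection on the sphere graph together with a finite hull which is convex for an $H$--orbit, and then invoke Proposition~\ref{prop:fixedpoint}(ii). So the proof splits into three lemmas mirroring Lemmas~\ref{lem:discsurgery}, \ref{lem:discsurgery-equivariance} and \ref{lem:disc-hulls}. First I would set up the sphere surgery: given two essential spheres $S$ and $S'$ in $W$ in minimal position (meaning they intersect in a collection of disjoint circles, none bounding a disc in either sphere disjoint from the other), pick a circle $c$ of $S\cap S'$ which is \emph{innermost} on $S'$, i.e.\ bounds a disc $E\subset S'$ with interior disjoint from $S$. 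Cutting $S$ along $c$ and gluing in two parallel copies of $E$ produces two spheres $S_+, S_-$ which are disjoint from $S'$ (up to isotopy) and each have strictly fewer intersection circles with $S'$; if $S'$ is already disjoint from $S$ the unique surgery pair is $\{S,S\}$. This is precisely Hatcher's surgery from \cite{Ha}, so one points there. Then $\Pi_\sigma(\rho)$ is defined as the set of all such innermost surgery pairs of $\rho$ in direction of $\sigma$, which is finite.

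The axioms of a $\sigma$--projection (Definition~\ref{def:proj}) are verified as in Lemma~\ref{lem:discsurgery}. For axiom (ii) (no cycles), each innermost surgery strictly drops the geometric intersection number $i(\rho_i,\sigma)$, so any surgery sequence terminates at $\sigma$. For axiom (i) (existence of an exposed vertex in a finite set $R$), I would copy the inductive argument of Lemma~\ref{lem:discsurgery}(i) almost verbatim, replacing ``outermost subarc of a disc'' by ``innermost disc on a sphere'': choose representatives of the elements of $R$ disjoint from a fixed $\rho^1\in R$ meeting $\sigma$ and all in minimal position with $\sigma$; an innermost disc $E$ on $\sigma$ meets each such competitor-sphere either not at all, or in circles disjoint from $\partial E$, nested outside $\partial E$, or nested inside $\partial E$; if none is nested strictly inside, the surgery pair from $E$ is disjoint from all of $R\setminus\{\rho^1\}$, so $\rho^1$ is exposed; otherwise replace $\rho^1$ by a sphere realising a strictly-inside nesting and repeat — the process terminates since the total number of intersection circles with $\sigma$ is finite. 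Equivariance, $h\Pi_\sigma(\rho)=\Pi_{h\sigma}(h\rho)$, is immediate from the fact that homeomorphisms of $W$ carry innermost discs to innermost discs.

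The finite hull requires a metric substitute for ``length of a geodesic''. Since $W$ is not hyperbolic, I would instead fix a Riemannian metric on $W$ and use an area/complexity functional, or — cleaner and matching the spirit of the other two sections — pass to the normal-surface or PL viewpoint: fix a triangulation (or a system of transverse meridian/longitude spheres) and for a sphere class $\alpha$ let $w(\alpha)$ be the minimal weight (number of intersections with the $1$--skeleton, equivalently the minimal number of intersection circles with a fixed maximal sphere system) over representatives. Given a finite set $\sigma_1=\alpha_1,\dots,\alpha_k$ set $L=\max_i w(\alpha_i)$ and let $\mathcal{H}$ be the set of sphere classes of weight at most $2L$; this is finite. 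The key convexity lemma then says: for $\delta=\alpha_i$ and any $\delta'\in\mathcal H$, some element of each innermost surgery pair of $\delta'$ in direction of $\delta$ lies in $\mathcal H$. The proof is the same split-into-two-arcs argument as Lemma~\ref{lem:disc-hulls}: the innermost disc $E$ on $\delta$ has ``weight'' contribution bounded by $L$, it splits $\delta'$ into two pieces, the lighter piece has weight $\le L$, so gluing $E$ to it gives weight $\le 2L$. Combining the three lemmas with Proposition~\ref{prop:fixedpoint}(ii) applied to $\sigma=\alpha_1$ and $R=\mathcal H(\alpha_1,\dots,\alpha_k)$ for the $H$--orbit $\{\alpha_1,\dots,\alpha_k\}$ of an arbitrary sphere $\alpha_1$ yields the fixed clique.

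The main obstacle I anticipate is the remark in the introduction that ``the proofs of dismantlability follow easily from the known proofs of contractibility for the arc and disc graph, but not for the sphere graph.'' Concretely, the delicate point is axiom (i): for spheres, two essential spheres in minimal position can still intersect in several circles, and unlike arcs-on-a-hyperbolic-surface there is no canonical representative, so one must argue carefully that among the competitors some innermost-disc configuration is ``clean'' and that the replacement process terminates — i.e.\ that minimal position behaves well and that innermost surgeries do not create new intersections with the third sphere. Getting the bookkeeping of nested intersection circles right, and choosing the weight functional so that the hull argument genuinely goes through in the non-hyperbolic setting, is where the real work lies; the equivariance and termination parts are routine.
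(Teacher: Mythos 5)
Your skeleton is the right one and matches the paper: define surgery pairs $\Pi_\sigma(\rho)$ via Hatcher's disc surgery, verify the two axioms of a dismantling projection, build a finite hull out of a complexity measured against a fixed maximal sphere system (your ``weight'' is essentially the paper's \emph{width}), and feed everything into Proposition~\ref{prop:fixedpoint}(ii). But the proposal has genuine gaps exactly at the three places where the sphere case differs from the arc and disc cases, and you flag the difficulty without supplying the missing ideas. First, well-definedness: your ``minimal position'' for spheres is wrongly stated (any two intersecting spheres have an innermost disc on each with interior disjoint from the other, so your condition would force disjointness), and more importantly minimal position is not unique --- the paper warns of this already for discs --- so it is not automatic that the surgery pairs depend only on the isotopy classes. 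The paper resolves this with the normal position machinery of Section~\ref{sec:normal}, in particular the uniqueness statement Lemma~\ref{lem:uniqueness-normal-position}, proved by lifting to the universal cover; nothing in your sketch replaces this. Second, termination of the replacement process in axiom (i): your argument ``the process terminates since the total number of intersection circles with $\sigma$ is finite'' is the argument that works when all competitors have fixed canonical (geodesic) representatives. Here the representatives of the spheres in $R$ must be re-chosen at every step (disjoint from the new $r^i$, in normal position with respect to the maximal system), so there is no fixed finite configuration of circles, and a priori the sequence of isotopy classes could cycle. Ruling this out is the genuinely new ingredient of the paper, Lemma~\ref{lem:no-sphere-cycles} (nested discs plus successive disjointness plus recurrence of an isotopy class forces all intermediate spheres to be isotopic), proved via lifts and product regions in $\widetilde W$; this is precisely what the introduction means by dismantlability not following easily from the known contractibility proof for the sphere graph.

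Third, the hull convexity lemma is not the ``lighter half'' argument of Lemma~\ref{lem:disc-hulls}. The surgery circle can run through several sphere pieces of $\rho$, so the widths of the two halves of $\rho$ need not add up to the width of $\rho$ (complementary components can be crossed by both halves), and the min of the two halves' widths is therefore not obviously $\leq w$; moreover the surgered sphere must afterwards be put in normal position with respect to $S_0$, and one must know this does not increase width (the paper's Lemma~\ref{lem:system-normal-pos-width}), and one must reconcile normal position of $\rho$ and $\sigma$ with respect to $S_0$ with normal position of $\rho$ and $\sigma$ with respect to each other. The paper's Lemma~\ref{lem:sphere-normal-pos-width} does all of this in the universal cover, splitting $\widetilde r$ along the region $\widetilde W(\widetilde s)$ swept by the lift of $\sigma$ and bounding separately the part inside that region (width $\leq w$) and the outside components assigned to one of the two discs (width $\leq w$). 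So while your route is the paper's route in outline, the proof as proposed is incomplete: the normal-position uniqueness, the no-cycle lemma, and the width-based hull estimate are the substance of the paper's argument and are missing from yours.
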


By the discussion in the previous paragraph
Theorem~\ref{thm:fixedsphere} immediately implies
that if $H$ is a finite subgroup of $\mathrm{Out}(F_n)$, then it fixes a
clique in $\mathcal{S}(W_n)$.

The strategy to prove Theorem~\ref{thm:fixedsphere} will be analogous to the strategy
employed in Sections~\ref{sec:fixarc} and~\ref{sec:fixdisc}. In the cases of the arc and disc graph we used hyperbolic metrics
on surfaces to select preferred representatives of arcs and
discbounding curves. In the manifold $W$ we do not have such geometric tools.
In the following section we recall and extend the notion of \emph{normal
  position} from \cite{Ha}, which is a topological tool allowing to
put pairs of spheres in a preferred position with good properties.

\subsection{Normal position}
\label{sec:normal}

A \emph{sphere system} $S_0$ in $W$ is a collection of disjoint
spheres in $W$, no two of which are isotopic.
Note that two sphere systems are isotopic if and only if they are
homotopic by a theorem of Laudenbach (see \cite{L74}).
A sphere system is
\emph{maximal} if it is not properly
contained in another sphere system.

For a sphere system $S_0$ in $W$ we call the
(path-metric) closures of the connected components of $W-S_0$ the
\emph{complementary components of $S_0$} (in $W$).
A sphere system is maximal if and
only if each of its complementary components in $W$ is homeomorphic
to the $3$--sphere minus three open balls.
Associated to a sphere system $S_0$ is a \emph{dual graph} $\Gamma$ in
the following way. The vertex set of $\Gamma$ is the set of
complementary components of $S_0$. Each sphere $s_0\in S_0$ defines an
edge in $\Gamma$, connecting the vertices corresponding to the two
(possibly coinciding) complementary components of $S_0$ adjacent to $s_0$. Note that the graph $\Gamma$ might not be simple. In particular, it might happen that a complementary component of $S_0$ is adjacent along two distinct boundary components to the same sphere in $S_0$, and then $\Gamma$ contains a loop.

Let $s$ be a sphere which is transverse to $S_0$.
The intersection of $s$ with a complementary component of $S_0$ is a
disjoint union of bordered $2$--spheres. We call these components
\emph{sphere pieces of $s$ with respect to $S_0$}.

A sphere $s$ is in \emph{normal
  position} with respect to a maximal sphere system $S_0$ (see \cite{Ha}) if $s$ is transverse to $S_0$ and each sphere piece $P$ of $s$ satisfies the following two conditions:
\begin{enumerate}[(1)]
\item The piece $P$ meets each boundary component of a complementary
  component of $S_0$ in at most one circle.
\item The piece $P$ is not a disc which is homotopic into
  $S_0$ relative to its boundary.
\end{enumerate}
We say that a sphere piece $P$ is \emph{problematic of type (1) or
  (2)}, if it violates condition (1) or
(2) above. In that case we say that it is \emph{problematic with
  respect to $s_0\in S_0$} if the boundary component of the
complementary component on which $P$ shows the
excluded behaviour corresponds to $s_0$.
We say that a sphere system $S$ is in \emph{normal position} with
respect to $S_0$ if each sphere $s\in S$ is in normal position with
respect to $S_0$.

Normal position of spheres is unique in the following sense:
suppose that $s,s'$ are isotopic spheres which are in
normal position with respect to some maximal sphere system $S_0$.
Then by \cite[Prop~1.2]{Ha} there is a
homotopy between $s$ and $s'$ which restricts to an isotopy on
$S_0$ (Hatcher considers only the case of connected $W$, but the
disconnected case follows immediately by considering the connected components
individually). We say that sphere pieces of $s$ and $s'$ are
\emph{corresponding}, if they are mapped to each other by such a
homotopy. Correspondence identifies the sphere pieces of $s$
bijectively with the sphere pieces of $s'$.

Next, we characterise normal position using lifts to the universal
cover $\widetilde{W}$ of $W$. Since $W$ is allowed to be disconnected,
we define $\widetilde{W}$ to be the disjoint union of the universal
covers of the components of $W$.
Note that since spheres are
simply connected, every sphere in $W$ lifts homeomorphically to
$\widetilde{W}$.
The definitions of sphere systems, complementary components, sphere
pieces and normal position extend verbatim to spheres in $\widetilde{W}$.
\begin{lem}\label{lem:characterizing-normal-position-systems}
  Let $S_0$ be a maximal sphere system in $W$ and let $s$ be a sphere in $W$. Denote
  by $\widetilde{S}_0$ the full preimage of $S_0$ in $\widetilde{W}$.
  Then the following are equivalent:
  \begin{enumerate}[(i)]
  \item The sphere $s$ is in normal position with respect to $S_0$.
  \item One (and hence any) lift $\widetilde{s}$ of $s$ is in normal position with
    respect to $\widetilde{S}_0$.
  \item \begin{enumerate}[(1)]\item One (and hence any) lift $\widetilde{s}$ of $s$ intersects each component of
    $\widetilde{S}_0$ in at most one circle, and
    \item there is no disc in $\widetilde{s}$ homotopic relative to its boundary into
    $\widetilde{S}_0$.
  \end{enumerate}
  \end{enumerate}
\end{lem}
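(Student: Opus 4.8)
The plan is to establish the cycle of implications (i) $\Leftrightarrow$ (ii) and (ii) $\Leftrightarrow$ (iii), exploiting that spheres are simply connected so that each sphere in $W$ or $\widetilde W$ lifts homeomorphically, and that the covering projection $p\colon\widetilde W\to W$ restricts to a homeomorphism on each such lift. First I would record the basic local picture: if $C$ is a complementary component of $S_0$ in $W$, then the preimage $p^{-1}(C)$ is a disjoint union of components of $\widetilde W-\widetilde S_0$, each mapping homeomorphically onto $C$, because $C$ is a sphere with three holes and in particular $\pi_1$--injectivity fails only in the trivial way — more precisely $C$ is not $\pi_1$--injective, but what matters is that its universal cover components appear as complementary components of $\widetilde S_0$ and the sphere pieces upstairs of a lift $\widetilde s$ are exactly the lifts of the sphere pieces of $s$ downstairs. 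I would state this as a preliminary observation (or cite \cite[Prop~1.1]{Ha} for the identification of complementary components of $\widetilde S_0$), and note that correspondence of sphere pieces is compatible with lifting.

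For (i) $\Rightarrow$ (ii): fix a lift $\widetilde s$. Each sphere piece $\widetilde P$ of $\widetilde s$ with respect to $\widetilde S_0$ projects homeomorphically to a sphere piece $P=p(\widetilde P)$ of $s$. Condition (1) for $\widetilde P$ — meeting each boundary sphere of its complementary component in at most one circle — follows because $p$ maps the boundary spheres of the complementary component of $\widetilde P$ bijectively to those of $P$'s complementary component, so a violation upstairs would project to a violation downstairs. For condition (2): if $\widetilde P$ were a disc homotopic rel boundary into $\widetilde S_0$, then pushing this homotopy down by $p$ (which is a local homeomorphism, hence carries the homotopy to a homotopy in $W$) shows $P$ is a disc homotopic rel boundary into $S_0$, contradicting normality of $s$. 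For (ii) $\Rightarrow$ (i): run the same argument in reverse. A violation of (1) or (2) for a sphere piece $P$ of $s$ downstairs can be lifted: choose a lift $\widetilde P$ of $P$; its complementary component is a lift of that of $P$, the boundary correspondence is again bijective so (1) transfers, and a homotopy of $P$ rel boundary into $S_0$ lifts (using the homotopy lifting property and simple connectivity of the disc) to a homotopy of $\widetilde P$ into $\widetilde S_0$, contradicting (ii). The "one (and hence any) lift" clause is immediate: deck transformations permute the lifts and preserve $\widetilde S_0$, so normal position of one lift is equivalent to normal position of all lifts.

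For (ii) $\Leftrightarrow$ (iii): here the point is that conditions (iii)(1) and (iii)(2) are a repackaging of conditions (1) and (2) of normal position in the simply connected setting, where they become global rather than piecewise. Condition (iii)(1) says $\widetilde s$ meets each single component of $\widetilde S_0$ in at most one circle; this is equivalent to the conjunction, over all sphere pieces and all their boundary spheres, of condition (1), because in $\widetilde W$ two distinct circles of $\widetilde s\cap(\text{one component }\widetilde\Sigma\text{ of }\widetilde S_0)$ must lie in sphere pieces abutting $\widetilde\Sigma$ from the two sides, and one checks — using that $\widetilde\Sigma$ separates $\widetilde W$ and that sphere pieces on a given side meet $\widetilde\Sigma$ in at most one circle under (1) — that (1) on both sides of every $\widetilde\Sigma$ is the same as the single-circle condition (iii)(1). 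Condition (iii)(2) is essentially the literal statement of (2) aggregated over pieces: a disc in $\widetilde s$ homotopic rel boundary into $\widetilde S_0$ either is a single sphere piece that is such a disc (a type-(2) problematic piece), giving the forward direction, or — for the reverse — a union of sphere pieces can be simplified, but more directly one observes that if no sphere piece is a problematic disc then an innermost such disc in $\widetilde s$ would have to be a single sphere piece, contradiction. I would phrase (ii) $\Rightarrow$ (iii) as: normal position of $\widetilde s$ gives (1) and (2) per piece, which immediately yields (iii)(2), and combined with the separation argument yields (iii)(1); and (iii) $\Rightarrow$ (ii) by taking, for a hypothetical problematic piece, an innermost circle or an innermost disc argument on $\widetilde s$ to contradict (iii)(1) or (iii)(2) respectively.

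The main obstacle I anticipate is the equivalence (1)-per-piece $\Leftrightarrow$ (iii)(1): the subtlety is bookkeeping which circles of $\widetilde s\cap\widetilde\Sigma$ belong to which sphere pieces and on which side, and ruling out the configuration where two circles of intersection with the same $\widetilde\Sigma$ lie in pieces that are on the same side but in different complementary components — this cannot happen in $\widetilde W$ precisely because $\widetilde\Sigma$ is a separating sphere in a simply connected $3$--manifold, so each side is connected, but making this airtight (and handling the boundary-correspondence identification between pieces upstairs and downstairs cleanly) is where the real care is needed. The homotopy-lifting steps for condition (2), by contrast, are routine since the domain of every relevant homotopy is a disc or a product $D^2\times[0,1]$, which is simply connected.
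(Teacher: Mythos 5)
Your reduction of (i) to (ii) via the covering projection, and your contrapositive for (iii) $\Rightarrow$ (ii), are fine and essentially what the paper does. The genuine gap is in the implication from per-piece normal position to (iii)(1). Condition (1) of normal position only says that each \emph{individual} sphere piece meets each boundary sphere of its complementary component in at most one circle; it does not exclude the configuration in which two \emph{distinct} pieces of $\widetilde{s}$, both lying in the single complementary component adjacent to a fixed sphere $\widetilde{\Sigma}\subset\widetilde{S}_0$ on one side, each meet $\widetilde{\Sigma}$ in one circle, the two circles being different. Your proposed justification --- that $\widetilde{\Sigma}$ separates $\widetilde{W}$ and each side is connected --- does not address this configuration at all, and the configuration you do propose to rule out (pieces abutting $\widetilde{\Sigma}$ from the same side but in different complementary components) is vacuous, since all pieces abutting $\widetilde{\Sigma}$ from a fixed side lie in the unique complementary component adjacent on that side. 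Moreover no analysis localised near $\widetilde{\Sigma}$ can close this gap: if $\widetilde{s}$ meets $\widetilde{\Sigma}$ in two circles while every piece satisfies (1) with respect to $\widetilde{\Sigma}$ itself, the violation of normal position that one must exhibit may occur in a complementary component far from $\widetilde{\Sigma}$ and with respect to a different sphere of $\widetilde{S}_0$.

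What is needed is a global argument using the connectivity of $\widetilde{s}$ and the fact that the dual graph of $\widetilde{S}_0$ in $\widetilde{W}$ is a forest (this is where simple connectivity of $\widetilde{W}$ enters, beyond mere separation of one sphere). The paper's proof: join the two circles of $\widetilde{s}\cap\widetilde{\Sigma}$ by a path $a\subset\widetilde{s}$ crossing the least number of complementary components; the forest structure forces $a$ to enter and leave some complementary component $\widetilde{K}$ through the same boundary sphere, and then the piece of $\widetilde{s}$ in $\widetilde{K}$ containing that portion of $a$ is problematic of type (1), contradicting (ii). (Equivalently: the circles of $\widetilde{s}\cap\widetilde{S}_0$ cut the sphere $\widetilde{s}$ into a tree of pieces, condition (1) makes the natural map from this tree to the dual tree of $\widetilde{S}_0$ locally injective, hence injective on edges, so no two circles lie on the same component of $\widetilde{S}_0$.) Either formulation supplies the missing step; your sketch never uses the connectivity of $\widetilde{s}$ or the tree structure. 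A minor further point: your claim for (ii) $\Rightarrow$ (iii)(2), that an innermost disc in $\widetilde{s}$ homotopic rel boundary into $\widetilde{S}_0$ must be a single sphere piece, is asserted rather than argued, though the paper is comparably brief at that step.
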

\begin{proof}
  Assertion (i) is equivalent to (ii) since a piece $\widetilde{P}$ of $\widetilde{s}$ that is a lift of piece $P$ of $s$ is problematic if and only if $P$ is problematic.
  It is clear that {(iii)} implies {(ii)}. It remains to prove that assertion {(ii)} implies {(iii)}.

  For part (1) let $\widetilde{s}_0$ be a sphere of
  $\widetilde{S}_0$. Suppose that $\widetilde{s}_0$ and
  $\widetilde{s}$ intersect in at least two circles $\widetilde{c}$
  and $\widetilde{c}'$. Let $a \subset\widetilde{s}$ be a path joining
  $\widetilde{c}$ and $\widetilde{c}'$ and crossing the least number
  of complementary components of $\widetilde{S}_0$. Since the graph
  dual to $\widetilde{S}_0$ in $\widetilde{W}$ is a forest, there is a
  complementary component $\widetilde{K}$ of $\widetilde{S}_0$ such
  that $a$ intersects only one boundary component of
  $\widetilde{K}$. Then a sphere piece of $\widetilde{s}$ in
  $\widetilde{K}$ is problematic of type (1).

  For part (2) suppose there is a disc in $\widetilde{s}$ homotopic relative to its boundary into
  $\widetilde{S}_0$. Then it contains a subdisc which is a piece of $\widetilde{s}$ problematic of type (2).

\end{proof}

Motivated by Lemma~\ref{lem:characterizing-normal-position-systems},
we say that two transverse spheres $\widetilde{s}$ and
$\widetilde{s}'$ in $\widetilde{W}$ are
in \emph{normal position} if
\begin{enumerate}[(1)]
\item  $\widetilde{s}$ and
$\widetilde{s}'$ intersect in at most one circle, and
\item
none of the discs in
$\widetilde{s}-\widetilde{s}'$ is homotopic
into $\widetilde{s}'$ relative to its boundary.
\end{enumerate}
Note that this relation is symmetric with respect to $\widetilde{s}$ and $\widetilde{s}'$.

We say that two spheres $s, s'$ in $W$ are in \emph{normal position}
if all of their lifts to $\widetilde{W}$ are in normal position.
Note that given two spheres $s, s'$ we can change $s$ by a homotopy to
be in normal position with $s'$. Namely, extend $s'$ to a maximal
sphere system $S'$. Then \cite[Prop~1.1]{Ha} implies that we can
homotope $s$ to be in normal position with respect to $S'$. By
Lemma~\ref{lem:characterizing-normal-position-systems} the spheres
$s,s'$ are then in normal position as well. Similarly, given a sphere $s'$ in $W$ we can homotope any sphere system $S$ in $W$ so that $s$ and $s'$ are in normal position for any $s\in S$.

We need the following uniqueness property of normal position for a
pair of spheres.
\begin{lem}\label{lem:uniqueness-normal-position}
  Let $s_0$ be a sphere in $W$, and let $s,s'$ be two isotopic spheres
  in $W$ which are in
  normal position with respect to $s_0$.
  Then there is a homotopy between $s$ and $s'$ which restricts to an
  isotopy on $s_0$.
\end{lem}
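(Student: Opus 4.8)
The plan is to reduce the statement to the uniqueness of normal position with respect to a \emph{maximal} sphere system, which is \cite[Prop~1.2]{Ha}. First I would extend $s_0$ to a maximal sphere system $S_0$ in $W$; let $\widetilde{S}_0\subset\widetilde{W}$ denote its full preimage, whose dual graph is a forest. Since $s$ and $s'$ are isotopic, the spheres $\widehat{s}$ and $\widehat{s}'$ produced below will also be isotopic, and \cite[Prop~1.2]{Ha} applies once they are normal with respect to $S_0$.

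The heart of the argument is to homotope $s$ to a sphere $\widehat{s}$ in normal position with respect to \emph{all} of $S_0$, by a homotopy that restricts to an isotopy on $s_0$, and likewise to homotope $s'$ to $\widehat{s}'$. To do this I would run the normalisation procedure of \cite[Prop~1.1]{Ha}, surgering away an innermost problematic sphere piece at each step, but exploiting that $s$ is \emph{already} in normal position with respect to $s_0$: by definition a lift $\widetilde{s}$ meets every lift of $s_0$ in at most one circle and bounds no disc homotopic into such a lift, so by Lemma~\ref{lem:characterizing-normal-position-systems} every problematic piece of $s$ (equivalently, every backtracking of $\widetilde{s}$ across $\widetilde{S}_0$) occurs across some sphere of $S_0$ other than $s_0$. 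The innermost surgery eliminating such a piece is a homotopy of $s$ supported near a disc lying in a complementary component of $S_0$, and one checks this disc can be chosen disjoint from $s_0$ using innermostness and the fact that $\widetilde{s}$ meets each lift of $s_0$ in at most one circle. Hence no intersection with $s_0$ is ever created, the intersection circle with $s_0$ (if any) is merely carried along by an ambient isotopy, and $\widehat{s}$ remains in normal position with respect to $s_0$. One arranges the surgeries equivariantly in $\widetilde{W}$ so that they descend to $W$.

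Once $\widehat{s}$ and $\widehat{s}'$ are isotopic spheres both in normal position with respect to the maximal system $S_0$, \cite[Prop~1.2]{Ha} yields a homotopy between them restricting to an isotopy on $S_0$, in particular on $s_0$. Concatenating the homotopy from $s$ to $\widehat{s}$, this homotopy, and the reverse of the homotopy from $s'$ to $\widehat{s}'$ gives the desired homotopy from $s$ to $s'$ restricting to an isotopy on $s_0$. The main obstacle is precisely the middle step: verifying that Hatcher's normalisation with respect to $S_0$ can be performed ``relative to $s_0$'', i.e.\ that each elementary surgery localises away from a neighbourhood of $s_0$. I expect this to reduce to an innermost-disc analysis in the universal cover, using that the dual graph of $\widetilde{S}_0$ is a forest and that $\widetilde{s}$ exhibits no backtracking across the lifts of $s_0$, so that the backtracking removed elsewhere never interferes with them.
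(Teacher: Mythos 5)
Your proposal follows essentially the same route as the paper's proof: extend $s_0$ to a maximal system $S_0$, normalise $s$ and $s'$ with respect to $S_0$ by Hatcher-type surgeries, observe that a piece problematic with respect to $s_0$ itself would contradict normal position of the pair (via the lift characterisation), so these surgeries never push the spheres through $s_0$ and preserve normal position with $s_0$, and finally apply \cite[Prop~1.2]{Ha} and concatenate. The step you flag as the main obstacle is exactly the point the paper settles by this type~(1)/type~(2) case analysis in the universal cover, so the argument is correct as outlined.
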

\begin{proof}
  Extend $s_0$ to a maximal sphere system $S_0$.  We first claim
  that we can change $s$ and $s'$ to be in normal
  position with respect to $S_0$ by a homotopy which restricts to
  the identity on $s_0$. We give the proof of the claim for $s$, the
  proof for $s'$ is identical.

  Suppose that $s$ is not in
  normal position with respect to $S_0$. Then there is a sphere piece $P$
  of $s$ with respect to $S_0$ which is problematic.
Following the proof of \cite[Prop~1.1]{Ha} we may modify $s$
  by a homotopy to remove this problematic piece.
  This homotopy moves $s$ through $s_0$ if and only if the
  sphere piece $P$ is problematic with respect to $s_0$.
  If $P$ was problematic of type (1), then a lift of $s$ would
  intersect a lift of $s_0$ in two circles. If $P$ was problematic of
  type (2), then a lift of $s$ would contain a disc which is
  homotopic into a lift of $s_0$ relative to its boundary.
  Both of these cases are excluded by normal position of $s$ and $s_0$.
Hence every problematic
  piece $P$ of $s$ with respect to $S_0$ can be removed by a
  homotopy of $s$ which does not move $s$ through $s_0$.
  Furthermore, such homotopies keep $s$ in normal position with
  respect to $s_0$. This shows the claim.

  Now the assertion of the lemma follows from
  \cite[Prop~1.2]{Ha} which states that there is a homotopy between $s$ and $s'$
  which restricts to an isotopy on $S_0$.
\end{proof}

\subsection{Sphere surgery}
\label{sec:sphere-surgery}
In this section we describe a surgery procedure for spheres. This
surgery procedure was defined and used in \cite{Ha} to show
contractibility of the sphere complex.

Suppose that spheres $s$ and $s'$ in $W$ intersect and are in normal position.
We say that a disc $D \subset s$ is
\emph{outermost in $s$ for $s'$} if  $D\cap s'=\partial D$.

Let $D$ be an outermost disc in $s$ for $s'$. Denote by $D'_+$ and $D'_-$ the two components of $s' -
\partial D$. Then we say that the spheres $s'_+ = D'_+ \cup D, s'_- = D'_- \cup
D$ are obtained by \emph{outermost surgery of $s'$ in direction of $s$ determined by $D$}.
Normal position of $s$ and $s'$ implies that both $s'_+$ and $s_-'$
are essential spheres.

By Lemma~\ref{lem:uniqueness-normal-position} the notion of outermost
discs is well-defined for isotopy classes of
spheres. Hence the isotopy classes $\sigma'_+,\sigma'_-$ of $s'_+,s'_-$
depend only on the isotopy classes $\sigma$
of $s$ and $\sigma'$ of $s'$.
We call the pair $\{\sigma'_+,\sigma'_-\}$ an
\emph{outermost surgery pair of $\sigma'$ in direction of $\sigma$}.

If $\sigma'\neq \sigma$ are disjoint, then the only \emph{outermost surgery pair of $\sigma'$ in direction of $\sigma$} is $\{\sigma,\sigma\}$,
interpreted as a pair both
of whose elements are equal to $\sigma$.
Let $\Pi_\sigma(\sigma')$ be the set of
all outermost surgery pairs of $\sigma'$ in direction of $\sigma$.

\medskip
We next show that $\Pi_\sigma$ satisfies the axioms of a
$\sigma$--projection for all $\sigma \in \mathcal{S}^0(W)$.
The main technical result needed for this is given by the following
lemma.
\begin{lemma}\label{lem:no-sphere-cycles}
  Let $S_0$ be a maximal sphere system in $W$. Suppose that $r_1,\ldots,
  r_k$ is a sequence of spheres in normal position with respect to
  $S_0$. Assume that for each $1\leq i\leq k-1$ the sphere
  $r_{i+1}$ is disjoint from $r_i$ and that $r_1$ is isotopic to $r_k$.
Let $c_i \subset r_i \cap S_0$ be a sequence of intersection
  circles bounding discs $D_i \subset S_0$.
  Suppose that $D_{i+1} \subset D_i$ for each $1\leq i\leq k-1$.
  If $c_1$ and $c_k$ are boundaries of corresponding sphere
  pieces of $r_1$ and $r_k$, then all $r_i$ are in fact isotopic.
\end{lemma}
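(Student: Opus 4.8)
\textbf{Plan for the proof of Lemma~\ref{lem:no-sphere-cycles}.} The key idea is to lift everything to the universal cover $\widetilde{W}$ and track a consistently chosen nested sequence of lifts. First I would pick a lift $\widetilde{s}_0$ of the sphere $s_0 \in S_0$ containing the circles $c_i$; for each $i$ let $\widetilde{r}_i$ be the unique lift of $r_i$ meeting $\widetilde{s}_0$ in the circle $\widetilde{c}_i$ that lifts $c_i$, and let $\widetilde{D}_i \subset \widetilde{s}_0$ be the disc bounded by $\widetilde{c}_i$, so that $\widetilde{D}_{i+1} \subset \widetilde{D}_i$ exactly as downstairs. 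Since $\widetilde{r}_i$ and $\widetilde{r}_{i+1}$ are disjoint (being lifts of the disjoint spheres $r_i, r_{i+1}$, chosen to meet the same lift $\widetilde{s}_0$), the sphere $\widetilde{r}_{i+1}$ lies in one of the two complementary balls of $\widetilde{r}_i$ in $\widetilde{W}$; the nesting $\widetilde{D}_{i+1}\subset\widetilde{D}_i$ pins down which one, namely the closed ball $B_i$ on the side of $\widetilde{r}_i$ that $\widetilde{D}_i \setminus \widetilde{D}_{i+1}$ points toward. I would then argue $B_1 \supset B_2 \supset \cdots \supset B_k$.

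Next, the hypothesis that $r_1$ is isotopic to $r_k$ and that $c_1, c_k$ bound corresponding sphere pieces means, via Lemma~\ref{lem:uniqueness-normal-position} and the correspondence of pieces under the homotopy restricting to an isotopy on $S_0$, that $\widetilde{r}_1$ and $\widetilde{r}_k$ are \emph{the same} lift up to the homotopy fixing $\widetilde{S}_0$: the homotopy between $s_1$ and $s_k$ respecting $s_0$ lifts to a homotopy carrying $\widetilde{r}_1$ to $\widetilde{r}_k$ and fixing $\widetilde{s}_0$, so in particular $\widetilde{c}_1 = \widetilde{c}_k$ as circles on $\widetilde{s}_0$ and $\widetilde{D}_1 = \widetilde{D}_k$. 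But then the chain of nested balls closes up, $B_1 \supseteq B_k$ with $B_k$ on the same side as $B_1$, forcing all intermediate inclusions $B_i \supset B_{i+1}$ to be equalities; hence $\widetilde{r}_i = \widetilde{r}_{i+1}$ up to the $\widetilde{S}_0$-homotopy for every $i$. Projecting back down and invoking Lemma~\ref{lem:uniqueness-normal-position} in reverse, all the $r_i$ are isotopic.

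The step I expect to be the main obstacle is making precise the claim that the nesting of the discs $\widetilde{D}_i$ genuinely forces the balls $B_i$ to be nested (rather than merely alternating sides), and that once $\widetilde{D}_1 = \widetilde{D}_k$ this collapses the whole chain. The subtlety is that a priori a disjoint sphere $\widetilde{r}_{i+1}$ on, say, the ``outer'' side of $\widetilde{r}_i$ could still meet $\widetilde{s}_0$ in a circle bounding a smaller disc, if $\widetilde{s}_0$ itself weaves back and forth; here normal position is essential, since condition (1) of normal position (each lift meets each component of $\widetilde{S}_0$ in at most one circle, Lemma~\ref{lem:characterizing-normal-position-systems}) guarantees $\widetilde{r}_i \cap \widetilde{s}_0 = \widetilde{c}_i$ is a single circle, so $\widetilde{r}_i$ separates $\widetilde{s}_0$ cleanly into $\widetilde{D}_i$ and its complement, and $\widetilde{r}_{i+1}$ — disjoint from $\widetilde{r}_i$ and meeting $\widetilde{s}_0$ inside $\widetilde{D}_i$ — must lie entirely in the ball $B_i$ cut off by $\widetilde{r}_i$ on the $\widetilde{D}_i$-side. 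I would spell this separation argument out carefully using that spheres in $\widetilde{W}$ (which is an increasing union of punctured $3$-spheres, hence has trivial $\pi_2$ of each complementary piece in the relevant sense) are two-sided and separating, and that $\widetilde{S}_0$ has a forest as dual graph, so complementary components are well-behaved.
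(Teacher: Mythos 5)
Your setup is essentially the first half of the paper's proof and is sound: lift to $\widetilde{W}$, take the lifts $\widetilde{r}_i$ through the lifted circles $\widetilde{c}_i\subset\widetilde{s}_0$, and use normal position (via Lemma~\ref{lem:characterizing-normal-position-systems}, single intersection circle) to see that $\widetilde{r}_{i+1}$ lies entirely in the complementary component of $\widetilde{r}_i$ containing the interior of $\widetilde{D}_i$, so these components are nested (a small slip: the complementary components of an essential sphere in $\widetilde{W}$ are not balls, but that is cosmetic). The endgame, however, has a genuine gap. First, correspondence does \emph{not} give $\widetilde{c}_1=\widetilde{c}_k$ or $\widetilde{D}_1=\widetilde{D}_k$: the homotopy from $r_1$ to $r_k$ restricts to an \emph{isotopy} of $S_0$, not to the identity, and in fact $c_k$ lies strictly in the interior of $D_1$ whenever $k\geq 2$ (consecutive circles are disjoint and the discs strictly nested), so the chain never literally closes up. All you may extract, by lifting the correspondence homotopy starting at $\widetilde{r}_1$, is that $\widetilde{r}_1$ and $\widetilde{r}_k$ are \emph{homotopic} in $\widetilde{W}$. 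Second, the concluding step ``$B_1\supseteq B_k$ forces all intermediate inclusions $B_i\supset B_{i+1}$ to be equalities, hence $\widetilde{r}_i=\widetilde{r}_{i+1}$'' is a non sequitur and cannot be repaired: the inclusions are strict by construction (consecutive spheres are disjoint embedded spheres with strictly nested intersection circles), and the lemma's conclusion is that the $r_i$ are \emph{isotopic}, not equal, so no collapsing of the chain can occur or is needed.

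The missing idea, which is exactly how the paper finishes, is a product-region argument. From the nesting, $\widetilde{r}_1$ and $\widetilde{r}_k$ are disjoint; since they are homotopic in $\widetilde{W}$, they cobound a region homeomorphic to $S^2\times[0,1]$, and the nesting places every intermediate $\widetilde{r}_i$ inside this region. An embedded sphere in such a region which is essential in $\widetilde{W}$ cannot bound a ball there, hence is parallel to the two boundary spheres; therefore all $\widetilde{r}_i$ are homotopic, and projecting down, the $r_i$ are homotopic in $W$, hence isotopic by Laudenbach \cite{L74}. Without some argument of this kind (comparing the intermediate spheres to the homotopic pair $\widetilde{r}_1,\widetilde{r}_k$ rather than trying to force equalities), your proof does not reach the conclusion.
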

\begin{proof}
  Suppose that $r_i$ are spheres satisfying the hypothesis of the
  lemma.
Denote by $K$ the complementary component of $S_0$ which
  contains the corresponding sphere pieces $P_1$ of $r_1$ and $P_k$ of $r_k$.
  For each $1< i<k$, let $P_i$ be the sphere piece of $r_i$
  contained in $K$ and bounded by $c_i$.
 The complementary component $K$ lifts into $\widetilde{W}$ homeomorphically to a
  complementary component $\widetilde{K}$ of the full preimage $\widetilde{S}_0$ of $S_0$.
  Let $\widetilde{P}_i,\widetilde{D}_i$ be the lifts of $P_i,D_i$ into
  $\widetilde{K}$. Let $\widetilde{r}_i$ be the lifts of the $r_i$ containing
  $\widetilde{P}_i$.

  Each sphere $\widetilde{r}_i$ separates the universal cover
  $\widetilde{W}$ into two connected components. We call the component
  which contains $\widetilde{D}_i$ the \emph{outside} of
  $\widetilde{r}_i$.
  Since $r_i$ and $r_{i+1}$ are disjoint, the lift $\widetilde{r}_{i+1}$ is
  completely contained on the outside of $\widetilde{r}_i$  for each
  $1\leq i\leq k-1$. In particular, $\widetilde{r}_1$ and
  $\widetilde{r}_k$ are disjoint.

  By the definition of correspondence, there is a homotopy of $r_1$ to
  $r_k$ which restricts to an isotopy on $S_0$ and which maps $P_1$ to
  $P_k$. Lifting this homotopy we see that $\widetilde{r}_1$ is
  homotopic to $\widetilde{r}_k$. Therefore, $\widetilde{r}_1$ and
  $\widetilde{r}_k$ bound a product region which contains all the
  $\widetilde{r}_i$. In particular, all $\widetilde{r}_i$ are
  homotopic, as desired.
\end{proof}

\begin{lem}\label{lem:sphere-projection}
  Let $\sigma\in\mathcal{S}^0(W)$ be a sphere in $W$.
  \begin{enumerate}[(i)]
  \item Let $R \subset \mathcal{S}^0(W)$ be a finite set of
    spheres with $R \setminus\{\sigma\}\neq\emptyset$. Then
    there is a sphere $\rho \in R\setminus\{\sigma\}$ with the
    following property: there is an outermost surgery pair
    $\{\rho_+,\rho_-\}$ of $\rho$ in direction of $\sigma$ such that
    each sphere $\rho'\in R$ which is disjoint from
    $\rho$ is also disjoint from $\rho_+$ and $\rho_-$.
  \item Every sequence $(\rho_i)$ of spheres such that $\rho_{i+1}$ is
    obtained from $\rho_i$ by an outermost surgery in direction of
    $\sigma$ terminates after finitely many steps with $\sigma$.
  \end{enumerate}
\end{lem}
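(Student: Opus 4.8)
The plan is to mimic the structure of the proofs of Lemma~\ref{lem:arcprojection} and Lemma~\ref{lem:discsurgery}, but the absence of a hyperbolic metric forces us to use normal position (Section~\ref{sec:normal}) as the substitute geometric tool, and Lemma~\ref{lem:no-sphere-cycles} as the substitute for the ``intersection numbers strictly decrease'' argument in the disc case. For part (ii), the argument is essentially formal: by Laudenbach's theorem isotopy classes of spheres are determined up to homotopy, and each outermost surgery of $\rho_i$ in direction of $\sigma$ strictly decreases the number of intersection circles of $\rho_i$ with a fixed representative of $\sigma$ in normal position (the outermost disc and its boundary circle disappear, and normal position guarantees no new circles are introduced). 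Hence the sequence terminates; when it terminates, $\rho_i$ is disjoint from $\sigma$, so the only surgery pair is $\{\sigma,\sigma\}$ and the sequence stabilises at $\sigma$. I would write this out carefully, invoking Lemma~\ref{lem:uniqueness-normal-position} so that ``the number of intersection circles'' is well-defined on isotopy classes.

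For part (i), the plan is as follows. First extend $\sigma$ to a maximal sphere system $S_0$ and homotope all elements of $R$ into normal position with respect to $S_0$; by Lemma~\ref{lem:characterizing-normal-position-systems} and Lemma~\ref{lem:uniqueness-normal-position} this can be done consistently so that the pairwise normal position needed for surgery also holds. We may assume some element of $R$ meets $\sigma$. Now choose $s$, the representative of $\sigma$, and look at the sphere pieces of the representatives $r_1,\dots,r_k$ of $R$ in the complementary component $K$ of $S_0$ adjacent to $s$. Among all intersection circles of the $r_i$ with $s$, pick an \emph{innermost} disc $D\subset s$ cut off by such a circle $c$ — that is, $D$ meets the $r_i$ only in $\partial D = c$. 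This is the analogue of the outermost subarc $b$ in the arc proof and the outermost subarc $a\subset d$ in the disc proof. Let $\rho$ be the element of $R$ with $\partial D$ on its boundary circle in the surgery sense, i.e. $c\subset r$ for a representative $r$ of $\rho$, and let $D$ be an outermost disc in $s$ for $r$.

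The key claim, exactly as in the disc case, is then: if $r'$ (a representative of some $\rho'\in R$) is disjoint from $r$, then $r'$ is disjoint from both spheres $r_\pm$ obtained by the outermost surgery of $r$ determined by $D$. Since $r_\pm \subset r \cup D$ and $r'$ is disjoint from $r$, it suffices to see that $r'$ can be made disjoint from $D\subset s$. Here is where the situation is genuinely harder than for discs: in the disc graph, an outermost subarc $a\subset d$ of a \emph{disc} $D$ could be chosen so that the other outermost arcs were nested or disjoint, and the argument proceeded by an induction replacing $r^1$ by $r^2$ whenever a badly-nested arc appeared. I expect the main obstacle here to be precisely this nesting issue for the discs $D\subset s$: a priori a piece of $r'$ could cross $D$, and we cannot simply homotope it off without possibly destroying normal position with $\sigma$. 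The resolution I would pursue is to choose $D$ \emph{innermost} among all discs cut off in $s$ by intersection circles with \emph{all} of the $r_i$, not just with $r$; then any sphere piece of any $r'\in R$ meeting the interior of $D$ would itself cut off a smaller disc in $s$, contradicting innermost-ness, unless $r'$ meets $s$ only along $\partial D$ itself — but two disjoint spheres $r$ and $r'$ cannot both have $\partial D$ as an intersection circle with $s$ in a way that obstructs the surgery, and Lemma~\ref{lem:no-sphere-cycles} is exactly the tool that rules out the remaining cyclic configuration of nested discs. So the logical skeleton is: innermost choice $\Rightarrow$ either the claim holds directly, or we get an infinite strictly-nested chain of discs in the finite set $s\cap(r_1\cup\dots\cup r_k)$, which is impossible, the non-repetition being guaranteed by Lemma~\ref{lem:no-sphere-cycles}. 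I would then conclude part (i) and note that parts (i) and (ii) together say $\Pi_\sigma$ satisfies axioms (i) and (ii) of a $\sigma$--projection, with axiom (ii) coming from part (ii) of the lemma via Laudenbach's theorem again (no isotopy class recurs along a surgery sequence since intersection numbers strictly drop).
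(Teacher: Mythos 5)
Your part (ii) is essentially the paper's argument (surgery strictly decreases the number of intersection circles with a normal representative of $\sigma$, and renormalising does not increase it), so that half is fine. The gap is in part (i), at the sentence ``Since $r_\pm\subset r\cup D$ and $r'$ is disjoint from $r$, it suffices to see that $r'$ can be made disjoint from $D\subset s$.'' The hypothesis is only that the \emph{isotopy classes} $\rho'$ and $\rho$ are disjoint, whereas your representative $r'$ was chosen to be in normal position with respect to $S_0$; normal position with respect to a fixed maximal system does not realise pairwise minimal position, so the normal representatives of two disjoint classes may very well intersect. This is exactly the service the hyperbolic metric performed in the arc and disc cases (geodesic representatives of disjoint classes are disjoint), and it is precisely what has no analogue for spheres --- which is why the ``innermost over all of $R$ at once'' shortcut, the direct analogue of the arc proof, does not go through. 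Your innermost choice keeps the normal representative $r'$ off the interior of $D$, while the disjointness from $\rho$ is realised by some \emph{other} representative, which may cross $D$; you never exhibit a single representative of $\rho'$ disjoint from both $r$ and $D$, and that is what the claim requires.

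The paper's proof is therefore structured differently. It starts with an \emph{arbitrary} $\rho^1\in R$ meeting $\sigma$, takes an outermost disc $D\subset s$ for its representative $r^1$, and then, for the spheres of $R$ disjoint from $\rho^1$, chooses representatives that are genuinely disjoint from $r^1$ and in normal position with respect to the maximal system. If every outermost disc in $s$ for these representatives is disjoint from $D$ or contains $D$, the surgery pair determined by $D$ works; otherwise some $r^2$ has an outermost disc nested inside $D$, and one replaces $r^1$ by $r^2$ and repeats. Because the representatives are re-chosen at every step (to be disjoint from the current sphere), the intersection circles on $s$ do not form one fixed finite family, so termination is not just the finiteness of a nested chain of circles; this is where Lemma~\ref{lem:no-sphere-cycles} is genuinely needed: it shows that no isotopy class can recur along the chain of nested discs, so finiteness of $R$ stops the process. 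Your appeal to that lemma to exclude ``an infinite strictly-nested chain of discs in the finite set $s\cap(r_1\cup\dots\cup r_k)$'' misplaces its role --- with a fixed finite set of circles no lemma is needed, and in the correct (iterative) argument the set of circles is not fixed.
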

\begin{proof}
  \begin{enumerate}[(i)]
  \item We can assume that not all the spheres in $R$ are disjoint from $\sigma$, since otherwise the assertion follows trivially. Let $s$ be a representative of $\sigma$ and let $S$ be an
    extension of $s$ to a maximal sphere system.
    Let $r^1$ be a representative of an arbitrary element $\rho^1\in R$ intersecting $s$ in normal
    position with respect to $S$.
    Let $D\subset s$ be an outermost disc for $r^1$ and let $N$ be the set
    of representatives of the spheres in $R\setminus \{\rho^1\}$ which are disjoint from $r^1$.

    We may assume that each $r' \in N$ is in normal position with respect to $S$.
    Note that this means that outermost discs in $s$ for
    the spheres $r'$ are either disjoint from $D$, contain $D$, or are
    contained in $D$. We distinguish two cases.

    If all outermost discs in $s$ for all $r' \in N$ are
    disjoint from $D$ or contain
    $D$, then both spheres obtained by outermost surgery determined by $D$ are
    disjoint from the spheres in $N$, and therefore $r^1$
    satisfies the condition required in {(i)}.

    On the other hand, assume that there is a sphere $r^2 \in
    N$ for which an outermost disc is contained in $D$. In this
    case, we replace $r^1$ by $r^2$, and inductively apply the same
    argument again.
    By Lemma~\ref{lem:no-sphere-cycles}, this process terminates after
    finitely many steps with the first case.
  \item The proof of this part is similar to the proof of
    Lemma~\ref{lem:arcprojection}(ii).
    Namely, suppose that $s$ and $s'$ are two spheres in normal
    position which intersect in $k$ circles. Then both spheres obtained by an
    outermost surgery of $s'$ in direction of $s$ intersect $s$
    in at most $k-1$ circles. Putting spheres in normal position
    decreases the number of intersection circles (see the proof of
    \cite[Prop~1.1]{Ha}). Thus an induction on this value proves the assertion.
  \end{enumerate}
\end{proof}
The fact that homeomorphisms of $W$ map outermost discs to outermost discs immediately implies the following.
\begin{lem}\label{lem:spheresurgery-equivariance}
  For every $h \in \mathrm{Map}(W)$ we have
  $$h\Pi_\sigma(\rho) = \Pi_{h\sigma}\left(h\rho\right)$$
  for all spheres $\sigma\neq\rho$ in $\mathcal{S}^0(W)$.
\end{lem}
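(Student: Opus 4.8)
The plan is to lift the situation to the universal cover and use that an (orientation-preserving) homeomorphism of $W$ preserves normal position of a pair of spheres and carries outermost discs to outermost discs. Fix a homeomorphism $\hbar\colon W\to W$ representing the mapping class $h$, and choose representatives $s$ of $\sigma$ and $s'$ of $\rho$ in normal position. First I would observe that $\hbar$ lifts to a homeomorphism $\widetilde{\hbar}$ of $\widetilde{W}$ which maps each lift of $s$ to a lift of $\hbar(s)$ and each lift of $s'$ to a lift of $\hbar(s')$. Being a homeomorphism, $\widetilde{\hbar}$ preserves the number of circles in which two lifted spheres intersect, and it carries a disc in one lifted sphere that is homotopic relative to its boundary into another lifted sphere to a disc in the image with the analogous property. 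By the definition of normal position for a pair of spheres (given just before Lemma~\ref{lem:uniqueness-normal-position}) this shows that $\hbar(s)$ and $\hbar(s')$ are again in normal position.

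Next, assume $\sigma$ and $\rho$ are not disjoint, so that $s$ and $s'$ intersect. A disc $D\subset s$ is outermost in $s$ for $s'$ precisely when $D\cap s'=\partial D$; applying $\hbar$, this condition becomes $\hbar(D)\cap\hbar(s')=\partial\hbar(D)$, so $\hbar(D)$ is outermost in $\hbar(s)$ for $\hbar(s')$, and $D\mapsto\hbar(D)$ is a bijection between the set of outermost discs of the pair $(s,s')$ and that of the pair $(\hbar(s),\hbar(s'))$. If $D'_+,D'_-$ are the two components of $s'-\partial D$, then $\hbar(D'_+),\hbar(D'_-)$ are the two components of $\hbar(s')-\partial\hbar(D)$, so
$$\hbar(D\cup D'_\pm)=\hbar(D)\cup\hbar(D'_\pm)$$
is exactly the sphere produced by outermost surgery of $\hbar(s')$ in direction of $\hbar(s)$ determined by the outermost disc $\hbar(D)$. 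Passing to isotopy classes, which is legitimate since outermost surgery pairs were already shown to depend only on isotopy classes, and letting $D$ range over all outermost discs, this yields $h\Pi_\sigma(\rho)=\Pi_{h\sigma}(h\rho)$.

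It remains to treat the degenerate case $\sigma\neq\rho$ with disjoint representatives. Then $\hbar(s)$ and $\hbar(s')$ are disjoint representatives of the distinct isotopy classes $h\sigma$ and $h\rho$, so by definition $\Pi_\sigma(\rho)=\{\{\sigma,\sigma\}\}$ and $\Pi_{h\sigma}(h\rho)=\{\{h\sigma,h\sigma\}\}=h\{\{\sigma,\sigma\}\}$, and the identity holds trivially. There is no real obstacle in this lemma; the only point that deserves a sentence is the claim that $\hbar$ preserves normal position of a pair of spheres, which is most transparent using the universal-cover characterisation rather than a maximal sphere system.
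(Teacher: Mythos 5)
Your proof is correct and follows essentially the same route as the paper, whose entire argument is the one-line observation that homeomorphisms of $W$ map outermost discs to outermost discs; your lift to $\widetilde{W}$ merely spells out the implicit point that $\hbar$ preserves normal position of the pair, using exactly the characterisation the paper set up before Lemma~\ref{lem:uniqueness-normal-position}. The handling of the disjoint case and the passage to isotopy classes via the already-established well-definedness of surgery pairs are both as intended.
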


\begin{rem}
  Let $\sigma$ be the isotopy class of a sphere in $W$.
  In \cite{HH11b} it is shown that the stabiliser of $\sigma$ in
  $\mathrm{Map}(W)$ is an undistorted subgroup of
  $\mathrm{Map}(W)$. Using the techniques developed in this section,
  the proof in \cite{HH11b} can be improved to show that the stabiliser of $\sigma$ in
  $\mathrm{Map}(W)$ is in fact a
  coarse Lipschitz retract of $\mathrm{Map}(W)$.
\end{rem}

\subsection{Finite hull for spheres}
\label{sec:sphere-hulls}

For this section we fix a maximal sphere system $S_0$ in $W$. Let
$\widetilde{S}_0$ be the
full preimage of $S_0$ in $\widetilde{W}$.
We say that a surface in $\widetilde{W}$ has \emph{width $w$
  with respect to $\widetilde{S}_0$}, if it crosses $w$ complementary components of
$\widetilde{S}_0$ in $\widetilde{W}$.
We say that a sphere $s$ in $W$ has
\emph{width $w$ with respect to $S_0$}, if one (hence any) lift
of $s$ to $\widetilde{W}$ has width $w$.
A sphere $\sigma\in \mathcal{S}^0(W)$ has \emph{width $w$}, if its representative $s$ in normal position with respect to $S_0$ has width $w$.
In other words, $s$ has $w$ sphere pieces. This does not depend on the choice of $s$.
Analogously we define the width for the isotopy classes of spheres in $\widetilde{W}$. In fact, the width of the class does not exceed the width of a representative:

\begin{lem}\label{lem:system-normal-pos-width}
  If $\widetilde{s}$ is a sphere in $\widetilde{W}$
  of width $w$, and $\widetilde{s}'$
  is an isotopic sphere in normal position with respect to
  $\widetilde{S}_0$, then $\widetilde{s}'$ has width at most $w$.
\end{lem}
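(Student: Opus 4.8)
The idea is to reduce the width of a sphere to the number of circles in which it meets $\widetilde S_0$, and then to invoke the fact, already used in the proof of Lemma~\ref{lem:sphere-projection}(ii), that putting a sphere in normal position only decreases this number.

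First I would record the following elementary count: for \emph{any} sphere $\widetilde t$ in $\widetilde W$ transverse to $\widetilde S_0$, the number of sphere pieces of $\widetilde t$ with respect to $\widetilde S_0$ equals $1+c(\widetilde t)$, where $c(\widetilde t)$ denotes the number of circles in $\widetilde t\cap\widetilde S_0$. Indeed $\widetilde t$ is a $2$--sphere, $\widetilde t\cap\widetilde S_0$ is a disjoint union of $c(\widetilde t)$ circles, and cutting a $2$--sphere along $k$ disjoint circles yields $k+1$ bordered $2$--spheres by the Jordan curve theorem; these are exactly the sphere pieces of $\widetilde t$. In particular the width of $\widetilde s$ equals $1+c(\widetilde s)$, and, since the width of an isotopy class of spheres is well defined (it does not depend on the choice of normal representative), the width of the isotopy class of $\widetilde s$ equals $1+c(\widetilde s')$. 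Thus the lemma is equivalent to the inequality $c(\widetilde s')\le c(\widetilde s)$, that is, to the statement that a normal representative realises the minimal number of intersection circles with $\widetilde S_0$ in its isotopy class.

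To prove $c(\widetilde s')\le c(\widetilde s)$ I would normalise $\widetilde s$. By \cite[Prop~1.1]{Ha}, applied to $\widetilde S_0$ in $\widetilde W$ --- its hypotheses hold because, just as used in Lemma~\ref{lem:characterizing-normal-position-systems}, the graph dual to $\widetilde S_0$ is a forest and each complementary component is a $3$--sphere minus three open balls --- one homotopes $\widetilde s$ to a sphere $\widetilde s''$ in normal position with respect to $\widetilde S_0$ by successively removing problematic pieces; and, as recalled in the proof of Lemma~\ref{lem:sphere-projection}(ii), each such step strictly decreases the number of intersection circles with $\widetilde S_0$. Hence $c(\widetilde s'')\le c(\widetilde s)$. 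Since $\widetilde s''$ is a normal representative of the isotopy class of $\widetilde s$, well-definedness of the class-width (equivalently, the $\widetilde W$--analogue of Lemma~\ref{lem:uniqueness-normal-position}) gives $c(\widetilde s')=c(\widetilde s'')$, and therefore $c(\widetilde s')\le c(\widetilde s)$, as required.

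There is no serious obstacle here: the lemma is in essence the combination of the trivial piece count above with the monotonicity of the intersection count under normalisation that was already invoked in Section~\ref{sec:sphere-surgery}. The only point deserving a line of care is the passage to the cover --- namely that the normalisation procedure of \cite[Prop~1.1]{Ha} and the uniqueness statement of \cite[Prop~1.2]{Ha} apply to $\widetilde S_0$ in the possibly disconnected, non-compact manifold $\widetilde W$ exactly as they do to a maximal sphere system in $W$. This holds because both arguments are local near the spheres of the system and use only the combinatorial structure recorded above, precisely as in the reductions already carried out in Section~\ref{sec:normal}.
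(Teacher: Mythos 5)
There is a genuine gap, and it occurs at the very first reduction: you identify the width of the arbitrary representative $\widetilde{s}$ with its number of sphere pieces. Width is defined in the paper as the number of \emph{complementary components} of $\widetilde{S}_0$ that the sphere crosses, not the number of pieces. Your Euler-characteristic count correctly shows that any transverse sphere $\widetilde{t}$ has exactly $1+c(\widetilde{t})$ pieces, and for a sphere in normal position the two counts agree (at most one piece per complementary component), so the equality ``width of the class $=1+c(\widetilde{s}')$'' is fine. But for a sphere not in normal position several pieces can lie in the same complementary component, so one only has: width of $\widetilde{s}$ $\leq 1+c(\widetilde{s})$, possibly strictly. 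For instance, take a sphere contained in a single complementary component $K$ and push two disjoint discs of it across the same sphere $\widetilde{s}_0\in\widetilde{S}_0$: then $c=2$ and there are three pieces, but the width is $2$. Consequently your argument, which correctly shows $c(\widetilde{s}')\leq c(\widetilde{s})$ via Hatcher's normalisation, only yields ``width of the class $\leq 1+c(\widetilde{s})=$ number of pieces of $\widetilde{s}$'', which is weaker than the assertion ``width of the class $\leq$ number of components crossed by $\widetilde{s}$''; the chain breaks exactly at the sentence ``In particular the width of $\widetilde{s}$ equals $1+c(\widetilde{s})$''.

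This is not a removable cosmetic issue: the stronger, component-counting form is precisely what is used in the proof of Lemma~\ref{lem:sphere-normal-pos-width}, where the surgered sphere $\widetilde{r}_+$ is controlled only through the components it crosses (it lies in $\widetilde{W}(\widetilde{s})\cup\widetilde{D}^*_+$); nothing there bounds its number of intersection circles with $\widetilde{S}_0$ by $2w$. So an argument that monitors intersection circles cannot substitute for one that monitors crossed components. The paper's proof does the latter directly: following \cite[Prop~1.1]{Ha}, one removes problematic pieces one at a time, and each such move slides a piece through some $\widetilde{s}_0\in\widetilde{S}_0$ into the adjacent complementary component, which already contains a piece of the sphere (the piece adjacent across an intersection circle on $\widetilde{s}_0$); hence the set of crossed components never grows, the width never increases, and uniqueness of normal position finishes the proof. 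If you want to salvage your write-up, replace the circle count by this invariance of the set of crossed components under each normalisation move.
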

\begin{proof}
  Following the proof of \cite[Prop~1.1]{Ha} we can homotope $\widetilde{s}$ to
  a sphere $\widetilde{s}'$ in normal position by successively removing
  problematic sphere pieces. In each such step one slides the
  problematic piece through a sphere $\widetilde{s}_0\in
  \widetilde{S}_0$. Such a
  homotopy does not increase the width, since
  the side of $\widetilde{s}_0$ that we push $\widetilde{s}$ towards
  already contains a piece of $\widetilde{s}$.
\end{proof}

We are now able to describe the finite hull construction for the sphere
graph. Let $\sigma_1,\ldots,\sigma_k\in \mathcal{S}^0(W)$ be a
collection of spheres with representatives $s_1,\ldots,s_k$. Let $w$
be the maximal width of any $\sigma_i$ with respect to $S_0$. Let
$\mathcal{H}=\mathcal{H}(\sigma_1,\ldots,\sigma_k)$ be
the set of isotopy classes of spheres with width $\leq 2w$ with
respect to $S_0$.
By construction, the set $\mathcal{H}$ is finite and contains all the
$\sigma_i$. The following lemma shows that $\mathcal{H}$ is
$\Pi_\sigma$--convex for all $\sigma=\sigma_i$.
\begin{lem}\label{lem:sphere-normal-pos-width}
  Let $\sigma_1,\ldots,\sigma_n\in \mathcal{S}^0(W)$ be a finite set of spheres. Put $\sigma=\sigma_i$, and let $\rho \in
  \mathcal{H}=\mathcal{H}(\sigma_1,\ldots,\sigma_n)$ be arbitrary. Then each outermost
  surgery pair of $\rho$ in direction of $\sigma$ contains at least
  one element of $\mathcal{H}$.
\end{lem}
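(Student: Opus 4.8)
The plan is to mirror the arc and disc cases (Lemmas~\ref{lem:arc-length-reduction} and~\ref{lem:disc-hulls}), with the width with respect to $S_0$ in place of the hyperbolic length used there. First I would dispose of the trivial case: if $\sigma$ and $\rho$ have disjoint representatives, then the only outermost surgery pair of $\rho$ in direction of $\sigma$ is $\{\sigma,\sigma\}$, and $\sigma=\sigma_i\in\mathcal{H}$ by construction. So assume $\sigma\neq\rho$ and that every pair of representatives intersects. I would then choose a representative $s$ of $\sigma$ and a representative $r$ of $\rho$ that are simultaneously in normal position with respect to $S_0$, in normal position with each other, and such that $s\cap r$ is disjoint from $S_0$. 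By the definition of width and Lemma~\ref{lem:system-normal-pos-width}, normal position with respect to $S_0$ forces $s$ to have at most $w$ sphere pieces (since $\sigma=\sigma_i$) and $r$ to have at most $2w$ sphere pieces (since $\rho\in\mathcal{H}$). Let $D\subset s$ be an outermost disc for $r$, so $D\cap r=\partial D$, and let $D'_+,D'_-$ be the two components of $r-\partial D$; then $r'_\pm=D'_\pm\cup D$ represent the two classes $\rho'_\pm$ in the outermost surgery pair determined by $D$, and by Lemma~\ref{lem:uniqueness-normal-position} it is enough to treat this particular pair.

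Now for the width count. Since $D\subset s$, a lift of $D$ crosses at most as many complementary components of $\widetilde{S}_0$ as a lift of $s$, hence at most $w$. For the two halves of $r$ I would use that a sphere in $\widetilde{W}$ in normal position with respect to $\widetilde{S}_0$ meets each complementary component of $\widetilde{S}_0$ in a single sphere piece: it meets each sphere of $\widetilde{S}_0$ in at most one circle, and since the graph dual to $\widetilde{S}_0$ is a forest (compare the proof of Lemma~\ref{lem:characterizing-normal-position-systems}), two pieces in one complementary component would force the sphere to cross some sphere of $\widetilde{S}_0$ twice. Because $s\cap r$, and in particular $\partial D$, is disjoint from $S_0$, the circle $\partial\widetilde{D}=\widetilde{D}'_+\cap\widetilde{D}'_-$ lies in a single complementary component $\widetilde{K}_0$ of $\widetilde{S}_0$; hence the unique piece of $\widetilde{r}$ in any complementary component other than $\widetilde{K}_0$ lies entirely on one of the two sides $\widetilde{D}'_+,\widetilde{D}'_-$. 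Therefore $\mathrm{width}(D'_+)+\mathrm{width}(D'_-)\leq\mathrm{width}(r)+1\leq 2w+1$, so one of the halves, say $D'_+$, crosses at most $w$ complementary components. Then $r'_+=D'_+\cup D$ crosses at most $\mathrm{width}(D'_+)+\mathrm{width}(D)\leq w+w=2w$ complementary components of $\widetilde{S}_0$, and so by Lemma~\ref{lem:system-normal-pos-width} the class $\rho'_+$ has width at most $2w$, that is $\rho'_+\in\mathcal{H}$.

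The step I expect to be the main obstacle is the choice of representatives: one has to produce $s$ and $r$ which are at the same time in normal position with respect to $S_0$, in normal position with one another, and whose intersection avoids $S_0$ (the last point is what keeps $\partial D$ inside one complementary component, and the bound already fails without it when $w\geq 2$). I would obtain this by the standard normal-position surgeries of \cite{Ha}, starting from representatives already in normal position with respect to $S_0$ and normalising them against each other while controlling the effect on $S_0$; this bookkeeping is the delicate part, the remainder being the elementary width arithmetic above.
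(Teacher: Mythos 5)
Your width arithmetic is sound (including the observation that a sphere in normal position with respect to $\widetilde S_0$ has at most one piece in each complementary component), and the overall shape of the argument --- bound the width of one surgered sphere by $2w$ in the universal cover and then invoke Lemma~\ref{lem:system-normal-pos-width} --- is the same as in the paper. But the step you defer, namely producing representatives $s$ of $\sigma$ and $r$ of $\rho$ that are \emph{simultaneously} in normal position with respect to $S_0$, in normal position with each other, and with $s\cap r$ disjoint from $S_0$, is not bookkeeping: it is the actual difficulty of the lemma, and you give no argument for it. Hatcher's results (\cite[Prop~1.1 and~1.2]{Ha}) normalise one sphere against a maximal system, or a pair of spheres against each other, but the homotopies achieving one of these relations can destroy the other: pushing $r$ across $s$ to remove an extra intersection circle or a trivial intersection disc may create problematic pieces with respect to $S_0$, while removing problematic pieces with respect to $S_0$ may move $r$ through $s$ or drag intersection circles across $S_0$. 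Your extra requirement that $s\cap r$ miss $S_0$ (which your count genuinely needs --- without it $\partial\widetilde D$ need not lie in a single complementary component and the estimate $\mathrm{width}(D'_+)+\mathrm{width}(D'_-)\leq\mathrm{width}(r)+1$ fails) is a further constraint of the same delicate kind. So as written the proof has a genuine gap at exactly the point you flag, and it is not clear that the configuration you postulate always exists.

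It is instructive to compare with how the paper sidesteps this. It never puts $s$ and $r$ simultaneously in normal position with $S_0$ and with each other: it takes $s$ and $r$ normal with respect to $S_0$, plus a \emph{second} representative $s_r$ of $\sigma$ normal with respect to $r$ (used only to define the surgery pair). In $\widetilde W$ it forms $\widetilde W(\widetilde s)$, the union of the at most $w$ complementary components of $\widetilde S_0$ crossed by $\widetilde s$, and homotopes $\widetilde s$ \emph{inside} $\widetilde W(\widetilde s)$ --- sacrificing its normal position with respect to $\widetilde S_0$ --- until it meets $\widetilde r$ in a single circle; this is possible because $\widetilde r\cap \widetilde W(\widetilde s)$ is connected, which uses normality of $r$ with respect to $S_0$, and Lemma~\ref{lem:uniqueness-normal-position} identifies the surgered spheres with lifts of $r_\pm$. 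The width bound then comes not from localising the intersection circle in one component but from containment: the part of $\widetilde r_+$ outside $\widetilde W(\widetilde s)$ lies in one of the two discs of $\widetilde r$ and may be assumed to cross at most $w$ components, while the rest of $\widetilde r_+$ lies in $\widetilde W(\widetilde s)$ and so crosses at most $w$ more. To salvage your version you would have to prove your simultaneous-normalisation claim, which is at least as hard as the lemma itself; otherwise you should replace it by the paper's device of confining the $\sigma$--representative to $\widetilde W(\widetilde s)$ rather than keeping it normal with respect to $\widetilde S_0$.
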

\begin{proof}
  Let $s,r$ be representatives of $\sigma,\rho$ which are in normal position
  with respect to $S_0$. We can assume that $s$ and $r$ intersect.
  Let $s_r$ be a representative of $\sigma$ which is in normal
  position with respect to $r$
  Let $r_+,r_-$ be obtained by outermost surgery of $r$ in direction of $s_r$ determined by an outermost disc $D \subset s_r$.

  Let $\widetilde{r}$ be a lift of $r$ to $\widetilde{W}$, and let
  $\widetilde{s}_r$ be the lift of $s_r$ which intersects $\widetilde{r}$
  in the lift of $\partial D$. Denote by $\widetilde{s}$ the lift of $s$ which is
  isotopic to $\widetilde{s}_r$.

  We may homotope $\widetilde{s}$ to be in normal position with respect
  to $\widetilde{r}$ without increasing its width. Namely, let
  $\widetilde{W}(\widetilde{s})$ be the union of all the complementary
  components of $\widetilde{S}_0$ crossed by $\widetilde{s}$.
  The manifold $\widetilde{W}(\widetilde{s})$ is homeomorphic to a
  bordered $3$--sphere (i.e.\ a $3$--sphere minus a finite disjoint
  union of open balls).
  Since $r$ is in normal position with respect to
  $S_0$, the lift $\widetilde{r}$ of $r$ intersects
  $\widetilde{W}(\widetilde{s})$ in a connected surface.
  Therefore, we may homotope $\widetilde{s}$ within
  $\widetilde{W}(\widetilde{s})$ as in the proof of \cite[Prop~1.1]{Ha} so that $\widetilde{s}$ and $\widetilde{r}$ intersect in
  a single circle $\widetilde{c}$.
  The circle $\widetilde{c}$ cuts $\widetilde{r}$ into two discs
  $\widetilde{D}_+,\widetilde{D}_-$.
  The normal position of
  $\widetilde{r}$ and $\widetilde{s}$ is determined by their
  isotopy classes by
  Lemma~\ref{lem:uniqueness-normal-position}. Therefore, there is a
  disc
  $\widetilde{D} \subset \widetilde{s}$ bounded by $\widetilde{c}$, such
  that the spheres $\widetilde{r}_+=\widetilde{D}_+ \cup \widetilde{D} ,\ \widetilde{r}_-=\widetilde{D}_-
  \cup \widetilde{D}$ are isotopic to lifts of $r_+$ and $r_-$.

  Since $\widetilde{r}$ is in normal position with respect to
  $\widetilde{S}_0$, each component of
  $\widetilde{r}-\widetilde{W}(\widetilde{s})$ belongs to exactly one
  of the discs $\widetilde{D}_+,\widetilde{D}_-$. Since $\widetilde{r}$ has width at
  most $2w$, without loss of generality we may assume that the union $\widetilde{D}^*_+$ of all
  the components of $\widetilde{r}-\widetilde{W}(\widetilde{s})$
  contained in $\widetilde{D}_+$ crosses at most $w$ complementary
  components of $\widetilde{S}_0$. The surface $\widetilde{r}_+-\widetilde{D}^*_+$ is contained in $\widetilde{W}(\widetilde{s})$.
  Hence it also has width at most $w$.

  Consequently, the sphere $\widetilde{r}_+$ has width at most $2w$.
  By Lemma~\ref{lem:system-normal-pos-width}, the width of its isotopy class with respect to $\widetilde{S}_0$ is at most $2w$.
  Since $\widetilde{r}_+$ is isotopic to a lift of $r_+$, the isotopy
  class of $r_+$ has width at most $2w$ and the lemma is proved.
\end{proof}

As a consequence of Lemmas~\ref{lem:sphere-projection},
\ref{lem:sphere-normal-pos-width}
and \ref{lem:spheresurgery-equivariance} we obtain
Theorem~\ref{thm:fixedsphere} from Proposition~\ref{prop:fixedpoint}(ii)
in the same way that we obtained Theorem~\ref{thm:fixed_arc}.

\section{Nielsen realisation for graphs}
\label{sec:sph-red}
Let $\{s_1,\ldots,s_k\}$ be a finite set of disjoint spheres in $W$. We say that
$\{s_1,\ldots,s_k\}$ is a \emph{simple sphere system} if the complementary
components of $s_1\cup\dots\cup s_k$ are bordered $3$--spheres or, equivalently, simply-connected.
A clique $\Delta\subset\mathcal{S}^0(W)$ is called \emph{simple} if
some (hence any) set of disjoint representatives of elements in $\Delta$ is a simple sphere system.
In this section we use the results of Section~\ref{sec:spheres} to
show the following.
\begin{thm}\label{thm:fixed-filling-sphere-clique}
  Let $W$ be a (possibly disconnected) doubled handlebody, each component of which has rank $\geq 2$.
  Let $H$ be a finite subgroup of $\mathrm{Map}(W)$. Then $H$ fixes a
  simple clique in the sphere graph $\mathcal{S}(W)$.
\end{thm}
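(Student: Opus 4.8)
The plan is to mimic closely the proofs of Theorem~\ref{thm:fixed_arc_system} and Theorem~\ref{thm:simple-fixed}: start from an arbitrary $H$--invariant clique provided by Theorem~\ref{thm:fixedsphere}, pass to a maximal one, and show that maximality forces it to be simple. So first, by Theorem~\ref{thm:fixedsphere} there is a clique $\Delta=\{\sigma_1,\ldots,\sigma_n\}$ in $\mathcal{S}(W)$ fixed by $H$, and we may assume $\Delta$ is maximal among $H$--invariant cliques. Realise $\Delta$ by a sphere system $S=\{s_1,\ldots,s_n\}$, and let $W_0$ be the union of the complementary components of $S$ in $W$ which are not bordered $3$--spheres; I want to show $W_0=\varnothing$, which is exactly the statement that $\Delta$ is simple.

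Suppose $W_0\neq\varnothing$. Cutting $W$ along $S$ and then capping off the resulting boundary spheres with balls yields a new manifold $W'$, again a (possibly disconnected) doubled handlebody, and the non-ball components $W'_0$ have rank $\geq 1$; since $H$ acts isometrically on $S$ (using an analogue of \cite[Thm~A.5]{Bus}, or simply because $H$ permutes the $\sigma_i$ and hence the complementary components), $H$ maps into $\mathrm{Map}(W')$. If $W'$ has rank--$1$ components, take the unique isotopy class of essential sphere in each of those (the separating sphere cutting $S^1\times S^2$), forming an $H$--invariant clique $\Delta'$; otherwise obtain an $H$--invariant clique $\Delta'=\{\sigma'_1,\ldots,\sigma'_m\}$ in $\mathcal{S}(W')$ by Theorem~\ref{thm:fixedsphere} applied to $W'_0$. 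Now one must push the spheres of $\Delta'$ back into $W_0\subset W$. The natural analogue of the set $\mathcal{F}(\Delta')$ from the proof of Theorem~\ref{thm:simple-fixed} is the set of isotopy classes of spheres in $W_0$ that become isotopic to some $\sigma'_j$ after capping; each such sphere is essential in $W$, and essential, since an inessential one would bound a ball and the capping discs are balls too. The analogue of "length" is \emph{width} with respect to a fixed maximal sphere system $S_0$ extending $S$ (or the relevant normal-form complexity): set $L$ to be the minimal width over $\mathcal{F}(\Delta')$ and let $\Delta^*\subset\mathcal{F}(\Delta')$ consist of those of minimal width. Then $\Delta^*$ is $H$--invariant, and if I can show the elements of $\Delta^*$ are pairwise disjoint, then $\Delta\cup\Delta^*$ is an $H$--invariant clique strictly larger than $\Delta$, a contradiction.

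The crux, therefore, is the disjointness claim for $\Delta^*$, which replaces the broken-geodesic surgery argument of Theorem~\ref{thm:simple-fixed}. Suppose $\rho_1,\rho_2\in\Delta^*$ have representatives $r_1,r_2$ in normal position with $r_1\cap r_2\neq\varnothing$; since $r_1$ and $r_2$ become isotopic to disjoint spheres after capping, there is a pattern of intersection circles that can be cleared by surgery inside the capped region. Concretely, take an outermost disc $D\subset r_1$ for $r_2$ cobounding a ball with a disc $D'\subset r_2$ whose interior is cut off by the capping balls; replacing the corresponding piece of $r_2$ by $D$ (or $D'$ by $D$, whichever lowers complexity) produces a sphere in $\mathcal{F}(\Delta')$ — it is still isotopic to $\sigma'_j$ after capping because the surgery happens in the capped-off part — of strictly smaller width than $L$, contradicting minimality. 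Making "strictly smaller width" precise will use Lemma~\ref{lem:system-normal-pos-width} and the width bookkeeping from Subsection~\ref{sec:sphere-hulls}, together with normal position to ensure no circles are created. This comparison of complexities under capping surgery is the step I expect to be the main obstacle, since unlike the surface case there is no canonical shortest representative and one has to argue directly with normal-position complexity; the rest is a routine adaptation of the maximal-clique extension scheme already used twice in the paper. Finally, once $W_0=\varnothing$, the clique $\Delta$ is simple by definition, completing the proof.
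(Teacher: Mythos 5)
Your overall scheme (maximal $H$--invariant clique, cap off $W_0$ to get $W'$, pull an invariant clique $\Delta'$ back through the fiber $\mathcal{F}(\Delta')$ by minimising a complexity, and contradict maximality) is the right skeleton, but the two steps you defer are exactly where the paper has to do something different, and as formulated they do not work. First, equivariance: your complexity is width with respect to ``a fixed maximal sphere system $S_0$ extending $S$'' (or a normal-form complexity). Such an $S_0$ cannot be chosen $H$--invariant --- an $H$--invariant maximal sphere system would already contain the simple clique you are trying to produce --- and width measured against a non-invariant system is not preserved by $H$, so your minimising set $\Delta^*$ need not be $H$--invariant and the centre of the argument collapses. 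This is precisely the ``no preferred geometry'' issue; the paper's substitute is to measure complexity by the geometric intersection number with a \emph{generating arc system} $\Lambda$ in $W_0$, which can be made $H$--invariant simply by taking an $H$--orbit (Section~\ref{sec:arcsystems}), with Lemma~\ref{lem:finiteness-small-intersection} supplying the finiteness of the minimising set $\mathcal{F}^\Lambda(\Delta')$.

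Second, and more fundamentally, the disjointness claim for minimisers is not just ``the main obstacle'' --- it is not how the argument can go, and the paper does not prove it. If $\Sigma_1,\Sigma_2$ are two intersecting elements of the fiber realising the minimum, the admissible surgery exchanges discs $D_1\subset S_1$, $D_2\subset S_2$ with $D_1\cup D_2$ bounding a ball in $W'$; by minimality the arc system meets $D_1$ and $D_2$ in the \emph{same} number of points, so the surgered system again realises the minimum (this equality is exactly what is used in Lemma~\ref{lem:fiber dismantlable}). There is no strict decrease below the minimum, hence no contradiction, and minimisers need not be pairwise disjoint --- unlike the handlebody case, where geodesic representatives and the broken-geodesic trick give a genuine strict shortening. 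The paper's actual route is to turn these equal-complexity surgeries into a dismantling projection on the finite graph $\mathcal{F}^\Lambda(\Delta')$ (with Lemma~\ref{lem:no-sphere-cycles} excluding cycles), conclude dismantlability, and then invoke Proposition~\ref{prop:fixedpoint}(i) to find an $H$--invariant clique $\Delta^*$ inside the fiber; it also runs an induction on rank so that $\Delta'$ is itself simple and $\Delta\cup\Delta^*$ is simple outright (your maximality-contradiction framing could dispense with that induction, but not with the fiber-dismantlability step). A small further slip: the unique essential sphere in a rank--$1$ component is non-separating, not ``the separating sphere cutting $S^1\times S^2$''.
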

As an immediate consequence of
Theorem~\ref{thm:fixed-filling-sphere-clique} we obtain
Theorem~\ref{thm:graph}. Namely, let $H$ be a finite subgroup of
$\mathrm{Out}(F_n)$. By Theorem~\ref{thm:fixed-filling-sphere-clique}
the group $H$ fixes a simple clique $\Delta$ in $\mathcal{S}(W_n)$ (see the
discussion at the beginning of Section~\ref{sec:spheres}). Then $H$ acts as isometries on the dual
graph of a simple sphere system representing $\Delta$.

\medskip
The proof of Theorem~\ref{thm:fixed-filling-sphere-clique} is similar to the one employed for the
handlebody group in Section~\ref{thm:simple-fixed} --- with
one additional complication:

In the handlebody group case,
if a finite group fixes a clique $\Delta$ in the disc graph, then
using Theorem~\ref{thm:Nielsen}
with hypothesis (B) there is an $H$--invariant hyperbolic metric on the
boundary of the handlebody. We
used curves of a certain type that minimise length with respect to
this metric to extend $\Delta$ to a simple clique.

The manifold $W$ does not have a preferred geometry. We will instead imitate
the length argument using a combinatorial and topological
argument. The role of length
will be played by the geometric intersection number of sphere systems
with arc systems, which we define next.

\subsection{Arc systems}
\label{sec:arcsystems}
We need a slight generalisation of the doubled handlebodies
considered in Section~\ref{sec:spheres}.
Namely, let $W_n^l$ be the $3$--manifold obtained from
$W_n$ by removing $l$ open balls. Let $W_0$ be the
disjoint union of a finite number of such manifolds.

An embedded sphere in $W_0$ is called \emph{essential} if it does not
bound a ball in $W_0$ and if it is not homotopic into a boundary
component of $W_0$.
The definitions of sphere systems and normal position then extend to the
manifold $W_0$ in a straightforward way. Furthermore, normal position
has the same properties as described for $W$ in Section~\ref{sec:spheres}.

An \emph{arc} is a proper embedding of an interval into $W_0$, where the endpoints are allowed to coincide.
Let $S$ be a simple sphere system in $W_0$ and let $a$ be an arc. We call the
intersections of $a$ with complementary components of $S$ the
\emph{arc pieces of $a$ with respect to $S$}. An arc piece $a'\subset a$ is called \emph{returning}, if its endpoints lie on the same boundary component of the complementary
component of $S$ containing $a'$.
We say that an arc $a$ is in \emph{minimal position with respect to
$S$} if no arc piece of $a$ is returning.
\begin{lemma}\label{lem:dual-minimal}
  Let $S,S'$ be two simple sphere systems in $W_0$ which are in
  normal position and let $a$ be an arc. Then $a$ can be
  homotoped to be in minimal position with respect to both $S$
  and $S'$.
\end{lemma}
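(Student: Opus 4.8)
The plan is to homotope $a$ so that the total intersection number $n(a):=|a\cap S|+|a\cap S'|$ is as small as possible, and then to show that any minimiser is automatically in minimal position with respect to both $S$ and $S'$. First I would put $a$ in general position: transverse to $S$ and to $S'$ and, since $a$ is $1$--dimensional while $S\cap S'$ is a disjoint union of circles in the $3$--manifold $W_0$, disjoint from $S\cap S'$. Among all arcs homotopic to $a$ and in general position the nonnegative integer $n$ attains a minimum; fix such an arc $a$, and suppose for contradiction that it has a returning arc piece. I would treat the case of a returning arc piece $a_0$ with respect to $S$, the case of $S'$ being symmetric; thus $a_0$ lies in a complementary component $K$ of $S$ and has both endpoints $p,q$ on one boundary sphere $\hat s$ of $K$ (a copy of some $s\in S$).

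Since $S$ is simple, $K$ is a bordered $3$--sphere, hence irreducible with trivial fundamental group. Therefore, for any embedded arc $\beta\subset\hat s$ from $p$ to $q$ missing the remaining points of $a\cap\hat s$, the embedded loop $a_0\cup\beta$ bounds an embedded disc $D\subset K$ whose interior lies in the interior of $K$ (Dehn's lemma). Pushing $a_0$ across $D$ onto $\beta$ and then slightly past $\hat s$ into the adjacent complementary component yields an arc $a'$ homotopic to $a$ with $|a'\cap S|=|a\cap S|-2$ and $|a'\cap S'|=|a\cap S'|-|a_0\cap S'|+|\beta\cap(\hat s\cap S')|$. Hence $n(a')<n(a)$, contradicting minimality, provided $\beta$ can be chosen with $|\beta\cap(\hat s\cap S')|\le|a_0\cap S'|$.

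Producing such a $\beta$ is the heart of the argument, and this is where normal position is used. By normal position each sphere of $S'$ meets $\hat s$ in at most one circle, so $\hat s\cap S'$ is a disjoint union of circles on the sphere $\hat s$; let $N$ be the number of these circles that separate $p$ from $q$, and choose $\beta$ realising this minimum, so that $|\beta\cap(\hat s\cap S')|=N$. It remains to show $|a_0\cap S'|\ge N$. Let $c=\hat s\cap s'$ be one of the $N$ separating circles ($s'\in S'$), bounding discs $E_p\ni p$ and $E_q\ni q$ in $\hat s$, and let $\Sigma$ be the component of $s'\cap K$ containing $c$ in its boundary; since $\hat s\cap s'=c$ we have $\Sigma\cap\hat s=c$, so $E_p$ and $E_q$ are disjoint from $\Sigma$. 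As $K$ is a bordered $3$--sphere we have $H_2(K,\partial K)=0$, so the properly embedded surface $\Sigma$ separates $K$; since $\Sigma$ emanates from $c$ into the interior of $K$ it locally separates the two sides of $c$, so the connected sets $E_p$ and $E_q$ lie in opposite components of $K\setminus\Sigma$, and hence the arc $a_0$, joining $p$ to $q$ with interior in $K$, must cross $\Sigma\subset s'$. Since, again by normal position, the $N$ separating circles lie on $N$ distinct spheres of $S'$, this gives $|a_0\cap S'|\ge N$; feeding this back into the previous step shows $a$ has no returning arc piece, i.e.\ $a$ is in minimal position with respect to both $S$ and $S'$.

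I expect the last step --- that the returning piece $a_0$, living in the $3$--dimensional region $K$, must meet $S'$ at least as often as the ``shortest'' arc along the sphere $\hat s$ --- to be the main obstacle. It is precisely here that both features of normal position enter: that $\hat s\cap s'$ is a single circle (so that $\Sigma\cap\hat s=c$, and so that distinct separating circles lie on distinct spheres of $S'$), together with the fact that every properly embedded surface in a bordered $3$--sphere is separating. The remaining ingredients --- general position, the existence of the disc $D$, and termination of the minimisation since $n$ is a nonnegative integer --- are routine, so once this step is established the lemma follows.
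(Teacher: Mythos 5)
Your strategy (minimise the total count $|a\cap S|+|a\cap S'|$ and show a returning piece lets you strictly decrease it) is genuinely different from the paper's, which first puts $a$ in minimal position with respect to $S'$, then minimises $|a\cap S|$ subject to that, and removes a hypothetical returning piece by an induction that slides it across pieces of $S'$ one at a time. Your route could be made to work, but as written the step you yourself identify as the heart of the argument rests on a misreading of normal position. In this paper normal position of two spheres in $W_0$ is defined via their \emph{lifts} to the universal cover: each pair of lifts meets in at most one circle, but downstairs $s\cap s'$ may consist of arbitrarily many circles. So neither ``each sphere of $S'$ meets $\hat s$ in at most one circle'' nor ``the $N$ separating circles lie on $N$ distinct spheres of $S'$'' is available, and both are false in general; with them goes your justification that $\Sigma\cap\hat s=c$ and your count $|a_0\cap S'|\ge N$, which tacitly assumes the $N$ forced crossings are with distinct spheres. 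The correct (and sufficient) statement is weaker: each \emph{component} of $S'\cap K$ meets $\hat s$ in at most one circle. This does follow from normal position, but only via the lift: $K$ is simply connected because $S$ is simple, so $K$ lifts homeomorphically to $\widetilde W_0$, a piece of $s'$ in $K$ lifts onto a single lift $\widetilde s'$, and its boundary circles on the lift of $\hat s$ lie in $\widetilde s\cap\widetilde s'$, which is at most one circle. With that, distinct separating circles bound distinct, pairwise disjoint pieces of $S'\cap K$, your separation argument ($H^1(K;\Z/2)=0$, so each piece separates $K$ and $E_p$, $E_q$ lie on opposite sides) forces $a_0$ to cross each of these pieces, and the crossings are distinct points, so $|a_0\cap S'|\ge N$ survives. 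You must supply this lifting argument explicitly; nothing in the definition lets you work sphere-by-sphere downstairs.

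A second, more minor error: the loop $a_0\cup\beta$ need not bound an \emph{embedded} disc in $K$ --- it can be knotted in the bordered $3$--sphere, and Dehn's lemma does not apply to a curve that is not in $\partial K$ and gives no embedded spanning disc for a knotted curve. Fortunately you do not need one: since $K$ is simply connected, $a_0$ is homotopic rel endpoints to $\beta$ inside $K$, and arcs here are only considered up to (proper) homotopy, so you may replace $a_0$ by a push-off of $\beta$ into the adjacent component and restore embeddedness by a small perturbation that does not change the transverse intersections with $S\cup S'$; your intersection count $n(a')\le n(a)-2$ then goes through. You should also note (as the paper implicitly does) that a returning piece has its endpoints on a copy of a sphere of $S$, not on $\partial W_0$, before pushing it across. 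With these repairs your argument is a valid alternative to the paper's sliding induction; as submitted, however, the normal-position step is a genuine gap.
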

\begin{proof}
  We replace $a$ with a homotopic arc in minimal position with respect to $S'$ and intersecting $S$ in the minimal possible number of points.
We will prove that $a$ is in minimal position with respect to
  $S$.

  Otherwise, there is a returning arc piece $a' \subset a$ with
  respect to $S$, contained in a complementary component $K$ of
  $S$.
  Let $n$ be the number of sphere pieces of $S'$ in $K$ intersected by $a'$. We
  will argue by induction that $n$ can be decreased to $0$.

  If $n=0$, then $a'$ is disjoint from $S'$. We can then homotope the
  arc piece $a'$ out of $K$ keeping $a$ in minimal position with
  respect to $S'$. Such a homotopy decreases the number of intersection points
  between $a$ and $S$. This contradicts the minimality assumption on $a$.

  If $n>0$, orient $a'$ and let $P'$ be the first sphere piece of $S'$ intersected by $a'$.
  Note that $a'$ intersects $P'$ in a single point, since $P'$
  separates $K$ into two connected components, and $a$ has no
  returning arc piece with respect to $S'$. In particular, the sphere piece $P'$ intersects the boundary
  component of $K$ containing the endpoints of $a'$.
  Therefore we can slide $a'$ along $P'$, keeping $a$ in minimal
  position with respect to $S'$ to decrease the number $n$ of sphere
  pieces of $S'$ in $K$ intersected by $a'$ (see
  Figure~\ref{fig:sliding-arc}), as desired.
  Note that such a homotopy does not increase the
  number of intersection points between $a$ and $S$.
\end{proof}

\begin{figure}[h!]
  \centering
  \includegraphics[width=0.35\textwidth]{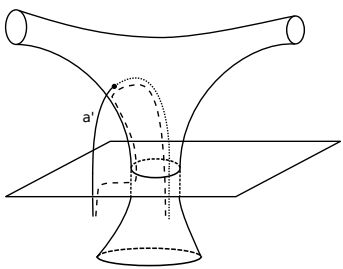}
  \caption{Sliding a returning arc piece to intersect fewer sphere pieces}
  \label{fig:sliding-arc}
\end{figure}

If an arc $a$ is in minimal position with respect to a simple sphere system $S$, then the number of intersection points between $a$ and $S$ depends only on the homotopy classes $\alpha$ of $a$ and $\Sigma$ of $S$. It will be called the \emph{intersection number} $i(\alpha,\Sigma)$.

\medskip

An \emph{arc system} in $W_0$ is a finite collection of arcs in $W_0$.
We say that an arc system $A$ is \emph{generating} if the following
holds for each connected component $\widehat{ W}$ of $W_0$.
\begin{enumerate}[(i)]
\item The graph whose vertices are boundary components of $\widehat{ W}$ and edges correspond to arcs of $A$ in $\widehat{ W}$ is connected.
\item There is a system of loops of $A$ in $\widehat{ W}$ based at a
  point on $\partial\widehat{ W}$, which
  generates the fundamental group of $\widehat{ W}$.
\end{enumerate}

If $A$ and $A'$ are arc systems in the same homotopy class $\Lambda$, then $A$ is generating if and only if $A'$ is generating. In that case we also call $\Lambda$ a \emph{generating arc system}. The \emph{intersection number} $i(\Lambda,\Sigma)$ is the sum of $i(\alpha,\Sigma)$ over all $\alpha\in \Lambda$.

 \begin{lemma}\label{lem:finiteness-small-intersection}
  Let $\Lambda$ be a generating arc system, and let $k>0$. Then there
  are only finitely many isotopy classes $\Sigma$ of simple sphere systems in $W_0$ with intersection number $i(\Lambda,\Sigma)\leq k$.
\end{lemma}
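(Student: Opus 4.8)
The plan is to bound the complexity of a simple sphere system $\Sigma$ with $i(\Lambda,\Sigma)\le k$ in terms of $k$ and the fixed generating arc system $\Lambda$, and then invoke finiteness of isotopy classes of simple sphere systems of bounded complexity. The key observation is that because $\Lambda$ is generating, it ``sees'' every part of $W_0$ in a quantitative way: the connectivity condition (i) forces the dual graph of $\Sigma$ to be connected with a controlled number of vertices, and the $\pi_1$--generation condition (ii) forces $\Sigma$ to carry no more spheres than $\Lambda$ can detect. First I would put $\Sigma$ in minimal position with respect to $\Lambda$ (using Lemma~\ref{lem:dual-minimal} applied componentwise, or simply minimising intersections), so that the number of intersection points of each arc of $\Lambda$ with $\Sigma$ equals $i(\alpha,\Sigma)$ and no arc has a returning piece.

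The main steps are as follows. First, bound the number of spheres in $\Sigma$: each sphere of $\Sigma$ is non-separating or separating, and using condition (ii) one shows that every sphere of $\Sigma$ must be crossed by at least one loop of the generating system (a sphere disjoint from all of $\Lambda$ would, together with condition (i), contradict that $\Lambda$ generates $\pi_1$, since one could find a nontrivial loop in a complementary component missed by $\Lambda$, or the rank would drop). Hence the number of spheres of $\Sigma$, and so the number of complementary components of $\Sigma$, is at most $i(\Lambda,\Sigma)+(\text{number of boundary components of }W_0)\le k+c$ for a constant $c=c(W_0)$. Second, observe that each complementary component of $\Sigma$ is a bordered $3$--sphere, and the pattern of how the finitely many arcs of $\Lambda$ (cut into at most $k+|\Lambda|$ pieces total) traverse these finitely many bordered $3$--spheres determines $\Sigma$ up to finitely many choices: a bordered $3$--sphere with a bounded number of boundary spheres and a bounded number of marked points on them (the endpoints of arc pieces) admits only finitely many isotopy classes of embedded arc systems, hence only finitely many ways of being reassembled. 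Third, assemble: the isotopy class of $\Sigma$ is recovered from the dual graph (finitely many, by the vertex bound) together with the gluing data along spheres, and the arc system $\Lambda$ of bounded total intersection pins down this data to finitely many possibilities.

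A cleaner route, which I would actually write up, is to use the dual graph directly. Cut $W_0$ along $\Sigma$; by the argument above the dual graph $\Gamma_\Sigma$ has at most $k+c$ edges, hence at most $k+c$ vertices, so there are finitely many combinatorial possibilities for $\Gamma_\Sigma$. For each fixed $\Gamma_\Sigma$, the manifold $W_0$ is recovered by gluing standard bordered $3$--spheres according to $\Gamma_\Sigma$, and two such gluings give isotopic sphere systems iff they differ by an automorphism; the ambiguity in the gluing maps is absorbed by the requirement that $\Lambda$ sits in the result with the prescribed (bounded) intersection pattern. Since an embedded arc with both endpoints on boundary spheres of a bordered $3$--sphere, meeting each boundary sphere in at most finitely many controlled points, has only finitely many isotopy classes, only finitely many gluings are compatible with carrying $\Lambda$ with $i(\Lambda,\Sigma)\le k$.

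The main obstacle is the second step: making precise and correct the claim that the combinatorial intersection pattern of $\Lambda$ with the complementary bordered $3$--spheres of $\Sigma$ determines $\Sigma$ up to finitely many isotopy classes. One must argue that there is no room for an unbounded ``shearing'' or twisting of $\Sigma$ that leaves all intersection numbers with $\Lambda$ fixed; this is where condition (ii) is essential, since a generating system of loops rigidifies the gluing maps (any nontrivial relative twist would change how some loop of $\Lambda$ wraps, hence either change $i(\Lambda,\Sigma)$ or violate generation). I would prove this by reducing, component by component, to the statement that a simple sphere system in a bordered doubled handlebody meeting a fixed generating arc system in at most $k$ points is determined up to finite ambiguity — an assertion one can establish by an explicit count of arc configurations in bordered $3$--spheres, exactly as one counts curve systems of bounded intersection number on a surface.
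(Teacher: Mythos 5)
Your first reduction is sound: using conditions (i) and (ii) of a generating arc system one can indeed show that every sphere of $\Sigma$ must be crossed by $\Lambda$ (a sphere missed by $\Lambda$ would either be non-separating with all generating loops having zero intersection, or would split off a simply connected piece forcing it to be inessential or forcing an arc of condition (i) to cross it), so the dual graph of $\Sigma$ has at most $k$ edges and the combinatorial pattern of arc pieces is bounded. However, the step you yourself flag as the ``main obstacle'' is a genuine gap, and your proposed fix does not close it. Knowing the dual graph and, for each complementary bordered $3$--sphere of $\Sigma$, the configuration of arc pieces of $\Lambda$, determines the triple $(W_0,\Sigma,\Lambda)$ only up to homeomorphism of triples; to recover the isotopy class of $\Sigma$ inside the \emph{fixed} $W_0$ carrying the \emph{fixed} $\Lambda$ you must control the regluing along the spheres of $\Sigma$, i.e.\ you must show that a homeomorphism of $W_0$ preserving each arc of $\Lambda$ up to homotopy and carrying one such $\Sigma$ to another can do so in only finitely many ways. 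That is precisely the nontrivial Claim in the paper's proof (the subgroup of the mapping class group of $W_0$ preserving each oriented arc of a generating arc system is finite), and it is not delivered by counting arc configurations. The surface analogy misleads you: on a surface the complement of a filling system is a union of discs, so bounded intersection data determines a curve combinatorially, whereas here the gluings are along spheres with marked points, whose mapping class groups are infinite once there are at least four marked points, and the complement of a $1$--complex in a $3$--manifold has large fundamental group, so a sphere is far from determined by how it meets $\Lambda$. (Moreover the auxiliary count you invoke is false at the level of isotopy: properly embedded arcs in a bordered $3$--sphere can be knotted, so there are infinitely many isotopy classes even with fixed endpoint data; only the homotopy count is finite.)

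Note also that the missing rigidity is not a technicality but the heart of the lemma: if the subgroup of $\mathrm{Map}(W_0)$ fixing every arc of $\Lambda$ up to homotopy were infinite modulo sphere system stabilisers, its orbit of a single $\Sigma$ would already produce infinitely many simple sphere systems with the same intersection number. Your remark that ``any nontrivial relative twist would change how some loop of $\Lambda$ wraps'' is exactly what has to be proved, and it is where condition (ii) together with Laudenbach's theorem on the finite kernel of $\mathrm{Map}(W_n)\to\mathrm{Out}(F_n)$ \cite{L74} enters. The paper avoids reconstruction entirely: it uses that $\mathrm{Map}(W_0)$ acts with finitely many orbits on simple sphere systems, that for a fixed system only finitely many homotopy classes of arcs have intersection number at most $k$ with it, and then the finiteness of the arcwise stabiliser of a generating arc system. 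If you want to keep your direct route, you must prove that stabiliser finiteness (equivalently, that the map from simple sphere systems to marked dual graphs is finite-to-one on the relevant data), which is where the real content lies.
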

\begin{proof}
  The mapping class group of $W_0$ acts with finite quotient on the
  set of isotopy classes of simple sphere systems.
  Thus there is a finite collection  $\{\Sigma^*_j\}_j$ of isotopy classes of simple
  sphere systems representing all the orbits.
   Let $S$ be a simple sphere
  system with $i(\Lambda,\Sigma)<k$. There is
  is a mapping class $\varphi$ of $W_0$ with
  $\varphi (\Sigma)=\Sigma^*_j$ for some $j$.
  For every $\Sigma^*_j$ there are only finitely many homotopy classes $\alpha$ of arcs
  with intersection number $i(\alpha,\Sigma^*_j)\leq k$.
  Therefore, there are only finitely many possible $\varphi(\Lambda)$. The lemma now follows from the following bound on the number of possible mapping classes $\varphi$.

  \medskip
  \textbf{Claim.} Suppose that $\Lambda$ is a generating arc system. The subgroup of the mapping class group of $W_0$ preserving each oriented arc $\alpha\in \Lambda$ is finite.

  \medskip
  To show the claim, let $f$ be a homeomorphism of $W_0$ such that the arc
  $f(a)$ is homotopic to $a$ for each oriented arc $a \in A$, where $A$ represents $\Lambda$. Let $\widehat{ W}$ be a
  connected component of $W_0$ with $l$ boundary components. If $l>1$, let $s_1,s_2$
  be two boundary components of $\widehat{ W}$ which are connected by an arc
  $a\in A$. These exist by property {(i)} of a generating arc system. Let
  $s$ be the boundary of a regular neighbourhood of
  $s_1\cup a\cup s_2$. Since $f$ preserves the homotopy class of $a$, it also preserves the homotopy classes of
  $s_1$ and $s_2$. Thus the sphere $s$ is homotopic, hence isotopic, to $f(s)$.
  The complement of $s$ in $\widehat{ W}$ is the
  disjoint union of $W_n^{l-1}$ (for a suitable $n\geq 1$) and a
  $3$--sphere minus three open
  balls. The intersection $A\cap W_n^{l-1}$ becomes a generating
  arc system in $W_n^{l-1}$ after a homotopy.
  Arguing inductively using property {(i)} of a generating arc system, we
  find a sphere $\widehat{ s}$ such that the following holds. The complement of
  $\widehat{ s}$ is the disjoint union of
  $W_n^1$ and a $3$--sphere minus $l+1$ open balls. Up to isotopy, the
  homeomorphism $f$ preserves $\widehat{s}$. The restriction
  of $f$ to $W_n^1$ preserves the homotopy class of a generating arc system.
  Now property {(ii)} of a generating arc system implies that $f$ acts as
  the identity on the fundamental group of $W_n^1$. By
  \cite[{Thm~4.3}]{L74} $f$ represents an element of a finite subgroup of the mapping
  class group of $\widehat{W}$.
\end{proof}

\subsection{Fibers}
\label{sec:fibers}
Let $W'$ be the manifold obtained by gluing closed balls to each of
the boundary spheres of $W_0$. There is a natural embedding $W_0 \to W'$.
Let $\Delta'$ be a clique in $\mathcal{S}^0(W')$, and let
$\{s'_1,\ldots,s_k'\}$ be a sphere system in $W'$ representing $\Delta'$. We define
$\mathcal{F}(\Delta')$ to be the following graph. The vertex set of
$\mathcal{F}(\Delta')$ is the set of isotopy classes $\Delta$ of sphere
systems $\{s_1,\ldots,s_k\}$ in $W_0$, such that $s_i$ is isotopic to $s'_i$, viewed
as spheres in $W'$. Two vertices are connected by an edge if the
corresponding sphere systems can be realised in $W_0$ disjointly.

Let $\Lambda$ be a generating arc system in $W_0$ and put
$$I=I(\Lambda,\Delta') = \min\{ i(\Lambda,\Delta) |\ \Delta \in
\mathcal{F}(\Delta')\}.$$
Let $\mathcal{F}^\Lambda(\Delta')$ be the subgraph of
$\mathcal{F}(\Delta')$ induced on the set of the vertices $\Delta$ of
$\mathcal{F}(\Delta')$ satisfying $i(\Lambda,\Delta)=I$.
By Lemma~\ref{lem:finiteness-small-intersection}, the graph $\mathcal{F}^\Lambda(\Delta')$
is finite.

\begin{lemma}
\label{lem:fiber dismantlable}
The graph $\mathcal{F}^\Lambda(\Delta')$ is dismantlable.
\end{lemma}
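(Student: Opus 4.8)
The goal is to produce a dismantling projection on the finite graph $\mathcal{F}^\Lambda(\Delta')$, and then invoke Corollary~\ref{cor:proj->dism}. The natural candidate for the projection is again the sphere surgery operation from Section~\ref{sec:sphere-surgery}, applied component-by-component to the sphere systems: given two sphere systems $\Delta, \Delta'' \in \mathcal{F}^\Lambda(\Delta')$ that intersect, one performs an outermost surgery of (one component of) $\Delta''$ in direction of a component of $\Delta$. The key point to establish is that this surgery stays inside $\mathcal{F}^\Lambda(\Delta')$: that is, the surgered sphere system still lies in the correct fiber $\mathcal{F}(\Delta')$ (its components are isotopic to the $s_i'$ after filling in the boundary balls), and it still realises the minimal intersection number $I$ with the generating arc system $\Lambda$. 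The first of these is immediate — surgering a sphere in $W_0$ along a disc coming from another sphere does not change its isotopy class once we fill in the boundary spheres with balls, since in $W'$ the surgered sphere is isotopic to the original. The second — minimality of intersection with $\Lambda$ — is the substitute for the length-minimality argument used in the handlebody case (Theorem~\ref{thm:simple-fixed}), and it is where the work lies.

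The structure of the argument is thus parallel to the hull lemmas of the previous sections. First I would fix a representative of $\Lambda$ in minimal position with respect to all the sphere systems under consideration, using Lemma~\ref{lem:dual-minimal} to put the arcs simultaneously in minimal position with respect to the finitely many relevant simple sphere systems. Then, given $\Delta, \Delta'' \in \mathcal{F}^\Lambda(\Delta')$ in normal position and intersecting, and an outermost disc $D$ in a component $s$ of $\Delta$ for a component $r$ of $\Delta''$, I would compare the intersection numbers $i(\Lambda, r_+)$ and $i(\Lambda, r_-)$ with $i(\Lambda, r)$, where $r_\pm$ are the two surgered spheres. The point is that an outermost surgery replaces one side of $r$ by the outermost disc $D$; counting intersection points of $\Lambda$ with $r_+$ and $r_-$ and adding, one gets $i(\Lambda, r_+) + i(\Lambda, r_-) \le i(\Lambda, r) + 2\,(\text{number of times }\Lambda\text{ meets }D)$, and since $D$ is outermost and $\Lambda$ is in minimal position, the error term is controlled — in fact at least one of $r_+, r_-$ has intersection number with $\Lambda$ no larger than $i(\Lambda,r)$. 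Since the whole system $\Delta''$ already has the minimal intersection number $I$ with $\Lambda$, replacing $r$ by whichever of $r_+, r_-$ does not increase the count keeps us at the minimum $I$, so the surgered system lies in $\mathcal{F}^\Lambda(\Delta')$. This gives the analogue of axiom (i) of a dismantling projection (an exposed vertex exists, via Lemma~\ref{lem:sphere-projection}(i) applied to the components), and the no-cycles axiom (ii) follows from Lemma~\ref{lem:no-sphere-cycles} exactly as in the proof of Lemma~\ref{lem:sphere-projection}.

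The main obstacle I expect is the bookkeeping for sphere \emph{systems} rather than single spheres: the surgery operates on one component at a time, but we must ensure the result is still an honest sphere system (no two components isotopic, components pairwise disjoint) lying in the prescribed fiber, and we must organise the projection so that the no-cycle condition holds at the level of systems. The cleanest way is probably to run the argument one sphere at a time — treating a component to be surgered as the ``$\rho$'' of Lemma~\ref{lem:sphere-projection}, with the remaining components of $\Delta''$ together with $\Delta$ playing the role of the finite set $R$ — and to check that the exposed component identified by Lemma~\ref{lem:sphere-projection}(i), after surgery, is disjoint from all the other components, hence yields a valid sphere system. The intersection-number estimate with $\Lambda$ then guarantees the fiber is preserved. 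Assembling these facts, $\mathcal{F}^\Lambda(\Delta')$ carries a dismantling projection and is therefore dismantlable by Corollary~\ref{cor:proj->dism}.
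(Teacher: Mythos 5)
There is a genuine gap, and it is exactly at the point you flag as ``immediate''. An outermost surgery of a sphere $r$ in direction of another sphere replaces $r$ by $r_\pm=D\cup D'_\pm$, where $D'_+\cup D'_-=r$; neither $r_+$ nor $r_-$ need be isotopic to $r$ in $W'$. (One of them is isotopic to $r$ in $W'$ precisely when the other bounds a ball in $W'$, and nothing forces that: typically $r$ splits into two spheres whose classes in $W'$ are both different from that of $r$, e.g.\ on the level of homology $[r]=[r_+]+[r_-]$ with both summands nontrivial.) So the surgered system generally leaves the fiber $\mathcal{F}(\Delta')$, and the whole scheme of running Lemma~\ref{lem:sphere-projection} componentwise collapses. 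Your intersection-number estimate is also not established: from $|A\cap r_+|+|A\cap r_-|=|A\cap r|+2|A\cap D|$ one cannot conclude that one of $r_\pm$ meets $\Lambda$ no more than $r$ does (take $D$ heavily crossed by $A$), and even if it did, there is no reason the ``cheap'' sphere is the fiber-preserving one.

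The paper circumvents both problems by using a different surgery. Since $S_1$ and $S_2$ are homotopic in $W'$, one finds \emph{admissible surgery discs}: discs $D_1\subset S_1$, $D_2\subset S_2$ with common boundary circle such that $D_1\cup D_2$ bounds a ball in $W'$ disjoint from $S_1\cup S_2$. The move $S_1\mapsto (S_1-D_1)\cup D_2$ then preserves the fiber by construction (push across the ball in $W'$). Minimality of $I$ is what controls the arc system: choosing $A$ in minimal position with respect to both systems (Lemma~\ref{lem:dual-minimal}), if $|A\cap D_1|\neq|A\cap D_2|$ then one of the two exchanged systems $(S_1-D_1)\cup D_2$, $(S_2-D_2)\cup D_1$ — both of which lie in $\mathcal{F}(\Delta')$ — would meet $\Lambda$ fewer than $I$ times, a contradiction; hence $|A\cap D_1|=|A\cap D_2|$ and the surgered system stays in $\mathcal{F}^\Lambda(\Delta')$. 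With this corrected notion of projection, the no-cycle axiom does follow from Lemma~\ref{lem:no-sphere-cycles} (which holds in $W_0$) and the exposedness axiom is verified as in Lemma~\ref{lem:sphere-projection}, so your overall architecture (define a dismantling projection, apply Lemma~\ref{lem:proj->dism}/Corollary~\ref{cor:proj->dism} to the finite graph) is the right one; the missing idea is the admissible-disc exchange in place of outermost surgery.
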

\begin{proof}
We will show that $\mathcal{F}^\Lambda(\Delta')$ is dismantlable by
defining a dismantling projection.

Let $S_1$ and $S_2$ be two sphere systems
representing vertices of $\mathcal{F}(\Delta')$ which intersect and which
are in normal position. Since $S_1$ and $S_2$ are homotopic in
$W'$, there are discs $D_i \subset S_i$ with common boundary circle, such that $D_1\cup D_2$
bounds a ball in $W'$ disjoint from $S_1\cup S_2$.
We say that such discs $D_i$ are \emph{admissible surgery discs}.

By Lemma~\ref{lem:dual-minimal} we may choose a generating arc system $A$ representing $\Lambda$ in minimal
position with respect to both $S_1$ and $S_2$. The isotopy classes of the sphere systems
$(S_1-D_1)\cup D_2$ and $(S_2-D_2)\cup D_1$ define vertices of $\mathcal{F}(\Delta')$.
By minimality of $I$, the arc system $A$ intersects $D_1$ in the
same number of points as $D_2$.
In particular, the sphere system $(S_1-D_1)\cup D_2$ represents a
vertex of $\mathcal{F}^\Lambda(\Delta')$.

We call $(S_1-D_1)\cup D_2$ an \emph{admissible surgery of $S_1$ in
  direction of $S_2$}.
Since normal position is unique, the isotopy classes of admissible
surgeries only depend on the isotopy classes $\Sigma_1,\Sigma_2$ of
the sphere systems $S_1$ and $S_2$.
Let $\Pi_{\Sigma_2}(\Sigma_1)$ be the set of all
such admissible surgeries (each of which is interpreted as a pair
whose two elements coincide).

Lemma
~\ref{lem:no-sphere-cycles} holds for $W_0$ with the same proof as
for $W$. Hence, arguing exactly as in the proof of
Lemma~\ref{lem:sphere-projection}, one
shows that $\Pi_{\Sigma_2}$ is a dismantling projection for
each $\Sigma_2\in\mathcal{F}^\Lambda(\Delta')$.
\end{proof}

\begin{proof}[Proof of Theorem~\ref{thm:fixed-filling-sphere-clique}]
  We prove the theorem by induction on the sum of the ranks of the components of $W$ minus the number of components of $W$.
  By Theorem~\ref{thm:fixedsphere}, the subgroup $H$ fixes a
  clique $\Delta$ in $\mathcal{S}(W)$.
  Let $S=\{s_1,\ldots,s_n\}$ be
  a sphere system representing $\Delta$.
  If every complementary component of $S$ is a bordered $3$--sphere, then
  $\Delta$ is already simple.
  Otherwise, let $W_0$ be the union of all the complementary components of
  $S$ which are not bordered $3$--spheres. The finite group $H$ acts as a group of mapping classes on $W_0$.

  Let $W'$ be the manifold obtained by gluing closed balls to each
  boundary component of $W_0$.
  There is an $H$--invariant simple clique $\Delta'\subset \mathcal{S}^0(W')$:
  in each rank $1$ component of $W'$ we consider the unique homotopy class of
  spheres; in the union of the remaining components of $W'$ there is a simple sphere system which is $H$--invariant up to isotopy by
  the induction hypothesis.

  Let $\Lambda$ be a generating arc system in $W_0$ which is fixed by $H$. Such a system can be obtained as the orbit of
  an arbitrary generating arc system.
  Since $H$ preserves both $\Lambda$ and $\Delta'$, it acts by automorphisms on
  $\mathcal{F}^\Lambda(\Delta')$. By Lemma~\ref{lem:fiber dismantlable} the graph $\mathcal{F}^\Lambda(\Delta')$ is dismantlable.
  Therefore, by Proposition~\ref{prop:fixedpoint}(i) the group $H$ fixes a clique $\Delta^*$ in
  $\mathcal{F}^\Lambda(\Delta')$. Since $\Delta'$ is a simple clique, the vertices of $\Delta^*$ can be realised as a simple sphere system in $W_0$.
  We can interpret $\Delta^*$ as a clique in $\mathcal{S}^0(W)$ which together with
  $\Delta$ forms the desired simple clique fixed by $H$.
\end{proof}

\begin{rem}
  In fact, one can show that if $W'$ does not have rank $1$ components, then the flag complex spanned on the graph $\mathcal{F}(\Delta')$ is
  isomorphic to a triangulation of a product of trees with orthoschemes. The
  number of factors is the number of boundary components
  of $W_0$. In the case where $W_0$ has one boundary component, this means
  means that $\mathcal{F}(\Delta')$ is a tree. That case can be proved for example by extending the dismantling projection from $\mathcal{F}^\Lambda (\Delta')$ (see Lemma~\ref{lem:fiber dismantlable}) to the whole $\mathcal{F}(\Delta')$. This fact is
  analogous to a result of Kent--Leininger--Schleimer \cite[Thm~7.1]{KLS}
  for the curve graph of a surface with one puncture. In the case where $W_0$ has more boundary components, recognising the product of trees structure requires local analysis of $\mathcal{F}(\Delta')$.
\end{rem}

\section{Contractibility of the set of fixed points}
\label{sec:contr}
This section is a continuation of Section~\ref{sec:dism}. The main result is the combinatorial Theorem~\ref{thm:contrfix} stating that in a finite flag simplicial complex with dismantlable $1$--skeleton the set of points fixed by a finite automorphism group is contractible. It is a significant extension of Theorem~\ref{thm:fixedpoint}, and according to our knowledge it is a new result. Theorem~\ref{thm:contrfix} is of independent interest in graph theory, and as we will see it has numerous applications in geometric group theory. We will use it to obtain Theorem~\ref{thm:EGhyper} in Section~\ref{sec:Rips}. At the end of the current section we will deduce from Theorem~\ref{thm:contrfix} a criterion for contractibility of fixed point sets in the presence of dismantling projections (Proposition~\ref{prop:contr}). This will allow us to prove Metatheorems~\ref{met:H_infinite} and~\ref{met:H_filling}.

We adopt a convention that in a flag simplicial complex a \emph{neighbourhood} of a vertex is its neighbourhood in the $1$--skeleton. Similarly a vertex is \emph{dominated} (resp.\ \emph{dominating}) if is dominated (resp.\ dominating) in the $1$--skeleton etc.

\begin{proof}[Proof of Theorem~\ref{thm:contrfix}]
  First observe that $Y^H$ is nonempty, by Theorem \ref{thm:fixedpoint}.
  The proof of the theorem is by induction on the number $|V|$ of vertices of $Y$. If $|V|=1$ then the assertion is clear. Now, for a given $Y$ assume that the theorem has been proved for all complexes with less than $|V|$ vertices.
  We treat separately two cases (as in the proof of Theorem \ref{thm:fixedpoint}).
  \medskip

  \noindent
  \emph{Case 1.} There exist no two vertices with common neighbourhood. In this case let $V'\subset V$ be the nonempty set of dominated vertices which are not dominating. Note that the subcomplex $Y'$ spanned on $V\setminus V'$ is $H$--invariant, dismantlable (by Lemma \ref{lem:pushing}), with less than $|V|$ vertices. By induction hypothesis $Y'^H$ is contractible.

  For every vertex $\sigma \in V'$ let $\sigma^{\mathrm{dom}}$ denote the simplex of $Y$ spanned on the set of all vertices dominating $\sigma$. Observe that $\sigma^{\mathrm{dom}}$ is contained in $Y'$.
  For $\sigma\in V\setminus V'$, put $\sigma^{\mathrm{dom}}=\sigma$. By $b_{\sigma^{\mathrm{dom}}}$ we denote the barycentre of ${\sigma^{\mathrm{dom}}}$.
    On the (geometric realisation of the) simplicial complex $Y$ define a deformation retraction $\mathcal{R}\colon Y\times [0,1] \to Y$ as follows. First we define $\mathcal{R}$ on vertices of $Y$ by setting $\mathcal{R}(\sigma,t)=(1-t)\sigma + t b_{\sigma^{\mathrm{dom}}}$, where the linear combination is taken with respect to the standard affine structure on $Y$.
    If the set $S=\{ \sigma_1,\ldots,\sigma_k\}\subset V$ of vertices spans a simplex of $Y$, then there exists a simplex $\Delta$ of $Y$ containing $S$ and all the simplices $\sigma_i^{\mathrm{dom}}$. Thus, for every $t$ the set $\{\mathcal{R}( \sigma_1,t),\ldots,\mathcal{R}(\sigma_k,t)\}$
is contained in $\Delta$. It follows that we may extend $\mathcal{R}$ affinely to a map on $Y\times [0,1]$. Observe that $\mathcal{R}( Y\times \{1 \})=Y'$.

Since for every vertex $\sigma \in V$ we have $H(\sigma^{\mathrm{dom}})=(H\sigma)^{\mathrm{dom}}$, the map $\mathcal{R}$ is $H$--equivariant (with the trivial $H$--action on $[0,1]$).
Thus, if the set $\{ \sigma_1,\ldots,\sigma_k\}$ of vertices of $Y$ spans an $H$--invariant simplex, then the set $\{\mathcal{R}( \sigma_1,t),\ldots,\mathcal{R}(\sigma_k,t)\}$ is $H$--invariant for every $t$.
  It follows that the image $\mathcal{R}(b,t)$ of the barycentre $b$ of an $H$--invariant simplex of $Y$ is a point fixed by $H$. Moreover, every point $x\in Y$ fixed by $H$ is an affine combination of such barycentres, thus $\mathcal{R}(x,t)$ is fixed by $H$.
   Thus the image $\mathcal{R}(Y^H,t)$ of the set of the points fixed by $H$ consists itself of fixed points. We obtain a deformation retraction $\mathcal{R}|_{Y^H\times [0,1]}$ of $Y^H$ to $\mathcal{R}( Y^H\times \{1 \})$.
   Since $\mathcal{R}( Y^H\times \{1 \})$ is isomorphic to $Y'^H$ which is contractible, we conclude that $Y^H$ is contractible.
\medskip

\noindent
\emph{Case 2.}
   There exist different vertices of $Y$ having common neighbourhood. In that
case consider the graph $\Gamma'$ obtained by identifying all vertices with
common neighbourhood (as in the proof of Theorem~\ref{thm:fixedpoint}). Let $Y'$ denote the corresponding flag simplicial complex,
i.e. $Y'=\Gamma'^\blacktriangle$. The $H$--action on $Y$ induces an $H$--action
on $Y'$, and $\Gamma'$ is dismantlable by Lemma \ref{lem:pushing} (see the proof of Theorem~\ref{thm:fixedpoint}). Thus, by
induction hypothesis $Y'^H$ is contractible.

   Observe that for every vertex $\sigma$ of $\Gamma$ the set of vertices with
the same neighbourhood as $\sigma$ spans a simplex $\Delta^{\sigma}$ of $Y$.
   Moreover, two such simplices $\Delta^{\sigma}\neq \Delta^{\rho}$ are
disjoint.
   Define a deformation retraction $\mathcal{R}\colon Y \times [0,1] \to Y$ as
follows.
   First we define $\mathcal{R}$ on vertices of $Y$ by setting
$\mathcal{R}(\sigma, t)=(1-t)\sigma + t b_{\Delta^{\sigma}}$.
   Note that if vertices $\sigma_1,\ldots,\sigma_k$ of $Y$ are pairwise connected by
edges (i.e. when they span a simplex), then for every $t$ all the images
$\mathcal{R}(\sigma_1,t),\ldots,\mathcal{R}(\sigma_k,t)$ are contained in a common
simplex of $Y$. Hence $\mathcal{R}$ can be affinely extended to a map
$\mathcal{R}\colon Y \times [0,1] \to Y$.
Let $\mathcal T$ be the flag triangulation of $\mathcal{R}( Y\times \{1 \})$
defined as follows. Vertices of $\mathcal T$ are the points
$b_{\Delta^{\sigma}}$, for all vertices $\sigma$ of $Y$. Edges are straight
segments connecting $b_{\Delta^{\sigma}}$ and $b_{\Delta^{\rho}}$, whenever
$\sigma$ and $\rho$ are connected by an edge of $Y$. Simplices are convex hulls
of cliques.
Observe that $\mathcal T$ is isomorphic to $Y'$.

   Similarly as in the previous case we see that the map $\mathcal{R}$ is
$H$--invariant. Thus, again, if the set $\{ \sigma_1,\ldots,\sigma_k\}$ of vertices
of $Y$ spans an $H$--invariant simplex, then for every $t$ the set
$\{\mathcal{R}( \sigma_1,t),\ldots,\mathcal{R}(\sigma_k,t)\}$ is $H$--invariant.
   Analogously to the previous case we conclude that we obtain a deformation
retraction $\mathcal{R}|_{Y^H\times [0,1]}$ of $Y^H$ to $\mathcal{R}( Y^H\times
\{1 \})$. Observe that the latter space is isomorphic (as a subcomplex of the barycentric subdivision of the
triangulation $\mathcal T$) to $Y'^H$, which is contractible.
\end{proof}

\begin{defin}
Assume that for every vertex $\sigma$ of a graph $\Gamma$ with vertex set $V$ we have a $\sigma$--projection $\Pi_\sigma$. The family $\{\Pi_\sigma\}_\sigma$ is \emph{synchronised} if
for any clique $\Delta\subset V$ and any sequence $\rho_1,\rho_2,\ldots$ with $\rho_{i+1}\in \Pi^*_{\sigma_i}(\rho_i)$ with any $\sigma_i\in \Delta$, the sequence $(\rho_i)_i$ enters (and then stays in) $\Delta$.
\end{defin}

\begin{prop}
\label{prop:contr}
Let $H$ be a group of automorphisms of a graph $\Gamma$ with vertex set $V$ without infinite cliques. Assume that we have a $\sigma$--projection $\Pi_\sigma$ for each $\sigma\in V$ and the family $\{\Pi_\sigma\}_\sigma$ is $H$--equivariant. Suppose that
\begin{enumerate}[(i)]
\item
$H$ fixes a vertex of $\Gamma$, or
\item
$H$ fixes a clique in $\Gamma$ and the family $\{\Pi_\sigma\}_\sigma$ is synchronised.
\end{enumerate}
Then the fixed point set ${\Gamma^\blacktriangle}^H$ is contractible.
\end{prop}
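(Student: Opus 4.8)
The plan is to reduce Proposition~\ref{prop:contr} to Theorem~\ref{thm:contrfix} by producing, inside $\Gamma$, a finite $H$-invariant dismantlable subgraph $T$ whose flag span $T^\blacktriangle$ is an $H$-equivariant retract of $\Gamma^\blacktriangle$ in a way that also retracts the fixed set. The overall scheme mirrors the passage from Corollary~\ref{cor:proj->dism} and Proposition~\ref{prop:fixedpoint} to the realisation theorems, but now we must carry along the \emph{contractibility} of the fixed set, not merely the existence of a fixed clique. First I would invoke hypothesis (i) or (ii) to obtain an $H$-invariant clique $\Delta_0$ (in case~(i) a single fixed vertex, hence trivially a clique). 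Then, using the $\sigma$-projections $\Pi_\sigma$ for $\sigma\in\Delta_0$, I would build a finite $\Pi_\sigma$-convex set: in case~(i), convexity of the one-vertex projection already forces $\sigma\in R$ and a finite convex hull exists; in case~(ii), synchronisation is exactly the hypothesis guaranteeing that the ``hull'' of $\Delta_0$ under all the relevant projections is finite, because any projection chain starting from a vertex and using directions in $\Delta_0$ eventually lands in $\Delta_0$, which has no infinite clique. Let $T$ be the resulting finite $H$-invariant subgraph; by Corollary~\ref{cor:proj->dism} (applied to each $\Pi_\sigma$, or to the combined equivariant family exactly as in the proof of Proposition~\ref{prop:fixedpoint}(ii)) $T$ is dismantlable.

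Next I would construct the equivariant deformation retraction of $\Gamma^\blacktriangle$ onto $T^\blacktriangle$. The dismantling order on $\Gamma$ supplied by the projections comes with explicit dominating vertices $\pi_i\in\Pi^*_\sigma(\sigma_i)$ satisfying $N(\sigma_i)\cap V^i\subset N(\pi_i)$; the key point from axiom~(ii) and the acyclicity argument in the proof of Lemma~\ref{lem:proj->dism} is that one may choose these dominators \emph{coherently}, so that pushing a dominated vertex $\sigma$ towards the simplex spanned by its dominators (exactly the map $\mathcal R$ built in the proof of Theorem~\ref{thm:contrfix}, with $\sigma^{\mathrm{dom}}$ in place of a single dominator) defines a deformation retraction. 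Iterating the Case~1/Case~2 analysis of Theorem~\ref{thm:contrfix} on the finite subgraph, and then extending over the (possibly infinite, but locally finite in the sense of no infinite cliques) complement, gives an $H$-equivariant strong deformation retraction $\Gamma^\blacktriangle\to T^\blacktriangle$. Because the retraction is $H$-equivariant and simplicial on barycentric subdivisions, it restricts to a deformation retraction of ${\Gamma^\blacktriangle}^H$ onto $(T^\blacktriangle)^H$, by the same bookkeeping used in Theorem~\ref{thm:contrfix}: a point of ${\Gamma^\blacktriangle}^H$ is an affine combination of barycentres of $H$-invariant simplices, each of which is carried to a fixed point. Finally, $(T^\blacktriangle)^H$ is contractible by Theorem~\ref{thm:contrfix}, since $T$ is a finite dismantlable graph and $H\le\mathrm{Aut}(T^\blacktriangle)$. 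Hence ${\Gamma^\blacktriangle}^H$ is contractible.

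The main obstacle I expect is making the retraction $\Gamma^\blacktriangle\to T^\blacktriangle$ genuinely well-defined and $H$-equivariant when $\Gamma$ is infinite: one must check that the target simplex $\Delta$ in the proof of Theorem~\ref{thm:contrfix}, containing both a clique and all the dominating simplices of its vertices, still exists, and that the ``direction of retraction'' for each vertex is canonical enough to be $H$-equivariant — this is where synchronisation (in case~(ii)) and the acyclicity axiom~(ii) of a projection (to choose dominators without cycles) do the real work. A secondary technical point is to verify that the finite hull $T$ constructed from $\Delta_0$ can be taken $H$-invariant; in case~(i) this is immediate from equivariance, and in case~(ii) it follows by replacing any finite hull with its $H$-orbit, as in the reduction in the proof of Proposition~\ref{prop:fixedpoint}(ii), using that $HR$ stays finite thanks to synchronisation and the absence of infinite cliques. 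Once these are in place, the contractibility statement is a direct corollary of Theorem~\ref{thm:contrfix}.
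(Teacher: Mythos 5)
There is a genuine gap, and it sits exactly where you flag your ``main obstacle'': the $H$--equivariant deformation retraction $\Gamma^\blacktriangle\to T^\blacktriangle$ onto a single finite hull is never constructed, and it cannot be obtained from the tools you cite. Lemma~\ref{lem:proj->dism} and the retraction $\mathcal R$ in Theorem~\ref{thm:contrfix} are strictly finite-graph arguments (a dismantling order of \emph{all} vertices, resp.\ induction on the number of vertices); nothing in the paper, and nothing in your sketch, extends them ``over the complement'' of a finite subgraph when $\Gamma$ is infinite --- indeed the paper leaves even the fixed-clique statement for infinite dismantlable graphs as an open conjecture. Worse, the target of your reduction is too small: the fixed set ${\Gamma^\blacktriangle}^H$ is the span of \emph{all} $H$--invariant cliques, which typically involves infinitely many vertices (take $H$ trivial, which the proposition allows: then ${\Gamma^\blacktriangle}^H=\Gamma^\blacktriangle$). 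So no finite $T^\blacktriangle$ contains it, and the claim that an equivariant retraction would carry ${\Gamma^\blacktriangle}^H$ onto $(T^\blacktriangle)^H$ is precisely the unproved content, not bookkeeping. Relatedly, the hull of $\Delta_0$ alone under projections in directions $\sigma\in\Delta_0$ is essentially just $\Delta_0$ (projections from vertices adjacent to $\sigma$ return the pair $\{\sigma,\sigma\}$), so it does not even capture the fixed set in the finite case.

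The paper's route avoids any retraction of the infinite complex. One observes that every point of ${\Gamma^\blacktriangle}^H$ lies in the interior of an $H$--invariant simplex, hence in $S^\blacktriangle$ where $S\subset V$ is the union of all $H$--invariant cliques. One exhausts $S$ by finite $H$--invariant sets $S_n$, and for a fixed $H$--invariant clique $\Delta$ takes $R_n$ to be the smallest set containing $S_n$ and closed under $\Pi^*_\sigma$ for all $\sigma\in\Delta$; synchronisation together with K\"onig's lemma gives finiteness of $R_n$, equivariance gives $H$--invariance, and $R_n$ is $\Pi_\sigma$--convex by construction. Corollary~\ref{cor:proj->dism} makes each $R_n$ dismantlable, Theorem~\ref{thm:contrfix} makes each ${R_n^\blacktriangle}^H$ contractible, and ${\Gamma^\blacktriangle}^H$ is the increasing union of these, hence contractible. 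Your finiteness mechanism (synchronisation plus no infinite cliques) is the right ingredient, but it must be applied to hulls of an exhausting sequence of finite invariant subsets of $S$, not to a single hull of $\Delta_0$, and the passage to the limit replaces the global retraction you were trying to build.
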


\begin{proof}
We first prove the proposition under hypothesis (ii). Recall that for $S\subset V$, the subcomplex of $\Gamma^\blacktriangle$ spanned on $S$ is denoted by $S^\blacktriangle$. In particular, $\Gamma^\blacktriangle=V^\blacktriangle$.

Let $S\subset V$ be the union of the vertices of all $H$--invariant cliques. We can exhaust $S$ by an increasing sequence $S_1,S_2,\ldots$ of finite $H$--invariant sets. Choose an $H$--invariant clique $\Delta\subset S$.

Let $R_n\subset V$ be the minimal set containing $S_n$ and with the property that for any $\rho\in R_n$ and $\sigma\in \Delta$, all the elements of $\Pi^*_\sigma(\rho)$ lie in $R_n$ as well. Since $S_n$ is finite, the set $R_n$ is finite by the fact that $\{\Pi_\sigma\}_\sigma$ is synchronised and by K\"onig's lemma. Since both $\Delta$ and $S_n$ are $H$--invariant and $\{\Pi_\sigma\}_\sigma$ is $H$--equivariant, the set $R_n$ is $H$--invariant as well.

Hence $R_n$ are finite $H$--invariant sets that contain $S_n$ and are $\Pi_\sigma$--convex for each $\sigma\in \Delta$. The subcomplexes
$R^\blacktriangle_n$ exhaust
$S^\blacktriangle$ which contains ${V^\blacktriangle}^H$.
By Corollary~\ref{cor:proj->dism} the subgraph induced on each $R_n$ is dismantlable.
By Theorem~\ref{thm:contrfix} each ${R_n^\blacktriangle}^H$ is contractible and hence ${V^\blacktriangle}^H$ is contractible, as desired.

Under hypothesis (i) we can take $\Delta$ to be a vertex and we can use axiom (ii) of a dismantling projection to obtain that $R_n$ is finite. The remaining part of the argument is the same.
\end{proof}

\section{Contractibility of fixed point sets in the arc complex}
\label{sec:contr_arc}

In this section we prove Metatheorems~\ref{met:H_infinite} and~\ref{met:H_filling} for the mapping class group (Theorems~\ref{thm:contrarc} and ~\ref{thm:arc_classify}). The \emph{arc complex} $\mathcal{A}^\blacktriangle(X)$ is the flag simplicial complex obtained from the arc graph $\mathcal{A}(X)$ by spanning simplices on all of its cliques. Then simplices of $\mathcal{A}^\blacktriangle(X)$ correspond to sets of disjoint arcs.

\begin{thm}
\label{thm:contrarc}
Let $X$ be a closed connected oriented surface with nonempty
set of marked points and negative Euler characteristic.
Let $H$ be any subgroup of $\mathrm{Map}(X)$. Then the set $\mathcal A^{\blacktriangle}(X)^H$ of points of the arc complex $\mathcal A^{\blacktriangle}(X)$ fixed by $H$ is empty or contractible.
\end{thm}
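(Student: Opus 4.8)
The plan is to deduce Theorem~\ref{thm:contrarc} from Proposition~\ref{prop:contr}, exactly as Theorem~\ref{thm:fixed_arc} was deduced from Proposition~\ref{prop:fixedpoint}. The setup is already in place: we have the arc graph $\mathcal{A}(X)$, the $\sigma$--projections $\Pi_\sigma$ given by outermost surgery (Subsection~\ref{subs:arc_surgery}), the fact that they are genuine $\sigma$--projections (Lemma~\ref{lem:arcprojection}), and their $\mathrm{Map}(X)$--equivariance (Lemma~\ref{lem:arcsurgery-equivariance}), which in particular gives $H$--equivariance for any $H\leq\mathrm{Map}(X)$. The arc graph has no infinite cliques, since a collection of pairwise disjoint, pairwise non-homotopic essential arcs on a surface of finite negative Euler characteristic is bounded in size (at most $6g-6+3n$ or so). So the standing hypotheses of Proposition~\ref{prop:contr} are satisfied.

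The dichotomy in the statement (empty or contractible) is precisely the dichotomy between hypothesis (i)/(ii) of Proposition~\ref{prop:contr} failing or holding. First I would handle the case where $\mathcal A^{\blacktriangle}(X)^H$ is nonempty. A fixed point of $H$ in the complex lies in the interior of some simplex, and since $H$ permutes the vertices of any simplex containing it and fixes the point, $H$ must preserve the vertex set of the carrier simplex; hence $H$ fixes a clique in $\mathcal{A}(X)$. Thus as soon as the fixed point set is nonempty, hypothesis (ii) of Proposition~\ref{prop:contr} is within reach --- provided we also know that the family $\{\Pi_\sigma\}_\sigma$ is synchronised. So the heart of the argument is to verify synchronisation: for any clique $\Delta\subset\mathcal{A}^0(X)$ and any sequence $\rho_1,\rho_2,\ldots$ with $\rho_{i+1}\in\Pi^*_{\sigma_i}(\rho_i)$ for some $\sigma_i\in\Delta$, the sequence eventually enters and stays in $\Delta$.

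To prove synchronisation I would fix disjoint geodesic representatives of the arcs of $\Delta$ and argue via geometric intersection numbers, as in the proof of Lemma~\ref{lem:arcprojection}(ii). Let $I(\rho)=\sum_{\sigma\in\Delta} i(\rho,\sigma)$ be the total geometric intersection number of $\rho$ with the arcs of $\Delta$. If $\rho_i\notin\Delta$ and $i(\rho_i,\sigma_i)>0$, then an outermost surgery of $\rho_i$ in direction of $\sigma_i$ strictly decreases $i(\cdot,\sigma_i)$ and does not increase $i(\cdot,\sigma)$ for the other $\sigma\in\Delta$ (the new arc is contained in $\rho_i\cup\sigma_i$, and $\sigma$ was already disjoint from $\sigma_i$), so $I$ strictly drops. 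If $\rho_i\notin\Delta$ but $i(\rho_i,\sigma_i)=0$ for the chosen $\sigma_i$, then $\Pi^*_{\sigma_i}(\rho_i)=\{\sigma_i\}\subset\Delta$, so $\rho_{i+1}=\sigma_i\in\Delta$. Since $I$ is a nonnegative integer, it can drop only finitely often; after that, every step either lands in $\Delta$ or keeps $\rho_i$ disjoint from every $\sigma\in\Delta$ with $i(\rho_i,\sigma_i)=0$ forced, so the sequence enters $\Delta$. Once $\rho_i\in\Delta$, say $\rho_i=\sigma$, then for any $\sigma_i\in\Delta$ the arc $\rho_i$ is disjoint from (or equal to) $\sigma_i$, so $\Pi^*_{\sigma_i}(\rho_i)\subset\{\sigma_i\}\subset\Delta$ and the sequence stays in $\Delta$. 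This establishes synchronisation, and Proposition~\ref{prop:contr}(ii) then gives contractibility of $\mathcal{A}^\blacktriangle(X)^H$.

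The main obstacle I anticipate is the careful bookkeeping in the synchronisation argument: one must be sure that outermost surgery never increases intersection with the \emph{other} arcs of the clique, which requires that the surgered arc be (homotopic to something) contained in the union of $\rho_i$ and the representative of $\sigma_i$, and that the representatives involved can be chosen consistently in minimal position with all the geodesics of $\Delta$ simultaneously. On a hyperbolic surface this is automatic --- all geodesic representatives are pairwise in minimal position --- so the argument goes through cleanly, but it is worth stating explicitly. A minor additional point is to observe that the collapse construction in Corollary~\ref{cor:proj->dism} and Theorem~\ref{thm:contrfix} applies to each finite $\Pi_\sigma$--convex piece $R_n$, which is immediate once synchronisation and K\"onig's lemma guarantee $R_n$ is finite, exactly as in the proof of Proposition~\ref{prop:contr}.
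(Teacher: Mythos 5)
Your proposal is correct and follows essentially the same route as the paper: the paper's proof also verifies the hypotheses of Proposition~\ref{prop:contr}(ii) (finite cliques, $H$--equivariance via Lemma~\ref{lem:arcsurgery-equivariance}, and synchronisation proved exactly by your intersection-number argument, recorded there as Lemma~\ref{lem:synchr_arc}). Your extra remarks on the carrier simplex and on geodesic representatives being pairwise in minimal position are just explicit versions of steps the paper leaves implicit.
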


Hence in view of Theorem~\ref{thm:fixed_arc} in the particular case where $H$ is finite the fixed point set $\mathcal A^{\blacktriangle}(X)^H$ is contractible. Also, if we take for $H$ the trivial group, it follows that $\mathcal A^{\blacktriangle}(X)$ is contractible, which was proved in \cite{Har1, Ha0}.

\begin{lemma}
\label{lem:synchr_arc}
The family $\{\Pi_\sigma\}_\sigma$ of dismantling projections on $\mathcal A(X)$ from Section~\ref{sec:fixarc} is synchronised.
\end{lemma}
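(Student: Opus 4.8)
The plan is to unwind the definition of \emph{synchronised} in this concrete setting. Fix a clique $\Delta\subset\mathcal A^0(X)$, i.e.\ a finite collection of pairwise disjoint arcs, realised simultaneously by disjoint geodesic representatives $d_1,\dots,d_n$ on $\check X$ (possible since disjoint homotopy classes have disjoint geodesic representatives). Consider a sequence $\rho_1,\rho_2,\dots$ with $\rho_{i+1}\in\Pi^*_{\sigma_i}(\rho_i)$ for some $\sigma_i\in\Delta$; we must show the sequence eventually lands in $\Delta$ and stays there. The key quantity to track is the total geometric intersection number $n(\rho_i)=\sum_{j=1}^{n} i(\rho_i,\delta_j)$ of $\rho_i$ with the arcs of $\Delta$, where $\delta_j$ is the class of $d_j$.

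First I would record the monotonicity: each outermost surgery in direction of some $\sigma_i\in\Delta$ does not increase $n$. Indeed, if $r$ and $s$ ($=$ representative of $\sigma_i$) are in minimal position and $r_\ast=b\cup b'_\pm$ is obtained by outermost surgery, then $r_\ast$ lies in a regular neighbourhood of $r\cup s$; hence for each $\delta_j$ with representative $d_j$ disjoint from $d_i=s$ and in minimal position with $r$, the arc $r_\ast$ meets $d_j$ in at most as many points as $r$ does (it can only ``inherit'' intersections from $r$ along $b$ and $b'_\pm$, while $b'_+$ or $b'_-$ is a subarc of $r$, and no new intersections with $d_j$ are created since $d_j$ avoids $s$). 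A small technical point to handle carefully is putting everything simultaneously into minimal position and using the fact from Lemma~\ref{lem:arcprojection}(ii) that surgeries in direction of a \emph{fixed} $\sigma$ strictly reduce intersection with that $\sigma$; but here the direction $\sigma_i$ varies over $\Delta$, so I cannot conclude strict decrease against a single curve directly.

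So the main obstacle is ruling out that $n(\rho_i)$ stabilises at a positive value forever. Here is how I would close it. Since $n(\rho_i)$ is nonincreasing and nonnegative, it is eventually constant, equal to some $N\ge 0$ for all $i\ge i_0$. If $N=0$, then $\rho_i$ is disjoint from every arc of $\Delta$ for $i\ge i_0$; I then argue $\rho_i\in\Delta$: if $\rho_i$ were disjoint from but not equal to $\sigma_i$, the only surgery pair in direction $\sigma_i$ is $\{\sigma_i,\sigma_i\}$, so $\rho_{i+1}=\sigma_i\in\Delta$ and thereafter (being in $\Delta$, hence disjoint from all of $\Delta$ including the next $\sigma$) the sequence is constant in $\Delta$ — this gives the ``enters and stays'' conclusion. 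If instead $N>0$, I derive a contradiction: for $i\ge i_0$ each surgery step preserves $n$, which forces, at each step, that the surgered arc $\rho_{i+1}$ realises the \emph{same} intersection number with $\sigma_i$ as $\rho_i$ did — but an outermost surgery of $\rho_i$ in the direction of $\sigma_i$ strictly drops $i(\cdot,\sigma_i)$ by Lemma~\ref{lem:arcprojection}(ii) unless $\rho_i$ was already disjoint from $\sigma_i$ (the ``$\{\sigma_i,\sigma_i\}$'' case, which leaves $n$ unchanged and replaces $\rho_{i+1}$ by $\sigma_i\in\Delta$). Thus whenever $\rho_i$ genuinely intersects some curve it could be surgered against, $n$ would strictly decrease at the step where that curve is chosen as $\sigma$; and if the adversary never chooses such a $\sigma_i$, i.e.\ always picks $\sigma_i$ already disjoint from $\rho_i$, then $\rho_{i+1}=\sigma_i\in\Delta$ and we are done. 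Either way the sequence enters $\Delta$; once in $\Delta$, every subsequent surgery against a disjoint $\sigma\in\Delta$ returns an element of $\Delta$, so it stays. This completes the proof; the only delicate part, as noted, is the simultaneous minimal-position bookkeeping in the monotonicity step, which I would phrase using the neighbourhood-of-$r\cup s$ description rather than case analysis.
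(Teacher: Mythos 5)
Your proposal is correct and follows essentially the same route as the paper's (very brief) proof, which observes that the total geometric intersection number of $\rho_i$ with the disjoint arcs of $\Delta$ is non-increasing and strictly drops at each genuine surgery step --- no new intersections arise because the surgered-in subarc lies on $\sigma_i\in\Delta$, which is disjoint from the rest of the clique --- so the sequence enters and then stays in $\Delta$. One harmless slip: in the case where $\sigma_i$ is disjoint from $\rho_i$ the quantity $n$ is not in general ``unchanged'' (it drops to $0$, since $\rho_{i+1}=\sigma_i\in\Delta$), but this does not affect your conclusion, as the sequence enters $\Delta$ at that step anyway.
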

\begin{proof}
Choose a clique $\Delta\subset \mathcal A^0(X)$ and any sequence $\rho_1,\rho_2,\ldots$ with $\rho_{i+1}\in \Pi^*_{\sigma_i}(\rho_i)$ with some $\sigma_i\in \Delta$. Then, as in the proof of Lemma~\ref{lem:arcprojection}(ii), the geometric intersection number between $\rho_i$ and the set $\Delta$ of disjoint arcs decreases until $\rho_i\in \Delta$, as desired.
\end{proof}

\begin{proof}[Proof of Theorem~\ref{thm:contrarc}]
Assume that $\mathcal A^{\blacktriangle}(X)^H$ is nonempty, i.e.\ $H$ fixes a clique in $\mathcal A(X)$. Cliques in $\mathcal A(X)$ are finite. The family $\{\Pi_\sigma\}_\sigma$ of dismantling projections on $\mathcal A(X)$ from Lemma~\ref{lem:arcprojection} is $H$--equivariant (Lemma~\ref{lem:arcsurgery-equivariance}) and synchronised (Lemma~\ref{lem:synchr_arc}). Hence by Proposition~\ref{prop:contr}(ii), the fixed point set $\mathcal A^{\blacktriangle}(X)^H$ is contractible.
\end{proof}

Denote by $\mathcal{A}^{0}_\mathrm{fill}(X)$ the barycentres of simplices of $\mathcal{A}^\blacktriangle(X)$ spanned on filling cliques. Let $\mathcal{A}^\blacktriangle_\mathrm{fill}(X)$ be the subcomplex of the barycentric subdivision of $\mathcal{A}^\blacktriangle(X)$ spanned on $\mathcal{A}^{0}_\mathrm{fill}(X)$. We call $\mathcal{A}^\blacktriangle_\mathrm{fill}(X)$ the \emph{filling arc system complex}.
The stabiliser in $\mathrm{Map}(X)$ of a vertex (or a simplex) of $\mathcal{A}^\blacktriangle_\mathrm{fill}(X)$ is finite. Conversely, we have proved in Theorem~\ref{thm:fixed_arc_system} that for each finite subgroup $H\subset\mathrm{Map}(X)$ the fixed point set $\mathcal{A}^\blacktriangle_\mathrm{fill}(X)^H$ contains a vertex. We extend this to the following.

\begin{thm}
\label{thm:arc_classify}
Let $X$ be a closed connected oriented surface with nonempty set of marked points and negative Euler characteristic. Let $H$ be a finite subgroup of $ \mathrm{Map}(X)$. Then the set $\mathcal{A}^\blacktriangle_\mathrm{fill}(X)^H$ of points in the filling arc system complex $\mathcal{A}^\blacktriangle_\mathrm{fill}(X)$ fixed by $H$ is contractible.
\end{thm}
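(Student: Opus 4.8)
The plan is to deduce Theorem~\ref{thm:arc_classify} from a ``relative'' version of Theorem~\ref{thm:contrfix}, after first reorganising the filling arc system complex combinatorially and exhausting it by finite pieces on which the dismantlability machinery of Section~\ref{sec:contr} applies.

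\emph{Reformulation.} First I would observe that the non-filling arc systems span a subcomplex $\mathcal{A}^\blacktriangle_\infty(X)\subset\mathcal{A}^\blacktriangle(X)$: a complementary region of a subsystem is a union of regions of the larger system glued along some arcs, and such a gluing is a disc only if all the pieces glued were discs, so passing to a subsystem cannot turn a non-disc region into discs. Hence the filling cliques form an $H$--invariant up-set $\mathcal{U}$ in the poset of all cliques, and $\mathcal{A}^\blacktriangle_\mathrm{fill}(X)$ is exactly the order complex of $\mathcal{U}$, sitting inside the barycentric subdivision of $\mathcal{A}^\blacktriangle(X)$; equivalently it is the target of the standard deformation retraction of the complement $|\mathcal{A}^\blacktriangle(X)|-|\mathcal{A}^\blacktriangle_\infty(X)|$ onto the order complex of the simplices lying outside the subcomplex $\mathcal{A}^\blacktriangle_\infty(X)$, a retraction which is natural, hence $\mathrm{Map}(X)$--equivariant (cf.\ \cite{Har1}).

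\emph{Exhaustion and reduction.} By Theorem~\ref{thm:fixed_arc_system} fix an $H$--invariant filling clique $\Delta_0$. Running the argument of Proposition~\ref{prop:contr}(ii) with the outermost--surgery projections---which are $H$--equivariant (Lemma~\ref{lem:arcsurgery-equivariance}) and synchronised (Lemma~\ref{lem:synchr_arc})---expresses the union $S$ of the vertices of all $H$--invariant cliques as an increasing union of finite, $H$--invariant, $\Pi_\sigma$--convex (for $\sigma\in\Delta_0$) sets $R_1\subset R_2\subset\cdots$, each inducing a dismantlable subgraph by Corollary~\ref{cor:proj->dism}, with $\Delta_0\subset R_n$ for $n$ large. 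Every $H$--fixed point of $\mathcal{A}^\blacktriangle_\mathrm{fill}(X)$ lies on a chain of $H$--invariant filling cliques, whose finitely many vertices lie in some $R_n$; so, writing $(R_n)^\blacktriangle_\mathrm{fill}$ for the order complex of the filling cliques contained in $R_n$, the subcomplexes $(R_n)^\blacktriangle_\mathrm{fill}{}^H$ form an exhaustion of $\mathcal{A}^\blacktriangle_\mathrm{fill}(X)^H$ by subcomplexes, nonempty for $n$ large since $b_{\Delta_0}$ lies in them. Since a nested union of contractible CW subcomplexes along closed inclusions is contractible, it suffices to prove that each $(R_n)^\blacktriangle_\mathrm{fill}{}^H$ is contractible.

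\emph{The main step and the obstacle.} What remains is a relative form of Theorem~\ref{thm:contrfix}: for a finite dismantlable flag complex $Y$, a finite $H\leq\mathrm{Aut}(Y)$, and a nonempty $H$--invariant up-set $\mathcal{U}$ of cliques, the $H$--fixed set of the order complex $Y_\mathcal{U}$ of $\mathcal{U}$ is contractible. The plan is to run the induction of Theorem~\ref{thm:contrfix} verbatim, checking that the deformation retraction $\mathcal{R}(\sigma,t)=(1-t)\sigma+tb_{\sigma^{\mathrm{dom}}}$ of Case~1 (and its Case~2 analogue) restricts to a deformation retraction of $|Y|-|Y\setminus\mathcal{U}|$ onto $Y_\mathcal{U}$ compatible with the inductive quotient. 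Because $\Delta\cup\Delta^{\mathrm{dom}}$ is again a clique, for $t<1$ the carrier of $\mathcal{R}(x,t)$ contains the carrier of $x$ and so stays in the up-set; the only thing that can fail is at $t=1$, where the carrier becomes $\Delta^{\mathrm{dom}}$ and one needs $\Delta\in\mathcal{U}\Rightarrow\Delta^{\mathrm{dom}}\in\mathcal{U}$. \emph{This compatibility of the filling property with the domination relation is the heart of the matter}, and it is the step I expect to be the main obstacle: it does not hold for an arbitrary dismantlable subgraph, so the exhausting sets $R_n$ must be chosen with more care than the length--hulls used for Theorem~\ref{thm:fixed_arc}. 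Concretely, I would enlarge each $R_n$ $H$--equivariantly so that it is ``saturated'' (every clique of $R_n$ extends inside $R_n$ to a filling clique) and closed under the domination that then arises, and verify---by exhibiting an appropriate dismantling projection on the enlarged set---that it is still dismantlable; this is modelled on the extra surgery argument that upgrades Theorem~\ref{thm:fixed_arc} to Theorem~\ref{thm:fixed_arc_system}. With this compatibility established, the induction of Theorem~\ref{thm:contrfix} goes through in the relative setting, each $(R_n)^\blacktriangle_\mathrm{fill}{}^H$ is contractible, and hence so is $\mathcal{A}^\blacktriangle_\mathrm{fill}(X)^H$.
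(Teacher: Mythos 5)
Your reduction to a ``relative'' version of Theorem~\ref{thm:contrfix} is where the argument breaks down, and you have in fact identified the fatal point yourself: the retraction $\mathcal{R}(\sigma,t)=(1-t)\sigma+tb_{\sigma^{\mathrm{dom}}}$ only preserves the up-set of filling cliques if domination is compatible with fillingness, i.e.\ if replacing a dominated arc by its dominating vertices inside a filling system again yields a filling system. Domination here is taken in a \emph{finite} subgraph $R_n$ of the arc graph (the arc graph itself has no dominated vertices), so it carries no information about arcs outside $R_n$, whereas fillingness is an absolute property of the surface; there is no reason the length-hulls of Section~\ref{sec:fixarc}, or any $\Pi_\sigma$--convex enlargement of them, should have this property. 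Your proposed remedy --- enlarge $R_n$ to be ``saturated'' and ``closed under the domination that then arises'' and then re-verify dismantlability --- is not a construction: enlarging the set changes the domination relation, and you give no argument that finite, $H$--invariant sets with all three properties (saturation, domination--filling compatibility, dismantlability) exist, nor a dismantling projection for them. As it stands the central step of the proof is missing, not merely deferred.

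The paper avoids the relative statement altogether by changing the graph on which the dismantling machinery runs. One works directly with the $1$--skeleton of $\mathcal{A}^\blacktriangle_{\mathrm{fill}}(X)$, whose vertices are filling cliques, and defines a $\Delta$--projection there for the $H$--invariant filling clique $\Delta$ provided by Theorem~\ref{thm:fixed_arc_system}: given a filling system $\Sigma$, realise $\Delta,\Sigma$ in minimal position, take an outermost subarc $b$ of an arc $a\in\Delta$, and replace the arc $s\in\Sigma$ met by $b$ with \emph{both} surgered arcs $s_+,s_-$. Keeping both arcs is exactly what preserves fillingness (any curve disjoint from $s_+$ and $s_-$ is disjoint from $s$), so this defines a dismantling projection on the filling-system graph itself; its axioms are checked as in Lemma~\ref{lem:arcprojection}, it is $H$--equivariant, cliques are finite, and Proposition~\ref{prop:contr}(i) (which already packages the exhaustion by finite convex sets, Corollary~\ref{cor:proj->dism} and Theorem~\ref{thm:contrfix}) yields contractibility of $\mathcal{A}^\blacktriangle_{\mathrm{fill}}(X)^H$ with no up-set version of Theorem~\ref{thm:contrfix} needed. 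If you want to salvage your outline, this ``surger into both arcs'' device is the missing idea: it replaces the unverifiable domination--filling compatibility by a projection that never leaves the filling locus in the first place.
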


Before we give the proof of Theorem~\ref{thm:arc_classify}, we note that consequently we obtain the following.

\begin{cor}
\label{cor:spine_teich}
Let $X$ be a closed connected oriented surface with nonempty set of marked points and negative Euler characteristic. The filling arc system complex $\mathcal{A}^\blacktriangle_\mathrm{fill}(X)$ is a finite model for $\underline{E}\mathrm{Map}(X)$ --- the classifying space for proper actions for $\mathrm{Map}(X)$.
\end{cor}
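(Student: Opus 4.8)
The plan is to deduce Corollary~\ref{cor:spine_teich} from Theorem~\ref{thm:arc_classify} together with the standard characterisation of models for $\underline{E}G$. Recall that a $G$--CW-complex $Z$ is a model for $\underline{E}G$ if and only if $Z^K$ is contractible for every finite subgroup $K\leq G$ and $Z^K=\emptyset$ for every infinite subgroup. So the work splits into three parts: verifying that $\mathcal{A}^\blacktriangle_\mathrm{fill}(X)$ is a $\mathrm{Map}(X)$--CW-complex which is \emph{finite} (i.e. cocompact with finite stabilisers), checking the fixed-point condition for finite subgroups, and checking that infinite subgroups have empty fixed-point set.

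First I would observe that $\mathrm{Map}(X)$ acts simplicially on the barycentric subdivision of the arc complex $\mathcal{A}^\blacktriangle(X)$, and since the property of a clique being filling is preserved by homeomorphisms, the subcomplex $\mathcal{A}^\blacktriangle_\mathrm{fill}(X)$ spanned on barycentres of filling cliques is $\mathrm{Map}(X)$--invariant; thus it is a $\mathrm{Map}(X)$--CW-complex. Finiteness has two ingredients, both already recorded in the text just before Theorem~\ref{thm:arc_classify}: the stabiliser of any simplex of $\mathcal{A}^\blacktriangle_\mathrm{fill}(X)$ is finite (a homeomorphism fixing a filling system of arcs, hence a cell decomposition of $X$, has finite order), and there are finitely many $\mathrm{Map}(X)$--orbits of filling arc systems (a filling arc system gives $X$ the structure of a ribbon graph with a bounded number of edges, and there are finitely many such up to the action). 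Together these give cocompactness with finite cell stabilisers, so the quotient is a finite complex.

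Next, the fixed-point condition for finite subgroups is exactly the content of Theorem~\ref{thm:arc_classify}: for $H\leq\mathrm{Map}(X)$ finite, $\mathcal{A}^\blacktriangle_\mathrm{fill}(X)^H$ is contractible, in particular nonempty. For an infinite subgroup $H$, I would argue that $\mathcal{A}^\blacktriangle_\mathrm{fill}(X)^H=\emptyset$: a point fixed by $H$ lies in some simplex whose vertices are barycentres of filling cliques, and $H$ must then permute a finite set of such barycentres, hence a finite-index subgroup $H_0\leq H$ fixes a vertex, i.e.\ fixes a filling arc system setwise and therefore (acting on the finitely many arcs of that system) a finite-index subgroup of $H_0$ fixes each arc; but the pointwise stabiliser of a filling arc system is finite, so $H$ would be finite, a contradiction. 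The main obstacle is really the bookkeeping in this last step — making precise that ``$H$ fixes a point of a simplicial complex'' forces a finite-index subgroup to fix a vertex (true because the vertices of the minimal simplex containing the point are permuted by $H$), and then invoking finiteness of the pointwise stabiliser of a filling system. Assembling these three verifications with the characterisation of $\underline{E}G$ then yields the corollary.
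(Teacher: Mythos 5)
Your proposal is correct and follows essentially the same route the paper intends: the corollary is deduced from Theorem~\ref{thm:arc_classify} together with the standard characterisation of $\underline{E}G$, using the finiteness of simplex stabilisers and of the number of $\mathrm{Map}(X)$--orbits of filling arc systems (the paper leaves these verifications, including the emptiness of fixed-point sets for infinite subgroups, implicit, while you spell them out). Your bookkeeping for the infinite-subgroup case --- passing to the minimal simplex containing a fixed point and then to a finite-index subgroup fixing each arc of a filling system --- is sound, since the pointwise stabiliser of a filling arc system is finite.
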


\emph{Harer's spine} of the Teichm\"uller space of $X$ (see \cite{Har2}) is the subcomplex of $\mathcal{A}^\blacktriangle_\mathrm{fill}(X)$ spanned on the vertices involving arcs starting and terminating on a distinguished marked point. Hence in the case of one marked point $\mathcal{A}^\blacktriangle_\mathrm{fill}(X)$ coincides with Harer's spine. The proof of Theorem~\ref{thm:arc_classify} carries over to the action of the pure mapping class group of $X$ on Harer's spine.

\begin{cor}
\label{cor:spine_teich_Harer}
Let $X$ be a closed connected oriented surface with nonempty set of marked points and negative Euler characteristic. Then Harer's spine of Teichm\"uller space of $X$ is a finite model for the classifying space for proper actions for the pure mapping class group of $X$.
\end{cor}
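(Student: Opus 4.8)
The plan is to check the standard recognition principle for classifying spaces for proper actions: a $G$--CW--complex $Y$ with finitely many $G$--orbits of cells is a finite model for $\underline{E}G$ precisely when all cell stabilisers are finite and the fixed point set $Y^H$ is contractible for every finite subgroup $H\leq G$ (the latter, together with the finiteness of stabilisers, automatically yields $Y^H=\emptyset$ for infinite $H$). Here $G=\mathrm{PMap}(X)$ is the pure mapping class group and $Y=\mathcal{S}_X$ is Harer's spine. Throughout, fix the distinguished marked point $p$ and let $\mathcal{A}_p(X)\subset\mathcal{A}(X)$ be the full subgraph on the homotopy classes of arcs with both endpoints at $p$, with associated flag complex $\mathcal{A}_p^\blacktriangle(X)$. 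Then $\mathcal{S}_X$ is the subcomplex of the barycentric subdivision of $\mathcal{A}_p^\blacktriangle(X)$ spanned on the barycentres of the filling cliques of $\mathcal{A}_p(X)$; it is a subcomplex of $\mathcal{A}^\blacktriangle_\mathrm{fill}(X)$ and is preserved by $\mathrm{PMap}(X)$ because $\mathrm{PMap}(X)$ fixes $p$.

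The two ``easy'' conditions are inherited from the $\mathrm{Map}(X)$ case. Since $\mathrm{PMap}(X)$ has finite index in $\mathrm{Map}(X)$ and $\mathcal{A}^\blacktriangle_\mathrm{fill}(X)$ has finitely many $\mathrm{Map}(X)$--orbits of cells (it is a finite model for $\underline{E}\,\mathrm{Map}(X)$ by Corollary~\ref{cor:spine_teich}), the subcomplex $\mathcal{S}_X$ has finitely many $\mathrm{PMap}(X)$--orbits of cells. A cell of $\mathcal{S}_X$ is a flag of filling cliques, so its stabiliser in $\mathrm{PMap}(X)$ is contained in the setwise stabiliser in $\mathrm{Map}(X)$ of a filling arc system, which is finite.

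It remains to prove that $\mathcal{S}_X^H$ is contractible for every finite $H\leq\mathrm{PMap}(X)$, and for this I would transcribe the arguments of Sections~\ref{sec:fixarc}, \ref{sec:Nielsen} and~\ref{sec:contr_arc}, replacing $\mathcal{A}(X)$ by $\mathcal{A}_p(X)$ everywhere. The one observation that makes this possible is that outermost surgery preserves being based at $p$: if $\sigma,\rho\in\mathcal{A}_p^0(X)$ and $b$ is an outermost subarc in a representative of $\sigma$ for one of $\rho$, then $b$ shares one endpoint with the $\sigma$--representative --- which is $p$ --- and the two arcs $b\cup b'_\pm$ of the corresponding surgery pair of $\rho$ have their remaining endpoint among the endpoints of $\rho$, again $p$. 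Hence the dismantling projections $\Pi_\sigma$ of Subsection~\ref{subs:arc_surgery} restrict to $\mathcal{A}_p(X)$, Lemmas~\ref{lem:arcprojection}, \ref{lem:arcsurgery-equivariance} and~\ref{lem:synchr_arc} hold verbatim for $\mathcal{A}_p(X)$, and $\mathcal{A}_p(X)$ carries no infinite cliques. Intersecting the finite hull of Subsection~\ref{subs:hull_arcs} with $\mathcal{A}_p^0(X)$ remains $\Pi_{\alpha_i}$--convex, since in Lemma~\ref{lem:arc-length-reduction} the shorter of the two surgered arcs is still $p$--based; so Proposition~\ref{prop:fixedpoint}(ii) produces an $H$--fixed clique in $\mathcal{A}_p(X)$ and Proposition~\ref{prop:contr}(ii) shows $\mathcal{A}_p^\blacktriangle(X)^H$ is contractible. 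The ``carries over'' remark for Theorem~\ref{thm:arc_classify} then finishes the argument once one also knows $\mathcal{S}_X^H\neq\emptyset$, i.e. that a maximal $H$--invariant clique in $\mathcal{A}_p(X)$ is filling.

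The last point is the genuine obstacle, and it is where I expect to spend real effort: transporting the proof of Theorem~\ref{thm:fixed_arc_system} into the $p$--based setting. One cuts $X$ along a maximal $H$--invariant clique $A$ of $p$--based arcs; since every boundary component of the completed non-disc part $\overline{X}_0$ is a loop of arcs of $A$, it passes only through copies of $p$, so after collapsing these boundary circles the distinguished marked point survives on the cut-open surface $X'$. To recurse, one must run the argument in the generality of surfaces carrying a \emph{distinguished set} of marked points with arcs based there, checking that $X'$ is again such a surface with the distinguished set nonempty; one then finds an $H$--invariant filling clique $\Delta'$ of such arcs on $X'$, and the broken-arc construction together with the twist-parameter (and degenerate annular) fixed-point argument of Theorem~\ref{thm:fixed_arc_system} --- now routing the new endpoints back to $p$ --- extends $A$, contradicting maximality. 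Apart from this bookkeeping, every step is a direct transcription, and the final passage to Harer's spine is literally the proof of Theorem~\ref{thm:arc_classify} with $\mathcal{A}_p(X)$ in place of $\mathcal{A}(X)$.
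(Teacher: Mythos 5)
Your proposal is correct and follows essentially the same route as the paper, whose entire proof is the remark that the argument of Theorem~\ref{thm:arc_classify} carries over to the action of the pure mapping class group on Harer's spine: your outline (surgery projections, finite hulls and the fixed filling clique argument restricted to arcs based at the distinguished marked point, then the dismantling projection on the spine's $1$--skeleton) is precisely that carrying-over, made explicit. One small point, a looseness the paper shares, is that when there are several marked points ``filling'' must be understood in Harer's sense --- complementary components are discs containing at most one marked point other than the distinguished one --- since with the strict definition of Section~\ref{sec:Nielsen} no arc system based at a single marked point is filling.
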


The dimension of Harer's spine coincides with the virtual cohomological dimension of $\mathrm{Map}(X)$, so it is efficient in this sense.

\medskip

The proof of Theorem~\ref{thm:arc_classify} relies on defining a surgery procedure for filling arc sets. In order to preserve the property of being filling we will now need to include in the surgered set both arcs obtained by outermost surgery. This will give rise to a dismantling projection on the $1$--skeleton of the filling arc system complex.

\begin{proof}[Proof of Theorem~\ref{thm:arc_classify}]
Let $\Delta\in{\mathcal{A}^{0}_\mathrm{fill}(X)}^H$ which is nonempty by Theorem~\ref{thm:fixed_arc_system}.
We construct a $\Delta$--projection $\Pi_\Delta$ on the $1$--skeleton $\mathcal{A}_\mathrm{fill}(X)$ of $\mathcal{A}^\blacktriangle_\mathrm{fill}(X)$.
Let $\Sigma\in\mathcal{A}^{0}_\mathrm{fill}(X)$ be any vertex distinct from $\Delta$. Every pair $\{\Pi_1,\Pi_2\}\in\Pi_\Delta(\Sigma)$ will satisfy $\Pi_1=\Pi_2$, so $\Pi_\Delta(\Sigma)$ will be determined by $\Pi^*_\Delta(\Sigma)$.

Note that $\Delta, \Sigma$ can be interpreted as filling sets of (homotopy classes of) disjoint arcs. If all the arcs in $\Delta, \Sigma$ are disjoint, we put $\Pi^*_\Delta(\Sigma)=\{\Delta\}$. Otherwise, realise the homotopy classes of arcs $\Delta, \Sigma$ as families of genuine arcs $A,S\subset X$ in minimal position. Choose an arc $a\in A$ intersecting $S$. Let $b\subset a$ be a component of $a-S$ sharing an endpoint with $a$.
Let $s\in S$ be the arc containing the endpoint of $b$ in the interior of $a$. Consider $s_+,s_-$ obtained by outermost surgery of $s$ in direction of $a$ determined by $b$. Let $P=S\cup\{s_+,s_-\}\setminus \{s\}$. Note that $P$ is filling, since any simple closed curve that is disjoint from both $s_+,s_-$ up to homotopy is also disjoint from $s$ up to homotopy.
Define $\Pi^*_\Delta(\Sigma)$ to be the set of homotopy classes of all possible $P$, under all choices of $a$ and $b$. This does not depend on the realisations $A,S$ of $\Delta, \Sigma$.

The assignment $\Pi_\Delta(\Sigma)$ determined by $\Pi^*_\Delta(\Sigma)$ satisfies the axioms of a $\Delta$--projection by the same argument as in the proof of Lemma~\ref{lem:arcprojection}:

For axiom (i) we consider the surface $\check{X}$ obtained from $X$ by
replacing each marked point with a boundary component and we put a
hyperbolic metric with geodesic boundary on $\check{X}$. We realise
all the arcs of $\Delta$ and the arbitrary set $R=\{\Sigma_1,\ldots,
\Sigma_k\}\subset \mathcal{A}^{0}_\mathrm{fill}(X)$ as families $A,
S_1,\ldots, S_k$ of shortest geodesic arcs on $\check{X}$. Choose any arc $a\in A$ intersecting $\bigcup^k_{i=0} S_i$.
Let $b$ be a component of $a-\bigcup_{i=0}^k S_i$ sharing an endpoint with $a$. Choose $S_i$ so that it contains the endpoint of $b$ in the interior of $a$. Let $\Pi\in\Pi^*_\Delta(\Sigma_i)$ be obtained by surgery determined by $b$. Then $N(\Pi)\supset N(\Sigma_i)\cap R$, as desired.

For axiom (ii) we observe that the sum of the geometric intersection numbers between all of the arcs of $\Delta$ and all of the arcs of any element of $\Pi^*_\Delta(\Sigma)$ is less than the corresponding sum for $\Delta$ and $\Sigma$.

The map $\Pi_\Delta$ is $H$--equivariant and cliques in $\mathcal{A}_\mathrm{fill}(X)$ are finite.
Hence by Proposition~\ref{prop:contr}(i) the fixed point set $\mathcal{A}^\blacktriangle_\mathrm{fill}(X)^H$ is contractible, as desired.

\end{proof}

\section{Contractibility of fixed point sets in the disc and sphere complexes}
\label{sec:contr_disc_sphere}

In this section we prove Metatheorems~\ref{met:H_infinite} and~\ref{met:H_filling} for the handlebody group and $\mathrm{Out}(F_n)$.

\subsection{Disc complex}

The \emph{disc complex} $\mathcal D^{\blacktriangle}(U)$ is the flag simplicial complex obtained from the disc graph $\mathcal D(U)$ by spanning simplices on all of its cliques. Then simplices of $\mathcal{D}^\blacktriangle(U)$ correspond to sets of disjoint discbounding curves.

\begin{thm}
\label{thm:contrdisc}
 Let $U$ be a connected handlebody of genus $\geq 2$. Let $H$ be any subgroup of $\mathrm{Map}(U)$. Then the set $\mathcal D^{\blacktriangle}(U)^H$ of points of the disc complex $\mathcal D^{\blacktriangle}(U)$ fixed by $H$ is empty or contractible.
\end{thm}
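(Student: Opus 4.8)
The plan is to carry over the proof of Theorem~\ref{thm:contrarc} essentially word for word, with the arc graph replaced by the disc graph $\mathcal{D}(U)$ and the arc-surgery projections of Section~\ref{sec:fixarc} replaced by the disc-surgery projections $\Pi_\sigma$ built in Section~\ref{sec:fixdisc}. Concretely, I would apply Proposition~\ref{prop:contr}(ii) with $\Gamma=\mathcal{D}(U)$. Three hypotheses have to be verified: that $\mathcal{D}(U)$ has no infinite cliques, that the family $\{\Pi_\sigma\}_\sigma$ is $H$--equivariant, and that it is synchronised. The first holds because a collection of pairwise disjoint, pairwise non-homotopic discbounding (hence essential simple closed) curves on the genus--$g$ surface $\partial U$ has at most $3g-3$ members, so cliques in $\mathcal{D}(U)$ are finite of uniformly bounded size. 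The second is precisely Lemma~\ref{lem:discsurgery-equivariance}. Granting these, if $\mathcal{D}^{\blacktriangle}(U)^H$ is nonempty then $H$ fixes a clique in $\mathcal{D}(U)$, and Proposition~\ref{prop:contr}(ii) delivers contractibility of $\mathcal{D}^{\blacktriangle}(U)^H$ at once.

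The only genuinely new ingredient is the analogue of Lemma~\ref{lem:synchr_arc}: the disc-surgery projections $\{\Pi_\sigma\}_\sigma$ on $\mathcal{D}(U)$ are synchronised. I would prove this exactly as in the arc case. Fix a hyperbolic metric on $\partial U$, a clique $\Delta\subset\mathcal{D}^0(U)$ realised as a family of disjoint geodesic discbounding curves, and a sequence $(\rho_i)$ with $\rho_{i+1}\in\Pi^*_{\sigma_i}(\rho_i)$ for some $\sigma_i\in\Delta$ (in particular $\sigma_i\neq\rho_i$). As long as $\rho_i\notin\Delta$, the total geometric intersection number $\sum_{d\in\Delta} i(\rho_i,d)$ strictly decreases: a curve in an outermost surgery pair of $\rho_i$ in direction of $\sigma_i$ has the form $a\cup a'_\pm$ with $a$ a subarc of $\sigma_i$ and $a'_\pm$ a subarc of $\rho_i$; since $\sigma_i$ is disjoint from $\Delta\setminus\{\sigma_i\}$ the arc $a$ contributes no intersections with $\Delta$, so the new curve meets each $d\in\Delta\setminus\{\sigma_i\}$ at most as often as $\rho_i$ and meets $\sigma_i$ strictly fewer times --- this last point being exactly the observation already used in the proof of Lemma~\ref{lem:discsurgery}(ii). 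If instead $\rho_i\notin\Delta$ is already disjoint from $\sigma_i$, the only outermost surgery pair is $\{\sigma_i,\sigma_i\}$, so $\rho_{i+1}=\sigma_i\in\Delta$. Finally, once $\rho_i\in\Delta$, it is disjoint from $\sigma_i\in\Delta\setminus\{\rho_i\}$, so again $\rho_{i+1}=\sigma_i\in\Delta$ and the sequence stays in $\Delta$. Hence $(\rho_i)$ enters and then remains in $\Delta$, which is what synchronisation requires.

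The one place to be careful is the warning recorded after the definition of $\mathcal{D}(U)$: minimal position of discs is not unique, and the homotopy classes of $\partial D$, $\partial D'$ do not determine which intersection points are joined by arcs of $D\cap D'$. This is harmless for the synchronisation argument, because that argument tracks only the boundary curves: the surgered curve $a\cup a'_\pm$ is assembled from one subarc of the geodesic $\sigma_i$ and one subarc of the geodesic $\rho_i$, and counting its intersections with the fixed disjoint geodesic family $\Delta$ never refers to the intersection pattern of the discs themselves. With the synchronisation lemma in hand there is nothing left to do; in particular, combining Theorem~\ref{thm:contrdisc} with Theorem~\ref{thm:fixed_disc} shows that for finite $H$ the set $\mathcal{D}^{\blacktriangle}(U)^H$ is in fact contractible (not merely empty-or-contractible), and taking $H$ trivial recovers the contractibility of $\mathcal{D}^{\blacktriangle}(U)$ from \cite{McC}.
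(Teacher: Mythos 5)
Your proposal is correct and follows exactly the paper's route: cliques in $\mathcal{D}(U)$ are finite, $H$--equivariance is Lemma~\ref{lem:discsurgery-equivariance}, synchronisation is proved just as in Lemma~\ref{lem:synchr_arc} via the decreasing intersection number with the fixed clique (the point already used in Lemma~\ref{lem:discsurgery}(ii)), and then Proposition~\ref{prop:contr}(ii) applies. Your extra remark that non-uniqueness of minimal position of discs is harmless because only boundary curves are tracked is a sound clarification, not a deviation.
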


\begin{proof}
Cliques in $\mathcal D(U)$ are finite. The family of dismantling projections for $\mathcal D(U)$ from Lemma~\ref{lem:discsurgery} is $H$--equivariant by Lemma~\ref{lem:discsurgery-equivariance}. It is also synchronised which is proved exactly as Lemma~\ref{lem:synchr_arc}. By Proposition~\ref{prop:contr}(ii), if the set $\mathcal D^{\blacktriangle}(U)^H$ is nonempty, then it is contractible.
\end{proof}

Let $\mathcal{D}_{\mathrm{simp}}^0(U)$ be the set of barycentres of simplices in $\mathcal D^{\blacktriangle}(U)$ spanned on simple cliques.
The \emph{simple disc system complex} $\mathcal{D}^{\blacktriangle}_{\mathrm{simp}}(U)$ is the subcomplex of the barycentric subdivision of  $\mathcal D^{\blacktriangle}(U)$ spanned on $\mathcal{D}^0_{\mathrm{simp}}(U)$.

\begin{thm}
\label{thm:handle_spine}
 Let $U$ be a connected handlebody of genus $\geq 2$. Let $H$ be a finite subgroup of $\mathrm{Map}(U)$. Then the set $\mathcal{D}^{\blacktriangle}_\mathrm{simp}(U)^H$ of points of the simple disc system complex $\mathcal{D}^{\blacktriangle}_\mathrm{simp}(U)$ fixed by $H$ is contractible.
\end{thm}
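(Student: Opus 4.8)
The plan is to run the proof of Theorem~\ref{thm:arc_classify} \emph{mutatis mutandis}, with Theorem~\ref{thm:fixed_disc_system} replacing Theorem~\ref{thm:fixed_arc_system}, the disc surgery of Section~\ref{sec:fixdisc} replacing the arc surgery, and a hyperbolic metric on the closed surface $\partial U$ replacing the one on $\check X$. By Theorem~\ref{thm:fixed_disc_system} the group $H$ fixes a simple clique $\Delta\in\mathcal D^0_{\mathrm{simp}}(U)$, so the barycentre $b_\Delta$ lies in $\mathcal{D}^{\blacktriangle}_{\mathrm{simp}}(U)^H$ and this set is nonempty. Regard $\Delta$ as a vertex of the $1$--skeleton $\mathcal D_{\mathrm{simp}}(U)$ of $\mathcal{D}^{\blacktriangle}_{\mathrm{simp}}(U)$; note $\mathcal D_{\mathrm{simp}}(U)$ has no infinite clique, since its cliques are chains of simple cliques of $\mathcal D(U)$ and these have uniformly bounded size. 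It then suffices to build an $H$--equivariant $\Delta$--projection $\Pi_\Delta$ on $\mathcal D_{\mathrm{simp}}(U)$ and conclude by Proposition~\ref{prop:contr}(i).

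To define $\Pi_\Delta$, fix a vertex $\Sigma\neq\Delta$ of $\mathcal D_{\mathrm{simp}}(U)$. If the curves of $\Delta$ and $\Sigma$ admit disjoint representatives, set $\Pi^*_\Delta(\Sigma)=\{\Delta\}$. Otherwise realise $\Delta$ and $\Sigma$ by systems of disjoint essential discs in minimal position, choose a disc $D$ bounded by a curve of $\Delta$ and an outermost disc $\delta\subset D$ for the discs of $\Sigma$, so $\partial\delta=a\cup\beta$ with $a$ a subarc of $\partial D$ and $\beta$ an intersection arc of $D$ with a single disc $E$ of $\Sigma$, bounding $e\in\Sigma$. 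The outermost surgery of $e$ in direction of $D$ determined by $\delta$ gives curves $e_+,e_-$, and we put $P=\Sigma\cup\{e_+,e_-\}\setminus\{e\}$. As in the arc case, $P$ is a simple clique: a discbounding curve disjoint from $e_+$ and $e_-$ is disjoint from $e$ (which is contained in $e_+\cup e_-$), so an essential disc disjoint from $P$ would be disjoint from $\Sigma$, contradicting simplicity of $\Sigma$. Let $\Pi^*_\Delta(\Sigma)$ be the set of homotopy classes of all such $P$, over all minimal-position realisations and all choices of $D$ and $\delta$; every pair in $\Pi_\Delta(\Sigma)$ has coinciding entries. Since homeomorphisms of $U$ carry outermost discs to outermost discs and $h\Delta=\Delta$ for $h\in H$, the map $\Pi_\Delta$ is $H$--equivariant, exactly as in Lemma~\ref{lem:discsurgery-equivariance}.

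Next I would check the axioms of a $\Delta$--projection. For axiom (ii) I would take as complexity the total geometric intersection number $\sum_{d\in\Delta}\sum_{c\in P} i(d,c)$ of boundary curves on $\partial U$: because $a$ is a subarc of a curve of the clique $\Delta$, it is disjoint from every curve of $\Delta$, so for each $d\in\Delta$ the curves $e_+$ and $e_-$ together meet $d$ no more often than $e$ does, and the two endpoints of $a$ --- which lie on the curve of $\Delta$ carrying $a$ and on $e$ --- cease to be intersection points, so the complexity drops strictly and there are no cycles. For axiom (i) I would fix a hyperbolic metric on $\partial U$, realise the curves of $\Delta$ and of every $\Sigma_i$ in a given finite set $R$ by geodesics, and run the inductive argument from the proof of Lemma~\ref{lem:discsurgery}(i): pick an outermost subarc $b$ of a geodesic of $\Delta$ with respect to the discs of all the $\Sigma_i$; if the associated surgery does not yet dominate the neighbourhood in $R$, replace the relevant disc by one whose outermost disc is nested inside the previous one, a process which terminates because the geodesics meet in finitely many points. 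This produces an exposed vertex $\Sigma_i$ together with a $P\in\Pi^*_\Delta(\Sigma_i)$ satisfying $N(\Sigma_i)\cap R\subset N(P)$.

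With the axioms verified, Proposition~\ref{prop:contr}(i) applies and gives that $\mathcal{D}^{\blacktriangle}_{\mathrm{simp}}(U)^H$ is contractible. I expect the main obstacle to be axiom (i). For geodesic arcs the analogous step in Theorem~\ref{thm:arc_classify} is a one-line length estimate, but the discs realising a disc system are not canonical --- minimal position of discs is genuinely non-unique --- so one must instead carry out the delicate ``nested outermost disc'' induction of Lemma~\ref{lem:discsurgery}(i), and do so while simultaneously tracking both surgered curves $e_+,e_-$, so that the resulting simple clique $P$ dominates the chosen vertex in $\mathcal D_{\mathrm{simp}}(U)$.
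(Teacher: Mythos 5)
Your overall route is exactly the paper's: take an $H$--fixed vertex $\Delta\in\mathcal{D}^0_{\mathrm{simp}}(U)$ from Theorem~\ref{thm:fixed_disc_system}, define a $\Delta$--projection on the $1$--skeleton of $\mathcal{D}^{\blacktriangle}_{\mathrm{simp}}(U)$ by replacing a curve $e$ of $\Sigma$ with \emph{both} curves $e_+,e_-$ of an outermost surgery, taken outermost with respect to all discs of $\Sigma$ (so that $e_\pm$ stay disjoint from the rest of $\Sigma$), verify the axioms as in Theorem~\ref{thm:arc_classify} building on Lemma~\ref{lem:discsurgery}, and conclude with Proposition~\ref{prop:contr}(i). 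Your treatment of axiom (ii) via the total intersection number with $\Delta$, the equivariance, the finiteness of cliques, and the plan for axiom (i) via the nested outermost--disc induction all agree with what the paper does (the paper states the construction and leaves these verifications to the reader).

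There is, however, a genuine gap in your justification that $P=\Sigma\cup\{e_+,e_-\}\setminus\{e\}$ is \emph{simple}. You argue that an essential disc disjoint from $P$ would be disjoint from $\Sigma$, ``contradicting simplicity of $\Sigma$''. But simplicity of a disc system does not exclude disjoint essential discs: in a genus $2$ handlebody the two meridian discs form a simple system (the complement is a ball), yet the separating discbounding curve is essential and disjoint from both of them. So no contradiction arises, and this is precisely the point where the disc case differs from the arc case you are transposing; knowing that a curve disjoint from $P$ is disjoint from $\Sigma$ only places it in a planar complementary piece of $\Sigma$, which does not make it inessential. The claim itself is true, but it needs an argument about complementary components: the complementary components of $\Sigma\cup\{e_+,e_-\}$ on $\partial U$ are planar because they refine those of $\Sigma$; the curves $e,e_+,e_-$ bound a pair of pants (a regular neighbourhood of $e\cup a$) whose interior is disjoint from $P$, so the component of the complement of $P$ containing $e$ is obtained by gluing two \emph{distinct} planar pieces --- one of them this pair of pants --- along the single circle $e$, hence is again planar, while all other components are unchanged. (Three--dimensionally: cutting $U$ along discs for $\Sigma\cup\{e_+,e_-\}$ gives balls, and the $P$--complementary piece containing the disc bounded by $e$ is two such balls glued along that one disc, one of the two being a regular neighbourhood of $E\cup\delta$.) With this repair your proposal coincides with the paper's proof.
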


Note that the simple disc system complex $\mathcal{D}^\blacktriangle_\mathrm{simp}(U)$ is not a finite model for $\underline{E}\mathrm{Map}(U)$, since stabilisers of simple disc systems are not all finite (they extend to braid groups).

\begin{proof}
By Theorem~\ref{thm:fixed_disc_system}, the fixed point set $\mathcal D^{\blacktriangle}_\mathrm{simp}(U)^H$ contains a vertex $\Delta\in \mathcal D^0_\mathrm{simp}(U)$.

We define a $\Delta$--projection $\Pi_\Delta$ on the $1$--skeleton of $\mathcal{D}^{\blacktriangle}_\mathrm{simp}(U)$. Let $\Sigma\in D^0_\mathrm{simp}(U)\setminus \{\Delta\}$. If all the discbounding curves of $\Delta$ and $\Sigma$ are disjoint up to homotopy, we put $\Pi^*_\Delta(\Sigma)=\{\Delta\}$. Otherwise, we realise $\Delta, \Sigma$ as families of disjoint discbounding curves $D,S\subset \partial U$ pairwise in minimal position. Consider a subarc $a\subset d$ for some $d\in D$ which is outermost in a disc bounded by $d$ for a disc bounded by some $s\in S$. Assume that $a$ does not contain any other such outermost subarc. For any such subarc $a$ we consider $P=S\cup\{s_+,s_-\}\setminus \{s\}$, where $s_+,s_-$ are obtained by outermost surgery of $s$ in direction of $d$ determined by $a$. The set $P$ is a simple disc system, and we define $\Pi_\Delta^*(\Sigma)$ to be the set of all homotopy classes of all $P$ constructed using all such subarcs $a$. This does not depend on the realisations $D,S$ of $\Delta,\Sigma$. Let $\Pi_\Delta(\Sigma)$ be the family of pairs $\{\Pi_1,\Pi_2\}$, where $\Pi_1=\Pi_2\in \Pi_\Delta^*(\Sigma)$.

We leave it to the reader to verify that $\Pi_\Delta$ satisfies the axioms of a dismantling projection and that it is $H$--equivariant. This can be carried out exactly as in the proof of Theorem~\ref{thm:arc_classify}, building upon the proof of Lemma~\ref{lem:discsurgery}.
Since cliques in $\mathcal D^{\blacktriangle}_\mathrm{simp}(U)$ are finite, by Proposition~\ref{prop:contr}(i) the fixed point set $\mathcal D^{\blacktriangle}_\mathrm{simp}(U)^H$ is contractible.
\end{proof}

\subsection{Sphere complex}

The \emph{sphere complex} $\mathcal S^{\blacktriangle}(W)$ is the flag simplicial complex obtained from the sphere graph $\mathcal S(W)$ by spanning simplices on all of its cliques. Then simplices of $\mathcal{S}^\blacktriangle(W)$ correspond to sets of disjoint spheres.

\begin{thm}
\label{thm:contrsphere}
 Let $W$ be a doubled connected handlebody of rank $n\geq 2$. Let $H$ be any subgroup of $\mathrm{Map}(W)$ or $\mathrm{Out}(F_n)$. Then the set $\mathcal S^{\blacktriangle}(W)^H$ of points of the sphere complex $\mathcal S^{\blacktriangle}(W)$ fixed by $H$ is empty or contractible.
\end{thm}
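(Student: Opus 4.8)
The plan is to mirror the proofs of Theorems~\ref{thm:contrarc} and~\ref{thm:contrdisc}: I would apply Proposition~\ref{prop:contr}(ii) to the graph $\Gamma=\mathcal{S}(W)$ (resp.\ $\mathcal{S}(W_n)$, on which a subgroup $H<\mathrm{Out}(F_n)$ acts through the finite-kernel surjection $\mathrm{Map}(W_n)\twoheadrightarrow\mathrm{Out}(F_n)$), equipped with the family of dismantling projections $\{\Pi_\sigma\}_{\sigma\in\mathcal{S}^0(W)}$ built from outermost sphere surgery in Section~\ref{sec:sphere-surgery}.

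First I would check the hypotheses of Proposition~\ref{prop:contr}. The graph $\mathcal{S}(W)$ has no infinite cliques, since a clique is a sphere system and a sphere system in a rank-$n$ doubled handlebody has at most $3n-3$ spheres. By Lemma~\ref{lem:sphere-projection} each $\Pi_\sigma$ is a $\sigma$--projection, and by Lemma~\ref{lem:spheresurgery-equivariance} (lifting $h\in\mathrm{Out}(F_n)$ to $\mathrm{Map}(W_n)$ when needed) the family $\{\Pi_\sigma\}$ is $H$--equivariant. If $\mathcal{S}^\blacktriangle(W)^H$ is empty there is nothing to prove; otherwise $H$ fixes a point of $\mathcal{S}^\blacktriangle(W)$, hence preserves the unique simplex containing that point in its interior, hence a clique of $\mathcal{S}(W)$. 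So the only remaining hypothesis of Proposition~\ref{prop:contr}(ii) is that $\{\Pi_\sigma\}$ is synchronised.

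The synchronisation statement is the one point needing an argument, and it is modelled on Lemma~\ref{lem:synchr_arc}. Fix a clique $\Delta\subset\mathcal{S}^0(W)$ and a sequence $\rho_1,\rho_2,\ldots$ with $\rho_{i+1}\in\Pi^*_{\sigma_i}(\rho_i)$ for some $\sigma_i\in\Delta$. I would realise $\Delta$ as a sphere system, extend it to a maximal sphere system $S_0$, and put each $\rho_i$ in normal position with respect to $S_0$; then the total number $N_i$ of intersection circles of $\rho_i$ with $\Delta$ (read off in normal position) is well defined by Lemma~\ref{lem:uniqueness-normal-position}. Since $\rho_{i+1}$ is an outermost disc of a representative of $\sigma_i$ glued to a disc of a representative of $\rho_i$, and $\sigma_i$ is disjoint from every other sphere of $\Delta$, the surgery does not increase the number of intersection circles with any sphere of $\Delta$ other than $\sigma_i$, while it strictly decreases the number with $\sigma_i$ whenever $\rho_i$ meets $\sigma_i$ (the estimate already used in Lemma~\ref{lem:sphere-projection}(ii), together with the fact, recalled there from the proof of \cite[Prop~1.1]{Ha}, that putting spheres in normal position only decreases the number of intersection circles). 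Hence at each step either $\rho_i$ meets $\sigma_i$ and $N_{i+1}<N_i$, or $\rho_i$ is disjoint from $\sigma_i$ and $\rho_{i+1}=\sigma_i\in\Delta$; since $N_i\geq0$ the second alternative must eventually occur, and once $\rho_i\in\Delta$ every admissible $\sigma_i\in\Delta\setminus\{\rho_i\}$ gives $\Pi^*_{\sigma_i}(\rho_i)=\{\sigma_i\}\subset\Delta$. Thus $(\rho_i)$ enters and then stays in $\Delta$, which is exactly synchronisation.

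With all hypotheses verified, Proposition~\ref{prop:contr}(ii) yields that $\mathcal{S}^\blacktriangle(W)^H$ is contractible, finishing the proof. I expect the only (minor) obstacle to be the bookkeeping in the synchronisation step; its single delicate point --- that intersection numbers of spheres have to be read off in normal position --- is entirely supplied by the normal-position machinery of Section~\ref{sec:normal}, so there is no genuinely new geometric input beyond what was used for the disc complex.
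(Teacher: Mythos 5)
Your proposal is correct and is exactly the argument the paper has in mind: the paper omits the proof of this theorem, stating it is carried out just as Theorems~\ref{thm:contrarc} and~\ref{thm:contrdisc}, i.e.\ via Proposition~\ref{prop:contr}(ii) with the surgery projections of Section~\ref{sec:sphere-surgery}, their equivariance (Lemma~\ref{lem:spheresurgery-equivariance}), and a synchronisation argument modelled on Lemma~\ref{lem:synchr_arc} using decreasing counts of intersection circles in normal position. Your write-up supplies precisely these details (including the finite-clique bound and the passage from $\mathrm{Out}(F_n)$ to $\mathrm{Map}(W_n)$), so there is nothing to add.
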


We omit the proof since it is carried out exactly as the proofs of Theorems~\ref{thm:contrarc} and~\ref{thm:contrdisc}.

\medskip

Let $\mathcal{S}_{\mathrm{simp}}^0(W)$ be the set of barycentres of simplices in $\mathcal S^{\blacktriangle}(W)$ spanned on simple cliques.
The \emph{simple sphere system complex} $\mathcal S^{\blacktriangle}_{\mathrm{simp}}(W)$ is the subcomplex of the barycentric subdivision of  $\mathcal S^{\blacktriangle}(W)$ spanned on $\mathcal{S}^0_{\mathrm{simp}}(W)$. The simple sphere system complex is the spine of the Culler--Vogtmann Outer space \cite{Ha}. Our method gives a new proof of the following.

\begin{thm}[\cite{Whi}, \cite{KV}]
\label{thm:sphere_spine}
 Let $W$ be a doubled connected handlebody of genus $n\geq 2$. Let $H$ be a finite subgroup of $\mathrm{Map}(W)$ or $\mathrm{Out}(F_n)$. Then the set $\mathcal S^{\blacktriangle}_\mathrm{simp}(W)^H$ of points of the simple sphere system complex $\mathcal S^{\blacktriangle}_\mathrm{simp}(W)$ fixed by $H$ is contractible.
\end{thm}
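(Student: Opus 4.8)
The plan is to copy the scheme of Theorems~\ref{thm:arc_classify} and~\ref{thm:handle_spine}: fix an $H$--invariant vertex of the simple sphere system complex, build a dismantling projection on the $1$--skeleton of $\mathcal{S}^\blacktriangle_\mathrm{simp}(W)$ centred at it, and conclude by Proposition~\ref{prop:contr}(i). The hyperbolic geodesics used to select preferred representatives in the surface cases are replaced throughout by normal position (Section~\ref{sec:normal}), and the length function by the geometric intersection circle number. First, by Theorem~\ref{thm:fixed-filling-sphere-clique} (in the $\mathrm{Out}(F_n)$ form, see the remarks following that theorem) the group $H$ fixes a vertex $\Delta\in\mathcal{S}^0_\mathrm{simp}(W)$, so $\mathcal{S}^\blacktriangle_\mathrm{simp}(W)^H$ is nonempty; and since sphere systems in $W$ are finite, the $1$--skeleton of $\mathcal{S}^\blacktriangle_\mathrm{simp}(W)$ has no infinite cliques.

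Next I would define a $\Delta$--projection $\Pi_\Delta$ on the $1$--skeleton of $\mathcal{S}^\blacktriangle_\mathrm{simp}(W)$, every pair in which has two equal entries, so that $\Pi_\Delta$ is determined by $\Pi^*_\Delta(\Sigma)\subset\mathcal{S}^0_\mathrm{simp}(W)$. Given $\Sigma\neq\Delta$: if the spheres of $\Delta$ and $\Sigma$ admit disjoint representatives, set $\Pi^*_\Delta(\Sigma)=\{\Delta\}$. Otherwise realise $\Delta$ and $\Sigma$ as simple sphere systems $S_\Delta,S_\Sigma$ with each sphere of $S_\Delta$ in normal position with each sphere of $S_\Sigma$; this is possible by extending $S_\Delta$ to a maximal sphere system, normalising $S_\Sigma$ with respect to it, and applying Lemma~\ref{lem:characterizing-normal-position-systems}. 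Pick a sphere $s_0\in S_\Delta$ meeting $S_\Sigma$ and a disc $E\subset s_0$ bounded by an intersection circle, with $\mathrm{int}(E)$ disjoint from $S_\Sigma$; let $s\in S_\Sigma$ be the sphere with $\partial E\subset s$, and let $s_+,s_-$ be obtained by outermost surgery of $s$ in direction of $s_0$ determined by $E$, as in Section~\ref{sec:sphere-surgery}. Then $P=(S_\Sigma\setminus\{s\})\cup\{s_+,s_-\}$ is again a simple sphere system, because the cut along $\{s_+,s_-\}$ refines the cut along $s$ --- the reason used in the proofs of Theorems~\ref{thm:arc_classify} and~\ref{thm:handle_spine}. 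Let $\Pi^*_\Delta(\Sigma)$ be the set of isotopy classes of all such $P$, over all choices of $s_0$ and $E$. Uniqueness of normal position (Lemma~\ref{lem:uniqueness-normal-position}, so that outermost discs are well defined for isotopy classes as in Section~\ref{sec:sphere-surgery}) makes this independent of the chosen representatives, and $H$--equivariance $h\Pi_\Delta(\rho)=\Pi_\Delta(h\rho)$ for $h\in H$ follows since homeomorphisms carry normal position, innermost discs, and surgeries to the same, as in Lemma~\ref{lem:spheresurgery-equivariance}.

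It remains to verify the two axioms of a $\Delta$--projection. For axiom (ii) I would use the geometric intersection number $i(\Delta,\Sigma)$, the total number of intersection circles between the normal-position representatives of the spheres of $\Delta$ and of $\Sigma$. The surgery above moves the intersection circles of $s$ with $s_0$ onto $s_+$ and $s_-$, discarding the circle $\partial E$, while splitting the intersection circles of $s$ with the other spheres of $S_\Delta$ between $s_+$ and $s_-$; after renormalising $s_+$ and $s_-$ (which does not raise the circle count, by the proof of \cite[Prop~1.1]{Ha}, cf.\ the argument for Lemma~\ref{lem:sphere-projection}(ii)) this gives $i(\Delta,P)\le i(\Delta,\Sigma)-1$. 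Hence there is no $\Pi^*_\Delta$--cycle: along such a cycle no vertex equals $\Delta$ and no vertex is disjoint from $\Delta$ (the successor of such a vertex would be $\Delta$), so $i(\Delta,\cdot)$ would strictly decrease around it. For axiom (i), let $R\subset\mathcal{S}^0_\mathrm{simp}(W)$ be finite with $R\setminus\{\Delta\}\neq\emptyset$. If every element of $R$ is disjoint from $\Delta$, then any $\Sigma_i\in R\setminus\{\Delta\}$ is exposed, with $\Pi^*_\Delta(\Sigma_i)=\{\Delta\}$ and $R\subset N(\Delta)$. Otherwise realise $\Delta$ and all of $R$ in normal position with respect to a common maximal sphere system, take a disc $E\subset s_0\in S_\Delta$ bounded by an intersection circle of $S_\Delta$ with $\bigcup_j S_{\Sigma_j}$ and with $\mathrm{int}(E)$ disjoint from $\bigcup_j S_{\Sigma_j}$, and let $\Sigma_i$ be the system whose representative contains $\partial E$ (so $\Sigma_i\neq\Delta$). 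Surgering $s$ produces $Q\in\Pi^*_\Delta(\Sigma_i)$ with $s_+,s_-\subset E\cup s$; since $s_+,s_-$ are disjoint from $S_{\Sigma_i}\setminus\{s\}$ and from $s$ up to isotopy, and disjoint from $S_{\Sigma_j}$ for each $\Sigma_j\in N(\Sigma_i)\cap R$ (here one uses that representatives in normal position with respect to a common maximal system realise disjointness of isotopy classes), we get $N(\Sigma_i)\cap R\subset N(Q)$. Both axioms being verified, Proposition~\ref{prop:contr}(i) yields that $\mathcal{S}^\blacktriangle_\mathrm{simp}(W)^H$ is contractible.

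The main obstacle is axiom (ii). In the surface cases strict length decrease under surgery was immediate, but here there is no ambient metric, so the monotone quantity --- the geometric intersection circle number with $\Delta$ --- must be controlled entirely within the normal position machinery: one needs that it is well defined (uniqueness of normal position), that the doubled surgery strictly decreases it, and that renormalising the surgered spheres does not increase it. All three ingredients are already present in Section~\ref{sec:spheres}, so the work is in assembling them; a secondary, purely topological check is that the surgered family $P$ remains simple, dealt with exactly as for arc and disc systems in the proofs of Theorems~\ref{thm:arc_classify} and~\ref{thm:handle_spine}.
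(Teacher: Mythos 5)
Your overall architecture is the one the paper intends: fix an $H$--invariant vertex $\Delta$ via Theorem~\ref{thm:fixed-filling-sphere-clique}, define a $\Delta$--projection on the $1$--skeleton of $\mathcal S^{\blacktriangle}_\mathrm{simp}(W)$ by replacing a sphere $s$ of $\Sigma$ by both spheres $s_+,s_-$ of an outermost surgery in direction of $\Delta$, and conclude by Proposition~\ref{prop:contr}(i); this is exactly the construction the paper prescribes, with the axioms to be checked as in Theorems~\ref{thm:arc_classify} and~\ref{thm:handle_spine} and by invoking the \emph{proof} of Lemma~\ref{lem:sphere-projection}.

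The genuine gap is in your verification of axiom (i). You realise $\Delta$ and \emph{all} systems of $R$ in normal position with respect to a common maximal system, pick a disc $E\subset s_0\in S_\Delta$ innermost for the union $\bigcup_j S_{\Sigma_j}$, and then assert that whenever $\Sigma_j$ is disjoint from $\Sigma_i$ up to isotopy the chosen representatives $S_{\Sigma_j}$ and $S_{\Sigma_i}$ are actually disjoint (``representatives in normal position with respect to a common maximal system realise disjointness of isotopy classes''). This is precisely where the surface template breaks down for spheres: normal position with respect to a maximal system is only determined up to homotopies restricting to isotopies of that system, and nothing in Section~\ref{sec:normal} (nor elsewhere in the paper) provides a simultaneous choice of representatives for a family of pairwise intersecting systems realising all the disjointness relations at once --- that is exactly the role the hyperbolic geodesic representatives played for arcs and discs, and the paper emphasises that $W$ lacks such a tool. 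If instead you re-realise $S_{\Sigma_j}$ disjoint from $S_{\Sigma_i}$ only after choosing $E$, the new representative may pass through $\mathrm{int}(E)$, and then $s_\pm\subset E\cup s$ need not be disjoint from $\Sigma_j$ up to isotopy; this is the very obstruction that forces the iterative exchange argument in the proof of Lemma~\ref{lem:sphere-projection}(i): when some system disjoint from $\Sigma_i$ (realised disjoint from $S_{\Sigma_i}$ and normal with respect to the maximal system) has an innermost circle inside $E$, one replaces $\Sigma_i$ by that system and repeats, with termination guaranteed by Lemma~\ref{lem:no-sphere-cycles}. Tellingly, your proposal never uses Lemma~\ref{lem:no-sphere-cycles}, whereas the paper states that verifying the axioms here requires invoking the proof of Lemma~\ref{lem:sphere-projection}. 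The remaining ingredients of your write-up (simplicity of the surgered system, well-definedness and strict decrease of the intersection-circle count for axiom (ii), equivariance, finiteness of cliques) are in line with what the paper does in Theorems~\ref{thm:arc_classify} and~\ref{thm:handle_spine}; it is the axiom (i) step that must be replaced by the iteration.
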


\begin{cor}[\cite{Whi}, \cite{KV}]
\label{cor:spine_sphere}
Let $W$ be a doubled connected handlebody of genus $n\geq 2$. The simple sphere system complex $\mathcal{S}^\blacktriangle_\mathrm{simp}(W)$ is a finite model for $\underline{E}\mathrm{Map}(W)$ and $\underline{E}\mathrm{Out}(F_n)$.
\end{cor}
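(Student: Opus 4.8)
The plan is to verify directly that $Z:=\mathcal{S}^\blacktriangle_\mathrm{simp}(W)$ has the defining properties of a classifying space for proper actions. Recall that a $G$--CW complex $Z$ is a model for $\underline{E}G$ precisely when all of its cell stabilisers are finite and $Z^K$ is contractible for every finite subgroup $K\leq G$ (taking $K$ trivial, this includes contractibility of $Z$ itself), and that it is a \emph{finite} model when moreover $G$ acts cocompactly. Since by Laudenbach's theorem the kernel of $\mathrm{Map}(W_n)\to\mathrm{Out}(F_n)$ acts trivially on $\mathcal{S}(W)$, the case $G=\mathrm{Map}(W)$ reduces to $G=\mathrm{Out}(F_n)$, and I would treat them simultaneously.

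First I would record the formal properties of $Z$. A simple sphere system in $W$ has at most $3n-3$ spheres, so cliques in $\mathcal{S}(W)$ have bounded size and $Z$ is a finite--dimensional simplicial complex on which $G$ acts simplicially; passing to a barycentric subdivision gives a genuine $G$--CW structure. Cocompactness holds because the mapping class group acts with finitely many orbits on isotopy classes of simple sphere systems, a fact already invoked in the proof of Lemma~\ref{lem:finiteness-small-intersection}.

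Next I would show that cell stabilisers are finite. A simplex of $Z$ is a chain $\Delta_0\subsetneq\dots\subsetneq\Delta_m$ of simple cliques whose stabiliser is contained in $\mathrm{Stab}(\Delta_m)$, so it suffices to bound the stabiliser of one simple clique $\Delta$. Realising $\Delta$ by a simple sphere system $S$, the complementary components of $S$ are simply connected, so the dual graph $\Gamma_S$ has fundamental group $F_n$ and every element of $\mathrm{Stab}(\Delta)$ induces an automorphism of the finite graph $\Gamma_S$; the resulting homomorphism $\mathrm{Stab}(\Delta)\to\mathrm{Aut}(\Gamma_S)$ is injective --- this is the familiar identification of vertex stabilisers in the spine of Outer space with graph automorphism groups --- so $\mathrm{Stab}(\Delta)$ is finite. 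It follows that $Z^K=\emptyset$ for every infinite $K\leq G$, because a point of $Z^K$ lies in the open carrier of some simplex which $K$ then preserves, embedding $K$ into a finite stabiliser.

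Finally, for every finite $K\leq G$ the fixed point set $Z^K$ is contractible by Theorem~\ref{thm:sphere_spine}. Combining this with the two preceding points and the characterisation recalled at the start, $\mathcal{S}^\blacktriangle_\mathrm{simp}(W)$ is a finite model for $\underline{E}\mathrm{Map}(W)$ and for $\underline{E}\mathrm{Out}(F_n)$. The only non--formal ingredient is the finiteness of the clique stabilisers, i.e.\ the injectivity of $\mathrm{Stab}(\Delta)\to\mathrm{Aut}(\Gamma_S)$; this is where the (well--known) content sits, and it is precisely the point that fails for simple disc systems, whose stabilisers extend to braid groups --- explaining why the analogous statement is false for $\mathcal{D}^\blacktriangle_\mathrm{simp}(U)$.
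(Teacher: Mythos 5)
Your proposal is correct and follows the same route the paper intends: the corollary is deduced from Theorem~\ref{thm:sphere_spine} via the standard characterisation of $\underline{E}G$ (finite cell stabilisers, contractible fixed sets of finite subgroups, cocompactness from finitely many orbits of simple sphere systems), with the reduction from $\mathrm{Map}(W)$ to $\mathrm{Out}(F_n)$ by Laudenbach's theorem. You merely make explicit the standard verifications the paper leaves implicit, notably the finiteness of clique stabilisers via the embedding into the automorphism group of the dual graph, which is exactly the point the paper contrasts with the disc case.
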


We leave the proof of Theorem~\ref{thm:sphere_spine} to the reader. It requires defining a dismantling projection on the $1$--skeleton of $\mathcal S^{\blacktriangle}_\mathrm{simp}(W)$ in the same way as it was done in Theorems~\ref{thm:arc_classify} and~\ref{thm:handle_spine} for the filling arc system complex and the simple disc system complex. To verify that it satisfies the axioms of a dismantling projection one needs to invoke the proof of Lemma~\ref{lem:sphere-projection}.

\section{Rips complex}
\label{sec:Rips}
In this section we prove Theorem~\ref{thm:EGhyper}, that if a group $G$ is $\delta$--hyperbolic and $D\geq 8\delta +1$, then the Rips complex $P_D(G)$ is a finite model for $\underline EG$.

We say that a graph is \emph{$\delta$--hyperbolic}, with $\delta\in \N$, if for any vertices $u,v,w$, for any geodesics $uv,vw,wu$, and for any vertex $t$ on $uv$ there is a vertex on $vw\cup wu$ at distance $\leq \delta$ from $t$.
 A finitely generated group $G$ is \emph{$\delta$--hyperbolic} if its Cayley graph with respect to some finite generating set is $\delta$--hyperbolic. For $D\in \N$, the \emph{Rips complex} $P_D(G)$ is the flag simplicial complex with vertex set $G$, and edges connecting two vertices at distance at most $D$ in the Cayley graph.

Theorem~\ref{thm:EGhyper} is an immediate consequence of Theorem \ref{thm:contrfix} and Lemma \ref{lem:dismhyp} below.

\begin{lem}
\label{lem:dismhyp}
   Let $G$ be a $\delta$--hyperbolic group and let $D\geq 8\delta +1$. Let $H$ be a finite subgroup of $G$. For every finite subset $S$ of $G$ there exists an $H$--invariant finite dismantlable subgraph of the $1$--skeleton of $P_D(G)$ containing $S$.
\end{lem}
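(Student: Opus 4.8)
The plan is to apply Proposition~\ref{prop:fixedpoint}, with the dismantling projections given by the operation ``move a distance of order $\delta$ along a geodesic of the Cayley graph towards a fixed vertex''; throughout, $d$ denotes distance in the Cayley graph of $G$, which is $\delta$--hyperbolic and locally finite. First I would replace $S$ by $HS$, which is still finite, so that $S$ becomes $H$--invariant. Next I would choose the distinguished vertex with care. The function $\mathrm{rad}_S(g)=\max_{s\in S}d(g,s)$ is $H$--invariant, since $S$ is $H$--invariant and $d$ is left--invariant, so the set $C$ of vertices on which $\mathrm{rad}_S$ attains its minimum is nonempty, finite and $H$--invariant; a standard thin--triangle estimate gives $\mathrm{diam}(C)\le 4\delta+1$. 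I would then fix any $\sigma\in C$. Note that $S\subseteq B(\sigma,\mathrm{rad}_S(\sigma))$ and --- crucially --- that the orbit $H\sigma$ is contained in $C$, hence has diameter at most $4\delta+1$.

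For every vertex $\tau\in G$ I would define a $\tau$--projection $\Pi_\tau$ by letting $\Pi_\tau(\rho)$, for $\rho\neq\tau$, be the set of pairs $(\pi,\pi)$ with $\pi$ a vertex at distance $\min(4\delta,d(\rho,\tau))$ from $\rho$ on some geodesic from $\rho$ to $\tau$. Since $H$ acts by isometries of the Cayley graph, the family $\{\Pi_\tau\}_\tau$ is $H$--equivariant. Axiom~(ii) of a $\tau$--projection holds because every such $\pi$ satisfies $d(\pi,\tau)<d(\rho,\tau)$, so there are no cycles. For axiom~(i), given a finite set $F$ with $F\setminus\{\tau\}\neq\emptyset$, I would pick $\rho\in F\setminus\{\tau\}$ maximising $d(\cdot,\tau)$; for a neighbour $w\in F$ of $\rho$, applying $\delta$--hyperbolicity to a triangle on $\rho,w,\tau$ (with the fixed geodesic $[\rho,\tau]$ as one side) and using $d(w,\tau)\le d(\rho,\tau)$ gives, for the corresponding $\pi$, the bound $d(\pi,w)\le\max(d(\rho,w)-2\delta,\,6\delta)\le D$, so $\rho$ is exposed; and if $d(\rho,\tau)<4\delta$ then $F$ already has diameter $<D$, hence is a clique, and there is nothing to check. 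This is where the hypothesis $D\ge 8\delta+1$ enters.

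Finally I would let $R$ be the smallest $H$--invariant, $\Pi_\sigma$--convex subset of $G$ containing $S$; concretely, $R$ is obtained from $S$ by repeatedly adjoining all vertices occurring in the projections towards the points of $H\sigma$ together with their $H$--translates. By construction $R$ is $H$--invariant, contains the original $S$, and is $\Pi_\sigma$--convex. The step I expect to require the most care is that $R$ is finite, and here the choice $\sigma\in C$ is exactly what makes it work: using $\mathrm{diam}(H\sigma)\le 4\delta+1$, a thin--triangle estimate shows that a projection step towards any $\sigma'\in H\sigma$ sends $\rho$ to a vertex $\pi$ with $d(\pi,\sigma)\le\max(d(\rho,\sigma)-2\delta,\,7\delta)$; hence $R\subseteq B(\sigma,\max(\mathrm{rad}_S(\sigma),7\delta))$, which is finite since the Cayley graph is locally finite. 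Now Proposition~\ref{prop:fixedpoint}(i) applies: the subgraph of the $1$--skeleton of $P_D(G)$ induced on $HR=R$ is dismantlable, and it is finite, $H$--invariant, and contains $S$, as required.

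The main obstacle is the tension between the two features demanded of the subgraph: a dismantling of a finite piece of a Rips complex naturally proceeds by sweeping outer vertices inward towards a single centre, but a single vertex is never $H$--invariant, so such a sweep would not produce an $H$--invariant subgraph. The resolution --- the only genuinely new input beyond the formalism of Section~\ref{sec:dism} and routine $\delta$--thinness estimates --- is to place the centre inside the bounded--diameter quasi--centre $C$ of the orbit $HS$: then pushing towards any point of $H\sigma\subseteq C$ still makes definite progress towards all of them, and it is precisely this that keeps the $H$--closure of the construction finite.
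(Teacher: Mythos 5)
Your proof is correct for $\delta\ge 1$, but it takes a genuinely different route from the paper's. The paper does not use the dismantling--projection formalism here at all: it takes the quasi-centre $C$ of $HS$ (the same set as your $C$, with the same $4\delta+1$ diameter bound quoted from Bridson--Haefliger), notes that the balls $B_r(C)$ are finite, $H$--invariant and contain $S$ for $r$ large, and verifies dismantlability of the induced subgraph directly: any vertex $v$ with $d(v,C)=a\ge 2\delta$ is dominated inside $B_a(C)$ by the vertex $u$ at distance $2\delta$ from $v$ on a geodesic to a closest point $w\in C$, so ordering vertices by decreasing distance to $C$ is a dismantling order; the estimate $d(t,u)\le d(u,v)+\mathrm{diam}(C)+2\delta\le 8\delta+1\le D$ is where the hypothesis enters. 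You instead feed the Rips graph into the machinery of Section~\ref{sec:dism} (Corollary~\ref{cor:proj->dism} and Proposition~\ref{prop:fixedpoint}(i)): projecting towards a single vertex with step $4\delta$ makes the domination estimate independent of $\mathrm{diam}(C)$ (your $\max(d(\rho,w)-2\delta,\,6\delta)\le D$ is correct), and the diameter of the quasi-centre is paid only in the finiteness of the hull, where your estimate comes out as $d(\pi,\sigma)\le\max(d(\rho,\sigma)-2\delta,\,5\delta+1)$, so the closure of $S$ under projections towards $H\sigma$ indeed stays in $B(\sigma,\max(\mathrm{rad}_S(\sigma),7\delta))$ for $\delta\ge1$. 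The paper's argument buys brevity and an explicit invariant subgraph (a ball around $C$) with an explicit dismantling order; yours buys uniformity with the arc, disc and sphere cases, isolating the hyperbolicity input in two thin--triangle inequalities.

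One caveat: your construction degenerates when $\delta=0$, which the statement allows (with $D\ge1$): the step length $\min(4\delta,d(\rho,\tau))$ is then $0$, so $\pi=\rho$ and axiom (ii) of a $\sigma$--projection fails; moreover $5\delta+1\le 7\delta$ also needs $\delta\ge1$. You cannot simply replace $\delta$ by $1$, since $D\ge 8\delta+1$ would then be violated. The paper handles $\delta=0$ separately (the Cayley graph is a tree, and for a finite subtree the graph obtained by joining vertices at distance at most $D$ is dismantlable). The fix on your side is easy --- use step length $\max(1,4\delta)$ and redo the two estimates, or treat the tree case directly --- but as written this case is missing.
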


\begin{proof}
If $\delta=0$, then the Cayley graph of $G$ is a tree ($G$ is a free group) and the lemma follows from the fact that for any finite tree the graph obtained after connecting by edges vertices at distance $\leq D$ is dismantlable. We can now assume $\delta\geq 1$.

The \emph{ball} $B_R(C)$ of radius $R\in \N$ around a subset $C$ of the vertex set $G$ of the Cayley graph is the set all of vertices at distance at most $R$ from some vertex in $C$. For a subset $O\subset G$ let $R$ be minimal such that $O$ is contained in $B_R(v)$ for some $v\in G$.
The \emph{quasi-centre} of $O$ is the set of all $v\in G$ with $O\subset B_R(v)$. By \cite[Lem III.$\Gamma$.3.3]{BH} the diameter of the quasi-centre of any finite subset of $G$ is at most $4\delta+1$.

Let $C\subset G$ be the quasi-centre of the orbit $HS$.
For any $r\in \N$ the ball $B_r(C)$ is $H$--invariant and for $r$ large enough we have $S\subset B_r(C)$.
Thus to prove the lemma it suffices to show that for any $r\in \N$ the graph induced on $B_r(C)$ in the $1$--skeleton of $P_D(G)$ is dismantlable.
If $r< 2\delta$, then $B_r(C)$ spans a simplex in $P_D(G)$ and the assertion is clear. Thus it remains to prove the following claim. By $d(\cdot,\cdot)$ we denote the distance in the Cayley graph.

\begin{claim} Consider a vertex $v\in G$ at distance $a\geq 2\delta$ from $C$. Let $w\in C$ be at distance $a$ from $v$. Let $u\in G$ be a vertex lying on a geodesic $vw$ with $d(u,v)=2\delta$. Then $u$ dominates $v$ in the subgraph of the $1$--skeleton of $P_D(G)$ induced on $B_a(C)$.
\end{claim}

\noindent \emph{Proof of Claim.}
The proof is similar to the proof of \cite[Prop III.$\Gamma$.3.23]{BH}.
Let $t\in B_a(C)$ be such that $d(t,v)\leq D$, i.e.\ $t$ is a neighbour of $v$ in $P_D(G)$. We have to show that $d(t,u)\leq D$.
Consider geodesics $wt$ and $tv$. By $\delta$--hyperbolicity we have $d(u,u')\leq \delta$ for some vertex $u'\in wt  \cup  tv$.
There are two cases:
\medskip

\noindent
\emph{Case 1. $u'\in wt$.} In this case we have:
\begin{align*}
 d(t,u')+d(u',w) &=d(t,w)\leq \mathrm{diam}(C)+a=\mathrm{diam}(C)+d(v,u)+d(u,w)\leq\\ &\leq \mathrm{diam}(C)+ d(v,u)+d(u,u')+d(u',w)\leq \\ &\leq \mathrm{diam}(C) +d(v,u)+\delta+d(u',w).
\end{align*}
Hence
$$d(t,u')\leq d(u,v)+\mathrm{diam}(C)+\delta,$$
and it follows that
$$d(t,u)\leq d(t,u')+d(u',u)\leq d(u,v) + \mathrm{diam}(C) +2\delta \leq 8\delta +1 \leq D.$$

\noindent
\emph{Case 2. $u'\in tv$.} Since
$$d(v,u')\geq d(v,u)-d(u,u')\geq 2\delta-\delta=\delta,$$
we have
$$d(t,u')\leq d(t,v) - d(v,u') \leq D - \delta.$$
It follows that
$$d(t,u)\leq d(t,u')+d(u',u)\leq (D-\delta)+\delta=D.$$
In both cases we obtain the desired inequality.
\end{proof}


\begin{bibdiv}
\begin{biblist}

\bib{BH}{book}{
   author={Bridson, Martin R.},
   author={Haefliger, Andr{\'e}},
   title={Metric spaces of non-positive curvature},
   series={Grundlehren der Mathematischen Wissenschaften [Fundamental
   Principles of Mathematical Sciences]},
   volume={319},
   publisher={Springer-Verlag},
   place={Berlin},
   date={1999},
   pages={xxii+643}}

\bib{Bus}{book}{
   author={Buser, Peter},
   title={Geometry and spectra of compact Riemann surfaces},
   series={Modern Birkh\"auser Classics},
   note={Reprint of the 1992 edition},
   publisher={Birkh\"auser Boston Inc.},
   place={Boston, MA},
   date={2010},
   pages={xvi+454}}

\bib{CO}{article}{
    title     ={Dismantlability of weakly systolic complexes and applications},
    author    ={Chepoi, Victor},
    author    ={Osajda, Damian},
    status    ={submitted},
    eprint    ={arXiv:0910.5444v1},
    date={2009}
}

\bib{Cul}{article}{
   author={Culler, Marc},
   title={Finite groups of outer automorphisms of a free group},
   conference={
      title={Contributions to group theory},
   },
   book={
      series={Contemp. Math.},
      volume={33},
      publisher={Amer. Math. Soc.},
      place={Providence, RI},
   },
   date={1984},
   pages={197--207}}

\bib{FM11}{book}{
   author={Farb, Benson},
   author={Margalit, Dan},
   title={A primer on mapping class groups},
   series={Princeton Mathematical Series},
   volume={49},
   publisher={Princeton University Press},
   place={Princeton, NJ},
   date={2012},
   pages={xiv+472}}

\bib{Gab}{article}{
   author={Gabai, David},
   title={Convergence groups are Fuchsian groups},
   journal={Ann. of Math. (2)},
   volume={136},
   date={1992},
   number={3},
   pages={447--510}}

\bib{HH11}{article}{
   author={Hamenst\"adt, Ursula},
   author={Hensel, Sebastian},
   title={The geometry of the handlebody groups I: Distortion},
   date={2011},
   eprint={arXiv:1101.1838}}
\bib{HH11b}{article}{
   author={Hamenst\"adt, Ursula},
   author={Hensel, Sebastian},
   title={Sphere systems, intersections and the geometry of Out($F_n$)},
   date={2011},
   eprint={arXiv:1109.2687}}

\bib{Har1}{article}{
   author={Harer, John L.},
   title={Stability of the homology of the mapping class groups of
   orientable surfaces},
   journal={Ann. of Math. (2)},
   volume={121},
   date={1985},
   number={2},
   pages={215--249}}

\bib{Har2}{article}{
   author={Harer, John L.},
   title={The virtual cohomological dimension of the mapping class group of
   an orientable surface},
   journal={Invent. Math.},
   volume={84},
   date={1986},
   number={1},
   pages={157--176}}

\bib{Ha0}{article}{
   author={Hatcher, Allen},
   title={On triangulations of surfaces},
   journal={Topology Appl.},
   volume={40},
   date={1991},
   number={2},
   pages={189--194}}

\bib{Ha}{article}{
   author={Hatcher, Allen},
   title={Homological stability for automorphism groups of free groups},
   journal={Comment. Math. Helv.},
   volume={70},
   date={1995},
   number={1},
   pages={39--62}}

\bib{JW}{article}{
   author={Ji, Lizhen},
   author={Wolpert, Scott A.},
   title={A cofinite universal space for proper actions for mapping class
   groups},
   conference={
      title={In the tradition of Ahlfors-Bers. V},
   },
   book={
      series={Contemp. Math.},
      volume={510},
      publisher={Amer. Math. Soc.},
      place={Providence, RI},
   },
   date={2010},
   pages={151--163}}

\bib{KLS}{article}{
   author={Kent, Richard P., IV},
   author={Leininger, Christopher J.},
   author={Schleimer, Saul},
   title={Trees and mapping class groups},
   journal={J. Reine Angew. Math.},
   volume={637},
   date={2009},
   pages={1--21}}

\bib{Ker}{article}{
   author={Kerckhoff, Steven P.},
   title={The Nielsen realization problem},
   journal={Bull. Amer. Math. Soc. (N.S.)},
   volume={2},
   date={1980},
   number={3},
   pages={452--454}}

\bib{Ker2}{article}{
   author={Kerckhoff, Steven P.},
   title={The Nielsen realization problem},
   journal={Ann. of Math. (2)},
   volume={117},
   date={1983},
   number={2},
   pages={235--265}}

\bib{Kh}{article}{
   author={Khramtsov, D. G.},
   title={Finite groups of automorphisms of free groups},
   language={Russian},
   journal={Mat. Zametki},
   volume={38},
   date={1985},
   number={3},
   pages={386--392, 476},
   issn={0025-567X}}

\bib{KV}{article}{
   author={Krstic, Sava},
   author={Vogtmann, Karen}
   title={Equivariant outer space and automorphisms of free-by-finite groups},
   journal={Comment. Math. Helv.},
   volume={68},
   date={1993},
   number={2},
   pages={216--262}}

\bib{L74}{book}{
   author={Laudenbach, Fran{\c{c}}ois},
   title={Topologie de la dimension trois: homotopie et isotopie},
   language={French},
   note={With an English summary and table of contents;
   Ast\'erisque, No. 12},
   publisher={Soci\'et\'e Math\'ematique de France},
   place={Paris},
   date={1974},
   pages={i+152}}

\bib{Ma86}{article}{
   author={Masur, Howard},
   title={Measured foliations and handlebodies},
   journal={Ergodic Theory Dynamic. Systems},
   volume={6},
   date={1986},
   number={1},
   pages={99--116}}

\bib{McC}{article}{
   author={McCullough, Darryl},
   title={Virtually geometrically finite mapping class groups of
   $3$-manifolds},
   journal={J. Differential Geom.},
   volume={33},
   date={1991},
   number={1},
   pages={1--65},
}

\bib{MS}{article}{
   author={Meintrup, David},
   author={Schick, Thomas},
   title={A model for the universal space for proper actions of a hyperbolic
   group},
   journal={New York J. Math.},
   volume={8},
   date={2002},
   pages={1--7 (electronic)},
   issn={1076-9803}
}

\bib{Mis}{article}{
   author={Mislin, Guido},
   title={Classifying spaces for proper actions of mapping class groups},
   journal={M\"unster J. Math.},
   volume={3},
   date={2010},
   pages={263--272}}

\bib{Pol}{article}{
   author={Polat, Norbert},
   title={Finite invariant simplices in infinite graphs},
   journal={Period. Math. Hungar.},
   volume={27},
   date={1993},
   number={2},
   pages={125--136}}

\bib{PS}{article}{
   author={Przytycki, Piotr},
   author={Schultens, Jennifer},
   title={Contractibility of the Kakimizu complex and symmetric Seifert surfaces},
   date={2010},
   number={3},
   volume={364}
   pages={1489--1508},
   journal={Transactions of the AMS}}

\bib{Sch}{article}{
   author={Schleimer, Saul},
   title={Notes on the complex of curves},
   date={2005},
   eprint={http://www.warwick.ac.uk/~masgar/math.html}}

\bib{Whi}{article}{
   author={White, Tad},
   title={Fixed points of finite groups of free group automorphisms},
   journal={Proc. Amer. Math. Soc.},
   volume={118},
   date={1993},
   number={3},
   pages={681--688}}

\bib{Wol}{article}{
   author={Wolpert, Scott A.},
   title={Geodesic length functions and the Nielsen problem},
   journal={J. Differential Geom.},
   volume={25},
   date={1987},
   number={2},
   pages={275--296}}

\bib{Zim2}{article}{
   author={Zimmermann, Bruno},
   title={\"Uber Abbildungsklassen von Henkelk\"orpern},
   language={German},
   journal={Arch. Math. (Basel)},
   volume={33},
   date={1979/80},
   number={4},
   pages={379--382}}

\bib{Zim}{article}{
   author={Zimmermann, Bruno},
   title={\"Uber Hom\"oomorphismen $n$-dimensionaler Henkelk\"orper und
   endliche Erweiterungen von Schottky-Gruppen},
   language={German},
   journal={Comment. Math. Helv.},
   volume={56},
   date={1981},
   number={3},
   pages={474--486}}

\end{biblist}
\end{bibdiv}

\end{document}